\documentclass[printscheme,screen]{aomart}

\usepackage{amssymb}
\usepackage{xurl}
\usepackage{booktabs}
\usepackage{tabularx}
\usepackage{array}
\usepackage{longtable}
\usepackage{makecell}
\usepackage[shortlabels]{enumitem}
\copyrightnote{}

\numberwithin{equation}{section}
\theoremstyle{plain}
\newtheorem{theorem}{Theorem}[section]
\newtheorem{proposition}[theorem]{Proposition}
\newtheorem{lemma}[theorem]{Lemma}
\newtheorem{corollary}[theorem]{Corollary}
\theoremstyle{definition}

\theoremstyle{remark}
\newtheorem{remark}[theorem]{Remark}

\renewcommand{\theequation}{%
  \ifnum\value{section}=1
    \arabic{equation}%
  \else
    \thesection.\arabic{equation}%
  \fi}
\renewcommand{\thetheorem}{%
  \ifnum\value{section}=1
    \arabic{theorem}%
  \else
    \thesection.\arabic{theorem}%
  \fi}
\title[Hilbert eigenform product identities]
  {Hecke Eigenform Product Identities for Full-Level Hilbert Modular Forms}

\author[D. Chen]{Dijia Chen}
\givenname{Dijia}
\surname{Chen}
\address{Department of Mathematics, University of Wisconsin--Madison,
  Madison, WI, USA}
\email{dchen426@wisc.edu}

\author[B. Gan]{Bowen Gan}
\givenname{Bowen}
\surname{Gan}
\address{Institute of Mathematical Sciences, ShanghaiTech University,
  Shanghai, China}
\email{ganbw2023@shanghaitech.edu.cn}

\author[C. Qin]{Chao Qin}
\givenname{Chao}
\surname{Qin}
\address{College of Mathematical Sciences, Harbin Engineering University,
  Harbin, China}
\email{qinchao@hrbeu.edu.cn}

\author[Y. Zhou]{Yang Zhou}
\givenname{Yang}
\surname{Zhou}
\address{College of Mathematical Sciences, Harbin Engineering University,
  Harbin, China}
\email{yang.zhou@hrbeu.edu.cn}

\renewcommand{\shortauthors}{D. CHEN, B. GAN, C. QIN, and Y. ZHOU}
\corresponding{Yang Zhou}

\makeatletter
\let\annals@firstpagestyle\ps@firstpage
\def\ps@firstpage{%
  \annals@firstpagestyle
  \fancyfoot{}%
}
\patchcmd{\@setaddresses}
  {\goodbreak\addvspace\bigskipamount}
  {\goodbreak\addvspace\medskipamount}{}{}
\makeatother

\keyword{Hilbert modular forms}
\keyword{Hecke eigenforms}
\keyword{Eigenform product identities}

\begin{document}

\begin{abstract}
We study full-level Hilbert Hecke eigenform product identities over totally real fields of degree at least two, allowing arbitrary compatible narrow class characters. Combining our
results with previous work, we classify products of two Eisenstein eigenforms of positive parallel integral weights and products of an Eisenstein eigenform of weight at least two with a cuspidal eigenform of positive parallel integral weight. The unique identity in each of these two classes is given by $E_{4}=60 E_{2}^{2}$ and $h_{8}=120 E_{2} \cdot h_{6}$, respectively, both over $\mathbb{Q}(\sqrt{5})$. The proofs use constant-term comparisons, indecomposable Fourier exponents, integrality of reciprocal Eisenstein constant terms, and a Rankin-Selberg formula for Petersson inner products. In the remaining case of a weight-one Eisenstein factor and a cuspidal factor, we also establish
\(h_5=12E_1(1,\varepsilon)\cdot h_4\) over
\(\mathbb Q(\sqrt3)\), where \(\varepsilon\) is the nontrivial
narrow class character. All eigenforms in these identities have unit-ideal coefficient one.
\end{abstract}

\maketitle
\thispagestyle{empty}
\raggedbottom
\tableofcontents

\section{Introduction and statements of the main theorems}
When can the product of two Hecke eigenforms be a Hecke eigenform? For
\(\mathrm{SL}_2(\mathbb Z)\), Duke \cite{Duke1997product} and
Ghate \cite{Ghate2000Eisenstein} independently obtained 16 identities,
each arising from a one-dimensional space of modular forms. Johnson
\cite{Johnson2013eigenforms} extended the classification to arbitrary
level and Nebentypus for weights at least two, obtaining
61 identities.

In the Hilbert modular setting, Joshi--Zhang
\cite{Joshi-Zhang2019Hilbert} proved the finiteness of full-level
eigenform product identities among Hecke eigenforms of parallel weights
at least two over real quadratic fields, and exhibited two identities
over \(F=\mathbb Q(\sqrt5)\). You--Zhang
\cite{You-Zhang2021Hilbert} subsequently established a corresponding
finiteness theorem over totally real fields of any fixed degree.
Hao--Qin--Zhou \cite[Theorems~1 and~2]{HaoQinZhou2026} classified
products of two Eisenstein eigenforms over real quadratic fields with
narrow class number one and excluded distinct-weight Eisenstein products
in degree at least three. They also classified Eisenstein–
cuspidal products in the quadratic setting, using the Generalized Riemann
Hypothesis (GRH) when the Eisenstein weight is two.
Li \cite[Theorem~3.9]{Li2026RankinCohen} removed this assumption
using a Petersson--Rankin--Selberg identity for Hilbert Rankin--Cohen
brackets.

In this paper, we study eigenform product identities among full-level
Hilbert Hecke eigenforms of positive parallel integral weights over
totally real fields of degree at least two, allowing arbitrary
compatible narrow class characters.
An eigenform product identity is an equality
\(g=\lambda f\cdot h\), where \(f,h,g\) are nonzero Hecke eigenforms
and \(\lambda\in\mathbb C^\times\). We normalize each eigenform by
its unit-ideal coefficient and classify identities up to
interchanging the factors. The displayed scalars account for
normalization of the product.

Our first result classifies all products of two Eisenstein eigenforms.
\begin{theorem}
\label{thm:main-ee}
Let \(F\) be a totally real field of degree \(n\geq2\).
The only eigenform product identity for two full-level Eisenstein
Hecke eigenforms of parallel weights \(k_1,k_2\geq1\) is
\[
E_4=60E_2^2,
\]
over \(F=\mathbb Q(\sqrt5)\). Here \(E_k\) denotes the normalized
Eisenstein eigenform of weight \(k\). This identity was established in
\cite[Theorem~7.4]{Joshi-Zhang2019Hilbert}.
\end{theorem}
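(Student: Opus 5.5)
We would prove Theorem~\ref{thm:main-ee} by comparing the structure of $f\cdot h$ with that of a Hecke eigenform. We take $f=E_{k_1}(\psi_1,\phi_1)$ and $h=E_{k_2}(\psi_2,\phi_2)$, where the $\psi_i,\phi_i$ are narrow class characters compatible with the weights, and we suppose $g=\lambda fh$ is a normalized eigenform of weight $k=k_1+k_2$. The first step is the constant-term comparison. A product of two Eisenstein series has nonzero constant term at some cusp, because the cusp data multiply; so $g$ is not cuspidal, and hence $g$ is itself an Eisenstein eigenform $E_k(\psi,\phi)$. The narrow class group acts on the cusps (the components $\mathfrak t_\lambda$). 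Comparing constant terms component by component gives relations of the form $c_{\lambda}(g)=\lambda\, c_\lambda(f)\,c_\lambda(h)$, where the constant terms are essentially $2^{-n}L(1-k,\psi^{-1}\phi)$-type values twisted by characters. These relations force the character on $g$ to be the product of the characters, and they pin $\lambda$ down as the ratio $\lambda=\zeta$-value ratio. Weight-one factors are handled separately, since $E_1$ has constant terms at more cusps; the same comparison there should force contradictions except in degenerate character configurations.

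The second step uses indecomposable Fourier exponents. Take a totally positive $\nu$ that is indecomposable, meaning it is not a sum of two totally positive elements of $\mathfrak t_\lambda$. We compare the $\nu$-th coefficients of both sides. On the right, the only contributions come from $\nu=0+\nu$ and $\nu=\nu+0$, so the identity reads $c(\nu,g)=\lambda\,(c_0(f)c(\nu,h)+c(\nu,f)c_0(h))$. Each such coefficient is a divisor-sum $\sum_{\mathfrak a\mid\nu}\psi\phi\,N\mathfrak a^{k-1}$. Applying this to $\nu=1$ (which is indecomposable) gives $1=\lambda(c_0(f)+c_0(h))$. Applying it to indecomposable elements generating prime ideals of small norm then gives polynomial identities in $N\mathfrak p$ that must hold for infinitely many $\mathfrak p$, which essentially forces the characters to be trivial. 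Combining the resulting equations yields an equation of the form $\frac{1}{\zeta_F(1-k)}=\frac{1}{\zeta_F(1-k_1)}+\frac{1}{\zeta_F(1-k_2)}$ up to explicit $2^n$ factors.

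The third step is arithmetic. We invoke the integrality of reciprocal Eisenstein constant terms: $1/c_0$ of these forms is essentially $2^n/\zeta_F(1-k)$. Using the functional equation, $|\zeta_F(1-k)|$ grows like $d_F^{k-1/2}\bigl((k-1)!/(2\pi)^k\bigr)^n$. For $n\ge2$ and all but finitely many pairs $(F,k)$, this makes the two sides of the equation above incompatible. In fact, $\zeta_F(1-k)$ has large absolute value and denominators bounded by the Kummer/Siegel denominators, so the relation can only hold in finitely many small cases. Those cases would be $n=2$ with small discriminant and small weights, together with degree $\ge3$, which is to be excluded by the bound. The remaining cases are then checked: over real quadratic fields with small $d_F$ and $k\le$ some small bound, by computing $\zeta_F(1-k)$ and testing additional Fourier coefficients, such as those at $\nu$ with $N\nu=4,5$. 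Only $F=\mathbb Q(\sqrt5)$, $k_1=k_2=2$ survives, which gives $E_4=60E_2^2$, as in \cite{Joshi-Zhang2019Hilbert}.

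The main obstacle is making the growth bound uniform in the degree $n$ and in arbitrary narrow class characters. In particular, we must rule out the case where $1/c_0(f)+1/c_0(h)$ nearly cancels with $1/c_0(g)$ for mixed characters, and handle the weight-one factors. To control the growth we would first try the crude bound $|L(1-k,\chi)|\ge c\,d_F^{k-1/2}((k-1)!/(2\pi)^k)^n$ together with the Odlyzko discriminant bounds.
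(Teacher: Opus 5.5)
Your Step~1 correctly leads to $AB=(A+B)C$, i.e.\ $1/C=1/A+1/B$, where $A=L(1-k_1,\chi_1)$, $B=L(1-k_2,\chi_2)$ and $C=L(1-k_1-k_2,\chi_1\chi_2)$. Growth bounds of the kind you describe do settle most distinct-weight cases in degree $n>2$. The gap is the equal-weight case. Since $|C|$ is far larger than $|A|$ and $|B|$, the relation just says that $A+B=AB/C$ is tiny, i.e.\ $A\approx-B$ for two values with different characters. No size estimate excludes such cancellation. You name this as the main obstacle but give no mechanism for it, and the tools you do propose fail:
\begin{itemize}
\item Step~2 cannot give identities ``for infinitely many $\mathfrak p$''. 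The norm argument guarantees indecomposability only for ideals of norm below $2^n$, and indecomposable exponents have bounded norm in any case. At a general prime the convolution sum contributes.
\item Nor are the characters forced to be trivial. For $n>2$ with $(2)$ not inert, the identity at a prime $\mathfrak p\mid(2)$ (Lemma~\ref{lem:noninert-prime-product-relation}) combined with $AB=(A+B)C$ gives $(A/C)(q^ku_1u_2-u_2)=u_1-u_2$ in equal weight $k$. This forces $\chi_1(\mathfrak p)\neq\chi_2(\mathfrak p)$, the opposite of what you claim.
\item Step~3's integrality of $2^n/L(1-k,\chi)$ is available only for an Eisenstein--cuspidal product. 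There $c_1(0,h)=0$ gives $c((2),G)=c((2),h)+\alpha$ (Lemma~\ref{lem:ec-integrality}). With two Eisenstein factors there is no such statement.
\end{itemize}

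Two ideas are missing. For $n=2$, every full-level critical value $L(1-k,\chi)$ is positive: use the Euler product and the root number $W(\chi)=1$, which holds since $\chi(\mathfrak d_F)=(-1)^r$. This gives $0<C/A=B/(A+B)<1$, against $C/A>1$ once $h^+(F)>1$; the case $h^+(F)=1$ is the Hao--Qin--Zhou classification. For $n>2$, you need the coefficient at $(2)$. If $(2)$ is inert, \eqref{eq:eisenstein-product-two-coefficient} gives exactly $A+B=2^n/(F_3-F_1)$ in equal weights, which pins down $C/(AB)$. Otherwise $u_1-u_2$ is a nonzero algebraic-integer denominator for $C/A$, and the resulting norm bound contradicts the lower bounds on all conjugates. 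Weight two further needs the sign $(-1)^nL(-1,\chi)>0$ and class-number tables. The pair $(3,2)$ in degree four also needs the $(2)$-coefficient input.

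Your weight-one step (``should force contradictions except in degenerate character configurations'') is not yet an argument. The key point is that a Hecke eigenform of weight $w\geq2$ with nonzero constant terms has Eisenstein projection proportional to a single $E_w(\phi,\psi)$, because the two Hecke roots have distinct absolute values. Its constant terms are therefore a scalar multiple of one character $\nu\mapsto\phi^{-1}(t_\nu)$. By contrast, $E_1(\phi,\psi)$ has constant terms $2^{-n}(\phi^{-1}(t_\nu)L(0,\chi)+\psi^{-1}(t_\nu)L(0,\chi^{-1}))$. Here $\phi\neq\psi$ because $\chi$ is totally odd, and both $L$-values are nonzero. So multiplying by an $E_k$ with $k\geq2$ keeps two distinct characters.

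For $E_1\times E_1$, the product of the two constant-term vectors could a priori vanish or collapse to one character. Each vector can vanish on some components, as $E_1(1,\varepsilon)$ does over $\mathbb Q(\sqrt3)$. This also means your Step~1 claim that the product has a nonzero constant term needs proof in this case. The paper sets $A_i=L(0,\chi_i)$, $B_i=L(0,\chi_i^{-1})$, $a=\chi_1^{-1}$ and $b=\chi_2^{-1}$. It writes the product as a totally even part $A_1A_2+B_1B_2ab$ plus a totally odd part $B_1A_2a+A_1B_2b$, and shows each part is nonzero. This uses $A_2=B_1$, $B_2=A_1$ when $ab=1$, and $A_2=A_1$, $B_2=B_1$ when $a=b$. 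No degenerate configuration survives, and you need this to conclude.
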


The second theorem treats an Eisenstein factor of weight at least two
and a cuspidal factor of any positive weight.

\begin{theorem}
\label{thm:main-ec}
Let \(F\) be a totally real field of degree \(n\geq2\).
Among products of a full-level Eisenstein Hecke eigenform of
parallel weight \(k\geq2\) and a full-level cuspidal Hecke eigenform
of parallel weight \(\ell\geq1\), the only eigenform product identity is
\[
h_8=120E_2\cdot h_6,
\]
over \(F=\mathbb Q(\sqrt5)\).
Here \(E_2\) is the normalized Eisenstein eigenform of weight two,
and \(h_6,h_8\) are the unique normalized cuspidal Hecke eigenforms
of weights six and eight, respectively. This identity was established
in \cite[Theorem~7.4]{Joshi-Zhang2019Hilbert}.
\end{theorem}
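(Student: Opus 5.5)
The plan is to start from the identity \(g=\lambda E\cdot h\), where \(E=E_k(\psi_1,\psi_2)\) is an Eisenstein eigenform of parallel weight \(k\ge2\), \(h\) is a cuspidal eigenform of weight \(\ell\), and \(g\) has weight \(k+\ell\). Since \(h\) vanishes at every cusp, so does \(E\cdot h\). Hence \(g\) is cuspidal, and the task is to rule out every configuration except \((k,\ell)=(2,6)\) over \(\mathbb Q(\sqrt5)\).

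\textbf{First step: bound the degree and discriminant.} Compare the Fourier expansions at the smallest totally positive exponents. The coefficient of \(h\) at an indecomposable totally positive element \(\nu\) (one that is not a sum of two totally positive elements) multiplies the constant term of \(E\). That constant term is essentially \(2^{-n}L(1-k,\psi)\), up to a character twist on each narrow class component. So at \(\nu\) we get \(a_g(\nu)=\lambda\,c_0(E)\,a_h(\nu)\), while the unit-ideal coefficients give \(1=\lambda c_0(E)\cdot 1\), or the analogue with the correct narrow-class component. Next, look at an exponent \(\nu=\nu_1+\nu_2\) with \(\nu_i\) indecomposable. Here the product picks up the extra cross term \(a_E(\nu_1)a_h(\nu_2)+\dots\), whereas multiplicativity of the Hecke eigenvalues of \(g\) and of \(h\) imposes relations among them. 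Together these force the identity
\[
\lambda^{-1}=c_0(E)\quad\text{with}\quad a_E(\mathfrak p)=\sigma_{k-1}\text{-type values}
\]
to hold simultaneously with Hecke relations at small primes. From these relations one extracts that \(1/c_0(E)=2^n/L(1-k,\psi)\) must be an algebraic integer, which is the integrality input mentioned in the abstract. Siegel-type lower bounds on \(|L(1-k,\psi)|\) via the functional equation, namely \(|L(1-k,\psi)|\gg d_F^{k-1/2}\bigl((k-1)!/(2\pi)^k\bigr)^n\), then show \(|1/c_0|<1\) for all its conjugates once \(n\) or \(d_F\) is large. That is a contradiction, so only finitely many \((F,k,\psi)\) remain, and in fact only quadratic fields of small discriminant with small \(k\).

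\textbf{Second step: control the weight \(\ell\) of the cuspidal factor.} For this I would use the Rankin–Selberg unfolding. Take the Petersson inner product \(\langle g, E\cdot h\rangle\), or equivalently unfold \(\langle E h, g\rangle\) against the Eisenstein series. This expresses \(\|g\|^2/\lambda\) as \(L(s,h\times g)\) times explicit gamma factors and \(L\)-values of \(\psi\). Comparing with \(\|g\|^2\) written via the adjoint \(L\)-value, standard convexity bounds and lower bounds at the edge of the critical strip give an inequality. That inequality fails for \(\ell\) large relative to \(k\) and \(d_F\), because the gamma-factor ratio decays like \(\Gamma(\ell+k-1)/\Gamma(\ell)\cdot(4\pi)^{-k}\) raised to the power \(n\). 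This is the step I expect to be the main obstacle. The difficulty is making the bounds on \(L(1,\mathrm{Ad}\,g)\) and \(L(k+\ell-1,h\times g)\) explicit and unconditional in weight \(k=2\), which is where GRH was previously used. I would first try to get uniform explicit bounds by using the Rankin–Cohen/Petersson identity of Li \cite{Li2026RankinCohen}. If that fails, I would fall back on dimension counting: an identity forces \(\dim S_\ell=1\) or a compatible Hecke stability, and dimension formulas grow like \(\ell^n d_F^{3/2}\).

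\textbf{Final step: finite check.} For the remaining finitely many triples \((F,k,\ell,\psi)\), I would compute Fourier coefficients of \(E\), \(h\), and the eigenbasis of \(S_{k+\ell}\) up to the Sturm bound, for example via Hecke operators on explicit bases. Then I check whether \(E h\) is an eigenform. Within the range of narrow class number one, Hao–Qin–Zhou and Li already cover the quadratic case, so I would only need to handle narrow class number greater than one with nontrivial characters. I would also invoke the earlier exclusion of degree at least three. The computation leaves exactly \(h_8=120E_2h_6\) over \(\mathbb Q(\sqrt5)\), where \(\dim S_6=\dim S_8=1\) and \(\zeta_F(-1)=1/30\) gives \(\lambda=120\).
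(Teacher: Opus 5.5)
Your Step~1 matches the paper's integrality mechanism. The coefficient at $(2)$ gives $c((2),G)=c((2),h)+\alpha$ with $\alpha=2^n/L(1-k,\phi^{-1}\psi)$. For $k\geq3$ this is essentially how the paper argues, supplemented by exact special values and $\ell$-uniform identities at primes above $2$, such as $\alpha=\eta(\mathfrak p)2^{\ell-1}(1-\omega(\mathfrak p)2^k)$ when $(2)=\mathfrak p^2$. These are needed because $\alpha$ is integral in some residual cases, e.g.\ $D_F=12$.

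\textbf{Degree reduction in weight two.} Your conclusion that only small quadratic fields survive fails for Eisenstein weight two, which is where the difficulty lies. For $k=2$ the conjugate bound is only $|\sigma(\alpha)|<(2\pi^4/3)^nD_F^{-3/2}$, so integrality gives merely $\delta_F<(2\pi^4/3)^{2/3}\approx16.2$. That permits degrees well above two and several hundred candidate fields in degrees three to six; the paper reaches $n\leq9$ only after sharpening. It also cannot exclude $\mathbb Q(\sqrt5)$, where the identity lives. There is no ``earlier exclusion of degree at least three'' to invoke: You--Zhang prove only finiteness, and the Hao--Qin--Zhou degree-$\geq3$ result concerns Eisenstein--Eisenstein products. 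The paper does this work itself:
\begin{enumerate}[label=(\roman*)]
\item indecomposable exponents force $(2)$ to be inert;
\item they exclude any prime with $N(\mathfrak p)^2\leq2^n$ and $\mathfrak p^2\neq(2)$, since such a prime would give $\chi(\mathfrak p)N(\mathfrak p)^2=1$;
\item explicit lattice searches remove further fields, and the coefficient at $(4)$ bounds $\ell$ for the last six;
\item quaternionic and Borisov--Gunnells dimension computations finish.
\end{enumerate}

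\textbf{Controlling $\ell$.} Your Step~2 has a second gap. Bounding $\ell$ through explicit estimates for $L(1,\mathrm{Ad}\,g)$ and the Rankin--Selberg value is exactly the GRH-dependent route, and you give no unconditional version. Your fallback names the wrong space ($S_\ell$ rather than the target $S_{\ell+2}(\phi\psi\eta)$) and gives no reason why an identity should force one-dimensionality. The missing idea is that no estimate is needed. Unfolding against the twisted weight-two Eisenstein family gives, for every eigenform $f$ in the target space,
\[
\langle E_2(\phi,\psi)\cdot h,f\rangle=\mathcal C\,L(1,\widetilde\pi_f\times(\pi_h\otimes\phi)),\qquad \mathcal C\neq0.
\]
This edge value is nonzero by Shahidi's theorem, and there is no pole because the weights $\ell+2$ and $\ell$ differ. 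An orthogonal eigenbasis of a space of dimension greater than one would contain an $f$ orthogonal to the product. Hence the product is an eigenform if and only if $\dim S_{\ell+2}(\phi\psi\eta)=1$. This also requires justifying specialization of the family at $w=0$ via uniform moderate growth.

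\textbf{The case $\ell=1$.} You never treat it. There one needs $\dim S_3(\omega\eta)\geq2$. The paper gets this from $(h^3)\otimes\xi$, which is independent of $E\cdot h$ because its unit-ideal coefficient vanishes, when $h(F)$ is odd. When $h(F)$ is even it uses a quadratic self-twist argument. Without $\ell$-uniform arguments of this kind, your Sturm-bound ``finite check'' is not actually finite.
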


We also prove a product identity with a weight-one Eisenstein factor over \(\mathbb Q(\sqrt3)\).

\begin{theorem}
\label{thm:weight-one-identity}
Let \(F=\mathbb Q(\sqrt3)\), and let \(\varepsilon\) be the
nontrivial narrow class character of \(F\). Then the full adelic cuspidal spaces satisfy
\[
\dim S_4(1)=2,\qquad \dim S_5(\varepsilon)=1.
\]
For either normalized full-level cuspidal Hecke eigenform
\(h_4\in S_4(1)\), one has
\begin{equation*}
h_5=12E_1(1,\varepsilon)\cdot h_4,
\end{equation*}
where \(h_5\) is the unique normalized Hecke eigenform in
\(S_5(\varepsilon)\). In particular, the two choices of \(h_4\)
give the same product.
\end{theorem}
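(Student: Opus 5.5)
We will prove Theorem~\ref{thm:weight-one-identity} by a dimension computation followed by a finite check of Fourier coefficients.

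\textbf{Dimensions.} First we compute $\dim S_4(1)$ and $\dim S_5(\varepsilon)$ for $F=\mathbb Q(\sqrt3)$. The narrow class number is $h^+=2$, because the fundamental unit $2+\sqrt3$ has norm $+1$. We split the adelic space into its two components indexed by the narrow classes $[\mathcal O]$ and $[\mathfrak a]$. Each component is a space of classical forms on a group $\Gamma(\mathcal O\oplus\mathfrak c)$, and the character $\varepsilon$ only changes the signs with which the components are combined. We then apply the Shimizu dimension formula. Its main term is $2\zeta_F(-1)(k-1)^2$ per component, with $\zeta_F(-1)=1/6$. Its elliptic-point contributions come from the points of order $2$ and $3$, whose class numbers are those of $F(\sqrt{-1})$, $F(\sqrt{-3})$ and $F(\sqrt{-(2+\sqrt 3)})$. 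We also subtract the cusp count, so that we obtain dimensions of cusp forms. Alternatively, we can cite a table such as LMFDB or the computations of Dembélé to confirm the values $2$ and $1$. As a cross-check, the total space $M_5(\varepsilon)$ should have dimension $\dim S_5(\varepsilon)$ plus the number of Eisenstein series $E_5(\psi_1,\psi_2)$ with $\psi_1\psi_2=\varepsilon$ and the correct parity.

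\textbf{The product is cuspidal.} $E_1(1,\varepsilon)$ has weight $1$ and character $\varepsilon$, and $h_4$ has weight $4$ and trivial character. So $f=E_1(1,\varepsilon)\cdot h_4$ lies in $M_5(\varepsilon)$. It is cuspidal because $h_4$ vanishes at every cusp and $E_1$ is holomorphic there; in the adelic setting, every constant term $a_0(\mathfrak c)$ of the product vanishes. Since $\dim S_5(\varepsilon)=1$, we get $f=c\,h_5$. Moreover $f\neq0$, so $f$ is automatically a Hecke eigenform, and the only remaining task is to determine the scalar $c$.

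\textbf{The scalar.} The unit-ideal coefficient of a product is a convolution:
\[
C(\mathcal O,f)=\sum a_0\text{-terms}+\sum_{\nu_1+\nu_2=\nu}\cdots,
\]
where $\nu$ runs over totally positive generators of $\mathcal O$ in the appropriate component. Because $h_4$ is cuspidal, only the terms involving the constant term of $E_1(1,\varepsilon)$ on the relevant component contribute at the smallest index. Since $\mathcal O$ is generated by the unit $1$, which is indecomposable as a sum of two totally positive elements, the only contribution is $a_0(E_1(1,\varepsilon))\cdot C(\mathcal O,h_4)$. That constant term is $L(0,\varepsilon)/2^n$ up to the paper's normalization. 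Here $L(s,\varepsilon)=\zeta_K(s)/\zeta_F(s)$ with $K=F(i)=\mathbb Q(\zeta_{12})$, and $L(0,\varepsilon)$ should equal $1/3$ up to sign, giving $C(\mathcal O,f)=1/12$ and hence $\lambda=12$. Because this scalar does not depend on which $h_4$ is chosen, both choices of $h_4$ produce the same form $h_5/12$, and the final claim of the theorem follows.

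\textbf{Main obstacle.} The most delicate step is the dimension computation, in particular showing that $\dim S_5(\varepsilon)=1$ rather than $0$. A secondary difficulty is pinning down the exact constant term of $E_1(1,\varepsilon)$ on the identity component, given the weight-one conventions with two constant terms. To be safe, we will verify the scalar $12$ at a second coefficient, for instance $\mathfrak p_2=(1+\sqrt3)$, using the Hecke eigenvalues of $h_4$.
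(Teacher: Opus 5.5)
\textbf{Overall.} The second half of your proposal has the same skeleton as the paper's proof. The product lies in $S_5(\varepsilon)$, this space is one-dimensional, and the scalar is read off from the unit-ideal coefficient $c_1(0,E_1(1,\varepsilon))\,c(\mathcal O_F,h_4)$. Your argument that both choices of $h_4$ give the same product is valid: the scalar does not involve $h_4$. It is shorter than the paper's argument, which instead shows that $E_1(1,\varepsilon)$ vanishes on the nonprincipal component and that the two eigenforms agree on the principal one. There is, however, a genuine gap in the dimension statement, and the constant-term computation contains two errors.

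\textbf{The dimension step.} This is half of the theorem, and you only outline it, with an incorrect setup. The full-level components are forms for $\Gamma_0(t_\nu\mathfrak d_F,\mathcal O_F)$. After conjugation these are $\mathrm{GL}_2^+(\mathcal O_F)$-forms on $\mathfrak H\times\mathfrak H^-$ (principal class) and on $\mathfrak H^2$ (nonprincipal class). So besides $\mathrm{SL}_2(\mathcal O_F)$-invariance you must impose invariance under $z\mapsto uz$, where $u=2+\sqrt3$ is totally positive but not a square. This is decisive. On the principal component, the $\mathrm{SL}_2(\mathcal O_F)$ Hilbert series $(1+x^{10})/((1-x)(1-x^3)(1-x^4))$ gives dimension $3$ in weights $4$ and $5$. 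The $u$-invariant ring $\mathbb C[g_1,g_4,g_6]$ gives only $2$. A Shimizu count for $\mathrm{SL}_2(\mathcal O\oplus\mathfrak c)$, the group for which that formula is usually stated, would therefore give cuspidal dimension at least $2$ in weight five on that component alone. Your step that a nonzero product is automatically an eigenform would then fail. The leading term you quote is also far too large: $2\zeta_F(-1)(k-1)^2$ equals $16/3$ per component at $k=5$. Tables will not rescue this, since the LMFDB's Hilbert modular forms are of parallel weight two with trivial character. The paper instead uses ring structures and one cusp per component:
\begin{itemize}
\item Aoki on $\mathfrak H^2$: $\dim M_4=2$ and $\dim M_5=1$;
\item D\"ursthoff on $\mathfrak H\times\mathfrak H^-$: $\dim M_4=\dim M_5=2$;
\item subtracting one cusp per component gives $1+1$ and $0+1$.
\end{itemize}
Note also that the central character is forced by the weight, $\eta((a))=\operatorname{sgn}(N(a))^k$. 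Each adelic space is therefore just the sum of the two component spaces, and $\varepsilon$ does not enter as a choice of signs.

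\textbf{The constant term.} Your value $1/12$ is correct, but only because two errors cancel. First, by Artin factorization $L(s,\varepsilon)=L(s,\chi_{-4})L(s,\chi_{-3})$, so $L(0,\varepsilon)=\frac12\cdot\frac13=\frac16$, not $\frac13$. Second, in weight one the constant term has two summands:
\[
c_1(0,E_1(1,\varepsilon))=2^{-2}\bigl(L(0,\varepsilon)+L(0,\varepsilon^{-1})\bigr)=\frac1{12},
\]
not $L(0,\varepsilon)/2^n$.

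\textbf{The cross-check.} Your proposed check at $(1+\sqrt3)$ gives no information. Since $N(1+\sqrt3)=-2$, this prime lies in the nonprincipal narrow class. There $E_1(1,\varepsilon)$ has constant term $(1+\varepsilon(t_\nu))/24=0$ and coefficients $\sum_{\mathfrak r\mid\mathfrak a}\varepsilon(\mathfrak r)=0$. Both sides therefore have coefficient zero at that prime. In any case, you have no independent access to the coefficients of $h_5$.
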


Table~\ref{tab:full-level-cases} summarizes the classification
according to the types and weights of the two factors.
We group together cases that use the same main method of proof,
although the detailed calculations may differ within each group.

\begingroup
\small
\setlength{\tabcolsep}{5pt}
\setlength{\LTcapwidth}{\textwidth}
\renewcommand{\arraystretch}{1.10}

\begin{longtable}{@{}
>{\raggedright\arraybackslash}p{0.18\textwidth}
>{\raggedright\arraybackslash}p{0.44\textwidth}
>{\raggedright\arraybackslash}p{\dimexpr0.38\textwidth-20pt\relax}
@{}}
\caption[Full-level eigenform products: results and methods.]{%
\centering
Full-level eigenform products: results and methods.\endgraf
\raggedright
Here \(E_k\) and \(C_\ell\) denote Eisenstein and cuspidal Hecke eigenforms
of weights \(k\) and \(\ell\);\newline
\(n=[F:\mathbb Q]\geq2\).
}
\label{tab:full-level-cases}\\

\toprule
\textbf{Range} & \textbf{Conclusion} & \textbf{Method}\\
\midrule
\endfirsthead

\multicolumn{3}{@{}l}{\tablename\ \thetable\ (continued)}\\
\toprule
\textbf{Range} & \textbf{Conclusion} & \textbf{Method}\\
\midrule
\endhead

\bottomrule
\endfoot

\(C_{\ell_1}\times C_{\ell_2}\)\newline
\(\ell_i\geq1\) &
No identity &
Unit-ideal coefficient; see Section~\ref{sec:cuspidal-products}.
\\
\addlinespace[5pt]

\(E_{k_1}\times E_{k_2}\)\newline
\(n=2,\ k_i\geq2\) &
Only \(E_4=60E_2^2\)\newline
over \(\mathbb Q(\sqrt5)\) &
Known classification \cite[Theorem 1(1)]{HaoQinZhou2026},
positivity and discriminant bounds; see
Section~\ref{sec:quadratic-ee-products}.
\\
\addlinespace[5pt]

\(E_{k_1}\times E_{k_2}\)\newline
\(n>2,\ k_i\geq2\) &
No identity &
Special-value, coefficient and class-number bounds; see
Section~\ref{sec:higher-degree-ee-products}.
\\
\addlinespace[5pt]

\(E_1\times E_k\)\newline
\(k\geq1\) &
No identity &
Constant terms on narrow components; see
Section~\ref{sec:weight-one-eisenstein-products}.
\\
\addlinespace[5pt]

\(E_k\times C_\ell\)\newline
\(k\geq3,\ \ell\geq1\) &
No identity &
Integrality, discriminant bounds and exact special values; see
Section~\ref{sec:eisenstein-cuspidal-weight-at-least-three}.
\\
\addlinespace[5pt]

\(E_2\times C_\ell\)\newline
\(\ell\geq1\) &
Only \(h_8=120E_2\cdot h_6\)\newline
over \(\mathbb Q(\sqrt5)\) &
Petersson pairing and dimension criterion; see
Section~\ref{sec:eisenstein-cuspidal-weight-two-products}.
\\
\addlinespace[5pt]

\(E_1\times C_\ell\)\newline
\(\ell\geq1\) &
Open in general.\newline
\(h_5=12E_1(1,\varepsilon)\cdot h_4\)\newline
over \(\mathbb Q(\sqrt3)\) &
Cuspidal dimensions for the example; see
Section~\ref{sec:weight-one-products}.
\\

\end{longtable}
\endgroup

Section~\ref{sec:preliminaries} fixes our notation and records the
formulas and background needed in the proofs.
Section~\ref{sec:cuspidal-products} treats products of two cuspidal eigenforms.
Sections~\ref{sec:quadratic-ee-products}--\ref{sec:weight-one-eisenstein-products}
prove Theorem~\ref{thm:main-ee}.
Sections~\ref{sec:eisenstein-cuspidal-weight-at-least-three}--\ref{sec:eisenstein-cuspidal-weight-two-products}
prove Theorem~\ref{thm:main-ec}, treating Eisenstein weights at least
three and weight two, respectively.
Section~\ref{sec:weight-one-products} proves the weight-one Eisenstein
identity in Theorem~\ref{thm:weight-one-identity}.

\section{Preliminaries}
\label{sec:preliminaries}

We first fix the field and character notation and the Fourier
coefficient normalization. We then record special-value bounds and product
coefficient identities, followed by the Petersson and Rankin--Selberg
conventions used for Eisenstein weight two. The final subsection
explains the finite calculations used in the proofs.
All weights are parallel, and all ideals in sums are nonzero integral
ideals unless otherwise stated.

\subsection{Characters and arithmetic notation}

We record the parity conditions on class characters and the
arithmetic bounds used to restrict the possible fields.
Let $F$ be a totally real field of degree $n\geq2$, with ring of
integers $\mathcal O_F$, different $\mathfrak d_F$, discriminant $D_F$,
ordinary class number $h(F)$, and narrow class number $h^+(F)$.
Put $\delta_F=D_F^{1/n}$.
We write $N\mathfrak a$ for the absolute norm of a nonzero fractional
ideal $\mathfrak a$, and $N(a)=N_{F/\mathbb Q}(a)$ for the field norm
of $a\in F$. Hence $N((a))=|N(a)|$ for $a\in F^\times$.
We write $x\gg0$ when $x$ is totally positive.

A narrow class character is identified with a finite-order Hecke
character unramified at every finite place. If its real components
are $\chi_v(x)=\operatorname{sgn}(x)^{r_v}$, then for $a\in F^\times$ we have
\begin{equation}
\label{eq:prelim-principal-character}
\chi((a))=\prod_{v\mid\infty}\operatorname{sgn}(a_v)^{r_v}.
\end{equation}
We call $\chi$ totally even if all $r_v=0$, and totally odd if all
$r_v=1$. Totally even characters are precisely the ordinary class
characters. For a totally odd character, applying
\eqref{eq:prelim-principal-character} to units shows that every unit
of $F$ has norm one.

We use the following bounds of Minkowski
\cite[\S5]{Lenstra1992Algorithms} and Odlyzko--Takeuchi
\cite[Proposition~2.3]{TAKEUCHI83}:
\begin{equation}
\label{eq:prelim-arithmetic-bounds}
\min_{\mathfrak a\in\mathcal C}N\mathfrak a
\leq\frac{n!}{n^n}\sqrt{D_F},\qquad
\sqrt{D_F}\geq\frac{n^n}{n!},\qquad
\delta_F>29.099e^{-8.3185/n},
\end{equation}
where the minimum is over integral ideals in an ordinary ideal
class $\mathcal C$. In degrees three and four, respectively,
$D_F\geq49$ and $D_F\geq725$ \cite[Table~3]{Voight}.

Let $H$ be the Hilbert class field corresponding
to the ordinary ideal class group of $F$.
The extension $H/F$ is unramified at every finite prime, and every
real place of $F$ splits completely in $H$.
It follows that $H$ is totally real, $[H:\mathbb Q]=nh(F)$, and
$\delta_H=\delta_F$.

\subsection{Hilbert modular forms and Hecke coefficients}

We fix the Fourier coefficient normalization and recall the Hecke
relation used to compare products, following \cite[\S2.1]{Dasgupta2011Hilbert}. We denote the space of full-level holomorphic  adelic Hilbert modular forms by $M_k$. Let $\chi$ be a narrow ideal class character. The subspace with central character $\chi$ is denoted by $M_k(\chi)$.
Choose representatives of the narrow ideal classes
$t_1=\mathcal O_F,t_2,\ldots,t_{h^+(F)}$.
A full-level form $f=(f_\nu)$ of weight $k$ has components for
$\Gamma_0(t_\nu\mathfrak d_F,\mathcal O_F)$, where the group is defined by
\[
\Gamma_0(\mathfrak c,\mathcal O_F)
=\left\{
\begin{pmatrix}a&b\\c&d\end{pmatrix}:
 a,d\in\mathcal O_F,\ b\in\mathfrak c^{-1},\ c\in\mathfrak c,
\ ad-bc\in\mathcal O_F^{\times,+}
\right\}.
\]
Write
\[
f_\nu(z)=a_\nu(0)+\sum_{\substack{\mu\in t_\nu\\\mu\gg0}}
a_\nu(\mu)e^{2\pi i\operatorname{Tr}(\mu z)}.
\]
For $\mathfrak m=(\mu)t_\nu^{-1}$, our coefficient normalization is
\begin{equation*}
c(\mathfrak m,f)=N(t_\nu)^{-k/2}a_\nu(\mu),\qquad
c_\nu(0,f)=N(t_\nu)^{-k/2}a_\nu(0).
\end{equation*}
Put
$K_f=\mathrm{GL}_2(\widehat{\mathcal O}_F)$, and write
\[
M_k(\chi)=S_k(\chi)\oplus\mathcal E_k(\chi),
\]
where $S_k(\chi)$ and $\mathcal{E}_k(\chi)$ are the subspaces of cusp forms and Eisenstein series, respectively.
Cuspidal forms vanish at every cusp, and hence have
$c_\nu(0,f)=0$ for every $\nu$.
The space $M_k(\chi)$ vanishes unless
$\chi_v(-1)=(-1)^k$ at every real place.

\begin{lemma}
\label{lem:odd-weight-unit-parity}
If $F$ admits a nonzero full-level form of odd parallel weight,
then $n$ is even and every unit of $F$ has norm one.
In particular, no such form exists when $h^+(F)=1$.
\end{lemma}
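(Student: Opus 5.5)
The plan is to use the parity condition recorded just before the lemma: $M_k(\chi)$ vanishes unless $\chi_v(-1)=(-1)^k$ at every real place $v$. The space $M_k$ is the direct sum of the subspaces $M_k(\chi)$ over narrow class characters $\chi$. So a nonzero full-level form of odd weight $k$ has a nonzero component in some $M_k(\chi)$, and for that $\chi$ we get $\chi_v(-1)=-1$ at every real place. Writing $\chi_v(x)=\operatorname{sgn}(x)^{r_v}$, this says $r_v=1$ for all $v$, so $\chi$ is totally odd.

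Next I would apply \eqref{eq:prelim-principal-character}. Since $\chi$ is unramified at all finite places and trivial on principal ideals of units, every unit $u\in\mathcal O_F^\times$ satisfies
\[
1=\chi((u))=\prod_{v\mid\infty}\operatorname{sgn}(u_v)=\operatorname{sgn}N(u).
\]
Hence every unit has norm $+1$; this is exactly the observation already made after \eqref{eq:prelim-principal-character}. Taking $u=-1$ gives $N(-1)=(-1)^n=1$, so $n$ is even.

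For the final assertion, suppose $h^+(F)=1$. Then the only narrow class character is the trivial one, which is totally even and not totally odd, and this is a contradiction. Equivalently, $h^+(F)=1$ forces $F$ to have units of every sign pattern, in particular a unit of norm $-1$, which contradicts the previous step.

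The only delicate point is making sure the central character decomposition applies, so that a nonzero form yields a nonzero component $M_k(\chi)$ with $\chi$ a narrow class character. I would simply cite the setup in \cite[\S2.1]{Dasgupta2011Hilbert}: the finite center $F^\times\backslash\mathbb A_F^\times/F_{\infty,+}^\times\widehat{\mathcal O}_F^\times$ is the narrow class group, and it acts on $M_k$, so $M_k$ decomposes into its isotypic components.
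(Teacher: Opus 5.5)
Your proposal is correct and follows essentially the same route as the paper. The central character must be totally odd; \eqref{eq:prelim-principal-character} applied to units gives norm one, and applied to $-1$ gives $n$ even; and when $h^+(F)=1$ the only narrow class character is trivial, which is incompatible with odd weight. Your explicit appeal to the decomposition $M_k=\bigoplus_\chi M_k(\chi)$ and your alternative sign-pattern argument are valid elaborations that the paper leaves implicit.
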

\begin{proof}
The central character is totally odd. Applying
\eqref{eq:prelim-principal-character} to units gives the norm assertion,
and applying it to $-1$ shows that $n$ is even. If $h^+(F)=1$, the
only narrow class character is trivial, which is incompatible
with odd weight.
\end{proof}

Hecke eigenforms are normalized by $c(\mathcal O_F,f)=1$.
For a prime ideal $\mathfrak p$, we have
\begin{equation}
\label{eq:prelim-prime-power-recurrence}
c(\mathfrak p^{r+1},f)
=c(\mathfrak p,f)c(\mathfrak p^r,f)
-\chi(\mathfrak p)N(\mathfrak p)^{k-1}c(\mathfrak p^{r-1},f)
\quad(r\geq1).
\end{equation}
In particular, we have
\begin{equation}
\label{eq:hecke-prime-square-recurrence}
c(\mathfrak p^2,f)=c(\mathfrak p,f)^2
-\chi(\mathfrak p)N(\mathfrak p)^{k-1}.
\end{equation}
For $k\geq2$, these coefficients are algebraic integers
\cite[Proposition~2.2]{Shimura1978Hilbert}.
\begin{lemma}
\label{lem:weight-one-coefficients}
Let $f\in S_k(\chi)$ be a normalized full-level cuspidal eigenform
of parallel weight $k\geq1$.
Its ideal coefficients are algebraic integers and satisfy
\begin{equation}
\label{eq:prelim-ramanujan-bound}
|\sigma(c(\mathfrak a,f))|
\leq\tau_F(\mathfrak a)N(\mathfrak a)^{(k-1)/2}
\end{equation}
for every nonzero integral ideal $\mathfrak a$ and every embedding
$\sigma$ of its coefficient field, where $\tau_F(\mathfrak a)$
denotes the number of integral ideal divisors of $\mathfrak a$.
In particular, when $k=1$,
\[
|\sigma(c(\mathfrak p^r,f))|\leq r+1
\qquad(r\geq0).
\]
\end{lemma}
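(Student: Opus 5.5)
The plan is to reduce everything to the prime-power case via multiplicativity, and then treat weights $k\geq2$ and $k=1$ separately. For $k\geq2$ both the integrality and the bound are known. For $k=1$ they come from the Galois representation attached to $f$.

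\emph{Multiplicativity.} A normalized Hecke eigenform satisfies $c(\mathfrak a\mathfrak b,f)=c(\mathfrak a,f)c(\mathfrak b,f)$ for coprime $\mathfrak a,\mathfrak b$. The divisor function $\tau_F$ is also multiplicative, and so is $N(\mathfrak a)^{(k-1)/2}$. It therefore suffices to prove integrality and the bound \eqref{eq:prelim-ramanujan-bound} for $\mathfrak a=\mathfrak p^r$, where $\tau_F(\mathfrak p^r)=r+1$.

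\emph{Reduction of the bound to Satake parameters.} Let $\alpha_{\mathfrak p},\beta_{\mathfrak p}$ be the roots of
\[
X^2-c(\mathfrak p,f)X+\chi(\mathfrak p)N(\mathfrak p)^{k-1}.
\]
The recurrence \eqref{eq:prelim-prime-power-recurrence} gives
\[
c(\mathfrak p^r,f)=\sum_{i=0}^r\alpha_{\mathfrak p}^i\beta_{\mathfrak p}^{r-i}.
\]
If, after any embedding $\sigma$, we have $|\sigma\alpha_{\mathfrak p}|=|\sigma\beta_{\mathfrak p}|=N(\mathfrak p)^{(k-1)/2}$, then the triangle inequality yields the bound $(r+1)N(\mathfrak p)^{r(k-1)/2}$ at once. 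The bound is therefore equivalent to the Ramanujan--Petersson conjecture for $f$ and all its Galois conjugates. Galois conjugates of normalized eigenforms are again normalized eigenforms of the same weight and level, since $S_k(\chi)$ has a rational structure. So it is enough to prove the absolute-value statement for every $f$.

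\emph{The cases $k\geq2$ and $k=1$.} For $k\geq2$, the Ramanujan bound at all primes (the level is full, so every prime is unramified) is Blasius's theorem for Hilbert modular forms of parallel weight, building on the Galois representations of Carayol, Taylor and others. Integrality for $k\geq2$ is \cite[Proposition~2.2]{Shimura1978Hilbert}.

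For $k=1$, I will invoke Rogawski--Tunnell (and Ohta, Wiles) to attach to $f$ an Artin representation
\[
\rho_f:\mathrm{Gal}(\overline F/F)\to\mathrm{GL}_2(\mathbb C),
\]
unramified at all finite primes, with $\operatorname{tr}\rho_f(\mathrm{Frob}_{\mathfrak p})=c(\mathfrak p,f)$ and $\det\rho_f(\mathrm{Frob}_{\mathfrak p})=\chi(\mathfrak p)$. Then $\alpha_{\mathfrak p},\beta_{\mathfrak p}$ are the eigenvalues of a finite-order matrix. They are therefore roots of unity, hence algebraic integers of absolute value one under every embedding. This gives both integrality and the bound $r+1$ for $c(\mathfrak p^r,f)$: integrality follows because $c(\mathfrak p^r,f)$ is a symmetric polynomial in $\alpha_{\mathfrak p},\beta_{\mathfrak p}$ with integer coefficients.

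For $k\geq2$ one could alternatively deduce integrality from the recurrence, once $c(\mathfrak p,f)$ is known to be integral.

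The main obstacle is not the computation but citing correctly the deep inputs, namely Blasius for $k\geq2$ and the weight-one Galois representation for $k=1$, in the adelic full-level setting with a nontrivial narrow class character. In particular, I must check that the normalization $c(\mathfrak m,f)=N(t_\nu)^{-k/2}a_\nu(\mu)$ matches the Hecke eigenvalues used in those references, so that the Frobenius traces are exactly $c(\mathfrak p,f)$.
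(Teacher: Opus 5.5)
Your proposal is correct and follows essentially the same route as the paper: the Hecke recurrence writes $c(\mathfrak p^r,f)$ as a complete symmetric polynomial in the two roots of the Hecke polynomial; Blasius's theorem applied to Galois conjugates of $f$ gives the bound for $k\geq2$, with integrality from Shimura; for $k=1$ the finite-image Artin representation, unramified at all finite primes, makes the roots roots of unity; and multiplicativity finishes. The differences are cosmetic. You reduce to prime powers first rather than last, and you cite Rogawski--Tunnell, Ohta and Wiles where the paper cites Betina--Deo--Fit\'e. You also say conjugates stay in $S_k(\chi)$, whereas conjugation actually moves $f$ into $S_k(\chi^\sigma)$; this is harmless, since Blasius applies there equally.
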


\begin{proof}
For $k\geq2$, algebraic integrality follows from
\cite[Proposition~2.2]{Shimura1978Hilbert}.
Fix a prime ideal $\mathfrak p$, put $q=N\mathfrak p$, and let
$A_{\mathfrak p},B_{\mathfrak p}$ be the roots of
\[
X^2-c(\mathfrak p,f)X+\chi(\mathfrak p)q^{k-1}.
\]
The Hecke relation \eqref{eq:prelim-prime-power-recurrence} gives
\[
c(\mathfrak p^r,f)
=\sum_{j=0}^r A_{\mathfrak p}^{r-j}B_{\mathfrak p}^j.
\]
Extend $\sigma$ to an embedding $\widetilde\sigma$ of a number
field containing both roots.
For $k\geq2$, Blasius's Ramanujan theorem
\cite[Theorem~1]{Blasius2006Ramanujan}, applied to the Galois
conjugates of $f$, gives
\[
|\widetilde\sigma(A_{\mathfrak p})|
=|\widetilde\sigma(B_{\mathfrak p})|
=q^{(k-1)/2}.
\]

For $k=1$, let $\rho_f$ be the finite-image Artin representation
attached to $f$; see \cite[\S1]{BetinaDeoFite2020}.
Since $f$ has full level, $\rho_f$ is unramified at every finite
prime, and the polynomial above is its arithmetic Frobenius
polynomial at $\mathfrak p$.
Thus both roots are roots of unity, as are their images under
$\widetilde\sigma$, so the displayed equality also holds in
weight one. The expansion additionally shows that
$c(\mathfrak p^r,f)$ is an algebraic integer in this case.

Consequently, for every $k\geq1$,
\[
|\sigma(c(\mathfrak p^r,f))|
\leq(r+1)q^{r(k-1)/2}.
\]
Multiplicativity and
$\tau_F(\mathfrak a)=\prod_{\mathfrak p^r\Vert\mathfrak a}(r+1)$
give \eqref{eq:prelim-ramanujan-bound} for every nonzero integral
ideal, and also establish algebraic integrality in weight one.
\end{proof}

For a narrow class character $\xi$, we use the Hecke-normalized
twist, characterized by
\begin{equation}
\label{eq:prelim-character-twist}
f\otimes\xi\in M_k(\chi\xi^2),\qquad
c(\mathfrak a,f\otimes\xi)=\xi(\mathfrak a)c(\mathfrak a,f).
\end{equation}
In an adelic realization, this is multiplication by
$\xi(\det g)$ up to a fixed nonzero scalar determined by our
Fourier normalization.
Twisting preserves full level and cuspidality and is invertible.
It applies to arbitrary forms, including products that are not eigenforms.

\subsection{Special values and Eisenstein series}

We next recall the special-value formulas and bounds needed for
Eisenstein series.
For a narrow class character $\chi$, we write
\[
L_F(s,\chi)=L(s,\chi)
=\prod_{\mathfrak p}(1-\chi(\mathfrak p)N(\mathfrak p)^{-s})^{-1}
\qquad(\operatorname{Re}(s)>1).
\]
Rationality of partial zeta values \cite[\S7.1]{GunnellsSczech2003}
implies
\begin{equation}
\label{eq:hecke-L-galois-equivariance}
\sigma(L(1-m,\chi))=L(1-m,\chi^\sigma)
\qquad(m\geq1).
\end{equation}
Here $\sigma$ acts on a cyclotomic field containing the character
values. It preserves their archimedean parity.
For $m\geq2$ with $\chi_v(-1)=(-1)^m$, the functional equation
for a character of finite conductor one
\cite[\S2.4]{MolinPage2022Hecke} gives
\begin{equation}
\label{eq:prelim-hecke-functional-equation}
|L(1-m,\chi)|
=\frac{2^nD_F^{m-1/2}\Gamma(m)^n}{(2\pi)^{nm}}
 |L(m,\chi^{-1})|.
\end{equation}
The discriminant and gamma factors in this equation are independent
of the character. Write their product as
$C_F(m)=2^nD_F^{m-1/2}\Gamma(m)^n/(2\pi)^{nm}$.
Bounds for the remaining $L$-value at $s=m$ give the following
estimate \cite[equation~(2.4)]{You-Zhang2021Hilbert}.
\begin{equation}
\label{eq:hecke-L-value-bounds}
C_F(m)\frac{\zeta(n^2m)}{\zeta(m)^n}
\leq |L(1-m,\chi)|\leq C_F(m)\zeta(m)^n.
\end{equation}
In particular, the special value $L(1-m,\chi)$ is nonzero.
By \eqref{eq:hecke-L-galois-equivariance}, these bounds also hold
for every Galois conjugate of $L(1-m,\chi)$.
The functional equation also gives $(-1)^n\zeta_F(-1)>0$.
For a totally even character $\chi$,
\eqref{eq:prelim-hecke-functional-equation} at $m=2$ reads
\[
|L(-1,\chi)|=\frac{D_F^{3/2}}{(2\pi^2)^n}|L(2,\chi^{-1})|.
\]

To compute Dirichlet special values, let $\xi$ be a primitive
character of conductor $q$. For every integer $m\geq1$,
the generalized Bernoulli formula \cite[Chapter~4]{Washington1997Cyclotomic} reads
\begin{equation}
\label{eq:generalized-bernoulli-special-value}
L(1-m,\xi)=-\frac{B_{m,\xi}}m,\qquad
B_{m,\xi}=q^{m-1}\sum_{a=1}^{q}
\xi(a)B_m(a/q).
\end{equation}
Here $B_m(x)$ is the $m$-th Bernoulli polynomial.
In the finite reductions below, we need explicit intervals containing
$\zeta_F(2)$. Truncating its Euler product gives the rational lower bound
\[
P_B(F)=\prod_{p\leq B}\prod_{\mathfrak p\mid p}
(1-N(\mathfrak p)^{-2})^{-1}.
\]
For every integer $B\geq2$, we have
\begin{equation}
\label{eq:prelim-euler-product-bound}
P_B(F)<\zeta_F(2)
<P_B(F)\left(\frac{B+1}{B}\right)^n.
\end{equation}
To obtain the upper bound, we bound the Euler factor at each
rational prime $p>B$ by $(1-p^{-2})^{-n}$ and use
\[
\prod_{p>B}(1-p^{-2})^{-1}
<
\prod_{m=B+1}^{\infty}(1-m^{-2})^{-1}
=\frac{B+1}{B}.
\]

For $k\geq2$ and narrow class characters $\phi,\psi$ with
$(\phi\psi)_v(-1)=(-1)^k$, the normalized Eisenstein eigenform
$E_k(\phi,\psi)\in\mathcal E_k(\phi\psi)$ satisfies
\begin{equation}
\label{eq:eisenstein-fourier-and-constant-terms}
\begin{aligned}
c(\mathfrak m,E_k(\phi,\psi))
 &=\sum_{\mathfrak r\mid\mathfrak m}
 \phi(\mathfrak m\mathfrak r^{-1})\psi(\mathfrak r)
 N(\mathfrak r)^{k-1},\\
c_\nu(0,E_k(\phi,\psi))
 &=2^{-n}\phi^{-1}(t_\nu)L(1-k,\phi^{-1}\psi).
\end{aligned}
\end{equation}
See \cite[Proposition~3.4]{Shimura1978Hilbert} and
\cite[Proposition~2.1]{Dasgupta2011Hilbert}.
The quotient character $\phi^{-1}\psi$ has the parity required
in \eqref{eq:hecke-L-value-bounds}.
For $k\geq2$, these series form a Hecke eigenbasis of the
full-level Eisenstein space, indexed by ordered inducing pairs;
see \cite[\S4]{DasguptaKakde2021}.

In weight one, the nonconstant coefficient formula remains valid,
while the corresponding two-term constant formula is given in
Section~\ref{sec:weight-one-eisenstein-products}.
Since $E_1(\phi,\psi)=E_1(\psi,\phi)$, a Hecke eigenbasis is
indexed by unordered inducing pairs
\cite[\S4]{DasguptaKakde2021}.

\subsection{Product coefficient identities}
\label{subsec:common-product-coefficients}

We derive the coefficient identities at small ideals that constrain
an eigenform product. Multiplication is componentwise, and
$M_{k_1}(\chi_1)\cdot M_{k_2}(\chi_2)\subseteq M_{k_1+k_2}(\chi_1\chi_2)$.
The product is cuspidal if either factor is cuspidal.
For $\mathfrak a=(\mu)t_\nu^{-1}$, $\mu\gg0$, multiplication of
Fourier expansions gives
\begin{equation}
\label{eq:component-product-convolution}
\begin{aligned}
c(\mathfrak a,f\cdot h)
={}&c_\nu(0,f)c(\mathfrak a,h)+c_\nu(0,h)c(\mathfrak a,f)\\
&+\sum_{\substack{x\in t_\nu\\0\ll x\ll\mu}}
 c((x)t_\nu^{-1},f)c((\mu-x)t_\nu^{-1},h),
\end{aligned}
\end{equation}
and $c_\nu(0,f\cdot h)=c_\nu(0,f)c_\nu(0,h)$.
An exponent is called indecomposable if the sum is empty.
If $\mu=x+y$ with $x,y\in t_\nu$ totally positive, apply
Minkowski's determinant inequality \cite[Theorem~2]{MarcusLopes1957}
to their positive embedding vectors.
Dividing by $N(t_\nu)^{1/n}$ gives
\begin{equation}
\label{eq:indecomposable-norm-bound}
N(\mathfrak a)^{1/n}
\geq N((x)t_\nu^{-1})^{1/n}+N((y)t_\nu^{-1})^{1/n}\geq2.
\end{equation}
If equality holds throughout \eqref{eq:indecomposable-norm-bound},
then the integral ideals $(x)t_\nu^{-1}$ and $(y)t_\nu^{-1}$
both have norm one and hence equal $\mathcal O_F$.
Moreover, equality in Minkowski's determinant inequality implies
that the positive embedding vectors of $x$ and $y$ are proportional.
Since the two ideals are equal, $N(x)=N(y)$,
so the proportionality factor is one.
It follows that $x=y$ and $\mathfrak a=(2)$.
Consequently, exponents corresponding to ideals of norm below
$2^n$ are indecomposable, as are those corresponding to ideals
of norm $2^n$ other than $(2)$.

In the principal component, the exponent $1$ is therefore
indecomposable.
For the exponent $2$, the equality case forces $x=y=1$,
so its only ordered positive decomposition is $2=1+1$.
These observations yield the following coefficient identities
for the unit ideal and $(2)$.

\begin{lemma}
\label{lem:low-product-coefficients}
For full-level forms $f\in M_{k_1}(\chi_1)$ and
$h\in M_{k_2}(\chi_2)$, we have
\begin{gather}
c(\mathcal O_F,f\cdot h)
=c_1(0,f)c(\mathcal O_F,h)+c_1(0,h)c(\mathcal O_F,f),
\label{eq:product-coefficient-one}\\
c((2),f\cdot h)
=c_1(0,f)c((2),h)+c_1(0,h)c((2),f)
+c(\mathcal O_F,f)c(\mathcal O_F,h).
\label{eq:product-coefficient-two}
\end{gather}
\end{lemma}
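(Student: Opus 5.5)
Both identities come from specializing the convolution formula \eqref{eq:component-product-convolution} to the principal component $\nu=1$, where $t_1=\mathcal O_F$ and $N(t_1)=1$. Since $N(t_1)=1$, the normalization $c(\mathfrak m,f)=N(t_1)^{-k/2}a_1(\mu)$ makes ideal coefficients and Fourier coefficients coincide on this component. Products are taken componentwise, so the normalization on the product (weight $k_1+k_2$) is again trivial. The ideal $\mathcal O_F$ corresponds to the exponent $\mu=1$, and $(2)$ corresponds to $\mu=2$. I will use these generators; other totally positive generators differ by units and give the same ideal coefficient by well-definedness.

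\textbf{Identity \eqref{eq:product-coefficient-one}.} For $\mu=1$, any decomposition $1=x+y$ with $x,y\in\mathcal O_F$ totally positive would give, by \eqref{eq:indecomposable-norm-bound}, $1=N(\mathcal O_F)^{1/n}\geq2$. This is impossible, so the sum in \eqref{eq:component-product-convolution} is empty. The two remaining terms are exactly $c_1(0,f)c(\mathcal O_F,h)+c_1(0,h)c(\mathcal O_F,f)$.

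\textbf{Identity \eqref{eq:product-coefficient-two}.} For $\mu=2$, let $2=x+y$ with $x,y\gg0$ in $\mathcal O_F$. Then \eqref{eq:indecomposable-norm-bound} gives $2=N((2))^{1/n}\geq N((x))^{1/n}+N((y))^{1/n}\geq2$, so equality holds throughout. By the equality analysis already carried out after \eqref{eq:indecomposable-norm-bound}, this forces $(x)=(y)=\mathcal O_F$ and $x=y$, hence $x=y=1$. The sum therefore has the single term $c(\mathcal O_F,f)c(\mathcal O_F,h)$, and adding the two constant-term contributions gives \eqref{eq:product-coefficient-two}.

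The only delicate step is this equality case, which forces the decomposition to be exactly $1+1$. It has already been settled in the text preceding the lemma, so the proof is a direct bookkeeping of \eqref{eq:component-product-convolution}.
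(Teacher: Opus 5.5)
Your proposal is correct and is essentially the paper's proof. Like the paper, you take $\nu=1$ in the convolution formula and use $\mu=1$ and $\mu=2$. The exponent $1$ is indecomposable, and the equality case of the Minkowski bound leaves $2=1+1$ as the only ordered totally positive decomposition.
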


\begin{proof}
The proof is very similar to that of Lemma 4.3 in \cite{Joshi-Zhang2019Hilbert}. Take $\nu=1$ in \eqref{eq:component-product-convolution}.
For $\mu=1$, the convolution sum is empty, giving
\eqref{eq:product-coefficient-one}.
For $\mu=2$, the only term in the convolution sum comes from
$2=1+1$ and equals $c(\mathcal O_F,f)c(\mathcal O_F,h)$.
This gives \eqref{eq:product-coefficient-two}.
\end{proof}

We now apply Lemma~\ref{lem:low-product-coefficients} to products
of two normalized full-level Eisenstein eigenforms.
Suppose that $f_i=E_{k_i}(\phi_i,\psi_i)$, with $k_i\geq2$,
and that $g=f_1\cdot f_2$ is a Hecke eigenform.
Write $\chi_i=\phi_i^{-1}\psi_i$, and set
\[
A=L(1-k_1,\chi_1),\qquad
B=L(1-k_2,\chi_2),\qquad
C=L(1-k_1-k_2,\chi_1\chi_2).
\]
We follow the constant-term comparison leading to
\cite[\S5, equations~(5.1)--(5.2)]{Joshi-Zhang2019Hilbert}.
The same argument applies in every degree $n\geq2$, with
$2^{-2}$ replaced by $2^{-n}$ in the Eisenstein constant terms. Thus
\[
g=f_1f_2
=2^{-n}(A+B)
E_{k_1+k_2}(\phi_1\phi_2,\psi_1\psi_2).
\]
Let $F_1,F_2,F_3$ denote the coefficients at $(2)$ of
$f_1$, $f_2$, and $E_{k_1+k_2}(\phi_1\phi_2,\psi_1\psi_2)$,
respectively.
Comparing the principal constant terms and the coefficients
at $(2)$ yields
\begin{gather}
AB=(A+B)C,
\label{eq:eisenstein-product-constant-relation}\\
(A+B)F_3=BF_1+AF_2+2^n.
\label{eq:eisenstein-product-two-coefficient}
\end{gather}

\begin{lemma}
\label{lem:noninert-prime-product-relation}
Under these hypotheses, suppose that $(2)$ is not inert.
For $\mathfrak p\mid(2)$, put $q=N\mathfrak p$ and
$u_i=\chi_i(\mathfrak p)$, with $u_3=u_1u_2$. These quantities satisfy
\begin{equation}
\label{eq:noninert-prime-product-relation}
Bu_1q^{k_1-1}+Au_2q^{k_2-1}
=(A+B)u_3q^{k_1+k_2-1}.
\end{equation}
\end{lemma}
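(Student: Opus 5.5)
The plan is to compare the coefficients at the ideal $\mathfrak p$ on both sides of
\[
g=f_1f_2=2^{-n}(A+B)\,E_{k_1+k_2}(\phi_1\phi_2,\psi_1\psi_2),
\]
working in the narrow component that contains $\mathfrak p$. Fix $\nu$ and a totally positive $\mu\in t_\nu$ with $\mathfrak p=(\mu)t_\nu^{-1}$.

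First I show that $\mu$ is an indecomposable exponent. Since $(2)$ is not inert, its factor $\mathfrak p$ satisfies $q=N\mathfrak p<2^n$, because $N(2)=2^n$ and $\mathfrak p\neq(2)$. The consequence of \eqref{eq:indecomposable-norm-bound} then says that $\mu$ is indecomposable. Formula \eqref{eq:component-product-convolution} therefore reduces to
\[
c(\mathfrak p,f_1f_2)=c_\nu(0,f_1)\,c(\mathfrak p,f_2)+c_\nu(0,f_2)\,c(\mathfrak p,f_1).
\]

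Second, I make everything explicit using \eqref{eq:eisenstein-fourier-and-constant-terms}. Write $a_i=\phi_i(\mathfrak p)$ and $b_i=\psi_i(\mathfrak p)$. Then
\[
c(\mathfrak p,f_i)=a_i+b_iq^{k_i-1},\qquad
c(\mathfrak p,E_{k_1+k_2}(\phi_1\phi_2,\psi_1\psi_2))=a_1a_2+b_1b_2q^{k_1+k_2-1}.
\]
For the constant terms, $t_\nu$ lies in the narrow class of $\mathfrak p^{-1}$, since $t_\nu=(\mu)\mathfrak p^{-1}$ with $\mu\gg0$. A narrow class character is trivial on totally positive principal ideals, so $\phi_i^{-1}(t_\nu)=\phi_i(\mathfrak p)=a_i$. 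This gives
\[
c_\nu(0,f_1)=2^{-n}a_1A,\qquad c_\nu(0,f_2)=2^{-n}a_2B.
\]
Substituting into the displayed identity and cancelling $2^{-n}$ yields
\[
(A+B)\bigl(a_1a_2+b_1b_2q^{k_1+k_2-1}\bigr)
=a_1A\bigl(a_2+b_2q^{k_2-1}\bigr)+a_2B\bigl(a_1+b_1q^{k_1-1}\bigr).
\]

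Third, I simplify. The terms $(A+B)a_1a_2$ cancel from both sides. I then divide by $a_1a_2$, which is a root of unity and hence nonzero, and use $u_i=\chi_i(\mathfrak p)=b_i/a_i$ and $u_3=u_1u_2$. This gives exactly \eqref{eq:noninert-prime-product-relation}.

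The only delicate step is the bookkeeping of the narrow component: one must check that the constant term on the component containing $\mathfrak p$ carries the factor $\phi_i(\mathfrak p)$, with the correct sign of the exponent. This follows from the triviality of $\phi_i$ on $(\mu)$ for $\mu\gg0$. The indecomposability itself is immediate from the norm bound.
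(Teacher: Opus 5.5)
Your proposal is correct and follows essentially the same argument as the paper. The paper also works on the component with $[t_\nu]=[\mathfrak p]^{-1}$, uses $q<2^n$ to make the exponent indecomposable, substitutes the Eisenstein coefficient and constant-term formulas with $\phi_i^{-1}(t_\nu)=\phi_i(\mathfrak p)$, and cancels the common character factor, which corresponds to your division by $a_1a_2$.
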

\begin{proof}
Here $q<2^n$. Choose $t_\nu$ with
$[t_\nu]=[\mathfrak p]^{-1}$. By
\eqref{eq:indecomposable-norm-bound}, the convolution sum is empty,
and $\phi_i^{-1}(t_\nu)=\phi_i(\mathfrak p)$.
Substitution of \eqref{eq:eisenstein-fourier-and-constant-terms}
into \eqref{eq:component-product-convolution}, followed by
cancellation of the common character factor, gives
\[
B(1+u_1q^{k_1-1})+A(1+u_2q^{k_2-1})
=(A+B)(1+u_3q^{k_1+k_2-1}),
\]
which proves the identity.
\end{proof}

For the finite indecomposability tests, let $b_1,\ldots,b_n$ be
a $\mathbb Z$-basis of a component lattice $t$, and let
$b_1^\vee,\ldots,b_n^\vee$ be its trace-dual basis.
If $x=\sum_jm_jb_j$ and $0\ll x\ll\mu$, then
$m_j=\operatorname{Tr}(xb_j^\vee)$, which gives
\begin{equation}
\label{eq:prelim-trace-dual-bounds}
\sum_{v(b_j^\vee)<0}v(\mu)v(b_j^\vee)
<m_j<\sum_{v(b_j^\vee)>0}v(\mu)v(b_j^\vee).
\end{equation}
Choose rational lower and upper bounds for these endpoints.
They give a finite box containing every possible integer tuple
$(m_1,\ldots,m_n)$. Checking $0\ll x\ll\mu$ for each tuple
determines all positive decompositions.

\subsection{Petersson and Rankin--Selberg normalizations}

We next fix the Petersson and Rankin--Selberg conventions used for
Eisenstein weight two. We use the adelic Petersson pairing summed over all narrow
components, with the measure normalization of
\cite[\S1]{Panchishkin1989}. The pairing is linear in its first
argument. On a cuspidal space with fixed central character, the
Hecke operators admit an orthogonal eigenbasis.

For normalized cuspidal eigenforms $f\in S_k(\omega_f)$ and
$h\in S_\ell(\omega_h)$, let $\pi_f,\pi_h$ be their unitary
automorphic representations and $\widetilde\pi_f$ the contragredient.
The local parameters for $\pi_f$ are obtained by dividing the roots
of the arithmetic Hecke polynomial by $N(\mathfrak p)^{(k-1)/2}$.
For automorphic representations, $L(s,\cdot)$ denotes the finite
part of the $L$-function. For a narrow class character $\xi$,
the local recurrences \eqref{eq:prelim-prime-power-recurrence} give
\begin{equation}
\label{eq:prelim-rankin-dirichlet-series}
\sum_{\mathfrak a}
\frac{\overline{c(\mathfrak a,f)}c(\mathfrak a,h)\xi(\mathfrak a)}
{N(\mathfrak a)^s}
=\frac{L(u,\widetilde\pi_f\times(\pi_h\otimes\xi))}
{L_F(2u,\omega_f^{-1}\omega_h\xi^2)},
\qquad u=s-\frac{k+\ell-2}{2},
\end{equation}
initially for $\operatorname{Re}(u)>1$, and subsequently by
meromorphic continuation
\cite[\S0.1]{JacquetPiatetskiShapiroShalika1983}.
For $k=\ell+2$, the shift is $u=s-\ell$, also when $\ell=1$.
The pairing formula in
Section~\ref{sec:eisenstein-cuspidal-weight-two-products} involves
the value at $u=1$.
We prove its nonvanishing there.

\subsection{Computer-assisted calculations}
\label{subsec:auxiliary-computations}

We use computer-assisted calculations to enumerate fields and
lattice points, evaluate special values, and compute arithmetic
invariants. We provide the source files and recorded outputs in
\cite{EigenformCertificates}. The footnotes identify the files used
for each calculation.

\section{Products of two cuspidal eigenforms}
\label{sec:cuspidal-products}

We begin with products of two cuspidal eigenforms.
If $f$ and $h$ are cuspidal,
equation~\eqref{eq:product-coefficient-one} therefore gives
$c(\mathcal O_F,f\cdot h)=0$.
Since a nonzero full-level Hecke eigenform has a nonzero unit-ideal
coefficient, $f\cdot h$ cannot be a Hecke eigenform in any positive
parallel weight.

\section{Quadratic Eisenstein products}
\label{sec:quadratic-ee-products}

We consider products of two Eisenstein eigenforms of weights at least
two over real quadratic fields. We begin with narrow class number one
and then turn to larger narrow class number.

Throughout this section and Section~\ref{sec:higher-degree-ee-products}, let
$f_i=E_{k_i}(\phi_i,\psi_i)$ be normalized full-level
Eisenstein eigenforms of parallel weights $k_i\geq2$,
for $i=1,2$.
Put
\[
\chi_i=\phi_i^{-1}\psi_i,\quad i=1,2,\qquad
\chi_3=\chi_1\chi_2.
\]

\subsection{Narrow class number one}
\label{sec:quadratic-ee-class-one}

For real quadratic fields with narrow class number one, the
classification in \cite[Theorem~1(1)]{HaoQinZhou2026} gives only the identity
\[
E_4=60E_2^2
\]
over \(\mathbb Q(\sqrt5)\), established in
\cite[Theorem~7.4]{Joshi-Zhang2019Hilbert}.

\subsection{Larger narrow class number}
\label{sec:quadratic-eisenstein-products}

We now turn to real quadratic fields with $h^+(F)>1$. We first record
some facts about quadratic fields of small discriminant that will also
be used in the later quadratic cases.

\begin{lemma}
\label{lem:quadratic-small-fields}
Let \(F\) be a real quadratic field of discriminant \(D_F\).
\begin{enumerate}[label=(\alph*)]
\item If \(h^+(F)>1\), then \(D_F\geq12\).
\item If \(D_F<28\), then \(h(F)=1\). Among these fields,
\(h^+(F)>1\) holds precisely for \(D_F\in\{12,21,24\}\),
and each of these three fields has \(h^+(F)=2\).
\item If \(D_F<43\), \(h^+(F)>1\), and \((2)\) is inert,
then \(D_F=21\).
\end{enumerate}
\end{lemma}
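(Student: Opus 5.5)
This is a finite verification over the short list of real quadratic fields of small discriminant, combined with genus theory. I would carry it out directly.

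First list the fundamental discriminants below 43:
\[
5,8,12,13,17,21,24,28,29,33,37,40,41.
\]

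For (a), fields with $D_F$ prime ($5,13,17,29,37,41$) or $D_F=8$ have odd narrow class number by genus theory. Indeed, the $2$-rank of the narrow class group is $t-1$, where $t$ is the number of prime discriminant factors, and here $t=1$. Since these discriminants are small, Minkowski's bound $\sqrt{D_F}/2<4$ shows that every ideal class contains an ideal of norm at most $3$. A direct check then gives $h(F)=1$. Moreover, $t=1$ implies that the fundamental unit has norm $-1$, so $h^+(F)=h(F)=1$. Hence $h^+(F)>1$ forces $D_F$ to be composite, and the smallest composite fundamental discriminant is $12$.

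For (b), take the composite discriminants below $28$, namely $12$, $21$ and $24$. Each has $t=2$, so the narrow $2$-rank is one. Minkowski's bound, $\sqrt{D_F}/2<2.6$, leaves only ideals of norm $2$ to check. For $\mathbb Q(\sqrt3)$ the ramified prime above $2$ is generated by $1+\sqrt3$. For $\mathbb Q(\sqrt6)$ the prime above $2$ is generated by $2+\sqrt6$, which has norm $-2$. For $\mathbb Q(\sqrt{21})$ the prime $2$ is inert. So $h(F)=1$ in all three cases. Each of these fields has no unit of norm $-1$: for $D_F=12$ and $24$ this is because the prime $3$ divides $D_F$, and for $D_F=21$ because $N(\varepsilon)=1$, with $\varepsilon=(5+\sqrt{21})/2$. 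Therefore $h^+=2h=2$. For the prime discriminants and $D_F=8$ in this range, we already have $h^+=1$. A quick cross-check against tables, or a computer calculation in \cite{EigenformCertificates}, confirms these values.

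For (c), the additional candidates are $28,33,40$, all composite with $h^+>1$ by genus theory. The prime $2$ ramifies when $D_F\in\{28,40\}$ and splits when $D_F=33$, since $33\equiv1\pmod 8$. Among $12,21,24$, only $D_F=21\equiv5\pmod8$ has $2$ inert. So the answer is $D_F=21$.

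The main obstacle is merely bookkeeping: making sure the list of fundamental discriminants is complete and the class numbers are correct. I would support this with the recorded computations.
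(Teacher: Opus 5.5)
Your proof is correct and has the same skeleton as the paper's: enumerate the fundamental discriminants, use Minkowski's bound to get $h(F)=1$, then decide whether a unit of norm $-1$ exists.

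The real difference is how you handle the norm $-1$ unit when $D_F$ has a single prime divisor. You get it from genus theory: $h^+(F)$ is odd, which forces $h^+(F)=h(F)$. The paper simply exhibits the units, e.g.\ $(1+\sqrt5)/2$, $1+\sqrt2$, $(5+\sqrt{29})/2$, $6+\sqrt{37}$. Your route saves a few computations at the cost of a heavier theorem. It does not replace your ``direct check'' of $h(F)=1$, which you left unexecuted. That check still needs explicit generators such as $(5+\sqrt{17})/2$ and $(7+\sqrt{37})/2$, of norms $2$ and $3$, plus $(7+\sqrt{41})/2$ if you keep $41$.

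For (c), the paper sorts by the congruence $D_F\equiv5\pmod 8$, so only $29$ and $37$ need treatment. Your route through all fields with $h^+(F)>1$ also works, but your genus-theoretic claim about $28,33,40$ and your treatment of $41$ are unnecessary.

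Two small points:
\begin{enumerate}
\item For $D_F=21$, you assert without proof that $(5+\sqrt{21})/2$ is fundamental. Instead, use the same mod $3$ obstruction you used for $12$ and $24$: $x^2-21y^2=-4$ has no solution mod $3$. This is what the paper does for all three fields.
\item ``Composite'' should read ``with at least two prime divisors'', since $8$ is composite.
\end{enumerate}
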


\begin{proof}
The positive fundamental discriminants below \(28\) are
\[
5,\quad8,\quad12,\quad13,\quad17,\quad21,\quad24.
\]
Minkowski's bound \eqref{eq:prelim-arithmetic-bounds}, with
\(\sqrt{D_F}/2<3\), shows that every ideal class contains an integral
ideal of norm one or two. The ideal \((2)\) is inert for
\(D_F=5,13,21\). For the other four discriminants, respectively,
\[
\sqrt2,\qquad 1+\sqrt3,\qquad
\frac{5+\sqrt{17}}2,\qquad 2+\sqrt6
\]
have norms of absolute value two. Together with their conjugates,
they generate all primes above \((2)\), so \(h(F)=1\) in every case.

Recall that \(h^+(F)=h(F)\) if a unit of norm \(-1\) exists,
and \(h^+(F)=2h(F)\) otherwise. Such units for
\(D_F=5,8,13,17\) are, respectively,
\[
\frac{1+\sqrt5}{2},\qquad 1+\sqrt2,\qquad
\frac{3+\sqrt{13}}2,\qquad 4+\sqrt{17}.
\]
For \(D_F=12,21,24\), the equation for a unit of norm \(-1\)
has no solution modulo \(3\). This proves (a) and (b).

Finally, \((2)\) is inert precisely when \(D_F\equiv5\pmod8\).
Beyond the preceding list, the only such fundamental discriminants
below \(43\) are \(29\) and \(37\). Minkowski's bound gives
\(h(F)=1\) for both: there are no ideals of norm two, and for
\(D_F=37\) the primes of norm three are generated by
\((7+\sqrt{37})/2\) and its conjugate. The units
\((5+\sqrt{29})/2\) and \(6+\sqrt{37}\) have norm \(-1\).
Thus both fields have narrow class number one, proving (c).
\end{proof}

The next lemma shows that the critical Hecke \(L\)-values are
positive over a real quadratic field.

\begin{lemma}
\label{lem:quadratic-critical-value-positive}
Let \(F\) be a real quadratic field of discriminant \(D_F\), and
let \(\chi\) be a narrow class character. Suppose that
\(\chi_v(x)=\operatorname{sgn}(x)^r\) at both real places, where
\(r\in\{0,1\}\). If \(k\geq2\) and \(k\equiv r\pmod2\), then we have
\begin{equation*}
L(1-k,\chi)
=\frac{4D_F^{k-1/2}\Gamma(k)^2}{(2\pi)^{2k}}
L(k,\chi^{-1})>0.
\end{equation*}
\end{lemma}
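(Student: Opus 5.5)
The plan has three steps: show that $L(s,\chi)$ is real for real $s$, deduce positivity of $L(k,\chi^{-1})$ from the Euler product, and then use the exact functional equation to get both the modulus and the sign of $L(1-k,\chi)$. Equation \eqref{eq:prelim-hecke-functional-equation} already gives the modulus. The new content is therefore that $L(1-k,\chi)$ is real and positive, and that $L(k,\chi^{-1})$ is real and positive, for \emph{every} narrow class character $\chi$, including those of order greater than two.

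\textbf{Step 1: reality for real $s$.} Write
\[
L(s,\chi)=\sum_{C}\chi(C)\zeta(s,C),
\]
where $C$ runs over narrow ideal classes and $\zeta(s,C)=\sum_{\mathfrak a\in C}N\mathfrak a^{-s}$ are the partial zeta functions. Let $\sigma$ be the nontrivial automorphism of $F$. For every integral ideal we have $\mathfrak a\,\sigma(\mathfrak a)=(N\mathfrak a)$, and $N\mathfrak a$ is a positive rational number, hence totally positive. So $\sigma(C)=C^{-1}$ in the narrow class group. Since $\sigma$ preserves norms, $\zeta(s,C)=\zeta(s,C^{-1})$ for $\operatorname{Re}s>1$, and this persists for all $s$ by meromorphic continuation. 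The functions $\zeta(s,C)$ are real for real $s$. Hence
\[
\overline{L(s,\chi)}=\sum_C\overline{\chi(C)}\zeta(s,C)=\sum_C\chi(C^{-1})\zeta(s,C^{-1})=L(s,\chi)
\]
for real $s$. In particular $L(s,\chi)=L(s,\chi^{-1})$ is real at every real point of holomorphy, including $s=k$ and $s=1-k$.

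\textbf{Step 2: positivity of $L(k,\chi^{-1})$.} On $(1,\infty)$ the function $L(s,\chi^{-1})$ is continuous and real by Step 1. It is nonzero there, because the Euler product converges absolutely. It also tends to $1$ as $s\to\infty$. By the intermediate value theorem it is positive on the whole interval, and in particular $L(k,\chi^{-1})>0$ since $k\geq2$.

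\textbf{Step 3: the sign of $L(1-k,\chi)$.} Here I need the exact functional equation, not just its modulus \eqref{eq:prelim-hecke-functional-equation}. For $\chi$ of conductor one with parity $r$ at both places, set
\[
\Lambda(s,\chi)=D_F^{s/2}\Gamma_{\mathbb R}(s+r)^2L(s,\chi),\qquad \Gamma_{\mathbb R}(s)=\pi^{-s/2}\Gamma(s/2).
\]
Then $\Lambda(s,\chi)=W(\chi)\Lambda(1-s,\chi^{-1})$. Because the conductor is trivial there is no Gauss sum, and $W(\chi)$ is the product of the archimedean epsilon factors, namely $(i^{r})^2$ up to the convention for those local factors.

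I would evaluate this at $s=1-k$. Since $k\equiv r\pmod 2$, the number $(1-k+r)/2$ is a half-integer, so $\Gamma_{\mathbb R}(1-k+r)$ is finite and nonzero. Solving gives
\[
L(1-k,\chi)=W(\chi)\,D_F^{k-1/2}\left(\frac{\Gamma_{\mathbb R}(k+r)}{\Gamma_{\mathbb R}(1-k+r)}\right)^{2}L(k,\chi^{-1}).
\]
The gamma ratio is real, so its square is positive. The remaining task is to simplify this ratio with the reflection and duplication formulas to $2\Gamma(k)/(2\pi)^k$ up to sign, which gives the constant $4D_F^{k-1/2}\Gamma(k)^2/(2\pi)^{2k}$ in the statement. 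I would then check that the leftover signs cancel: the squared ratio is positive, and $W(\chi)$ together with the sign produced by the reflection formula at negative half-integers should give $+1$. In degree $n=2$ all such per-place signs are squared, which is why positivity holds uniformly in $r$.

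\textbf{Main obstacle.} The delicate point is fixing the archimedean root number convention in the $r=1$ case consistently with $\Gamma_{\mathbb R}(s+1)$. If this bookkeeping is uncertain, I would use a fallback. Steps 1 and 2 already give $L(1-k,\chi)=\pm\,C_F(k)\,L(k,\chi^{-1})$ with a sign depending only on $(k,r)$ and not on $\chi$ or $F$, since all $\chi$-dependence has been removed. I would then fix that sign by the known case $\zeta_F(1-k)>0$ for $r=0$ (compare $(-1)^n\zeta_F(-1)>0$). For $r=1$ I would check the sign on one explicit totally odd character, for instance via \eqref{eq:generalized-bernoulli-special-value} applied to a factorization $L_F(s,\chi)=L(s,\xi_1)L(s,\xi_2)$ into Dirichlet $L$-functions for a genus character over $\mathbb Q(\sqrt3)$. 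Here $\xi_1,\xi_2$ are both odd, and the product of two values $L(1-k,\xi_i)$ of the same sign is positive.
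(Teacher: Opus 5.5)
Your Steps 1 and 2 are correct. They give a valid alternative route to $L(k,\chi^{-1})>0$. The paper pairs Euler factors using $\chi(\tau\mathfrak a)=\overline{\chi(\mathfrak a)}$ and checks positivity prime by prime. You instead get reality from $\sigma(C)=C^{-1}$ on partial zeta functions and positivity from the intermediate value theorem. Your simplification of the gamma factors to $4\Gamma(k)^2/(2\pi)^{2k}$ is also right.

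\textbf{The gap is the sign in Step 3,} which is the whole content of the lemma beyond \eqref{eq:prelim-hecke-functional-equation}. Because the gamma ratio enters squared, no sign survives from the reflection formula, so your own display gives $\operatorname{sgn}L(1-k,\chi)=\operatorname{sgn}W(\chi)$. Your value $W(\chi)=(i^r)^2$ is wrong. Trivial conductor removes the Gauss sum, but not the contribution of the different. The standard additive character composed with the trace has conductor $\mathfrak d_F^{-1}$, not $\mathcal O_F$. For conductor one the root number is therefore $W(\chi)=i^{2r}\chi(\mathfrak d_F)$; with the convention $\chi^{-1}(\mathfrak d_F)$ nothing changes here.

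With your value, the case $r=1$ would give $L(1-k,\chi)<0$, which is false. For instance, $L(-2,\varepsilon)=1/9$ over $\mathbb Q(\sqrt3)$ by Lemma~\ref{lem:quadratic-exact-special-values}. The missing observation, which is exactly what the paper uses, is that $\mathfrak d_F=(\sqrt{D_F})$, whose generator has opposite signs at the two real places. Then \eqref{eq:prelim-principal-character} gives $\chi(\mathfrak d_F)=(-1)^r$, hence $W(\chi)=1$ for every $\chi$ and every real quadratic $F$.

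The fallback does not repair this. Steps 1 and 2 give $L(1-k,\chi)=\pm C_F(k)L(k,\chi^{-1})$ with sign $\operatorname{sgn}W(\chi)$. Nothing in them shows that this sign is independent of $\chi$ and $F$. That independence is precisely the identity $\chi(\mathfrak d_F)=(-1)^r$, an arithmetic fact about the different, not a formal consequence of reality and positivity.

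So checking $\zeta_F(1-k)$ and a single genus character over $\mathbb Q(\sqrt3)$ cannot replace the root-number computation, though that check would have exposed the error in your $W(\chi)$. Once you include $\chi(\mathfrak d_F)$ and evaluate it via the generator $\sqrt{D_F}$, your argument becomes the paper's.
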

\begin{proof}
Let \(\tau\) be the nontrivial automorphism of \(F/\mathbb Q\).
For every nonzero integral ideal $\mathfrak a$, the ideal
\(\mathfrak a\tau(\mathfrak a)=(N\mathfrak a)\)
has a totally positive generator. Hence the character values satisfy
\[
\chi(\tau(\mathfrak a))
=\chi(\mathfrak a)^{-1}
=\overline{\chi(\mathfrak a)}.
\]
For real \(s>1\), the Euler factor at a rational prime \(p\) is
\[
\begin{cases}
|1-\chi(\mathfrak p)p^{-s}|^{-2},
 & (p)=\mathfrak p\tau(\mathfrak p),\\
(1-p^{-2s})^{-1},
 & (p)\text{ is inert},\\
(1-\chi(\mathfrak p)p^{-s})^{-1},
 & (p)=\mathfrak p^2.
\end{cases}
\]
In the ramified case, \(\chi(\mathfrak p)^2=\chi((p))=1\).
Thus all factors are positive, and absolute convergence of the Euler
product implies
\(L(s,\chi)>0\).

Put
\[
\Lambda(s,\chi)
=D_F^{s/2}\pi^{-(s+r)}
\Gamma\!\left(\frac{s+r}{2}\right)^2L(s,\chi).
\]
Since \(\chi\) has finite conductor one, the functional equation
and local root number formulas in
\cite[\S2.4]{MolinPage2022Hecke} give
\[
\Lambda(1-s,\chi)=W(\chi)\Lambda(s,\chi^{-1}),
\qquad
W(\chi)=i^{2r}\chi(\mathfrak d_F).
\]
As \(\mathfrak d_F=(\sqrt{D_F})\), whose generator has opposite
signs at the two real places, we have
\[
\chi(\mathfrak d_F)=(-1)^r,
\qquad W(\chi)=1.
\]
Evaluating the functional equation at \(s=k\) and applying the
reflection and duplication formulas, using \(k\equiv r\pmod2\),
we obtain
\[
\begin{aligned}
L(1-k,\chi)
&=D_F^{k-\frac12}\pi^{1-2k}
\left(
\frac{\Gamma((k+r)/2)}{\Gamma((1-k+r)/2)}
\right)^2L(k,\chi^{-1})\\
&=\frac{4D_F^{k-\frac12}\Gamma(k)^2}{(2\pi)^{2k}}
L(k,\chi^{-1})>0,
\end{aligned}
\]
where \(L(k,\chi^{-1})>0\) by the Euler product argument above.
\end{proof}
\begin{remark}
For every full-level Eisenstein eigenform \(E_k(\phi,\psi)\),
the character \(\chi=\phi^{-1}\psi\) satisfies
the parity condition in Lemma~\ref{lem:quadratic-critical-value-positive}.
\end{remark}

Since these special values are positive, the constant-term
relation involves no cancellation when both weights are at least two.

\begin{proposition}
\label{prop:quadratic-eisenstein-larger-narrow-class}
Let \(F\) be a real quadratic field with \(h^+(F)>1\).
If \(f_i=E_{k_i}(\phi_i,\psi_i)\), for \(i=1,2\), are
normalized full-level Eisenstein eigenforms of parallel
weights \(k_i\geq2\), then \(f_1\cdot f_2\) is not a Hecke eigenform.
\end{proposition}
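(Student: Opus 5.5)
The plan is to play the constant-term relation \eqref{eq:eisenstein-product-constant-relation} against the growth of critical values in the weight. Suppose \(g=f_1f_2\) is a Hecke eigenform. By Lemma~\ref{lem:quadratic-critical-value-positive} and the remark following it, the three values \(A=L(1-k_1,\chi_1)\), \(B=L(1-k_2,\chi_2)\) and \(C=L(1-k_1-k_2,\chi_3)\) are all strictly positive. The relation \(AB=(A+B)C\) then gives
\[
C=\frac{AB}{A+B}<\min(A,B).
\]
In fact it gives \(C\leq \tfrac12\min(A,B)\cdot\tfrac{2\max(A,B)}{A+B}\), and in any case \(C<A\) and \(C<B\). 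By symmetry we may assume \(k_1\geq k_2\), and we aim to contradict \(C<A\).

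\textbf{Step 1: general inequality.} Write both values through the exact formula of Lemma~\ref{lem:quadratic-critical-value-positive}. This gives
\[
\frac{C}{A}=D_F^{k_2}\,\frac{\Gamma(k_1+k_2)^2}{\Gamma(k_1)^2(2\pi)^{2k_2}}\,
\frac{L(k_1+k_2,\chi_3^{-1})}{L(k_1,\chi_1^{-1})}.
\]
The ratio of \(L\)-values at the right edge is bounded below by \(\zeta(4(k_1+k_2))/\zeta(k_1)^2\), as in \eqref{eq:hecke-L-value-bounds}. More sharply, it is at least \(1/\zeta_F(k_1)\) using the positive Euler factors from the proof of Lemma~\ref{lem:quadratic-critical-value-positive}. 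The factor \(\Gamma(k_1+k_2)^2/\Gamma(k_1)^2\) is at least \((k_1(k_1+1))^2\) when \(k_2\geq2\), and it grows rapidly. By Lemma~\ref{lem:quadratic-small-fields}(a), \(D_F\geq12\). With this, I expect \(C/A>1\) for all \((k_1,k_2)\) and \(D_F\) outside a small finite list. A rough check for \(k_1=k_2=2\), \(D_F=12\) gives \(C/A\approx 1\), so this case is borderline. The bound clearly wins once \(D_F\geq 28\) or the weights are larger, since the right-hand side is increasing in \(D_F\) and in each weight.

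\textbf{Step 2: the residual fields.} By Lemma~\ref{lem:quadratic-small-fields}(b), the remaining fields with \(D_F<28\) are \(D_F\in\{12,21,24\}\). Each has \(h(F)=1\) and \(h^+(F)=2\), so the only nontrivial narrow class character is the genus character. Its \(L\)-function factors as \(L(s,\chi_{d_1})L(s,\chi_{d_2})\) with \(d_1d_2=D_F\): \(12=(-3)(-4)\), \(21=(-3)(-7)\), \(24=(-3)(-8)\), while the trivial character gives \(\zeta(s)L(s,\chi_{D_F})\). I will compute \(A\), \(B\), \(C\) exactly via the generalized Bernoulli formula \eqref{eq:generalized-bernoulli-special-value} for the finitely many small weight pairs left by Step 1, respecting the parity constraints. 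I will then check that \(AB\neq (A+B)C\). If some case survives, I would fall back on \eqref{eq:eisenstein-product-two-coefficient}. For \(D_F=12,24\), where \((2)\) is ramified, I would also use Lemma~\ref{lem:noninert-prime-product-relation} with \(q=2\) and \(u_i=\pm1\). Its left side has size \(O(2^{\max k_i})\) against \((A+B)2^{k_1+k_2-1}\), which is incompatible in sign or size.

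\textbf{Main obstacle.} The hardest step is making Step 1 uniform with explicit constants near the borderline (\(k_1=k_2=2\), small \(D_F\)). There the crude bounds from \eqref{eq:hecke-L-value-bounds} lose a factor close to the margin. I would first try the sharper lower bound \(L(k,\chi)\geq \zeta_F(2k)/\zeta_F(k)\), together with \eqref{eq:prelim-euler-product-bound}, to push the threshold on \(D_F\) below \(28\). Whatever remains is left to the exact computation of Step 2.
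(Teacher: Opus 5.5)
Your argument is the paper's: positivity from Lemma~\ref{lem:quadratic-critical-value-positive} turns $AB=(A+B)C$ into $C/A=B/(A+B)<1$, and \eqref{eq:hecke-L-value-bounds} with $D_F\geq12$ contradicts this. The case you flag as borderline is already settled by the crude bound. Keeping only the factors $j=k_1,k_1+1$ of $\prod_{j=k_1}^{k_1+k_2-1}D_Fj^2/(4\pi^2)$ and using $\zeta(k_1+k_2)\zeta(k_1)\leq\zeta(4)\zeta(2)$ gives $C/A>D_F^2(810/\pi^8)^2\geq(9720/\pi^8)^2>1$ uniformly, so Step~2 is never needed.

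Two slips should be fixed. First, the bound from \eqref{eq:hecke-L-value-bounds} on the $L$-ratio is $\zeta(4(k_1+k_2))/(\zeta(k_1+k_2)^2\zeta(k_1)^2)$; you omitted the factor $\zeta(k_1+k_2)^2$. Second, the ``sharper'' bound $1/\zeta_F(k_1)$ is false, because $L(k_1+k_2,\chi_3^{-1})$ can be below $1$ (e.g.\ $L(5,\varepsilon)<1$ over $\mathbb Q(\sqrt3)$). The correct Euler-product bound is $\zeta_F(2(k_1+k_2))/(\zeta_F(k_1+k_2)\zeta_F(k_1))$.
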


\begin{proof}
Suppose that \(f_1\cdot f_2\) is a Hecke eigenform. Put
\[
A=L(1-k_1,\chi_1),\quad
B=L(1-k_2,\chi_2),\quad
C=L(1-k_1-k_2,\chi_3).
\]
All three values are positive by
Lemma~\ref{lem:quadratic-critical-value-positive}.
Thus \eqref{eq:eisenstein-product-constant-relation} gives
\begin{equation}
\label{eq:Eisensteinconstantterm}
0<\frac CA=\frac{B}{A+B}<1.
\end{equation}
On the other hand, \eqref{eq:hecke-L-value-bounds} implies
\[
\frac CA>
\frac{1}{\zeta(k_1+k_2)^2\zeta(k_1)^2}
\prod_{j=k_1}^{k_1+k_2-1}\frac{D_Fj^2}{4\pi^2}.
\]
By Lemma~\ref{lem:quadratic-small-fields}, \(D_F\geq12\),
so every factor in the product exceeds one.
Since \(k_1,k_2\geq2\), retaining the first two factors
and using the monotonicity of \(\zeta(s)\) for \(s>1\), we obtain
\[
\frac CA>
\frac{9D_F^2}{4\pi^4\zeta(4)^2\zeta(2)^2}
=D_F^2\left(\frac{810}{\pi^8}\right)^2
\geq\left(\frac{9720}{\pi^8}\right)^2>1,
\]
since
\(\zeta(2)=\pi^2/6\) and \(\zeta(4)=\pi^4/90\).
This contradicts \eqref{eq:Eisensteinconstantterm}.
\end{proof}

\section{Eisenstein products in higher degree}
\label{sec:higher-degree-ee-products}

We next consider totally real fields of degree $n>2$, allowing arbitrary
narrow class number. We begin with equal weights and then turn to
distinct weights.

\subsection{Equal weights}
\label{sec:higher-degree-eisenstein-products}

We treat equal weights $k\geq3$ and $k=2$ separately.

In the equal-weight case $k_1=k_2=k$, set
\[
A=L(1-k,\chi_1),\qquad
B=L(1-k,\chi_2),\qquad
C=L(1-2k,\chi_3).
\]

\subsubsection{Equal weights at least three}

We begin with $k\geq3$.

For the numerical comparisons below, we use the elementary estimate
\[
\zeta(s)<1+2^{-s}+\int_2^\infty t^{-s}\,dt
=1+2^{-s}+\frac{2^{1-s}}{s-1}\qquad(s>1).
\]
In particular, we have \(\zeta(3)<5/4\), \(\zeta(5)<67/64\), and
\(\zeta(6)<33/32\). We also use
\(\zeta(2)=\pi^2/6\), \(\zeta(4)=\pi^4/90\), and
\(\zeta(6)=\pi^6/945\).

\begin{theorem}
\label{thm:equal-weight-k-at-least-three}
Let \(F\) be a totally real field of degree \(n>2\), and let \(k\geq3\).
If \(f_i=E_k(\phi_i,\psi_i)\), for \(i=1,2\), are normalized
full-level Eisenstein eigenforms, then \(f_1\cdot f_2\) is not a Hecke eigenform.
\end{theorem}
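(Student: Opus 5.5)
The approach is to show that the two relations \eqref{eq:eisenstein-product-constant-relation} and \eqref{eq:eisenstein-product-two-coefficient} force a near-cancellation that contradicts integrality. Suppose \(f_1\cdot f_2\) is a Hecke eigenform, with \(A,B,C\) as above. Since \(A,B,C\neq0\) by \eqref{eq:hecke-L-value-bounds}, relation \eqref{eq:eisenstein-product-constant-relation} gives \(A+B=AB/C\). The bounds \eqref{eq:hecke-L-value-bounds} then yield
\[
\frac{|A+B|}{|A|}=\frac{|B|}{|C|}\leq
\frac{C_F(k)\zeta(k)^n\zeta(2k)^n}{C_F(2k)\zeta(2n^2k)}
=\frac{\zeta(k)^n\zeta(2k)^n}{\zeta(2n^2k)}\cdot
\frac{(2\pi)^{nk}\Gamma(k)^n}{D_F^{k}\Gamma(2k)^n}.
\]
This ratio is tiny once \(D_F\) is moderately large. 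By \eqref{eq:prelim-arithmetic-bounds} we have \(\delta_F>29.099e^{-8.3185/n}\), and \(D_F\geq49\) or \(725\) in degrees three and four. So \(A+B\) is very small compared with \(|A|\), and simultaneously \(|A|\) itself is large, since \(|A|\geq C_F(k)\zeta(n^2k)/\zeta(k)^n\) grows like \(D_F^{k-1/2}\).

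Next I would substitute this into \eqref{eq:eisenstein-product-two-coefficient}, rewritten as
\[
A(F_2-F_1)+2^n=(A+B)F_3-(A+B)F_1 .
\]
Here \(F_1,F_2,F_3\) are the \((2)\)-coefficients, bounded trivially via \eqref{eq:eisenstein-fourier-and-constant-terms}: \(|F_i|\leq\tau_F((2))\,2^{n(k_i-1)}\), with \(k_3=2k\). The right side is therefore bounded by \(|A+B|\,\tau_F((2))(2^{n(2k-1)}+2^{n(k-1)})\), which I expect to be much smaller than \(2^{n-1}\). If \(F_1=F_2\), this already contradicts the appearance of \(2^n\) on the left. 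Otherwise \(\Delta=F_2-F_1\) is a nonzero algebraic integer in a cyclotomic field, being a combination of character values and powers of \(2\). The displayed equation gives \(|\Delta|\leq(2^n+\text{small})/|A|<1\).

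To make this a contradiction I will apply every automorphism \(\sigma\) of the cyclotomic field generated by the character values. The relations \eqref{eq:eisenstein-product-constant-relation} and \eqref{eq:eisenstein-product-two-coefficient} are algebraic identities among \(A,B,C,F_i\). By \eqref{eq:hecke-L-galois-equivariance}, the conjugates \(\sigma(A),\sigma(B),\sigma(C)\) are the corresponding special values for conjugate characters of the same parity. Hence the bounds \eqref{eq:hecke-L-value-bounds} hold for them, and so do the trivial bounds on \(\sigma(F_i)\). Thus \(|\sigma(\Delta)|<1\) for every \(\sigma\), and the norm of \(\Delta\) is a nonzero rational integer of absolute value less than \(1\). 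This is impossible.

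The main obstacle is the numerical verification that, uniformly in \(n\geq3\) and \(k\geq3\), two inequalities hold. The first is \(2^n+|A+B|(|F_3|+|F_1|)<|A|\). The second is that the error term is below \(2^{n-1}\) in the case \(F_1=F_2\). Both reduce to showing that \(D_F^{k-1/2}\,2^n\Gamma(k)^n/(2\pi)^{nk}\) dominates \(2^n\cdot 2^{2nk}\tau_F((2))\), after using \(\tau_F((2))\leq 2^n\). The growth in \(k\) is handled by the factor \(\delta_F^{k}\Gamma(k)/(2\pi)^k\) increasing in \(k\). I would first check \(k=3\) using the Odlyzko bound on \(\delta_F\) for large \(n\). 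For \(n=3,4\) I would use \(D_F\geq49,725\) together with the \(\zeta\)-estimates recorded above. If small degrees fail the crude bound, I would enumerate the finitely many remaining fields and verify them directly.
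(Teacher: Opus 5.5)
Your approach differs from the paper's. You use only the coefficient at \((2)\) and take the norm of \(\Delta=F_2-F_1\); the paper splits according to whether \((2)\) is inert. Your branch \(F_1=F_2\) is sound in spirit. When \((2)\) is inert, \((A+B)(F_3-F_1)=2^n\) together with \(A+B=AB/C\) is exactly the paper's identity \(C/(AB)=2^{n(2k-2)}(1-2^{-nk})\).

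The gap is in the branch \(F_1\neq F_2\). There, \(|\sigma(\Delta)|<1\) requires \(|\sigma(A)|>2^n\), i.e.\ roughly \(\delta_F^{k-1/2}\Gamma(k)>\zeta(k)(2\pi)^k\). For \(k=3\) this means \(\delta_F>7.4\), which fails for every totally real quartic field of discriminant below about \(3000\). So your assertion that \(|A|\) is large is false exactly where it matters. Concretely:
\begin{itemize}
\item For \(\mathbb Q(\zeta_{20})^+\) and \(\mathbb Q(\zeta_{24})^+\) (discriminants \(2000\) and \(2304\), with \((2)\) ramified) and \(k=3\), the value \(A=L(-2,\varepsilon)\) is \(12\), respectively \(46/3\), both below \(2^4\).
\item At \(k=4\), the cubic field of discriminant \(49\) has \(A=\zeta_F(-3)=79/210<2^3\).
\end{itemize}
``Verify them directly'' supplies no argument for these fields. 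To close them inside your framework you would need two further inputs. The first is the identity \(F_i=\sum_{\mathfrak r\mid(2)}\chi_i(\mathfrak r)N(\mathfrak r)^{k-1}\), valid because \(\phi_i((2))=1\); it gives \(F_1=F_2\) whenever \((2)\) is inert or \(\chi_1,\chi_2\) agree on the primes above \(2\). The second is class-number information showing that these small quartic fields carry at most one totally odd character.

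The reduction in your last paragraph is also not the right one. Asking \(C_F(k)\) to dominate \(2^n2^{2nk}\tau_F((2))\) would require \(\delta_F\) of size about \(36\) when \(k=3\), so no feasible enumeration can be based on it.

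The missing idea is to work with a prime of norm below \(2^n\) rather than with the ideal \((2)\). When \((2)\) is not inert, the paper chooses \(\mathfrak p\mid(2)\) of least residue degree. Then \(q=N\mathfrak p\leq2^{n-2}<2^n\), so the exponent on the narrow component of \([\mathfrak p]^{-1}\) is indecomposable. Lemma~\ref{lem:noninert-prime-product-relation} together with \(AB=(A+B)C\) gives \((A/C)(q^ku_1u_2-u_2)=u_1-u_2\). Hence \(u_1\neq u_2\), and the norm argument is applied to \(C/A\) instead of \(\Delta\). Since \(|C/A|\) grows like \(D_F^k\bigl(\Gamma(2k)/((2\pi)^k\Gamma(k))\bigr)^n\), it exceeds \(q^k+1\) for all \(n\geq3\) and \(k\geq3\) using Minkowski's bound alone, with no exceptional fields.
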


\begin{proof}
Suppose otherwise. First assume that \((2)\) is inert.
Equations~\eqref{eq:eisenstein-product-constant-relation} and
\eqref{eq:eisenstein-product-two-coefficient} give
\[
\frac{C}{AB}=2^{n(2k-2)}(1-2^{-nk}).
\]
Comparing this identity with the lower bound for $C/(AB)$ in
\eqref{eq:hecke-L-value-bounds} gives the necessary inequality
\begin{equation}
\label{eq:equal-weight-inert-necessary-inequality}
\sqrt{D_F}\,H(k)^n<1,
\end{equation}
where $H(k)$ is defined by
\[
H(k)=\frac{\Gamma(2k)}
{2^{2k-1}\Gamma(k)^2\zeta(2k)\zeta(k)^2}.
\]
To contradict this necessary condition, we need a lower bound
for $H(k)$ that holds for all $k\geq3$. For integers $k\geq3$, the ratio satisfies
\[
\frac{H(k+1)}{H(k)}
=\frac{2k+1}{2k}
\frac{\zeta(2k)}{\zeta(2k+2)}
\left(\frac{\zeta(k)}{\zeta(k+1)}\right)^2>1,
\]
so we obtain
\[
H(k)\geq H(3)=\frac{15}{16\zeta(6)\zeta(3)^2}>\frac{32}{55}.
\]

We now show that \eqref{eq:equal-weight-inert-necessary-inequality}
is impossible. For \(n=3\),
the minimum discriminant \(49\) gives
\(\sqrt{D_F}H(k)^3>7(32/55)^3>1\)
\cite[Table~3]{Voight}. For \(n\geq4\), Minkowski's bound
\eqref{eq:prelim-arithmetic-bounds} gives
\[
\sqrt{D_F}H(k)^n>
\frac{n^n}{n!}\left(\frac{32}{55}\right)^n>1.
\]
The expression on the right exceeds one at \(n=4\). The ratio
of successive values is
\((32/55)(1+1/n)^n>64/55\).
This contradicts \eqref{eq:equal-weight-inert-necessary-inequality}.

Next suppose that \((2)\) is not inert. Choose
\(\mathfrak p\mid(2)\) of least residue degree, and put
\(q=N(\mathfrak p)\) and \(u_i=\chi_i(\mathfrak p)\).
Since \(\sum_{\mathfrak q\mid(2)}e_{\mathfrak q}f_{\mathfrak q}=n\)
and \(\sum_{\mathfrak q\mid(2)}e_{\mathfrak q}\geq2\), we have
\[
q\leq2^{\lfloor n/2\rfloor}\leq2^{n-2}.
\]
Equations~\eqref{eq:noninert-prime-product-relation} and
\eqref{eq:eisenstein-product-constant-relation} give
\[
\frac AC(q^ku_1u_2-u_2)=u_1-u_2.
\]
If \(u_1=u_2\), this equation would give \(q^ku_1=1\),
which is impossible because \(q>1\) and \(|u_1|=1\). Thus
\(u_1\ne u_2\).

Let \(K\) be a cyclotomic field containing the character values,
and put \(d=[K:\mathbb Q]\). Written as a quotient, $C/A$ has
denominator \(u_1-u_2\), a nonzero algebraic integer, and numerator
\(q^ku_1u_2-u_2\), whose conjugates have absolute value at most
\(q^k+1\). This gives an upper bound for the norm of $C/A$. To contradict it,
we use \eqref{eq:hecke-L-value-bounds} and
\eqref{eq:hecke-L-galois-equivariance} to bound every conjugate
from below:
\[
\left|\sigma\!\left(\frac CA\right)\right|>R(k),
\qquad
R(k):=D_F^k\left(
\frac{\Gamma(2k)}
{(2\pi)^k\Gamma(k)\zeta(2k)\zeta(k)}
\right)^n
\]
for every embedding \(\sigma:K\hookrightarrow\mathbb C\).
Taking norms yields
\[
R(k)^d<\left|N_{K/\mathbb Q}\!\left(\frac CA\right)\right|
\leq(q^k+1)^d,
\]
and hence we have
\begin{equation}
\label{eq:higher-degree-noninert-bound}
R(k)<q^k+1.
\end{equation}

We first show that this necessary inequality fails at $k=3$.
To compare its two sides in every degree, the elementary zeta
estimates above and Minkowski's bound give
\[
R(3)>\frac{D_F^3}{6^n}\geq
U_n:=6^{-n}\left(\frac{n^n}{n!}\right)^6.
\]
Put \(V_n=2^{3n-6}+1\), so \(q^3+1\leq V_n\).
Since \(U_3=19683/512>9=V_3\), the comparison
\[
\frac{U_{n+1}}{U_n}
=\frac{(1+1/n)^{6n}}6>
\frac{64}{6}>8>\frac{V_{n+1}}{V_n}
\]
shows that \(R(3)>q^3+1\) for every \(n\geq3\).

Finally, for integers \(k\geq3\), the monotonicity of \(\zeta(s)\) gives
\[
\frac{R(k+1)}{R(k)}
>D_F\left(\frac{2k+1}{\pi}\right)^n
>2^n>q>\frac{q^{k+1}+1}{q^k+1}.
\]
Hence \(R(k)/(q^k+1)\) increases with \(k\), contradicting
\eqref{eq:higher-degree-noninert-bound} for every \(k\geq3\).
\end{proof}

\subsubsection{Weight two}

We now turn to weight two.

\begin{theorem}
\label{thm:equal-weight-k-two}
Let \(F\) be a totally real field of degree \(n>2\). A product of two
normalized full-level Eisenstein eigenforms of weight two over
\(F\) is not a Hecke eigenform.
\end{theorem}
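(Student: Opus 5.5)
We follow the structure of Theorem~\ref{thm:equal-weight-k-at-least-three} with $k=2$, so
\[
A=L(-1,\chi_1),\qquad B=L(-1,\chi_2),\qquad C=L(-3,\chi_3),
\]
where $\chi_1,\chi_2$ are totally even (ordinary class) characters. Assuming the product is an eigenform, we split according to whether $(2)$ is inert. The main difficulty is that the weight-two special values are too small for the estimates of the previous proof to close in low degree: at $k=2$ the analogue of $H(k)$ is $H(2)=3/(4\zeta(4)\zeta(2)^2)<1$. We therefore expect to need the genuine discriminant bounds \eqref{eq:prelim-arithmetic-bounds}, including the Odlyzko--Takeuchi bound for large $n$ and $D_F\ge49$, $D_F\ge725$ for $n=3,4$, rather than the crude Minkowski estimate.

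\emph{Inert case.} Equations \eqref{eq:eisenstein-product-constant-relation} and \eqref{eq:eisenstein-product-two-coefficient} again give
\[
C/(AB)=2^{2n}(1-2^{-2n}).
\]
The lower bounds in \eqref{eq:hecke-L-value-bounds} for $C$ and the upper bounds for $A,B$ then give the necessary inequality
\[
\sqrt{D_F}\,H(2)^n\le c
\]
for an explicit constant $c$ close to $1$, up to the $\zeta(n^2m)$ correction. Since $H(2)=3\cdot 90\cdot 36/(4\pi^8)\approx 0.256$, this reads $\delta_F^{n/2}\lesssim 3.9^n$, i.e.\ $\delta_F\lesssim 15.3$. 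The Odlyzko bound $\delta_F>29.099e^{-8.3185/n}$ exceeds $15.3$ once $n\ge 14$. For $3\le n\le 13$, Minkowski's bound $\sqrt{D_F}\ge n^n/n!$ and the tabulated minimal discriminants (e.g.\ $\delta_F\ge 49^{1/3}\approx3.66$ for $n=3$) are not enough by themselves. Here we would use the sign information instead. By \eqref{eq:prelim-hecke-functional-equation}, $L(-1,\chi)$ has sign $(-1)^n$ for the trivial character, and more generally all three values are real up to conjugation. We would also try the Galois-norm trick: $C/(AB)$ is a rational integer multiple, and its conjugates all satisfy the same bounds, so the identity must hold for every conjugate simultaneously. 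If a gap remains for small $n$, we would close it with a finite enumeration of fields of small root discriminant, using the certificates of \S\ref{subsec:auxiliary-computations}.

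\emph{Non-inert case.} Take $\mathfrak p\mid(2)$ of least residue degree, with $q\le 2^{\lfloor n/2\rfloor}$. Lemma~\ref{lem:noninert-prime-product-relation} and \eqref{eq:eisenstein-product-constant-relation} give
\[
\frac{A}{C}\,(q^2u_1u_2-u_2)=u_1-u_2,
\]
with $u_1\neq u_2$. Taking norms yields $R(2)<q^2+1\le 2^{2\lfloor n/2\rfloor}+1$, where
\[
R(2)=D_F^2\bigl(6/(4\pi^2\zeta(4)\zeta(2))\bigr)^n\approx D_F^2(0.0855)^n,
\]
that is, roughly $\delta_F^{2}\cdot 0.0855\gtrsim 2$, so $\delta_F\gtrsim 4.9$ forces a contradiction. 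This holds for all $n\ge3$ from the Odlyzko bound once $n\ge 6$ or so, and from the tables ($D_F\ge49$, $725$, $14641$ for $n=3,4,5$) by direct numerical check, with the exact sharper lower bounds $\zeta(n^2m)/\zeta(m)^n$ retained where needed. Any residual small-degree fields would again be removed by the computer check of the explicit constant-term relation.

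We expect the inert case in degrees $3$--$13$ to be the main obstacle. The first attempt there would be to combine the exact relation $AB\cdot 2^{2n}(1-2^{-2n})=C$ with the integrality of $1/A$-type quantities, bounding the denominator of $\zeta_F(-1)$, and to fall back on field enumeration by root discriminant below about $15.3$.
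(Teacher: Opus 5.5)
\textbf{Gap: the inert case is left open.} Your non-inert case follows the paper's argument. The inert case, which you correctly flag as the obstacle, is not closed, and the remedies you list cannot close it.

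From $C/(AB)=4^n-1$ and the modulus bounds in \eqref{eq:hecke-L-value-bounds}, you only get $\delta_F<15.248$, hence $n\le12$. The sign information you have covers only $\chi=1$. For ordinary class characters of order at least three, the values $L(-1,\chi)$ need not be real. For example, take $\phi_1=\phi_2=1$ and $\psi_1=\chi$, $\psi_2=\overline{\chi}$ with $\chi$ cubic. Then $B=\overline{A}$ and $C=\zeta_F(-3)>0$, and the two relations read $|A|^2=\zeta_F(-3)/(4^n-1)$ and $2\operatorname{Re}A=1/(4^n-1)$. Modulus estimates cannot see the argument of $A$, so neither relation is contradicted.

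The other remedies also fail:
\begin{itemize}
\item The Galois-norm trick adds nothing, since $C/(AB)$ is already rational and every conjugate obeys the same bounds.
\item There is no integrality of reciprocal constant terms here; Lemma~\ref{lem:ec-integrality} needs a cuspidal factor.
\item Enumerating all totally real fields with $\delta_F<15.25$ in degrees up to twelve is not a feasible finishing step. In degree twelve alone it would require every field with $D_F$ up to about $1.6\cdot10^{14}$.
\end{itemize}

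\textbf{Two missing ideas.} First, bound the class number through the Hilbert class field $H$. Since $\delta_H=\delta_F$, the Odlyzko--Takeuchi bound applied to $H$ gives $nh(F)\le12$, so $h(F)\le2$ for $n\ge5$. Roblot's tables give the same in degrees three and four, except for one cubic field (discriminant $2597$) in which $(2)$ is not inert. Every $\chi_i$ is then real. Artin factorization $\zeta_H(-1)=\zeta_F(-1)L(-1,\varepsilon)$, together with $(-1)^m\zeta_K(-1)>0$ for totally real $K$ of degree $m$, gives $(-1)^nL(-1,\chi)>0$ for every such $\chi$.

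Second, use the linear relation $A+B=1/(4^n-1)$ directly. This is what \eqref{eq:eisenstein-product-two-coefficient} says in the inert case, and you use it only through the quotient $C/(AB)$.
\begin{itemize}
\item For odd $n$, it contradicts $A,B<0$ immediately.
\item For even $n$, positivity of $A$ and $B$ lets the lower bound in \eqref{eq:hecke-L-value-bounds} bound the sum from below: $(4^n-1)(A+B)>2(4^n-1)D_F^{3/2}(3/\pi^4)^n$. This exceeds one by $D_F\ge725$ at $n=4$ and by Minkowski's bound for even $n\ge6$.
\end{itemize}

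\textbf{Smaller point in the non-inert case.} $D_F\ge49$ does not finish degree three. The bound $\delta_F<5$ still allows the cubic fields of discriminant $49$ and $81$. These have $h(F)=1$, so $\chi_1=\chi_2=1$ and $u_1=u_2$, contrary to what the relation forces.
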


\begin{proof}
Suppose otherwise, and use the notation above with \(k=2\).
The characters \(\chi_1,\chi_2,\chi_3\) are totally even, so they
factor through \(\operatorname{Cl}(F)\).

First suppose that \((2)\) is inert.
Equations~\eqref{eq:eisenstein-product-constant-relation} and
\eqref{eq:eisenstein-product-two-coefficient} give
\begin{equation}
\label{eq:weight-two-inert}
A+B=\frac1{4^n-1}=\frac{AB}{C}.
\end{equation}
Since $AB/C=1/(4^n-1)$, the upper bound for $AB/C$ from
\eqref{eq:hecke-L-value-bounds} gives an upper bound on $D_F$.
We will use it to bound the degree and class number.
With $c=\pi^8/9720$, the resulting inequalities are
\begin{equation}
\label{eq:weight-two-root-discriminant}
\begin{aligned}
D_F&<\bigl((4^n-1)c^n\bigr)^2,\\
\delta_F&<16c^2<15.248.
\end{aligned}
\end{equation}
The Odlyzko--Takeuchi bound \eqref{eq:prelim-arithmetic-bounds}
\cite[Proposition~2.3]{TAKEUCHI83} therefore gives \(n\leq12\).

We next show that \(h(F)\leq2\). For \(n\geq5\), apply the Odlyzko--Takeuchi bound in
\eqref{eq:prelim-arithmetic-bounds} to the Hilbert class field \(H\).
Since \(\delta_H=\delta_F\), we obtain \(nh(F)\leq12\).
In degrees three and four,
\eqref{eq:weight-two-root-discriminant} gives
\(D_F<3434.61\) and \(D_F<53621.89\), respectively.
Roblot's tables \cite{RoblotTables} give \(h(F)\leq2\) throughout
these ranges except for the cubic field of discriminant \(2597\).
That field is generated by a root \(a\) of
\(x^3-x^2-9x+8\). Since \(N(a-2)=6\), the ideal \((2)\) cannot
be inert: in that case, the exponent of \(2\) in this norm
would be divisible by three.

For a totally real field \(K\) of degree \(m\), the functional
equation \eqref{eq:prelim-hecke-functional-equation} gives
\((-1)^m\zeta_K(-1)>0\). If \(h(F)=2\), Artin factorization
\cite[p.~4]{CogdellArtin} for its Hilbert class field yields
\[
L(-1,\varepsilon)=\frac{\zeta_H(-1)}{\zeta_F(-1)},
\]
where \(\varepsilon\) is the nontrivial ordinary class character.
Thus \((-1)^nL(-1,\chi)>0\) for every ordinary class character
\(\chi\) under consideration. If \(n\) is odd, then
\(A+B<0\), contrary to \eqref{eq:weight-two-inert}.

For even \(n\), both \(A\) and \(B\) are positive, so
\eqref{eq:hecke-L-value-bounds} gives a lower bound for \(A+B\).
If \(n=4\), the minimum discriminant \(725\)
\cite[Table~3]{Voight} gives
\[
(4^4-1)(A+B)>
2(4^4-1)725^{3/2}\left(\frac3{\pi^4}\right)^4>8.
\]
For even $n\geq6$, we will show that $(4^n-1)(A+B)>1$,
contrary to \eqref{eq:weight-two-inert}. Minkowski's bound gives
\[
(4^n-1)(A+B)>T_n:=
2(4^n-1)\left(\frac{n^n}{n!}\right)^3
\left(\frac3{\pi^4}\right)^n.
\]
Here \(T_6>1.88\). The binomial estimate
\((1+1/n)^n>9/4\) for \(n\geq3\) shows that
\[
\frac{T_{n+1}}{T_n}
=\frac{4^{n+1}-1}{4^n-1}
\left(1+\frac1n\right)^{3n}\frac3{\pi^4}
>\frac{2187}{16\pi^4}>1.
\]
Thus \((4^n-1)(A+B)>1\) for every even \(n\geq4\),
contrary to \eqref{eq:weight-two-inert}.

Finally, suppose that \((2)\) is not inert, and again choose
\(\mathfrak p\mid(2)\) of least residue degree. Put
\(q=N(\mathfrak p)\) and \(u_i=\chi_i(\mathfrak p)\).
Equation~\eqref{eq:noninert-prime-product-relation} and the
constant-term relation \eqref{eq:eisenstein-product-constant-relation}
again force \(u_1\ne u_2\).
The norm comparison giving \eqref{eq:higher-degree-noninert-bound}
is valid for \(k\geq2\). At \(k=2\), it gives
\begin{equation*}
D_F^2\left(\frac{810}{\pi^8}\right)^n
<q^2+1\leq2^n+1<\left(\frac{17}{8}\right)^n.
\end{equation*}
The last inequality follows from the binomial theorem for \(n\geq3\).
Consequently, we obtain
\[
\delta_F^2<\frac{17\pi^8}{6480}<25.
\]
Thus \(\delta_F<5\). In degree three this gives \(D_F<125\),
so \(h(F)=1\) by \cite{RoblotTables}, contrary to \(u_1\ne u_2\).
In degree four it contradicts \(D_F\geq725\).
For \(n\geq5\), \cite[Proposition~2.3]{TAKEUCHI83} gives
\(\delta_F>29.099e^{-8.3185/5}>5.512\), completing the proof.
\end{proof}
\subsection{Distinct weights}
\label{sec:distinct-weight-eisenstein-products}

The following result was stated in
\cite[Theorem~2]{HaoQinZhou2026}. We give a proof that includes the cases
omitted from the original argument.

\begin{theorem}[Hao--Qin--Zhou, {\cite[Theorem~2]{HaoQinZhou2026}}]
\label{thm:distinct-weight-published}
Over a totally real field of degree \(n>2\), a product of two
full-level Eisenstein eigenforms of distinct parallel weights at
least two is not a Hecke eigenform.
\end{theorem}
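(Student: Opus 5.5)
We follow the template of Theorems~\ref{thm:equal-weight-k-at-least-three} and~\ref{thm:equal-weight-k-two}. Suppose $f_1f_2$ is a Hecke eigenform with $2\le k_1<k_2$, and write $A=L(1-k_1,\chi_1)$, $B=L(1-k_2,\chi_2)$, $C=L(1-k_1-k_2,\chi_3)$. The constant-term relation \eqref{eq:eisenstein-product-constant-relation} gives $AB=(A+B)C$. We split according to whether $(2)$ is inert, as before.

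\textbf{Non-inert case.} Choose $\mathfrak p\mid(2)$ of least residue degree, so $q=N\mathfrak p\le2^{\lfloor n/2\rfloor}$. Combining Lemma~\ref{lem:noninert-prime-product-relation} with \eqref{eq:eisenstein-product-constant-relation}, we eliminate $B$. From $B=AC/(A-C)$ and \eqref{eq:noninert-prime-product-relation} we obtain a linear relation
\[
\frac{C}{A}\bigl(u_1u_2q^{k_1+k_2-1}-u_2q^{k_2-1}\bigr)=u_1q^{k_1-1}-u_2q^{k_2-1}.
\]
We may take $u_1\neq 0$. If the right side vanished, then $|q^{k_1-1}|=|q^{k_2-1}|$, which is impossible since $k_1\ne k_2$. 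So $C/A$ is a quotient whose denominator is a nonzero algebraic integer, and whose numerator has all conjugates bounded by $q^{k_1+k_2-1}+q^{k_2-1}$.

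Using \eqref{eq:hecke-L-value-bounds} and \eqref{eq:hecke-L-galois-equivariance} for every conjugate, and taking norms over a cyclotomic field, we get
\[
R(k_1,k_2):=D_F^{k_2}\Bigl(\frac{\Gamma(k_1+k_2)}{(2\pi)^{k_2}\Gamma(k_1)\zeta(k_1+k_2)\zeta(k_1)}\Bigr)^n<q^{k_2-1}(q^{k_1}+1).
\]
As in the equal-weight proof, we check this fails at the minimal pair $(k_1,k_2)=(2,3)$ using Minkowski's bound, the minimal discriminants $49$ and $725$ in degrees $3$ and $4$, and an induction in $n$. We then show that the ratio of the left side to the right side increases when either $k_1$ or $k_2$ is raised; each such step gains a factor of at least $D_F(\cdot/2\pi)^n>2^n>q$. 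Parity of $k_1+k_2$ relative to $\chi_3$ imposes no extra constraint here.

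\textbf{Inert case.} Equation \eqref{eq:eisenstein-product-two-coefficient} with $F_i=1+\chi_i((2))2^{n(k_i-1)}$-type values (evaluated on $(2)$, of norm $2^n$), together with $AB=(A+B)C$, gives two equations in $A,B$. Solving them yields explicit algebraic expressions for $A+B$ and $AB/C$ in terms of powers of $2^n$ and roots of unity. We then compare $C/(AB)$, or $C/A$, with \eqref{eq:hecke-L-value-bounds}, obtaining an inequality of the shape $\sqrt{D_F}H(k_1,k_2)^n<1$. Here $H$ is bounded below using a monotonicity argument in each weight, as for $H(k)$. This contradicts Minkowski's bound for $n\ge3$.

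\textbf{Main obstacle.} The step most likely to resist this scheme is $k_1=2$ in the inert case. There the gamma factors are smallest and the right side carries a factor $2^{n(k_2-1)}$, so the naive estimate may fail for small $n$. We would treat it as in Theorem~\ref{thm:equal-weight-k-two}. First, the sign of $(-1)^nL(-1,\chi_1)$ via Artin factorization (with $h(F)\le2$ obtained from Odlyzko--Takeuchi applied to $H$) kills odd $n$. For even $n$, we bound $\delta_F$, which leaves finitely many fields of degree $4$, $6,\dots$ from tables, and these are excluded by a direct computation of $A$, $B$, $C$.
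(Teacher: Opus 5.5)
\textbf{There is a genuine gap in the inert case.} When $k_1\ne k_2$, the dyadic coefficients $F_1=1+2^{n(k_1-1)}$ and $F_2=1+2^{n(k_2-1)}$ differ. So \eqref{eq:eisenstein-product-two-coefficient} is the single linear relation $A(F_3-F_2)+B(F_3-F_1)=2^n$. Together with $AB=(A+B)C$, this is two equations in the three unknowns $A,B,C$. It determines neither $A+B$ nor $AB/C$. Consequently there is no analogue of $C/(AB)=2^{n(2k-2)}(1-2^{-nk})$, no inequality $\sqrt{D_F}\,H(k_1,k_2)^n<1$, and no root-discriminant bound to feed into Odlyzko--Takeuchi or class-number tables. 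Your fallback for $k_1=2$ has the same defect. The sign argument of Theorem~\ref{thm:equal-weight-k-two} rests on $A+B=1/(4^n-1)$ with all characters ordinary class characters. Here you have only a weighted relation, and for odd $k_2$ the character $\chi_2$ is totally odd, so the Hilbert class field factorization says nothing about $B$.

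\textbf{The missing idea is that distinct weights make the constant-term relation contradictory by size alone.} In your labelling, $C/A=B/(A+B)$, so $|C/A|\le 1/(1-|A/B|)$ once $|A/B|<1$. Your own bound $R(k_1,k_2)$ makes $|C/A|$ large, and \eqref{eq:hecke-L-value-bounds} makes $|B/A|$ grow roughly like $\prod_{j=k_1}^{k_2-1}(\delta_F j/2\pi)^n$. The paper runs exactly this comparison, as $1\ge(|A/B|-1)|C/A|$ with $A$ at the larger weight. The bound $\delta_F>7/2$ settles every pair whose larger weight is at least $4$. For the pair $\{3,2\}$, the odd weight forces $n$ even, and $\delta_F>7$ settles $n\ge6$.

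Only $\{3,2\}$ with $n=4$ needs your dyadic split. Write $A$ for the weight-three value. If $(2)$ is not inert, the prime relation gives $|C/A|\le(q^3+1)/(q-1)\le 65/3$, against $|C/A|>1000$. If $(2)$ is inert, the linear relation above reads $65520A+65280B=16$. The reverse triangle inequality contradicts this, using $|A|>1/3$ and $|B/C|<1/1000$.

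\textbf{Smaller points in your non-inert branch.}
\begin{enumerate}[label=(\roman*)]
\item Your displayed relation has its two sides interchanged, although you then use the correct orientation.
\item Minkowski's bound is too weak for the base pair at $n=6$: it yields only $R(2,3)>2466$, against $q^2(q^2+1)\le 4160$. You need Odlyzko--Takeuchi there.
\item Raising $k_1$ gains a factor $((k_1+k_2)/k_1)^n>2^n$, not a power of $D_F$.
\end{enumerate}
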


\begin{proof}[Proof of Theorem~\ref{thm:distinct-weight-published}]
Suppose otherwise, and assume \(k_1>k_2\geq2\). Put
\[
A=L(1-k_1,\chi_1),\qquad B=L(1-k_2,\chi_2),\qquad
C=L(1-k_1-k_2,\chi_3).
\]
By \eqref{eq:eisenstein-product-constant-relation}, we have
\begin{equation}
\label{eq:unequal-weight-contradiction}
1=\left|\frac AB+1\right|\left|\frac CA\right|
\geq\left(\left|\frac AB\right|-1\right)\left|\frac CA\right|.
\end{equation}
We first prove \(\lvert A/B\rvert>11/10\) for all
\(k_1>k_2\geq2\) with \(k_1\geq4\), and then treat
\((k_1,k_2)=(3,2)\).

To contradict \eqref{eq:unequal-weight-contradiction}, we need
lower bounds for both $|A/B|$ and $|C/A|$. In their gamma-factor
ratios, each increase of the weight contributes a factor
$\delta_Fj/(2\pi)$. Writing $r=\delta_F/(2\pi)$, the estimates in
\cite[equation~(2.4)]{You-Zhang2021Hilbert}, recalled in
\eqref{eq:hecke-L-value-bounds}, give
\begin{align*}
\left|\frac AB\right|
&>\left(
\frac{\prod_{j=k_2}^{k_1-1}rj}{\zeta(k_1)\zeta(k_2)}
\right)^n=:R,
\\
\left|\frac CA\right|
&>\left(
\frac{\prod_{j=k_1}^{k_1+k_2-1}rj}
{\zeta(k_1+k_2)\zeta(k_1)}
\right)^n=:S.
\end{align*}
We have used \(\zeta(s)>1\) to discard the zeta factors
in the numerators.
The cubic bound \(D_F\geq49\) \cite[Table~3]{Voight} and the
Odlyzko--Takeuchi bound \eqref{eq:prelim-arithmetic-bounds}
\cite[Proposition~2.3]{TAKEUCHI83} imply \(\delta_F>7/2\)
for every \(n\geq3\).

Now assume $k_1\geq4$. The discriminant bound just proved gives
$r>7/(4\pi)$. We denote the right-hand side by $r_0$.
Every factor $rj$ in the products above exceeds one.
For \(R\), retain the first two factors when \(k_2=2\), or the
first factor when \(k_2\geq3\). Since \(\zeta(s)\) decreases,
the resulting bounds reduce to the pairs \((4,2)\) and \((4,3)\).
Each expression in the minimum below exceeds one. Since
\(n\geq3\), we obtain
\[
R>\min\left\{
\frac{6r_0^2}{\zeta(4)\zeta(2)},
\frac{3r_0}{\zeta(4)\zeta(3)}
\right\}^3>\frac{11}{10}.
\]
Likewise, retaining the first two factors in \(S\) gives
\[
S>\left(\frac{20r_0^2}{\zeta(6)\zeta(4)}\right)^3>100.
\]
These numerical comparisons use the elementary zeta estimates
from Subsection~\ref{sec:higher-degree-eisenstein-products}
and \(\pi<22/7\). Thus
\eqref{eq:unequal-weight-contradiction} would imply
\(1>(R-1)S>10\), a contradiction.

It remains to consider \((k_1,k_2)=(3,2)\).
Lemma~\ref{lem:odd-weight-unit-parity} forces \(n\) to be even.
For \(n\geq6\), the same discriminant bound gives \(\delta_F>7\).
Using \(\zeta(3)<5/4\) and \(\zeta(5)<67/64\), we obtain
\[
\begin{aligned}
R&>\left(\frac{168}{5\pi^3}\right)^n>\frac32,\\
S&>\left(\frac{147}{\pi^2\zeta(5)\zeta(3)}\right)^n>10^n.
\end{aligned}
\]
Again \((R-1)S>1\), contradicting
\eqref{eq:unequal-weight-contradiction}.

Finally, let \(n=4\). Using \(D_F\geq725\)
\cite[Table~3]{Voight}, \(\zeta(3)<5/4\), \(\zeta(5)<67/64\),
and \(\zeta(2)=\pi^2/6\), \eqref{eq:hecke-L-value-bounds} gives
\[
|A|>\frac{16\cdot725^2\sqrt{725}}{625\pi^{12}}
>\frac{16\cdot725^2\cdot26}{625(22/7)^{12}}>\frac13,
\]
\[
\left|\frac CA\right|>725^2\left(\frac{768}{335\pi^2}\right)^4>1000,
\qquad
\left|\frac CB\right|>725^3\left(\frac{1152}{67\pi^5}\right)^4>1000.
\]
For the last two comparisons, replacing \(\pi\) by \(22/7\)
reduces each assertion to a rational inequality. If \((2)\) is not
inert, choose a prime \(\mathfrak p\mid(2)\) of least residue degree,
as in the equal-weight case. Then \(q=N(\mathfrak p)\) belongs to
\(\{2,4\}\). Put \(u_i=\chi_i(\mathfrak p)\).
Equations~\eqref{eq:noninert-prime-product-relation} and
\eqref{eq:eisenstein-product-constant-relation} give
\[
\left|\frac CA\right|
=\left|\frac{q^3u_1u_2-u_2}{qu_1-u_2}\right|
\leq\frac{q^3+1}{q-1}\leq\frac{65}{3},
\]
contrary to \(\lvert C/A\rvert>1000\).

If \((2)\) is inert, \eqref{eq:eisenstein-fourier-and-constant-terms}
gives the three coefficients \(257,17,65537\) at \((2)\). Hence
\eqref{eq:eisenstein-product-two-coefficient} becomes
\[
65520A+65280B=16.
\]
Using \(A+B=AB/C\) from
\eqref{eq:eisenstein-product-constant-relation}, the reverse
triangle inequality gives
\[
\begin{aligned}
16&=|240A+65280(A+B)|\\
&\geq |A|\left(240-65280\left|\frac BC\right|\right)\\
&>\frac13\left(240-\frac{65280}{1000}\right)>16.
\end{aligned}
\]
This is a contradiction.
\end{proof}

\section{Eisenstein products with a weight-one factor}
\label{sec:weight-one-eisenstein-products}

It remains to consider Eisenstein products with a factor of weight one.
We treat the case of exactly one such factor before the case of two.
For \(E=E_1(\phi,\psi)\), put \(\chi=\phi^{-1}\psi\).
The weight-one formula
\cite[Proposition~2.1, equation~(50)]{Dasgupta2011Hilbert} is
\begin{equation}
\label{eq:weight-one-constant-vector}
c_\nu(0,E)=2^{-n}\bigl(
\phi^{-1}(t_\nu)L(0,\chi)
+\psi^{-1}(t_\nu)L(0,\chi^{-1})\bigr).
\end{equation}
Here \(\chi\) is totally odd and unramified at every finite place.
The order-of-vanishing
formula in \cite[p.~441]{Dasgupta2011Hilbert} gives
\(L(0,\chi)\ne0\) and \(L(0,\chi^{-1})\ne0\).
The constant terms of \(E\) are therefore a linear combination
of two distinct characters, with nonzero coefficients.

\begin{proposition}
\label{prop:weight-one-ee}
Let \(F\) be a totally real field of degree at least two. If
\(f_i=E_{k_i}(\phi_i,\psi_i)\), for \(i=1,2\), are normalized
full-level Eisenstein eigenforms with \(k_i\geq1\) and
\(\min(k_1,k_2)=1\), then \(f_1\cdot f_2\) is not a Hecke eigenform.
\end{proposition}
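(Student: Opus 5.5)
The plan is to compare constant-term vectors on the narrow components. Regard $\nu\mapsto c_\nu(0,\cdot)$ as a function on the narrow class group $\mathrm{Cl}^+(F)$ via $t_\nu$. Since $c_\nu(0,f_1f_2)=c_\nu(0,f_1)c_\nu(0,f_2)$, this vector is multiplicative under products. For any nonzero Hecke eigenform $g$ of weight $k\geq2$ there are two possibilities. If $g$ is cuspidal, its constant vector vanishes. Otherwise, by the Hecke eigenbasis recalled after \eqref{eq:eisenstein-fourier-and-constant-terms}, $g$ is a scalar multiple of some $E_k(\phi,\psi)$, whose constant vector is $2^{-n}L(1-k,\phi^{-1}\psi)\,\phi^{-1}(t_\nu)$. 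The $L$-value here is nonzero by \eqref{eq:hecke-L-value-bounds}, so the vector is a nonzero multiple of a \emph{single} character of $\mathrm{Cl}^+(F)$.

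The weight $k_1+k_2$ of the product is at least $2$. By linear independence of characters, it therefore suffices to show that the constant vector of $f_1f_2$ is a combination of characters in which at least two \emph{distinct} characters have nonzero total coefficient. This rules out both the cuspidal and the Eisenstein alternatives. The input is \eqref{eq:weight-one-constant-vector}: for $E_1(\phi,\psi)$ the constant vector is $a\phi^{-1}+b\psi^{-1}$ with $a=2^{-n}L(0,\chi)\neq0$ and $b=2^{-n}L(0,\chi^{-1})\neq0$. Moreover $\phi\neq\psi$, since $\chi=\phi^{-1}\psi$ is totally odd and hence nontrivial.

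\emph{Case $k_1=1<k_2$.} The constant vector of $f_2$ is $c\,\phi_2^{-1}$ with $c\neq0$. The product therefore has constant vector $ac(\phi_1\phi_2)^{-1}+bc(\psi_1\phi_2)^{-1}$. These two characters are distinct because $\phi_1\neq\psi_1$, and both coefficients are nonzero. This gives the required contradiction.

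\emph{Case $k_1=k_2=1$.} Write the constant vector of $f_2$ as $c\phi_2^{-1}+d\psi_2^{-1}$ with $c,d\neq0$. The product has constant vector
\[
ac(\phi_1\phi_2)^{-1}+ad(\phi_1\psi_2)^{-1}+bc(\psi_1\phi_2)^{-1}+bd(\psi_1\psi_2)^{-1}.
\]
Since $\phi_i\neq\psi_i$, the only possible coincidences among these four characters are $\phi_1\phi_2=\psi_1\psi_2$, i.e.\ $\chi_1\chi_2=1$, and $\phi_1\psi_2=\psi_1\phi_2$, i.e.\ $\chi_1=\chi_2$. If at most one coincidence occurs, the two terms not involved in it are distinct characters with nonzero coefficients, and we are done.

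The main obstacle is the subcase where both coincidences hold, because cancellation could then occur. In that subcase only the two characters $(\phi_1\phi_2)^{-1}$ and $(\phi_1\psi_2)^{-1}$ remain, with coefficients $ac+bd$ and $ad+bc$. The two conditions force $\chi_1=\chi_2$ and $\chi_1^2=1$, so $\chi_1^{-1}=\chi_1$ and likewise $\chi_2^{-1}=\chi_2$. Hence $a=b$ and $c=d$, both coefficients equal $2ac\neq0$, and we again obtain two distinct characters with nonzero coefficients. This contradiction completes the proof.
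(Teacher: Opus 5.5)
Your proof is correct and follows the paper's argument. In the case $k_1=k_2=1$ the paper splits the four terms into a totally even part on the characters $\{1,\chi_1^{-1}\chi_2^{-1}\}$ and a totally odd part on $\{\chi_1^{-1},\chi_2^{-1}\}$; these groupings correspond exactly to your two possible coincidences, so your case analysis is the same argument organized differently.

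One justification needs tightening. Having an eigenbasis of $\mathcal E_k$ does not by itself make a non-cuspidal eigenform a scalar multiple of a single $E_k(\phi,\psi)$. Argue instead with the Eisenstein projection of $g$, which has the same constant terms. For $k\ge2$ the Hecke roots $\phi(\mathfrak p)$ and $\psi(\mathfrak p)N\mathfrak p^{k-1}$ have distinct absolute values, so distinct inducing pairs give distinct eigenvalue systems. That projection is therefore a multiple of one $E_k(\phi,\psi)$, as the paper notes.
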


\begin{proof}
The uniqueness argument for the Eisenstein projection following
Lemma~\ref{lem:low-product-coefficients} is valid in every
weight $w\geq2$, since the roots of the Hecke polynomial
have distinct absolute values.
Thus, if a full-level Hecke eigenform $g$ of weight $w$ has a
nonzero constant term, its Eisenstein projection is proportional
to a single $E_w(\phi,\psi)$. Since the cuspidal projection has
zero constant terms, \eqref{eq:eisenstein-fourier-and-constant-terms}
shows that the constant terms of $g$ are a nonzero scalar multiple
of one narrow class character.

If exactly one factor has weight one, its constant terms contain
two distinct characters with nonzero coefficients. The other
factor contributes a single character. Multiplication on each
component preserves the two distinct characters and their nonzero
coefficients, contrary to the preceding claim.

It remains to treat \(k_1=k_2=1\). Put
\(\chi_i=\phi_i^{-1}\psi_i\), \(A_i=L(0,\chi_i)\), and
\(B_i=L(0,\chi_i^{-1})\), and write
\(a=\chi_1^{-1}\), \(b=\chi_2^{-1}\).
Up to the nonzero scalar \(2^{-2n}\) and the common character
factor \((\phi_1\phi_2)^{-1}\), the product constant terms are
\[
\begin{aligned}
W&=(A_1+B_1a)(A_2+B_2b)\\
&=(A_1A_2+B_1B_2ab)+(B_1A_2a+A_1B_2b).
\end{aligned}
\]
All four coefficients are nonzero. The first sum involves totally
even characters, and the second involves totally odd characters.
We check that neither sum vanishes.
If $ab\ne1$, the characters $1$ and $ab$ are distinct,
so their linear independence shows that the even part is nonzero.
If $ab=1$, then $A_2=B_1$ and $B_2=A_1$,
so the even part equals $2A_1B_1\ne0$.
Similarly, the odd part is nonzero when $a\ne b$.
If $a=b$, then $A_2=A_1$ and $B_2=B_1$,
so the odd part equals $2A_1B_1a\ne0$.
Since no character is both totally even and totally odd, $W$ is
a linear combination of at least two distinct characters. Thus
the product constant terms cannot be proportional to a single
character, contrary to the first paragraph of the proof.
\end{proof}

\begin{proof}[Proof of Theorem~\ref{thm:main-ee}]
The classification for weights at least two is established in
Sections~\ref{sec:quadratic-ee-products}--\ref{sec:higher-degree-ee-products}.
Proposition~\ref{prop:weight-one-ee} excludes all remaining
Eisenstein--Eisenstein products.
\end{proof}

\section{Eisenstein--cuspidal products with Eisenstein weight at least three}
\label{sec:eisenstein-cuspidal-weight-at-least-three}

We now turn to Eisenstein--cuspidal products with Eisenstein weight
\(k\geq3\) and cuspidal weight \(\ell\geq1\). We first record two
lemmas used in these cases. For \(k\geq4\), we begin with quadratic
fields, separated according to their narrow class number, and then
consider fields of higher degree. The final subsection treats weight
three.

\subsection{Integrality and local coefficient identities}
\label{subsec:ec-integrality-local-identities}

We first record an integrality lemma and local coefficient identities
for quadratic fields. We state them for \(k\geq2\) and \(\ell\geq1\),
since we will also use them in the weight-two case.

\begin{samepage}
\begin{lemma}
\label{lem:ec-integrality}
Let \(F\) be a totally real field of degree \(n\geq2\), and let
\(f=E_k(\phi,\psi)\) and \(h\in S_\ell(\eta)\) be normalized
full-level Hecke eigenforms with \(k\geq2\) and \(\ell\geq1\).
If \(g=f\cdot h\) is a Hecke eigenform, then
\[
\alpha=c_1(0,f)^{-1}=\frac{2^n}{L(1-k,\phi^{-1}\psi)}
\]
is a nonzero algebraic integer.
\end{lemma}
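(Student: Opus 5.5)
The plan is to compare unit-ideal coefficients and then use algebraic integrality of Hecke eigenvalues of the product eigenform. Since $h$ is cuspidal, $c_1(0,h)=0$. Lemma~\ref{lem:low-product-coefficients}, equation \eqref{eq:product-coefficient-one}, then gives
\[
c(\mathcal O_F,f\cdot h)=c_1(0,f)\,c(\mathcal O_F,h)=c_1(0,f)=2^{-n}L(1-k,\phi^{-1}\psi),
\]
which is nonzero by \eqref{eq:hecke-L-value-bounds}, so $\alpha$ is a well-defined nonzero number. The product $g$ is a nonzero cuspidal Hecke eigenform of weight $k+\ell\ge3$, and its unit-ideal coefficient equals $c_1(0,f)$. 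Hence $\alpha g=g/c(\mathcal O_F,g)$ is the normalized eigenform, and every coefficient $c(\mathfrak a,\alpha g)$ is an algebraic integer by Lemma~\ref{lem:weight-one-coefficients} (applicable since $\alpha g$ is a normalized full-level cuspidal eigenform).

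The next step is to express these coefficients through the convolution formula \eqref{eq:component-product-convolution}, namely
\[
c(\mathfrak a,\alpha g)=c(\mathfrak a,h)+\alpha\sum c(\cdot,f)\,c(\cdot,h).
\]
The constant-term contribution of $f$ gives exactly $c(\mathfrak a,h)$, because $\alpha c_\nu(0,f)=\phi^{-1}(t_\nu)$ is a root of unity times $c(\mathfrak a,h)$. This term is integral. Taking $\mathfrak a=(2)$ in the principal component, \eqref{eq:product-coefficient-two} gives
\[
c((2),\alpha g)=c((2),h)+\alpha\cdot c(\mathcal O_F,f)\,c(\mathcal O_F,h)=c((2),h)+\alpha.
\]
So $\alpha=c((2),\alpha g)-c((2),h)$ is a difference of algebraic integers, hence an algebraic integer.

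The main point to check is that the coefficient of $(2)$ in the principal component is computed exactly by \eqref{eq:product-coefficient-two}, with no further decompositions. This is already established in the discussion preceding Lemma~\ref{lem:low-product-coefficients} via Minkowski's determinant inequality. I also need to confirm that $c((2),h)$ is an algebraic integer, including in weight $\ell=1$; that is covered by Lemma~\ref{lem:weight-one-coefficients}. Nonvanishing of $\alpha$ is clear, since it is the reciprocal of a finite nonzero number.
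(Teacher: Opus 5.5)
Your proof is correct and follows essentially the same route as the paper: you normalize $G=\alpha g$ via \eqref{eq:product-coefficient-one}, read off $\alpha=c((2),G)-c((2),h)$ from \eqref{eq:product-coefficient-two}, and invoke integrality of normalized eigenform coefficients (Shimura in weight $\geq2$, and Lemma~\ref{lem:weight-one-coefficients} when $\ell=1$). One small slip sits in your unused aside: on a nonprincipal component the constant-term contribution is $\phi^{-1}(t_\nu)c(\mathfrak a,h)$ rather than exactly $c(\mathfrak a,h)$, but nothing in the argument depends on this.
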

\end{samepage}

\begin{proof}
The formula for \(\alpha\) follows from
\eqref{eq:eisenstein-fourier-and-constant-terms}. The proof follows the integrality argument in
\cite[\S6]{Joshi-Zhang2019Hilbert}, extended here to arbitrary
degree and cuspidal weight one. 
Since \(h\) is cuspidal, \eqref{eq:product-coefficient-one} shows
that \(G=\alpha g\) is normalized. Equation~\eqref{eq:product-coefficient-two}
then gives
\begin{equation}
\label{eq:quadratic-ec-two-coefficient}
c((2),G)=c((2),h)+\alpha.
\end{equation}

Both coefficients are algebraic integers by
\cite[Proposition~2.2]{Shimura1978Hilbert} in weights at least two
and Lemma~\ref{lem:weight-one-coefficients} when \(\ell=1\).
Their difference is \(\alpha\), proving the claim.
\end{proof}

We next use a prime above \((2)\) to obtain a lower bound for
\(|\alpha|\) when \((2)\) ramifies. The coefficient calculation
in \cite[\S6.1]{Joshi-Zhang2019Hilbert} also works for
cuspidal weight one.

\begin{lemma}
\label{lem:quadratic-ec-ramified-prime}
Under the hypotheses of Lemma~\ref{lem:ec-integrality}, assume
that \(F\) is a real quadratic field. Put \(G=\alpha g\) and
\(\omega=\phi\psi\), and let \(\mathfrak p\mid(2)\) have norm two.
Then the prime coefficient satisfies
\begin{equation}
\label{eq:quadratic-ec-prime-relation}
c(\mathfrak p,G)=\phi(\mathfrak p)c(\mathfrak p,h).
\end{equation}
If \((2)=\mathfrak p^2\), then we have
\begin{equation}
\label{eq:quadratic-ec-ramified-alpha}
\alpha=\eta(\mathfrak p)2^{\ell-1}
\bigl(1-\omega(\mathfrak p)2^k\bigr).
\end{equation}
\end{lemma}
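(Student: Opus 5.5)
The plan is to read off the coefficient at \(\mathfrak p\) from the convolution formula \eqref{eq:component-product-convolution} on a suitable narrow component, and then, in the ramified case, to compare the two Hecke relations at \(\mathfrak p^2=(2)\) with the identity \eqref{eq:quadratic-ec-two-coefficient} from the proof of Lemma~\ref{lem:ec-integrality}.

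\emph{Prime coefficient.} Since \(n=2\) and \(N\mathfrak p=2<2^n=4\), the discussion after \eqref{eq:indecomposable-norm-bound} shows that every exponent corresponding to \(\mathfrak p\) is indecomposable. Choose the component \(t_\nu\) with \([t_\nu]=[\mathfrak p]^{-1}\) in the narrow class group, and write \(\mathfrak p=(\mu)t_\nu^{-1}\) with \(\mu\gg0\).

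In \eqref{eq:component-product-convolution} the convolution sum is then empty. The term \(c_\nu(0,h)c(\mathfrak p,f)\) vanishes because \(h\) is cuspidal. Hence
\[
c(\mathfrak p,f\cdot h)=c_\nu(0,f)\,c(\mathfrak p,h).
\]
The normalizing powers \(N(t_\nu)^{-k/2}\), \(N(t_\nu)^{-\ell/2}\) and \(N(t_\nu)^{-(k+\ell)/2}\) match multiplicatively, so this identity holds directly for the ideal coefficients.

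By \eqref{eq:eisenstein-fourier-and-constant-terms},
\[
c_\nu(0,f)=2^{-n}\phi^{-1}(t_\nu)L(1-k,\phi^{-1}\psi)=\alpha^{-1}\phi^{-1}(t_\nu).
\]
Since \(\mathfrak p t_\nu=(\mu)\) with \(\mu\gg0\), we have \(\phi^{-1}(t_\nu)=\phi(\mathfrak p)\). Multiplying by \(\alpha\) then gives \eqref{eq:quadratic-ec-prime-relation}.

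\emph{Ramified case.} Suppose \((2)=\mathfrak p^2\). Here \(G\) is a normalized eigenform of weight \(k+\ell\) with central character \(\omega\eta\). Applying \eqref{eq:hecke-prime-square-recurrence} to both \(G\) and \(h\) gives
\[
c((2),G)=c(\mathfrak p,G)^2-\omega\eta(\mathfrak p)2^{k+\ell-1},\qquad
c((2),h)=c(\mathfrak p,h)^2-\eta(\mathfrak p)2^{\ell-1}.
\]
Subtract the second from the first and use \eqref{eq:quadratic-ec-two-coefficient}, which says the difference equals \(\alpha\). Substituting \eqref{eq:quadratic-ec-prime-relation} yields
\[
\alpha=\bigl(\phi(\mathfrak p)^2-1\bigr)c(\mathfrak p,h)^2+\eta(\mathfrak p)2^{\ell-1}-\omega\eta(\mathfrak p)2^{k+\ell-1}.
\]
The only point needing care is that the first term vanishes. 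This holds because \(\mathfrak p^2=(2)\) has the totally positive generator \(2\), so \(\phi(\mathfrak p)^2=\phi((2))=1\) by \eqref{eq:prelim-principal-character}. What remains is exactly \eqref{eq:quadratic-ec-ramified-alpha}.

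The step I expect to need the most care is the bookkeeping in the first part: choosing the component class correctly, as \([\mathfrak p]^{-1}\) rather than \([\mathfrak p]\), and checking the character evaluation \(\phi^{-1}(t_\nu)=\phi(\mathfrak p)\). This is the same step already used in Lemma~\ref{lem:noninert-prime-product-relation}, so I would follow that argument verbatim. Weight \(\ell=1\) causes no difficulty, because the Hecke recurrence \eqref{eq:hecke-prime-square-recurrence} is valid in every weight.
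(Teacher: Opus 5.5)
Your proposal is correct and follows the paper's proof essentially step for step. You use indecomposability of the exponent on the component with \([t_\nu]=[\mathfrak p]^{-1}\) and the identification \(c_\nu(0,f)=\phi(\mathfrak p)c_1(0,f)\). In the ramified case you subtract the prime-square Hecke relations for \(G\) and \(h\), using \(\phi(\mathfrak p)^2=\phi((2))=1\) together with \(c((2),G)=c((2),h)+\alpha\).
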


\begin{proof}
Choose \(t_\nu\) with \([t_\nu]=[\mathfrak p]^{-1}\).
Since \(N\mathfrak p=2<4\),
\eqref{eq:indecomposable-norm-bound} shows that the corresponding
exponent is indecomposable. Equation~\eqref{eq:component-product-convolution}
and cuspidality of \(h\) therefore give
\[
c(\mathfrak p,g)=c_\nu(0,f)c(\mathfrak p,h).
\]
By \eqref{eq:eisenstein-fourier-and-constant-terms},
\(c_\nu(0,f)=\phi(\mathfrak p)c_1(0,f)\).
Multiplying by \(\alpha\) proves
\eqref{eq:quadratic-ec-prime-relation}.

Now suppose \((2)=\mathfrak p^2\). Since \(2\) is totally positive,
\eqref{eq:prelim-principal-character} gives \(\phi(\mathfrak p)^2=1\),
so the prime coefficients of the normalized product and the
cuspidal factor have equal squares. Subtract the Hecke relations
\eqref{eq:hecke-prime-square-recurrence} for \(G\) and \(h\),
with central characters \(\omega\eta\) and \(\eta\), respectively.
Equation~\eqref{eq:quadratic-ec-two-coefficient} then gives
\eqref{eq:quadratic-ec-ramified-alpha}, also when \(\ell=1\).
Since character values have absolute value one, the reverse
triangle inequality gives
\(\lvert\alpha\rvert\geq2^{\ell-1}(2^k-1)\geq2^k-1\).
\end{proof}

\subsection{Quadratic fields with narrow class number one}
\label{sec:quadratic-ec-class-one}

We begin with real quadratic fields of narrow class number one and
Eisenstein weight \(k\geq4\).

\begin{theorem}[Hao--Qin--Zhou and Joshi--Zhang]
\label{thm:quadratic-ec-class-one-published}
Let \(F\) be a real quadratic field with \(h^+(F)=1\). If \(E_k\) and
\(h\) are full-level Eisenstein and cuspidal Hecke eigenforms of
parallel weights \(k\geq4\) and \(\ell\geq1\), respectively,
then \(E_k\cdot h\) is not a Hecke eigenform.
\end{theorem}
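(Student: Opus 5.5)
Over a real quadratic field with \(h^+(F)=1\) the only narrow class character is trivial. Lemma~\ref{lem:odd-weight-unit-parity} therefore forces \(k\) and \(\ell\) to be even, so \(\ell\geq2\), and \(E_k=E_k(1,1)\) has constant term \(2^{-2}\zeta_F(1-k)\). The plan is to argue by contradiction using Lemma~\ref{lem:ec-integrality}. Suppose \(g=E_k\cdot h\) is a Hecke eigenform. Then
\[
\alpha=\frac{4}{\zeta_F(1-k)}
\]
is a nonzero algebraic integer. It is also rational, by \eqref{eq:hecke-L-galois-equivariance}, so \(\alpha\in\mathbb Z\setminus\{0\}\). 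This forces \(|\zeta_F(1-k)|\leq4\). By Lemma~\ref{lem:quadratic-critical-value-positive},
\[
\zeta_F(1-k)=\frac{4D_F^{k-1/2}\Gamma(k)^2}{(2\pi)^{2k}}\,\zeta_F(k)\geq\frac{4D_F^{k-1/2}\Gamma(k)^2}{(2\pi)^{2k}}.
\]
This lower bound grows rapidly in both \(D_F\) and \(k\). Requiring it to be at most \(4\) bounds \(k\) and \(D_F\) to a finite list: for \(k\geq4\) one needs roughly \(D_F^{k-1/2}(k-1)!^2\leq(2\pi)^{2k}\). This leaves small \(D_F\in\{5,8,13,17,\ldots\}\) and small \(k\), e.g.\ \(k=4,6\) over \(\mathbb Q(\sqrt5)\) and \(\mathbb Q(\sqrt2)\).

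For the surviving pairs I will compute \(\zeta_F(1-k)=\zeta(1-k)L(1-k,\chi_{D_F})\) exactly, via the Bernoulli formula \eqref{eq:generalized-bernoulli-special-value}. I then check whether \(4/\zeta_F(1-k)\) is an integer. For most pairs it is not, which ends the argument; over \(\mathbb Q(\sqrt5)\) with \(k=4\), for instance, \(\zeta_F(-3)=1/30\) gives \(\alpha=120\). For the pairs where \(\alpha\) is an integer, further constraints are needed.

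\begin{itemize}
\item \textbf{Ramified \((2)\)} (e.g.\ \(D_F=8\)). Lemma~\ref{lem:quadratic-ec-ramified-prime} gives \(\alpha=\pm2^{\ell-1}(1-2^k)\), where all character values are \(1\). This fixes \(\ell\) in terms of \(\alpha\) or yields a contradiction directly.
\item \textbf{Remaining cases.} I use \eqref{eq:quadratic-ec-two-coefficient} together with the Ramanujan bound of Lemma~\ref{lem:weight-one-coefficients}. At \((2)\), \(c((2),G)\) is bounded by \(\tau\cdot 2^{n(k+\ell-1)/2}\) and \(c((2),h)\) by \(\tau\cdot 2^{n(\ell-1)/2}\). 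A large \(\alpha\) combined with small \(\ell\), or a mismatch of sizes, gives a contradiction, or else bounds \(\ell\) above as well.
\end{itemize}

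This reduces everything to finitely many triples \((F,k,\ell)\). For these I would compare dimensions of \(S_\ell\) and \(S_{k+\ell}\) and examine explicit eigenforms. Where the relevant spaces are small, one can check directly whether \(E_k h\) is an eigenform by comparing Hecke eigenvalues at a small split or inert prime. The known results of \cite{HaoQinZhou2026} and \cite{Joshi-Zhang2019Hilbert} cover the \(\ell\geq2\) quadratic class-number-one setting, and I would cite them for these finite checks.

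The main obstacle is this residual finite list, especially \(\mathbb Q(\sqrt5)\) with \(k=4\) or \(6\), where the integrality test passes. There a genuine eigenvalue computation at a prime such as \((2)\) or the primes above \(11\) is needed, rather than a pure size estimate.
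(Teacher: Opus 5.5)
Your handling of \(D_F>5\) takes a genuinely different, more self-contained route than the paper. The paper excludes \(\ell=1\) by parity and then simply cites \cite[Theorem~1(2)]{HaoQinZhou2026} for \(D_F>5\) and \cite[Theorem~7.4]{Joshi-Zhang2019Hilbert} for \(D_F=5\).

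Your integrality argument does work for \(D_F>5\). The size bound leaves only
\[
(D_F,k)\in\{(5,4),(8,4),(13,4),(17,4),(5,6),(8,6),(5,8)\}.
\]
The Bernoulli formula then gives, respectively,
\[
\alpha=240,\ 480/11,\ 240/29,\ 120/41,\ 2520/67,\ 1008/361,\ 480/361.
\]
So only \((5,4)\) survives, and the ramified-\((2)\) bullet is never used. There are two numerical slips. First, \(\zeta_{\mathbb Q(\sqrt5)}(-3)=1/60\); the value \(1/30\) is \(\zeta_{\mathbb Q(\sqrt5)}(-1)\). Hence \(\alpha=240\), not \(120\). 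Second, \(k=6\) over \(\mathbb Q(\sqrt5)\) does not pass the integrality test.

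The gap is in the surviving case \(F=\mathbb Q(\sqrt5)\), \(k=4\). Since \((2)\) is inert, \eqref{eq:quadratic-ec-two-coefficient} and the Ramanujan bound give only
\[
240\le|c((2),G)|+|c((2),h)|\le17\cdot2^{\ell}.
\]
This is a \emph{lower} bound on \(\ell\), not an upper bound. So your claimed reduction to finitely many triples \((F,k,\ell)\) does not follow, and no finite list of eigenvalue computations can close the case. As written, this case rests entirely on citing \cite[Theorem~7.4]{Joshi-Zhang2019Hilbert} for \(\mathbb Q(\sqrt5)\). That is exactly the paper's proof, so your intermediate bullets add nothing there.

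To avoid the citation, you need an ingredient that bounds \(\ell\). For example, compare the convolution at \((4)\) with the Hecke relation at \((2)\), as in Lemma~\ref{lem:c11-dyadic-cutoffs}, to obtain
\[
255\cdot4^{\ell-1}=\alpha\Bigl(2c((2),h)+\alpha-\sum_{0\ll x\ll4}c((x),E_4)\,c((4-x),h)\Bigr).
\]
Because \(N(4-x)<16\), the right-hand side is \(o(4^{\ell})\), which leaves finitely many \(\ell\). Those remaining weights can be excluded by Theorem~\ref{thm:c11-dimension-class-one} once \(\dim S_{\ell+4}(1)>1\) is checked. Alternatively, apply Theorem~\ref{thm:c11-dimension-class-one} directly, using the known structure of Hilbert modular forms over \(\mathbb Q(\sqrt5)\).
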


\begin{proof}
Lemma~\ref{lem:odd-weight-unit-parity} excludes \(\ell=1\),
so assume \(\ell\geq2\).
For \(D_F>5\), the exclusion follows from
\cite[Theorem~1(2)]{HaoQinZhou2026}.
For \(D_F=5\), \cite[Theorem~7.4]{Joshi-Zhang2019Hilbert}
lists only one Eisenstein--cuspidal identity, whose Eisenstein
factor has weight two. Thus no identity occurs when \(k\geq4\).
\end{proof}
\subsection{Quadratic fields with larger narrow class number}
\label{sec:quadratic-eisenstein-cuspidal}

We next consider real quadratic fields with \(h^+(F)>1\) and Eisenstein
weight \(k\geq4\). As in \cite[\S6]{Joshi-Zhang2019Hilbert}, we work
with the reciprocal Eisenstein constant term.

Suppose that \(g=f\cdot h\) is an eigenform as in
Lemma~\ref{lem:ec-integrality}, with \(F\) real quadratic,
\(h^+(F)>1\), and \(k\geq4\). Put
\(\chi=\phi^{-1}\psi\) and \(\alpha=4/L(1-k,\chi)\).
We begin with a bound for the conjugates of \(\alpha\).

\begin{lemma}
\label{lem:quadratic-ec-conjugate-bound}
Every Galois conjugate of $\alpha$ satisfies
\[
\lvert\sigma(\alpha)\rvert
<2\pi\left(\frac{4\pi^2}{D_F}\right)^{k-1/2}
\frac{\zeta(4)^2}{\Gamma(k)^2}.
\]
Denote the right-hand side by $V(D_F,k)$. Equivalently, we can write
\[
V(D_F,k)=\frac{4^k\pi^{2k+8}\sqrt{D_F}}
{8100D_F^k((k-1)!)^2}.
\]
For \(D_F\geq12\), \(V(D_F,k)\) decreases as \(D_F\) increases
and as \(k\) runs through the integers \(k\geq4\). Moreover,
\(V(28,4)<1\),
\(V(21,5)<1\), and \(V(12,4)<14\).
\end{lemma}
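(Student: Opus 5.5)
The plan is to bound $|\sigma(\alpha)|=4/|L(1-k,\chi^\sigma)|$ from above using the lower estimate in \eqref{eq:hecke-L-value-bounds}. That estimate applies to every Galois conjugate by \eqref{eq:hecke-L-galois-equivariance}, since $\chi^\sigma$ is again a narrow class character with the same parity. With $n=2$ we have $C_F(k)=4D_F^{k-1/2}\Gamma(k)^2/(2\pi)^{2k}$, and the lower bound reads
\[
|L(1-k,\chi^\sigma)|\geq C_F(k)\,\frac{\zeta(4k)}{\zeta(k)^2}.
\]
Hence
\[
|\sigma(\alpha)|\leq\frac{(2\pi)^{2k}}{D_F^{k-1/2}\Gamma(k)^2}\cdot\frac{\zeta(k)^2}{\zeta(4k)}.
\]
Since $\zeta(4k)>1$ and $\zeta(k)\leq\zeta(4)$ for $k\geq4$, this is strictly less than $(2\pi)^{2k}D_F^{-(k-1/2)}\zeta(4)^2/\Gamma(k)^2$. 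Rewriting $(2\pi)^{2k}=2\pi\,(4\pi^2)^{k-1/2}$ gives the first displayed form of $V(D_F,k)$. Substituting $\zeta(4)=\pi^4/90$ and $\Gamma(k)=(k-1)!$ gives the second form: one gets $\zeta(4)^2=\pi^8/8100$, and collecting $4^k\pi^{2k}$ against $D_F^{-k}\sqrt{D_F}$ gives $4^k\pi^{2k+8}\sqrt{D_F}/(8100D_F^k((k-1)!)^2)$.

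Next comes monotonicity. As a function of $D_F$, $V$ is a constant times $D_F^{1/2-k}$ with $k\geq4$, so it is strictly decreasing. In $k$, the ratio is
\[
\frac{V(D_F,k+1)}{V(D_F,k)}=\frac{4\pi^2}{D_Fk^2}.
\]
For $D_F\geq12$ and $k\geq4$ this is at most $4\pi^2/192<1$, so $V$ decreases along the integers $k\geq4$.

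Finally, I would verify the three numerical inequalities directly from the closed form. At $k=4$ the expression is $4^4\pi^{16}\sqrt{D_F}/(8100\cdot36\,D_F^4)$. At $D_F=28$ the approximation $\pi^{16}\approx9.0\times10^7$ gives roughly $256\cdot9.0\times10^7\cdot5.29/(291600\cdot614656)\approx0.68<1$. At $D_F=12$ the same computation gives about $256\cdot9.0\times10^7\cdot3.46/(291600\cdot20736)\approx13.2<14$. For $(21,5)$ the expression is $4^5\pi^{18}\sqrt{21}/(8100\cdot576\cdot21^5)$, which is roughly $0.53<1$.

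The only delicate point is making these estimates rigorous rather than approximate. I would do this with a rational upper bound such as $\pi<22/7$, or $\pi<3.1416$ where $22/7$ is too crude. The $D_F=12$ case has the least margin, so it is the step I expect to need the most care, possibly requiring the sharper $\pi$ bound or a reference to the computer-assisted calculations in Subsection~\ref{subsec:auxiliary-computations}.
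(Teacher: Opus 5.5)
Your proposal is correct and matches the paper's proof step for step. Both transport the lower bound in \eqref{eq:hecke-L-value-bounds} to every conjugate via \eqref{eq:hecke-L-galois-equivariance}, then use $\zeta(4k)>1$ and $\zeta(k)\le\zeta(4)$, the ratio $4\pi^2/(D_Fk^2)\le\pi^2/48<1$, and substitution with $\pi<22/7$. That bound already suffices even at $D_F=12$, where it gives roughly $13.3<14$. One harmless numerical slip: $V(21,5)$ is about $0.22$, not $0.53$, which only strengthens the conclusion.
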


\begin{proof}
Galois equivariance
\eqref{eq:hecke-L-galois-equivariance} and the lower bound in
\eqref{eq:hecke-L-value-bounds} give
\[
\lvert\sigma(\alpha)\rvert
\leq2\pi\left(\frac{4\pi^2}{D_F}\right)^{k-1/2}
\frac{\zeta(k)^2}{\Gamma(k)^2\zeta(4k)}
<V(D_F,k).
\]
Here \(\zeta(k)\leq\zeta(4)\), and the second expression for
\(V\) uses \(\zeta(4)=\pi^4/90\).

The expression for \(V\) shows that it decreases with \(D_F\).
Moreover, we have
\[
\frac{V(D_F,k+1)}{V(D_F,k)}
=\frac{4\pi^2}{D_Fk^2}\leq\frac{\pi^2}{48}<1.
\]
Substitution, using \(\pi<22/7\), gives the three stated bounds.
\end{proof}

These bounds leave only \(D_F=12,21,24\). We exclude the first
and third by ramification at \((2)\), and the second by an exact
zeta value.

\begin{theorem}
\label{thm:quadratic-eisenstein-cuspidal}
Let \(F\) be a real quadratic field. If \(f=E_k(\phi,\psi)\) and
\(h\in S_\ell(\eta)\) are normalized full-level Eisenstein and
cuspidal Hecke eigenforms of parallel weights \(k\geq4\) and
\(\ell\geq1\), respectively, then \(f\cdot h\) is not a Hecke eigenform.
\end{theorem}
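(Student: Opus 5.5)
The plan is to reduce to finitely many discriminants with Lemma~\ref{lem:quadratic-ec-conjugate-bound}, and then remove the survivors one at a time. If \(h^+(F)=1\), Theorem~\ref{thm:quadratic-ec-class-one-published} already gives the claim, so we may assume \(h^+(F)>1\). Then \(D_F\geq12\) by Lemma~\ref{lem:quadratic-small-fields}(a).

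Suppose \(g=f\cdot h\) is a Hecke eigenform. By Lemma~\ref{lem:ec-integrality}, \(\alpha=4/L(1-k,\chi)\) is a nonzero algebraic integer. Its norm is therefore a nonzero rational integer, so some conjugate satisfies \(|\sigma(\alpha)|\geq1\). Comparing with Lemma~\ref{lem:quadratic-ec-conjugate-bound}:
\begin{itemize}
\item if \(D_F\geq28\), monotonicity gives \(V(D_F,k)\leq V(28,4)<1\), a contradiction;
\item if \(D_F<28\), Lemma~\ref{lem:quadratic-small-fields}(b) leaves only \(D_F\in\{12,21,24\}\), each with \(h(F)=1\) and \(h^+(F)=2\);
\item for \(D_F=21\), the bound \(V(21,5)<1\) excludes every \(k\geq5\), leaving only \(k=4\).
\end{itemize}

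For \(D_F=12\) and \(D_F=24\), the prime \(2\) ramifies, say \((2)=\mathfrak p^2\) with \(N\mathfrak p=2\). Lemma~\ref{lem:quadratic-ec-ramified-prime} then gives
\[
\alpha=\eta(\mathfrak p)2^{\ell-1}\bigl(1-\omega(\mathfrak p)2^k\bigr),\qquad |\alpha|\geq2^k-1\geq15.
\]
Since \(\alpha=\eta(\mathfrak p)2^{\ell-1}(1-\omega(\mathfrak p)2^k)\) and character values are roots of unity, every Galois conjugate obeys the same estimate:
\[
|\sigma(\alpha)|\geq2^{\ell-1}(2^k-1)\geq2^k-1.
\]
The conjugate bound, however, gives \(|\sigma(\alpha)|<V(12,k)\leq V(12,4)<14\), a contradiction. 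Here \(V(24,k)\leq V(12,k)\) by monotonicity in \(D_F\).

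It remains to treat \(D_F=21\) with \(k=4\). Since \(h(F)=1\), the character \(\chi\) has even parity (\(k=4\) is even) and is therefore an ordinary class character, so \(\chi=1\). Hence
\[
\alpha=\frac{4}{\zeta_F(-3)},\qquad F=\mathbb Q(\sqrt{21}).
\]
The plan is to compute this value exactly using the factorization
\[
\zeta_F(-3)=\zeta(-3)\,L(-3,\chi_{21}),\qquad \zeta(-3)=\frac1{120},\qquad L(-3,\chi_{21})=-\frac{B_{4,\chi_{21}}}{4},
\]
evaluating the generalized Bernoulli number \(B_{4,\chi_{21}}\) from \eqref{eq:generalized-bernoulli-special-value}. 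A rough size check supports the expected outcome: \(\zeta_F(-3)\) is approximately \(4\cdot21^{3.5}\cdot36/(2\pi)^8\approx 0.7\), so \(\alpha\) should be a non-integral rational number of modulus around \(5\). Once the exact rational value is confirmed not to be an integer, this case is excluded.

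The main obstacle is this exact evaluation. Both the Bernoulli arithmetic and the check that the resulting rational \(\alpha\) is not an integer must be carried out precisely.

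If \(\alpha\) turned out to be an integer, I would fall back on the inert-prime relation at \((2)\), since \(2\) is inert in \(\mathbb Q(\sqrt{21})\). The Hecke relation at \(\mathfrak p=(2)\), of norm \(4\), would be combined with \eqref{eq:quadratic-ec-two-coefficient} and the Ramanujan bound of Lemma~\ref{lem:weight-one-coefficients} for \(c((2),h)\) and \(c((2),G)\). This forces
\[
|\alpha|\leq|c((2),G)|+|c((2),h)|\leq2\cdot4^{(k+\ell-1)/2}+2\cdot4^{(\ell-1)/2}.
\]
That upper bound is compatible with \(|\alpha|\approx5\) and so cannot exclude the case by itself. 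I therefore expect the exact special value to be the decisive step.
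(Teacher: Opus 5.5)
Your argument is the paper's proof step for step. You reduce to \(h^+(F)>1\) and use integrality of \(\alpha\) with the bound \(V(D_F,k)\) to leave \(D_F\in\{12,21,24\}\). You exclude \(D_F=12,24\) by \(15\leq2^k-1\leq|\alpha|<V(D_F,k)\leq V(12,4)<14\), and reduce \(D_F=21\) to \(k=4\), \(\chi=1\), \(\alpha=4/\zeta_F(-3)\). The one step you leave unexecuted is the one the paper supplies: \(B_{4,\chi_{21}}=-1232\), so \(L(-3,\chi_{21})=308\) and \(\zeta_F(-3)=77/30\). Hence \(\alpha=120/77\notin\mathbb Z\), contradicting Lemma~\ref{lem:ec-integrality}.

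Your size check is wrong, and it is what makes this step look like the main obstacle. In fact \(4\cdot21^{7/2}\cdot36/(2\pi)^8\approx2.52\), not \(0.7\). So \(\zeta_F(-3)\approx2.57\) and \(\alpha\approx1.56\), not about \(5\).

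Made rigorous, this estimate already closes the case without any Bernoulli arithmetic. Lemma~\ref{lem:quadratic-critical-value-positive} gives \(\alpha=(2\pi)^8/(36\cdot21^{7/2}\zeta_F(4))\). The Euler product gives \(1<\zeta_F(4)<\zeta(4)^2\). Hence \(\alpha\), which is rational since \(\chi=1\), lies strictly between \(1.35\) and \(1.6\) and cannot be an integer. Your inert-prime fallback is therefore unnecessary.
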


\begin{proof}
The case \(h^+(F)=1\) is covered by
Subsection~\ref{sec:quadratic-ec-class-one}.
Suppose, therefore, that \(h^+(F)>1\) and that \(f\cdot h\)
is an eigenform.
By Lemma~\ref{lem:ec-integrality}, \(\alpha\) is a nonzero
algebraic integer. If \(D_F\geq28\), Lemma~\ref{lem:quadratic-ec-conjugate-bound}
gives absolute value less than one for every conjugate of
\(\alpha\), contrary to the fact that its norm is a nonzero
integer.
Lemma~\ref{lem:quadratic-small-fields}(b) thus leaves
\(D_F\in\{12,21,24\}\).

For \(D_F=12\) or \(24\), the ideal \((2)\) ramifies.
Lemmas~\ref{lem:quadratic-ec-conjugate-bound}
and~\ref{lem:quadratic-ec-ramified-prime} give
\[
15\leq2^k-1\leq\lvert\alpha\rvert
<V(D_F,k)\leq V(12,4)<14,
\]
which is impossible for every \(k\geq4\).

Finally, let \(D_F=21\). Since \(V(21,5)<1\), the norm argument
excludes all \(k\geq5\), leaving \(k=4\).
Lemma~\ref{lem:quadratic-small-fields}(b) gives \(h(F)=1\).
The character \(\chi=\phi^{-1}\psi\) is totally even and hence
factors through this trivial ordinary class group, so \(\chi=1\).

Let \(\chi_{21}\) be the primitive quadratic Dirichlet character
of conductor \(21\). Formula~\eqref{eq:generalized-bernoulli-special-value}
gives \(B_{4,\chi_{21}}=-1232\), so \(L(-3,\chi_{21})=308\).\footnote{
We compute $B_{4,\chi_{21}}=-1232$ and
$L(-3,\chi_{21})=308$, and independently verify both results;
see \path{e3_cusp_check.sage} and \path{verify_special_values.py}
in \cite{EigenformCertificates}.}
Thus we obtain
\[
\zeta_F(-3)=\zeta(-3)L(-3,\chi_{21})=\frac{77}{30}.
\]
It follows that \(\alpha=120/77\), a nonintegral rational number,
contradicting Lemma~\ref{lem:ec-integrality}.
\end{proof}

\subsection{Higher-degree fields with Eisenstein weight at least four}
\label{sec:higher-degree-eisenstein-cuspidal}

We turn next to fields of degree greater than two, still with
Eisenstein weight \(k\geq4\). We first record computations for two
cubic fields that will be used in the proof.

Let \(F_{49}\) and \(F_{81}\) be the cubic fields defined by
\[
x^3-x^2-2x+1,\qquad x^3-3x-1,
\]
respectively. Their subscripts indicate their discriminants.

\begin{lemma}
\label{lem:higher-degree-ec-exceptional-arithmetic}
The fields \(F_{49}\) and \(F_{81}\) have ordinary class number one,
and their zeta values satisfy
\[
\zeta_{F_{49}}(-3)=\frac{79}{210},\qquad
\zeta_{F_{81}}(-3)=\frac{199}{90}.
\]
\end{lemma}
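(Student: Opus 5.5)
The plan is to treat the two assertions separately: class number one by Minkowski's bound, and the zeta values by factoring \(\zeta_F\) into Dirichlet \(L\)-functions, since both fields are cyclic cubic.

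\textbf{Class numbers.} I would apply the Minkowski bound in \eqref{eq:prelim-arithmetic-bounds} with \(n=3\), so that \(n!/n^n=6/27\).
\begin{itemize}
\item For \(F_{49}\) the bound is \(\tfrac{6}{27}\cdot 7<2\). Every ideal class therefore contains \(\mathcal O_F\), and \(h(F_{49})=1\).
\item For \(F_{81}\) the bound is \(\tfrac{6}{27}\cdot 9=2\). Here I must exclude integral ideals of norm two.
\end{itemize}
Both defining polynomials have index one, so Dedekind–Kummer applies at \(2\). Modulo \(2\) the polynomials are \(x^3+x^2+1\) and \(x^3+x+1\), and neither has a root in \(\mathbb F_2\). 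Hence \((2)\) is inert in both fields, there is no ideal of norm two, and \(h=1\) in both cases. The index-one claim follows from the discriminants: the polynomial discriminants are \(49\) and \(81\), and these agree with the field discriminants.

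\textbf{Identifying the fields.} \(F_{49}=\mathbb Q(\zeta_7)^+\), since \(2\cos(2\pi/7)\) is a root of \(x^3+x^2-2x-1\), which becomes the given polynomial under \(x\mapsto -x\). Likewise \(F_{81}=\mathbb Q(\zeta_9)^+\), since \(2\cos(2\pi/9)\) is a root of \(x^3-3x+1\), again up to \(x\mapsto-x\). So both fields are abelian. Artin factorization (as used in Section~\ref{sec:higher-degree-ee-products}) then gives
\[
\zeta_F(s)=\zeta(s)\,L(s,\xi)\,L(s,\bar\xi),
\]
where \(\xi\) is an even primitive cubic Dirichlet character of conductor \(7\), respectively \(9\).

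\textbf{Zeta values.} Evaluating at \(s=-3\) with \eqref{eq:generalized-bernoulli-special-value} gives
\[
\zeta_F(-3)=\frac{1}{120}\cdot\frac{B_{4,\xi}\,B_{4,\bar\xi}}{16}
=\frac{|B_{4,\xi}|^2}{1920}.
\]
The computation then reduces to a finite sum:
\[
B_{4,\xi}=q^{3}\sum_{a=1}^{q}\xi(a)B_4(a/q),\qquad B_4(x)=x^4-2x^3+x^2-\tfrac1{30}.
\]
The values of \(\xi\) lie in \(\mathbb Z[\omega]\) with \(\omega=e^{2\pi i/3}\). For \(q=7\) take \(\xi(3)=\omega\), and for \(q=9\) take \(\xi(2)=\omega\). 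Grouping the residues by their value of \(\xi\) writes \(B_{4,\xi}=u+v\omega\) with \(u,v\) rational, so
\[
|B_{4,\xi}|^2=u^2-uv+v^2.
\]
The target values require \(|B_{4,\xi}|^2=1920\cdot\tfrac{79}{210}\) for \(F_{49}\) and \(1920\cdot\tfrac{199}{90}\) for \(F_{81}\).

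\textbf{Main obstacle.} The only real difficulty is carrying out this arithmetic without error. I would use the evenness \(\xi(q-a)=\xi(a)\) and the symmetry \(B_4(1-x)=B_4(x)\) to halve the sum, and I would cross-check the results against the computational certificates in \cite{EigenformCertificates}, as was done for \(L(-3,\chi_{21})\).
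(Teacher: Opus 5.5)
Your proposal is correct and follows the paper's proof essentially step for step: Minkowski's bound with irreducibility modulo $2$ for the class numbers, and $\zeta_F(s)=\zeta(s)L(s,\xi)L(s,\bar\xi)$ with generalized Bernoulli numbers for the special values. With your choices of $\xi$, the sums evaluate to $B_{4,\xi}=(-128+88\omega)/7$ and $(-672-408\omega)/9$, so $|B_{4,\xi}|^2=5056/7$ and $12736/3$, exactly as required.

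The one step to tighten is the index-one claim, which as written presupposes $D_F\in\{49,81\}$. The paper settles it by noting that a nontrivial index would force $D_F\le 9$, contrary to the Minkowski lower bound $D_F\ge 81/4$. Your cyclotomic identification would also do, as would the observation that the index is odd: its square divides $49$ or $81$, and that is all Dedekind--Kummer at $2$ needs.
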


\begin{proof}
First, the defining polynomials are irreducible and have
discriminants \(49\) and \(81\). A nontrivial index would force
the field discriminant to be at most \(9\), contrary to the
Minkowski bound \(D_F\geq81/4\) in
\eqref{eq:prelim-arithmetic-bounds}. Thus both orders are maximal.
Their reductions modulo \(2\) are irreducible, so neither field
has an ideal of norm two. The ideal-class bound in the same
equation, \(2\sqrt{D_F}/9\leq2\), shows that every ideal class
contains an integral ideal of norm one or two. Thus both fields have ordinary
class number one.

Next, since both cubic fields have square discriminants, they
are cyclic.
The conductor--discriminant formula
\cite[Theorem~3.11]{Washington1997Cyclotomic} gives conductors
\(7\) and \(9\). For \(q\in\{7,9\}\), let \(\chi_q\) be a
primitive cubic Dirichlet character of conductor \(q\). The Dedekind zeta function then factors as
\[
\zeta_{F_{q^2}}(s)
=\zeta(s)L(s,\chi_q)L(s,\overline{\chi}_q).
\]
Formula~\eqref{eq:generalized-bernoulli-special-value} gives
\[
\begin{aligned}
L(-3,\chi_7)L(-3,\overline{\chi}_7)&=\frac{316}{7},\\
L(-3,\chi_9)L(-3,\overline{\chi}_9)&=\frac{796}{3}.
\end{aligned}
\]
Multiplying by \(\zeta(-3)=1/120\) proves the stated values.\footnote{
We compute $\zeta_{F_{49}}(-3)=79/210$ and
$\zeta_{F_{81}}(-3)=199/90$, and independently verify both results;
see \path{e3_cusp_check.sage} and \path{verify_special_values.py}
in \cite{EigenformCertificates}.}
\end{proof}

The following theorem uses these zeta values to exclude the two
remaining cubic fields.

\begin{theorem}
\label{thm:higher-degree-eisenstein-cuspidal}
Let \(F\) be a totally real field of degree \(n>2\), with arbitrary
narrow class number. If \(f=E_k(\phi,\psi)\) and \(h\in S_\ell(\eta)\)
are normalized full-level Eisenstein and cuspidal Hecke eigenforms
of parallel weights \(k\geq4\) and \(\ell\geq1\), respectively,
then \(f\cdot h\) is not a Hecke eigenform.
\end{theorem}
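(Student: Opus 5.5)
We follow the same strategy as in Theorem~\ref{thm:quadratic-eisenstein-cuspidal}. Suppose $g=f\cdot h$ is an eigenform, and put $\chi=\phi^{-1}\psi$ and $\alpha=2^n/L(1-k,\chi)$. By Lemma~\ref{lem:ec-integrality}, $\alpha$ is a nonzero algebraic integer. The lower bound in \eqref{eq:hecke-L-value-bounds}, together with Galois equivariance \eqref{eq:hecke-L-galois-equivariance}, bounds every conjugate:
\[
|\sigma(\alpha)|<W(F,k):=\left(\frac{(2\pi)^{k}\zeta(k)}{\Gamma(k)}\right)^n D_F^{1/2-k}\,\zeta(nk\cdot n)^{-1}
\le\left(\frac{(2\pi)^k\zeta(4)}{\Gamma(k)\,\delta_F^{\,k-1/2}}\right)^n .
\]
Whenever $W(F,k)\le1$ we have a contradiction, since the norm of $\alpha$ is then a nonzero integer of absolute value less than one. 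As in Lemma~\ref{lem:quadratic-ec-conjugate-bound}, $W$ decreases in $k$: the ratio of successive values is $\bigl(2\pi/(k\delta_F)\bigr)^n<1$ once $\delta_F>2\pi/4$, which holds because $\delta_F>7/2$ for $n\ge3$, as already shown in the proof of Theorem~\ref{thm:distinct-weight-published}. It is therefore enough to work at $k=4$. The condition $W<1$ there amounts roughly to $\delta_F^{7/2}>(2\pi)^4\zeta(4)/6\approx 281$, i.e.\ $\delta_F\gtrsim 5.0$.

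By the Odlyzko--Takeuchi bound \eqref{eq:prelim-arithmetic-bounds}, $\delta_F>5.512$ for $n\ge5$, so the argument excludes every $n\ge5$; the exact constants would be checked with $\pi<22/7$. For $n=4$, the bound $D_F\ge725$ gives $\delta_F\ge 725^{1/4}>5.18$, which should also suffice at $k=4$. For $n=3$, the inequality $W(F,4)<1$ fails only for $D_F$ below an explicit threshold, around $D_F\lesssim 125$–$150$. The totally real cubic fields in that range have discriminants $49,81,148$ (and possibly $169$). For $D_F\ge148$ I would verify $W(F,4)<1$ directly with the exact discriminant, and likewise $W(F,5)<1$ for the two smallest fields. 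This leaves $F_{49},F_{81}$ with $k=4$ as the main obstacle. If the numerics at $k=5$ turn out tight there, I would fall back on the ratio bound with $D_F=49$.

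For $F_{49}$ and $F_{81}$, Lemma~\ref{lem:higher-degree-ec-exceptional-arithmetic} gives $h(F)=1$. With $k=4$ even, $\chi$ is totally even, so it is an ordinary class character and hence trivial. Then $\alpha=8/\zeta_F(-3)$, which gives $\alpha=1680/79$ and $\alpha=720/199$ respectively. Neither is an integer, contradicting Lemma~\ref{lem:ec-integrality}.

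The delicate step is ensuring the numerical thresholds for $n=3,4$ really leave only these two cubic fields. This relies on tables of cubic fields of small discriminant, and we must keep the factor $\zeta(k)^n/\zeta(nk)$ rather than crude bounds.
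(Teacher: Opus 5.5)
The overall strategy matches the paper's, but one step fails as written. The shared steps are: integrality of $\alpha=2^n/L(1-k,\chi)$ via Lemma~\ref{lem:ec-integrality}, the conjugate bound from \eqref{eq:hecke-L-value-bounds}, monotonicity in $k$, and discriminant bounds reducing to $F_{49},F_{81}$. The failing step is your claim that $W(F,5)<1$ for both small cubic fields.

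For $F_{49}$ we have $\delta_F=49^{1/3}\approx3.659$. The factor inside the $n$-th power at $k=5$ is
\[
\frac{(2\pi)^5\zeta(5)}{24\,\delta_F^{9/2}}\approx\frac{423}{343}\approx1.23,
\]
so the conjugate bound for $\alpha$ is about $1.9$ and gives no contradiction. Your fallback does not help either. The ratio bound only compares consecutive weights. Starting from $k=4$, where the factor is about $3$, it gives only about $3\cdot 2\pi/(4\delta_F)\approx1.29$ at $k=5$. So the case of $F_{49}$ with $k=5$ is left open by your argument. At $F_{81}$ the $k=5$ bound does work (factor about $0.58$), and for $D_F\ge148$ your check at $k=4$ plus monotonicity is fine.

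The missing idea is the parity constraint of Lemma~\ref{lem:odd-weight-unit-parity}. A nonzero full-level form of odd parallel weight has a totally odd central character. Applying \eqref{eq:prelim-principal-character} to $-1$ then forces $n$ to be even. In degree three, $E_k(\phi,\psi)$ with $k$ odd therefore does not exist, so $k=5$, and indeed every odd $k$, is vacuous. This is how the paper proceeds. Once $k$ is known to be even, only $k=4$ and $k\ge6$ remain. For $k\ge6$, monotonicity and $\delta_F\ge49^{1/3}>18/5$ bound the factor by
\[
\frac{(2\pi)^6\zeta(6)}{120\,(18/5)^{11/2}}<\frac12,
\]
and $k=4$ is handled by the exact values $\alpha=1680/79$ and $720/199$, as you have it.

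With this insertion your proof coincides with the paper's. The remaining differences are cosmetic. You use the sharper threshold $\delta_F\approx5.01$ with a direct check at $D_F=148$. The paper instead uses the cruder cutoff $\delta_F<51/10$, i.e.\ $D_F<133$, and then reads off $F_{49},F_{81}$ from the cubic tables.
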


\begin{proof}
Suppose otherwise. Put \(\chi=\phi^{-1}\psi\).
By Lemma~\ref{lem:ec-integrality},
\(\alpha=2^n/L(1-k,\chi)\) is a nonzero algebraic integer.
To bound $\delta_F$, we will show that all conjugates of $\alpha$
have absolute value below one when $\delta_F$ is large.
Their product would then be a nonzero integer of absolute value
below one, which is impossible. Galois equivariance
\eqref{eq:hecke-L-galois-equivariance} and the lower bound in
\eqref{eq:hecke-L-value-bounds} give
\begin{equation}
\label{eq:higher-degree-ec-conjugate-bound}
\lvert\sigma(\alpha)\rvert
\leq\frac{1}{\zeta(n^2k)}
\left(\frac{\sqrt{2\pi}\,\zeta(k)}{\Gamma(k)}
\left(\frac{2\pi}{\delta_F}\right)^{k-1/2}\right)^n.
\end{equation}
Denote the factor inside the $n$-th power by $b_k(\delta_F)$,
and use the same expression to define $b_k(\delta)$ for $\delta>0$.
Since $\zeta(n^2k)>1$, every conjugate satisfies
$|\sigma(\alpha)|<b_k(\delta_F)^n$. If $\delta_F$ were large enough to give $b_k(\delta_F)<1$ for
every $k\geq4$, the product could not be an eigenform.

For every integer \(k\geq4\), the ratio satisfies
\begin{equation}
\label{eq:higher-degree-ec-bound-ratio}
\frac{b_{k+1}(\delta)}{b_k(\delta)}
=\frac{\zeta(k+1)}{\zeta(k)}\frac{2\pi}{\delta k}
<\frac{2\pi}{\delta k}.
\end{equation}
Each \(b_k\) decreases with \(\delta\). If \(\delta\geq51/10\),
this quotient is less than one for every \(k\geq4\).
Using \(\zeta(4)=\pi^4/90\) and \(\pi<22/7\) gives
\(b_4(51/10)<19/20\). Hence \(\delta_F\geq51/10\) would force
every conjugate of \(\alpha\) to have absolute value less than one. This
contradicts its nonzero integral norm. Thus \(\delta_F<51/10\).

For \(n\geq5\), the Odlyzko--Takeuchi bound
\cite[Proposition~2.3]{TAKEUCHI83} gives
\[
\delta_F>29.099e^{-8.3185/n}
\geq29.099e^{-8.3185/5}>\frac{51}{10}.
\]
Degree four is also impossible, since every totally real quartic
field has discriminant at least \(725>(51/10)^4\)
\cite{BuchmannFord1989}.
Consequently, \(n=3\) and \(D_F<(51/10)^3<133\).
The cubic enumeration \cite{Voight} leaves only \(F_{49}\) and
\(F_{81}\).

Since \(n=3\), Lemma~\ref{lem:odd-weight-unit-parity}
forces \(k\) to be even. If \(k\geq6\), then
\(\delta_F\geq49^{1/3}>18/5\), and
\eqref{eq:higher-degree-ec-bound-ratio} gives
\(b_k(\delta_F)\leq b_6(\delta_F)\).
Using \(\zeta(6)=\pi^6/945\) and \(\pi<22/7\) gives
\(b_6(18/5)<1/2\). Equation~\eqref{eq:higher-degree-ec-conjugate-bound}
therefore forces every conjugate of \(\alpha\) to have absolute
value less than one, again contradicting its nonzero integral norm. Only
\(k=4\) remains.

Finally, the totally even character \(\chi\) factors through the
ordinary class group. By
Lemma~\ref{lem:higher-degree-ec-exceptional-arithmetic},
\(h(F)=1\), so \(\chi=1\) and
\(\alpha=8/\zeta_F(-3)\). The same lemma yields
\[
\alpha=
\begin{cases}
1680/79,&F=F_{49},\\
720/199,&F=F_{81}.
\end{cases}
\]
Both values are nonintegral rational numbers, contradicting
Lemma~\ref{lem:ec-integrality}.
\end{proof}
\subsection{Eisenstein weight three}
\label{sec:eisenstein-cuspidal-weight-three}

It remains to treat Eisenstein weight three. We first record
the arithmetic facts needed in the quartic and quadratic cases.

\begin{lemma}
\label{lem:weight-three-quartic-unit-signatures}
Let $F$ be a totally real quartic field with $D_F<3003$.
For $m\in\{15,20,24\}$, let $F_m=\mathbb Q(\zeta_m)^+$,
the maximal real subfield of the $m$-th cyclotomic field.
If $F$ admits a totally odd narrow class character, then
$F\cong F_m$ for one of these values of $m$.
These fields have discriminants $1125,2000,2304$, respectively,
and ordinary class number one. In each case, the unique totally
odd narrow class character is the character $\varepsilon_m$
of $\mathbb Q(\zeta_m)/F_m$.
\end{lemma}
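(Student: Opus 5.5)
The plan is to turn the existence of a totally odd narrow class character into a condition on the unit group, and then finish with a finite enumeration of quartic fields.

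First I extract the unit condition. By \eqref{eq:prelim-principal-character}, a totally odd character $\chi$ satisfies $\chi((u))=N(u)$ for every unit $u$. Since $(u)=\mathcal O_F$, every unit must have norm $1$; this is the first part of Lemma~\ref{lem:odd-weight-unit-parity}.

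Norm one alone does not suffice, so I use the sharper form. The character $\operatorname{sgn}\circ N$ on $F^\times$ must be trivial on the image of $\mathcal O_F^\times$ in the sign group $\{\pm1\}^4$. A totally odd narrow class character exists exactly when the total-sign character $\prod_v\operatorname{sgn}_v$ vanishes on the signature image of the units. The narrow class group surjects onto the ordinary one with kernel $\{\pm1\}^4/\operatorname{sgn}(\mathcal O_F^\times)$. A character of this kernel extends to the narrow class group, and it is totally odd precisely when its restriction to the kernel is the norm-sign character. So the criterion is: every unit has norm $+1$. Equivalently, the image of the signature map has index at least two and lies in the kernel of the total product. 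For quartic fields this is a strong restriction, since generically the four fundamental units have independent signatures.

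Next I enumerate. Using the table of totally real quartic fields with $D_F<3003$ (\cite{Voight}, \cite{BuchmannFord1989}), I compute a fundamental unit system for each field. This is a finite computation, which I would record as a certificate in \cite{EigenformCertificates}, as the paper does for other finite checks. For each field I check whether some fundamental unit has norm $-1$. I expect exactly three survivors, of discriminants $1125$, $2000$ and $2304$. I identify these with $\mathbb Q(\zeta_{15})^+$, $\mathbb Q(\zeta_{20})^+$ and $\mathbb Q(\zeta_{24})^+$ by matching discriminants. The discriminants follow from the conductor--discriminant formula \cite[Theorem~3.11]{Washington1997Cyclotomic} applied to the even characters mod $m$: for $m=15$ they give $1\cdot 5\cdot 15\cdot 15=1125$, and the other two cases are similar. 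Since there is a unique quartic field of each of these discriminants, the identification is complete.

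For these three fields I then check that $h(F)=1$. The Minkowski bound $\frac{4!}{4^4}\sqrt{D_F}<5$ leaves only ideals of norm at most $4$ to test, and I show each such ideal is principal by exhibiting explicit generators; Roblot's tables \cite{RoblotTables} confirm the result. With $h(F)=1$, the narrow class group is isomorphic to $\{\pm1\}^4/\operatorname{sgn}(\mathcal O_F^\times)$. Among the characters of this group, the totally odd one is unique, because two totally odd characters differ by a totally even character, and every totally even character is trivial when $h=1$.

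The quadratic character of $\mathbb Q(\zeta_m)/F_m$ is unramified at all finite primes. This holds because $m$ is not a prime power, so $\mathbb Q(\zeta_m)/\mathbb Q(\zeta_m)^+$ is unramified at finite places. It is ramified at every real place, since $\mathbb Q(\zeta_m)$ is CM. It is therefore a totally odd narrow class character, and by the uniqueness just shown it equals $\varepsilon_m$.

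The main obstacle I expect is the enumeration step. It requires reliable unit-group data for every quartic field below $3003$, which is a computer-assisted certificate rather than a conceptual argument. An alternative way to cut down the list would be to note that norm-one units make $F$ "odd" in a sense close to being CM-like. However, I do not see a clean theoretical reduction, so I would rely on the computation.
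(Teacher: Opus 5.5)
Your proposal is correct and follows essentially the paper's route: enumerate the eleven totally real quartic fields with $D_F<3003$, exclude eight of them via a unit of norm $-1$, and for the three cyclotomic survivors prove $h(F)=1$ and obtain $\varepsilon_m$ from the CM extension $\mathbb Q(\zeta_m)/F_m$. The differences are only in how the sub-steps are certified. The paper needs no fundamental unit systems, because each excluded field has a defining polynomial with $P(-1)=-1$, so $1+\theta$ is already a unit of norm $-1$. It proves $h(F_m)=1$ by applying the Odlyzko bound to the Hilbert class field (root discriminant below $7.403$ in degree at least $8$ is impossible), rather than by Minkowski's bound with explicit generators.
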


\begin{proof}
The complete enumeration \cite{BuchmannFord1989,Voight}
consists of these three fields and the eight fields listed below.
\footnote{
We check the eleven quartic discriminants; see
\path{e3_cusp_check.sage} in \cite{EigenformCertificates}.}
Each polynomial in the table defines the corresponding field
\cite{LMFDB} and satisfies $P(-1)=-1$. Thus $1+\theta$ is a
unit of norm $-1$ for any root $\theta$ of $P$.
By the character identity on principal ideals
\eqref{eq:prelim-principal-character}, such a unit excludes a totally
odd narrow class character.
\begin{center}
\begingroup
\setlength{\tabcolsep}{6pt}
\renewcommand{\arraystretch}{1.10}
\begin{tabular}{@{}rl@{}}
\toprule
$D_F$ & $P(x)$\\
\midrule
725  & $x^4-x^3-3x^2+x+1$\\
1600 & $x^4-6x^2+4$\\
1957 & $x^4-4x^2-x+1$\\
2048 & $x^4-4x^2+2$\\
2225 & $x^4-x^3-5x^2+2x+4$\\
2525 & $x^4-2x^3-4x^2+5x+5$\\
2624 & $x^4-2x^3-3x^2+2x+1$\\
2777 & $x^4-x^3-4x^2+x+2$\\
\bottomrule
\end{tabular}
\endgroup
\end{center}

Next, we show that $h(F)=1$ for each of the three remaining fields.
Suppose otherwise. The Hilbert class field $H$ is totally real
of degree $4h(F)\geq8$, with
$\delta_H=\delta_F<3003^{1/4}<7.403$, as recalled in
Section~\ref{sec:preliminaries}.
But the Odlyzko--Takeuchi bound
\cite[Proposition~2.3]{TAKEUCHI83} gives
\[
\delta_H>29.099e^{-8.3185/[H:\mathbb Q]}
\geq29.099e^{-8.3185/8}>10.2,
\]
a contradiction.

Finally, the cyclotomic discriminant formula
\cite{Washington1997Cyclotomic} gives
$\lvert D_{\mathbb Q(\zeta_m)}\rvert=D_{F_m}^2$.
Thus $\mathbb Q(\zeta_m)/F_m$ is quadratic, unramified at
every finite place, and ramified at every real place.
Its character $\varepsilon_m$ is therefore totally odd.
Any other totally odd narrow class character differs from
$\varepsilon_m$ by a totally even character, which factors
through the trivial ordinary class group. This proves uniqueness.
\end{proof}

For the quadratic part of the proof, we will need the following
exact special values. The zeta values will also be used in the
weight-two case.

\begin{lemma}
\label{lem:quadratic-exact-special-values}
Let \(F\) be either \(\mathbb Q(\sqrt3)\) or
\(\mathbb Q(\sqrt{21})\), and let \(\varepsilon\) be its unique
nontrivial narrow class character. If \(F=\mathbb Q(\sqrt3)\), then we have
\[
L(-2,\varepsilon)=\frac19,\qquad
\zeta_F(-1)=\frac16.
\]
If \(F=\mathbb Q(\sqrt{21})\), then we have
\[
L(-2,\varepsilon)=\frac{32}{63},\qquad
\zeta_F(-1)=\frac13.
\]
\end{lemma}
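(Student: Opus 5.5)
The plan is to reduce every value to Dirichlet $L$-values over $\mathbb Q$ by Artin factorization, as was done for the cubic fields in Lemma~\ref{lem:higher-degree-ec-exceptional-arithmetic}, and then evaluate each Dirichlet value with the generalized Bernoulli formula \eqref{eq:generalized-bernoulli-special-value}.

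\textbf{Identifying the field $K$ cut out by $\varepsilon$.} By Lemma~\ref{lem:quadratic-small-fields}(b), both fields have $h(F)=1$ and $h^+(F)=2$. Hence $\varepsilon$ is totally odd, and it is the character of the narrow Hilbert class field $K$, a quadratic extension of $F$ that is unramified at all finite places and ramified at both real places.
\begin{itemize}
\item For $F=\mathbb Q(\sqrt3)$, take $K=\mathbb Q(\sqrt3,i)=\mathbb Q(\zeta_{12})$.
\item For $F=\mathbb Q(\sqrt{21})$, take $K=\mathbb Q(\sqrt{-3},\sqrt{-7})$.
\end{itemize}
In each case $K$ is a biquadratic CM field containing $F$. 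The check that $K/F$ is unramified at finite places follows from the discriminant identity $D_K=D_F^2$. For $\mathbb Q(\zeta_{12})$ this gives $144=12^2$. For the second field, the conductor--discriminant formula gives $3\cdot7\cdot21=21^2$. Uniqueness of the nontrivial narrow class character then identifies $\varepsilon$ with the character of $K/F$.

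\textbf{Artin factorization.} Factorization for the biquadratic field gives
\[
\zeta_F(s)=\zeta(s)L(s,\chi_{D_F}),\qquad
L(s,\varepsilon)=\frac{\zeta_K(s)}{\zeta_F(s)}=L(s,\chi_{-a})L(s,\chi_{-b}),
\]
where $(a,b)=(4,3)$ for $F=\mathbb Q(\sqrt3)$ and $(a,b)=(3,7)$ for $F=\mathbb Q(\sqrt{21})$.

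\textbf{Evaluating the Dirichlet values.} Using $L(1-m,\xi)=-B_{m,\xi}/m$, the expected values are:
\begin{itemize}
\item $L(-2,\chi_{-4})=-\tfrac12$, $L(-2,\chi_{-3})=-\tfrac29$ and $L(-2,\chi_{-7})=-\tfrac{16}7$. Their products give $L(-2,\varepsilon)=\tfrac19$ and $\tfrac{32}{63}$.
\item $L(-1,\chi_{12})=-2$ and $L(-1,\chi_{21})=-4$. Together with $\zeta(-1)=-\tfrac1{12}$, these give $\zeta_F(-1)=\tfrac16$ and $\tfrac13$.
\end{itemize}
Each value is a finite Bernoulli-polynomial sum over $a\bmod q$. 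These are routine and would also be certified by the computer files in \cite{EigenformCertificates}, as in the earlier footnotes.

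\textbf{Main obstacle.} The only delicate step is the second one: justifying that $\varepsilon$ is exactly the character of the chosen $K/F$, rather than merely some quadratic character. I would argue via the discriminant identity $D_K=D_F^2$ and the fact that $K$ is totally imaginary, so that $K$ is the narrow Hilbert class field. This is the same pattern as the last paragraph of Lemma~\ref{lem:weight-three-quartic-unit-signatures}. The sign conventions in the Bernoulli sums are routine but error-prone. As a sanity check, positivity from Lemma~\ref{lem:quadratic-critical-value-positive} requires both final values to be positive.
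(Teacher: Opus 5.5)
Your proposal is correct and follows essentially the same route as the paper. Like the paper, you identify the narrow Hilbert class field through $D_K=D_F^2$ and the uniqueness of the nontrivial narrow class character, factor $L(s,\varepsilon)$ as a product of two odd quadratic Dirichlet $L$-functions and $\zeta_F$ as $\zeta(s)L(s,\chi_{D_F})$, and evaluate with generalized Bernoulli numbers. Your intermediate values $L(-2,\chi_{-4})=-1/2$, $L(-2,\chi_{-3})=-2/9$, $L(-2,\chi_{-7})=-16/7$, $L(-1,\chi_{12})=-2$ and $L(-1,\chi_{21})=-4$ are all correct; the paper leaves these to its computer certificates.
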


\begin{proof}
The narrow Hilbert class fields are
\[
\mathbb Q(i,\sqrt3),\qquad \mathbb Q(\sqrt{-3},\sqrt{-7}),
\]
respectively.
Let \(K\) be the corresponding extension of \(F\).
The conductor--discriminant formula
\cite[Theorem~3.11]{Washington1997Cyclotomic} gives \(D_K=D_F^2\),
so the relative discriminant is one.
Lemma~\ref{lem:quadratic-small-fields}(b) gives narrow class number two.

Artin factorization
\cite[p.~4]{CogdellArtin} gives
\(\zeta_K(s)=\zeta_F(s)L(s,\varepsilon)\).
Comparing the factorizations over \(F\) and \(\mathbb Q\), we find
\[
L(s,\varepsilon)=
\begin{cases}
L(s,\chi_{-4})L(s,\chi_{-3}),&F=\mathbb Q(\sqrt3),\\
L(s,\chi_{-3})L(s,\chi_{-7}),&F=\mathbb Q(\sqrt{21}),
\end{cases}
\]
where \(\chi_e\) denotes the primitive quadratic Dirichlet character
of discriminant \(e\). Applying
\eqref{eq:generalized-bernoulli-special-value} to the Dirichlet
factors gives the two stated values of $L(-2,\varepsilon)$.\footnote{
We compute $L(-2,\varepsilon)$ for
$F=\mathbb Q(\sqrt3)$ and $\mathbb Q(\sqrt{21})$, obtaining
$1/9$ and $32/63$, respectively; see \path{e3_cusp_check.sage}
in \cite{EigenformCertificates}.}
For the zeta values, we use
\(\zeta_F(s)=\zeta(s)L(s,\chi_{D_F})\) and
\(\zeta(-1)=-1/12\), together with the same Bernoulli formula.\footnote{
We compute $\zeta_F(-1)$ for $D_F=12$ and $21$, obtaining
$1/6$ and $1/3$, respectively; see \path{rq_e2_exact.sage}
in \cite{EigenformCertificates}.}
\end{proof}

We now apply these arithmetic facts. For the quartic fields,
evaluating the special values will contradict integrality of the
reciprocal constant term. For quadratic fields, we will also use
the coefficient identities at primes above $(2)$.

\begin{theorem}
\label{thm:eisenstein-cuspidal-weight-three}
Let $F$ be a totally real field of degree $n\geq2$.
If $f=E_3(\phi,\psi)$ and $h\in S_\ell(\eta)$ are normalized
full-level Eisenstein and cuspidal Hecke eigenforms, with
$\ell\geq1$, then $f\cdot h$ is not a Hecke eigenform.
\end{theorem}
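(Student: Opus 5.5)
We argue by contradiction, assuming $g=f\cdot h$ is a Hecke eigenform. Since $k=3$ is odd, Lemma~\ref{lem:odd-weight-unit-parity} forces $n$ to be even, every unit of $F$ to have norm one, and $h^+(F)>1$. Moreover $\chi=\phi^{-1}\psi$ is totally odd. By Lemma~\ref{lem:ec-integrality}, $\alpha=2^n/L(-2,\chi)$ is a nonzero algebraic integer. By \eqref{eq:hecke-L-galois-equivariance} and the lower bound in \eqref{eq:hecke-L-value-bounds}, every conjugate satisfies
\[
\lvert\sigma(\alpha)\rvert<\left(\frac{\sqrt{2\pi}\,\zeta(3)}{2}\left(\frac{2\pi}{\delta_F}\right)^{5/2}\right)^n=b_3(\delta_F)^n .
\]
As in the proof of Theorem~\ref{thm:higher-degree-eisenstein-cuspidal}, the norm argument then forces $b_3(\delta_F)\geq1$. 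Using $\zeta(3)<5/4$, this gives roughly $\delta_F<7.4$, and I expect the resulting cutoff to be compatible with $D_F<3003$ when $n=4$.

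For $n\geq6$, the Odlyzko--Takeuchi bound \eqref{eq:prelim-arithmetic-bounds} gives $\delta_F>29.099e^{-8.3185/6}>7.29$, and I will need to check this against the precise cutoff. If it is too tight, I will sharpen the estimate by keeping the factor $1/\zeta(3n^2)$ and replacing the upper bound $\zeta(3)$ by the exact lower-bound shape $\zeta(3)^n/\zeta(9n^2)$. Alternatively, I can apply Odlyzko to the narrow class field. This field is a CM extension of degree $2n$ unramified at finite places, so its root discriminant equals $\delta_F$, and the totally imaginary Odlyzko bound is larger.

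For $n=4$, Lemma~\ref{lem:weight-three-quartic-unit-signatures} leaves $F=F_m$ with $m\in\{15,20,24\}$ and $\chi=\varepsilon_m$. Artin factorization expresses $L(s,\varepsilon_m)$ as $\zeta_{\mathbb Q(\zeta_m)}(s)/\zeta_{F_m}(s)$, which is the product of $L(s,\xi)$ over the odd Dirichlet characters $\xi$ modulo $m$ of conductor dividing $m$. The generalized Bernoulli formula \eqref{eq:generalized-bernoulli-special-value} then computes $L(-2,\varepsilon_m)$ exactly, and I expect $16/L(-2,\varepsilon_m)$ to fail to be an algebraic integer. Since these values are rational, this amounts to checking that the value is not an integer.

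For $n=2$, the bound gives $D_F<\delta^2$ with $\delta$ around $7$, and I will sharpen it with the explicit quadratic formula of Lemma~\ref{lem:quadratic-critical-value-positive}, $\lvert\alpha\rvert\le 4(2\pi)^6\zeta(3)^2/(4D_F^{5/2}\cdot4)$. The hope is that this restricts $D_F$ to below $43$. Since $h^+(F)>1$ and units have norm one, Lemma~\ref{lem:quadratic-small-fields} leaves $D_F\in\{12,21,24\}$, plus possibly a few fields of larger discriminant, which I would treat by direct computation if the bound does not reach them. When $\chi$ is the unique nontrivial character $\varepsilon$:
\begin{itemize}[label={}]
\item for $D_F=21$, Lemma~\ref{lem:quadratic-exact-special-values} gives $\alpha=4\cdot63/32=63/8$, which is not integral;
\item for $D_F=12$, it gives $\alpha=36$;
\item for $D_F=24$, $(2)$ ramifies, so Lemma~\ref{lem:quadratic-ec-ramified-prime} gives $\lvert\alpha\rvert\ge 2^{\ell-1}\cdot 7$ while the size bound caps $\lvert\alpha\rvert$, and I would compute $L(-2,\varepsilon)=L(-2,\chi_{-3})L(-2,\chi_{-8})$ exactly and compare.
\end{itemize}

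\textbf{Main obstacle: $\mathbb Q(\sqrt3)$.} The main obstacle is $D_F=12$, where $\alpha=36$ is integral. There I will use the ramified prime $\mathfrak p$ above $2$ and \eqref{eq:quadratic-ec-ramified-alpha}:
\[
36=\eta(\mathfrak p)2^{\ell-1}\bigl(1-\omega(\mathfrak p)8\bigr).
\]
Here $\omega(\mathfrak p)$ and $\eta(\mathfrak p)$ are $\pm1$, since $\mathfrak p^2=(2)$ with $2$ totally positive. The right side then has odd part $7$ or $9$, so only $\omega(\mathfrak p)=-1$, $\ell=3$ and $\eta(\mathfrak p)=1$ survives. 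That gives $g\in S_6$ with $\ell=3$. To finish, I will either show $S_3(\eta)=0$ over $\mathbb Q(\sqrt3)$ by a dimension computation, or compare the coefficients at a prime above $3$ (which also ramifies) using the analogous indecomposability argument to derive a second incompatible constraint. I would try the dimension route first, citing the certificates in \cite{EigenformCertificates}.
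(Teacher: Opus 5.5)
Your outline has the same structure as the paper's proof: integrality of $\alpha=2^n/L(-2,\chi)$, the bound $|\sigma(\alpha)|<(4\pi^3\zeta(3)\delta_F^{-5/2})^n$, exact values at $F_{15},F_{20},F_{24}$, and the ramified identity \eqref{eq:quadratic-ec-ramified-alpha}. However, \textbf{degree six is never actually excluded}. The cutoff is $\delta_3=(4\pi^3\zeta(3))^{2/5}\approx7.402$. At $n=6$ the Odlyzko--Takeuchi bound gives only $29.099e^{-8.3185/6}\approx7.274$, not $>7.29$, so it proves nothing there; for even $n\ge8$ it does suffice.

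Neither fallback closes the case:
\begin{itemize}
\item Retaining $1/\zeta(3n^2)$ changes nothing, since $\zeta(108)=1+O(2^{-108})$. The loss $\zeta(3)^n$ cannot be removed by general Euler-product estimates: even $|L(3,\chi^{-1})|^{-1}\le(\zeta(3)/\zeta(6))^n$ only lowers the cutoff to about $7.35$.
\item Passing to the field cut out by $\chi$ gives a totally imaginary field of degree $n\cdot\mathrm{ord}(\chi)\ge12$ with root discriminant $\delta_F$. But unconditional Odlyzko bounds for totally imaginary fields are much weaker than totally real ones, and in degree twelve the bound is itself only roughly $7.4$. So this is at best borderline, and you have not checked it.
\end{itemize}
What works, and what the paper does, is the degree-specific totally real bound. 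Totally real sextic fields satisfy $\delta_F>7.9$; alternatively, $D_F\ge300125$, as used in the proof of Proposition~\ref{prop:c11-inertness}, gives $\delta_F>8.18$.

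A smaller numerical point concerns degree four. With $\zeta(3)<5/4$ your cutoff is about $7.52$, which only gives $D_F<3197$. Since $\delta_3^4\approx3002.6$, you need an estimate as sharp as $\zeta(3)<12021/10000$ to invoke Lemma~\ref{lem:weight-three-quartic-unit-signatures} as stated.

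In degree two, Lemma~\ref{lem:quadratic-critical-value-positive} gives exactly the same estimate $|\alpha|<16\pi^6\zeta(3)^2D_F^{-5/2}$ as the general bound, because $\delta_F^{5n/2}=D_F^{5/2}$ when $n=2$. So you get $D_F<55$, not $D_F<43$. Moreover, Lemma~\ref{lem:quadratic-small-fields} removes only $D_F<28$ and the inert fields below $43$; it does not remove $28,33,40,41$.
\begin{itemize}
\item Units of norm $-1$ dispose of $40,41,53$.
\item At $28$ and $44$, the ramified bound $|\alpha|\ge7$ beats $25\pi^6D_F^{-5/2}<6$.
\item $D_F=33$ survives every size and integrality test. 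Here $L(-2,\varepsilon)=L(-2,\chi_{-3})L(-2,\chi_{-11})=(-2/9)(-6)=4/3$, so $\alpha=3$, and ``direct computation'' of $\alpha$ does not exclude it.
\end{itemize}
\textbf{The missing idea at $D_F=33$ is the split-prime argument.} Write $(2)=\mathfrak p\mathfrak q$ with both primes of norm two. Applying \eqref{eq:quadratic-ec-prime-relation} at both primes, together with multiplicativity and $\phi((2))=1$, gives $c((2),G)=c((2),h)$. This contradicts \eqref{eq:quadratic-ec-two-coefficient}, since $\alpha\ne0$.

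Your exact computation at $D_F=24$ does work: $L(-2,\chi_{-8})=-3$, so $\alpha=6<7$. This is an alternative to the paper's use of $\omega(\mathfrak p)=-1$ to get $9\le|\alpha|<9$.

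At $D_F=12$, no dimension count or coefficient comparison at $3$ is needed, and the certificates contain no weight-three computation anyway. Since $\ell=3$, the character $\eta$ is totally odd, and $\mathfrak p=(1+\sqrt3)$ has a generator of norm $-2$. So \eqref{eq:prelim-principal-character} gives $\eta(\mathfrak p)=-1$, contradicting the forced value $\eta(\mathfrak p)=1$.
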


\begin{proof}
Suppose otherwise. Put $\chi=\phi^{-1}\psi$ and
$\omega=\phi\psi$. The character $\chi$ is totally odd.
By Lemma~\ref{lem:ec-integrality},
$\alpha=2^n/L(-2,\chi)$ is a nonzero algebraic integer,
and $G=\alpha f\cdot h$ is normalized.
Lemma~\ref{lem:odd-weight-unit-parity} forces $n$ to be even.

First, Galois equivariance
\eqref{eq:hecke-L-galois-equivariance} and
\eqref{eq:hecke-L-value-bounds} give
\begin{equation}
\label{eq:weight-three-conjugate-bound}
\lvert\sigma(\alpha)\rvert
\leq\frac{1}{\zeta(3n^2)}
\left(\frac{4\pi^3\zeta(3)}{\delta_F^{5/2}}\right)^n
<
\left(\frac{4\pi^3\zeta(3)}{\delta_F^{5/2}}\right)^n.
\end{equation}
If $\delta_F\geq(4\pi^3\zeta(3))^{2/5}$, the displayed bound
makes every conjugate of $\alpha$ smaller than one in absolute
value, contradicting its nonzero integral norm. Hence the root discriminant satisfies
\[
\delta_F<\delta_3=(4\pi^3\zeta(3))^{2/5}.
\]
We next bound this threshold numerically to restrict the degree
and obtain explicit discriminant bounds. The integral test gives
\[
\zeta(3)
<
\sum_{j=1}^{25}\frac1{j^3}+\frac1{1250}
<
\frac{12021}{10000}.
\]
Together with $\pi<355/113$, this implies
$\delta_3^2<55$ and $\delta_3^4<3003$.
Since $\delta_3<7.403$, the Odlyzko bounds in
\cite[\S1.1, Table~1]{Voight} exclude $n\geq6$.
Therefore $n=2$ or $4$.

Suppose that $n=4$, so $D_F<3003$.
Lemma~\ref{lem:weight-three-quartic-unit-signatures} gives
$F=F_m$ and $\chi=\varepsilon_m$ for some
$m\in\{15,20,24\}$.
Artin factorization \cite[p.~4]{CogdellArtin} gives
\[
L(s,\varepsilon_m)
=
\frac{\zeta_{\mathbb Q(\zeta_m)}(s)}{\zeta_{F_m}(s)}
=
\prod_\xi
L(s,\xi),
\]
where the product runs over primitive odd Dirichlet characters
whose conductors divide $m$.
We evaluate the primitive Dirichlet factors at $s=-2$
using \eqref{eq:generalized-bernoulli-special-value}.
For $m=15,20$, the two conjugate factors of conductor five
have values $(-4\pm2i)/5$, whose product is $4/5$.
The resulting special values and the corresponding values of
$\alpha=16/L(-2,\varepsilon_m)$ are listed below.\footnote{
We compute $L(-2,\varepsilon_{15})=128/45$,
$L(-2,\varepsilon_{20})=12$ and $L(-2,\varepsilon_{24})=46/3$,
and independently verify the results; see \path{e3_cusp_check.sage}
and \path{verify_special_values.py} in \cite{EigenformCertificates}.}
\begin{center}
\begingroup
\setlength{\tabcolsep}{6.5pt}
\renewcommand{\arraystretch}{1.10}
\begin{tabular}{@{}rrlr@{}}
\toprule
$m$ & $D_{F_m}$ & $L(-2,\varepsilon_m)$ & $\alpha$\\
\midrule
15 & 1125
& $(-2/9)(-16)(4/5)=128/45$
& $45/8$\\
20 & 2000
& $(-1/2)(-30)(4/5)=12$
& $4/3$\\
24 & 2304
& $(-1/2)(-3)(-2/9)(-46)=46/3$
& $24/23$\\
\bottomrule
\end{tabular}
\endgroup
\end{center}
Each value of $\alpha$ is a nonintegral rational number,
contradicting Lemma~\ref{lem:ec-integrality}.

It remains to consider $n=2$, where $D_F<55$.
Since $f$ has odd weight, $h^+(F)>1$.
Lemma~\ref{lem:quadratic-small-fields}(b),(c) excludes the discriminants
$5,8,13,17,29,37$. In the last two cases, $(2)$ is inert.
The units
\[
3+\sqrt{10},\qquad
32+5\sqrt{41},\qquad
\frac{7+\sqrt{53}}2
\]
have norm $-1$ and exclude $D_F=40,41,53$, respectively.
The remaining discriminants are therefore
\[
D_F\in\{12,21,24,28,33,44\}.
\]

For $D_F=33$, write $(2)=\mathfrak p\mathfrak q$.
Since $2$ is totally positive,
\eqref{eq:prelim-principal-character} gives $\phi((2))=1$.
Applying \eqref{eq:quadratic-ec-prime-relation} at both primes
and using Hecke multiplicativity gives
\[
c((2),G)=\phi((2))c((2),h)=c((2),h).
\]
This contradicts \eqref{eq:quadratic-ec-two-coefficient},
since $\alpha\ne0$.

Next, $(2)$ ramifies for $D_F=12,24,28,44$.
Lemma~\ref{lem:quadratic-ec-ramified-prime} gives
$\lvert\alpha\rvert\geq7$.
On the other hand, \eqref{eq:weight-three-conjugate-bound}
and $\zeta(3)<5/4$ give
\[
\lvert\alpha\rvert<\frac{25\pi^6}{D_F^{5/2}}.
\]
For $D_F=28,44$, this upper bound is at most
$25\pi^6/28^{5/2}<6$, a contradiction.

For $D_F=24$, take $\mathfrak p=(2+\sqrt6)$.
Its generator has mixed signs, so total oddness of $\chi$ gives
$\chi(\mathfrak p)=-1$.
Since $\mathfrak p^2=(2)$, we have $\phi(\mathfrak p)^2=1$,
and hence
$\omega(\mathfrak p)=\phi(\mathfrak p)^2\chi(\mathfrak p)=-1$.
Substituting into \eqref{eq:quadratic-ec-ramified-alpha} gives
\[
9\leq\lvert\alpha\rvert
<
\frac{25\pi^6}{24^{5/2}}
<9.
\]
For the upper bounds at $D_F=28,44$ and $D_F=24$, we use
$\pi<22/7$.

The remaining discriminants are $12$ and $21$.
Lemma~\ref{lem:quadratic-small-fields}(b) identifies $\chi$
with the unique nontrivial narrow class character.
The values in Lemma~\ref{lem:quadratic-exact-special-values}
give $\alpha=36$ and $63/8$, respectively.
The value $63/8$ excludes $D_F=21$, since it is not an
algebraic integer, contrary to Lemma~\ref{lem:ec-integrality}.

For $D_F=12$, the generator of $\mathfrak p=(1+\sqrt3)$
has mixed signs, so the same argument gives
$\omega(\mathfrak p)=-1$.
Equation~\eqref{eq:quadratic-ec-ramified-alpha} becomes
\[
36=9\eta(\mathfrak p)2^{\ell-1}.
\]
Taking absolute values forces $\ell=3$, and the equality then
forces $\eta(\mathfrak p)=1$.
But $\eta$ is totally odd in weight three, so the mixed signs of
the generator give $\eta(\mathfrak p)=-1$, a contradiction.
\end{proof}
\section{Eisenstein--cuspidal products with Eisenstein weight two}
\label{sec:eisenstein-cuspidal-weight-two-products}

We now consider full-level products \(E_2(\phi,\psi)\cdot h\)
with a cuspidal factor of parallel weight \(\ell\geq1\).
We begin with a Rankin--Selberg pairing formula over all narrow
components, then treat \(\ell\geq2\) and \(\ell=1\) separately.
Remark~\ref{rmk:c11-comparison-Li} compares our argument with
\cite{Li2026RankinCohen}.

\subsection{Rankin--Selberg pairing over all narrow components}

We begin by constructing the Eisenstein family used in the pairing
formula.

Let $F$ be a totally real field of degree $n\geq2$.
Let $\phi,\psi$ be narrow class characters whose
product is compatible with weight two, and set
$\chi=\phi^{-1}\psi$.
Then $\chi$ is totally even and factors through the ordinary
class group $\operatorname{Cl}(F)$.
Let
\[
h\in S_\ell(\eta),\qquad
f\in S_{\ell+2}(\phi\psi\eta)
\]
be normalized cuspidal Hecke eigenforms, where $\ell\geq1$.
We use a Petersson pairing that is linear in its first argument,
as in the preliminaries. Mok uses the opposite convention
\cite[Definition~7.1]{Mok2009}.

We use the Hecke coefficient normalization of twisting in
\eqref{eq:prelim-character-twist}. The adelic twist of the
Eisenstein family below is scaled to agree with this normalization
at $w=0$. We include this scalar in the constants in the
Rankin identity.
Put $f_\phi=f\otimes\phi^{-1}$. Its central character is
$\chi\eta$, and its coefficients satisfy
\[
\overline{c(\mathfrak a,f_\phi)}
=\phi(\mathfrak a)\overline{c(\mathfrak a,f)}.
\]
The Rankin identity involves the series
\begin{equation}
\label{eq:c11-twisted-series}
D_\phi(s;f,h)
=\sum_{\mathfrak a\subseteq\mathcal O_F}
\overline{c(\mathfrak a,f)}c(\mathfrak a,h)
\phi(\mathfrak a)N(\mathfrak a)^{-s},
\end{equation}
where the sum is over nonzero integral ideals.
At $s=\ell+1$, we use the value given by meromorphic continuation.

We first identify the weight-two Eisenstein series in Mok's
Rankin identity.
Choose a finite idele $D$ representing $\mathfrak d_F$, and use
the full-level compact subgroup in \cite[equation~(2.1)]{Mok2009},
which is conjugate to $K_f$ by $\operatorname{diag}(D,1)$.
We account for this change of normalization in the constants
below.
Let
\[
\mathcal K(w)
=K_2(w,\chi^{-1},\mathcal O_F)
\]
be Mok's kernel in \cite[equation~(3.8)]{Mok2009}.
Average this kernel over the ordinary class group by setting
\[
\mathcal Q(w)(g)
=\frac1{h(F)}\sum_{z\in\operatorname{Cl}(F)}
\chi(z)^{-1}\mathcal K(w)(zg).
\]
Here each $z$ is represented by a finite central idele.
Equation~(3.6) of \cite{Mok2009} shows that the summands do not
depend on this choice.
The family $\mathcal Q(w)$ has central character $\chi$.

\begin{lemma}
\label{lem:c11-endpoint}
The family $\mathcal Q(w)$ is regular at $w=0$, and
$\mathcal Q(0)$ is a nonzero scalar multiple of the
normalized holomorphic Eisenstein eigenform $E_2(1,\chi)$.
In particular, we have
\[
u=c(\mathcal O_F,\mathcal Q(0))\ne0.
\]
The normalized family
\begin{equation}
\label{eq:c11-Hecke-family}
E_2(\phi,\psi;w)
=u^{-1}\mathcal Q(w)\otimes\phi
\end{equation}
therefore satisfies
$E_2(\phi,\psi;0)=E_2(\phi,\psi)$.
\end{lemma}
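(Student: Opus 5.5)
The plan is to compute the Fourier expansion of Mok's kernel $\mathcal K(w)=K_2(w,\chi^{-1},\mathcal O_F)$ explicitly, as in \cite[\S3]{Mok2009}. The $w$-dependence enters through archimedean confluent hypergeometric (Whittaker) factors and through a finite Dirichlet-series normalization. We then specialize at $w=0$.

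\textbf{Step 1: regularity at $w=0$.} The nonconstant Fourier coefficients of $\mathcal K(w)$ are products of two pieces. The first is an archimedean Whittaker function $W(y,w)$, which is entire in $w$ and at $w=0$ reduces to $e^{-2\pi y}$ for weight two. The second is a finite divisor sum $\sum_{\mathfrak r\mid\mathfrak m}\chi^{-1}(\cdot)N(\mathfrak r)^{1+2w}$, which is polynomial in $N(\mathfrak r)^{w}$. The constant term consists of two pieces. One is $y^{1+w}$ times a character factor. The other is $y^{-w}$ times a ratio built from $L(2w+1,\chi^{-1})$ and gamma factors, whose possible pole would come only from $\chi=1$ at the edge $2w+1=1$. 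For weight two the relevant value is $L(2,\cdot)$ or, after the functional equation, $L(-1,\cdot)$, which is finite and nonzero by \eqref{eq:hecke-L-value-bounds}. So no pole occurs at $w=0$. Averaging over $\operatorname{Cl}(F)$ is a finite sum, so $\mathcal Q(w)$ is regular at $w=0$ as well.

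\textbf{Step 2: holomorphy and the Hecke property at $w=0$.} At $w=0$ the nonholomorphic term $y^{-w}\cdot(\text{second constant})$ still has to be accounted for. In parallel weight $2$ with $n\geq2$, the gamma factor attached to this term vanishes at $w=0$. This is the standard vanishing that distinguishes Hilbert weight two from the elliptic $E_2$. Because every archimedean place contributes such a factor, the nonholomorphic part disappears, and $\mathcal Q(0)$ is a holomorphic full-level form of weight two with central character $\chi$.

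Its nonconstant coefficients are, up to one common scalar, the divisor sums $\sum_{\mathfrak r\mid\mathfrak m}\chi(\mathfrak r)N(\mathfrak r)$, possibly after the class-group average projects onto the right character. Comparing with \eqref{eq:eisenstein-fourier-and-constant-terms} identifies $\mathcal Q(0)$ as a scalar multiple of $E_2(1,\chi)$. The conjugation by $\operatorname{diag}(D,1)$ relating Mok's compact subgroup to $K_f$ changes only the overall constant and the choice of component representatives. It does not change the Hecke eigenvalues.

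An alternative route, if the explicit expansion is cumbersome, is to check directly that $\mathcal Q(0)$ lies in $\mathcal E_2(\chi)$ and is a Hecke eigenvector with eigenvalues $1+\chi(\mathfrak p)N\mathfrak p$. Then we invoke the statement that the $E_2(\phi',\psi')$ form an eigenbasis indexed by ordered pairs \cite[\S4]{DasguptaKakde2021}. For $k=2$ the roots $1$ and $N\mathfrak p$ have distinct absolute values, so the eigenvalue system determines the pair $(1,\chi)$.

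\textbf{Step 3: $u\neq0$.} This is the main obstacle. The common scalar is explicit: a power of $2$ and $\pi$, a power of $N\mathfrak d_F$, the gamma values at $w=0$, and a nonzero class-group normalization. We must check that it is nonzero, and in particular that the averaging over $\operatorname{Cl}(F)$ with weight $\chi(z)^{-1}$ does not cancel everything. Each summand $\mathcal K(0)(zg)$ transforms under the center by $\chi$, using \cite[equation~(3.6)]{Mok2009}. Hence $\chi(z)^{-1}\mathcal K(0)(zg)=\mathcal K(0)(g)$, the average equals $\mathcal K(0)$ itself, and $u$ is the unit-ideal coefficient of $\mathcal K(0)$. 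That coefficient is the Whittaker constant at $w=0$, which is nonzero, so $u\neq0$.

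Finally, twisting by $\phi$ via \eqref{eq:prelim-character-twist} sends $E_2(1,\chi)$ to the form with coefficients $\sum_{\mathfrak r\mid\mathfrak m}\phi(\mathfrak m\mathfrak r^{-1})\psi(\mathfrak r)N(\mathfrak r)$, which is $E_2(\phi,\psi)$. Dividing by $u$ normalizes the result, which gives \eqref{eq:c11-Hecke-family}.
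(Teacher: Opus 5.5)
Your route matches the paper's in outline: read off Mok's explicit Fourier expansion, let the archimedean factors $\Gamma(w)^{-1}$ remove the non-holomorphic constant term because $n\geq2$, and match divisor sums. The paper simply cites Mok's Proposition~3.6 and equation~(3.15) for this. However, Step~3, which you rightly call the crux, rests on a false claim. You assert $\mathcal K(0)(zg)=\chi(z)\mathcal K(0)(g)$ for every class representative $z$, so that $\mathcal Q(0)=\mathcal K(0)$.

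Mok's equation~(3.6) gives less than that. It only makes the summand $\chi(z)^{-1}\mathcal K(w)(zg)$ independent of the finite idele chosen to represent the class of $z$. It does not give $\chi$-equivariance under the whole finite center, and it cannot. For conductor one the primitive kernel does not depend on $\chi$; the character enters $\mathcal K(w)$ only through a scalar Hecke $L$-factor. If that kernel had a fixed central character $\chi_0$, the average would vanish for every $\chi\neq\chi_0$. But the lemma must hold for every ordinary class character once $h(F)>1$.

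So the average is a genuine projection onto the $\chi$-isotypic part. That is why $\mathcal Q(w)$, not $\mathcal K(w)$, has central character $\chi$, and why the paper identifies $h(F)\mathcal Q(w)$ with Mok's family $E_2(w;1,\chi)$ via his (3.8)--(3.9). Your own caveat in Step~2 (``possibly after the class-group average projects onto the right character'') already contradicts Step~3. Consequently the unit-ideal coefficient of $\mathcal Q(0)$ is not that of $\mathcal K(0)$, and $u\neq0$ is not proved. The repair is to expand the averaged family $E_2(w;1,\chi)$ itself. Its coefficient at $\mathfrak m$ is a nonzero explicit constant times a $\chi$-twisted divisor sum, and that sum equals $1$ at $\mathfrak m=\mathcal O_F$.

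The regularity argument in Step~1 is also misdirected. When $\chi=1$ (the case of $E_2$ over $\mathbb Q(\sqrt5)$), the factor $L(2w+1,\chi^{-1})=\zeta_F(2w+1)$ that you identify really has a simple pole at $w=0$. The finite values $L(2,\cdot)$ and $L(-1,\cdot)$ you invoke belong to the normalizing Hecke factor and to the holomorphic constant term, not to this one. What gives regularity is the factor $\prod_{v\mid\infty}\Gamma(w)^{-1}$. Its zero of order $n$ absorbs the pole and, since $n\geq2$, still leaves a zero at $w=0$; this is exactly what makes the term vanish in Step~2. You should state this pole--zero count instead of asserting that no pole exists. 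Similarly, the Whittaker factors at exponents that are negative at some real place do not reduce to $e^{-2\pi y}$. They vanish at $w=0$ through the same $\Gamma(w)^{-1}$ factors, and this is also needed for holomorphy of $\mathcal Q(0)$.
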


\begin{proof}
When the finite conductor is one, Mok's primitive kernel does not
depend on the finite character \cite[equations~(3.4)--(3.5)]{Mok2009}.
The matrix relating the two formulas is
\[
\begin{pmatrix}0&D^{-1}\\D&0\end{pmatrix},
\]
which lies in the chosen compact subgroup.
Consequently, equations~(3.8)--(3.9) of \cite{Mok2009} identify
$h(F)\mathcal Q(w)$ with his unnormalized family
$E_2(w;1,\chi)$.
The continuation after equation~(3.5) and the regularity of the
Hecke $L$-factor in equation~(3.8) show that this family is regular
at zero.

Since $F\ne\mathbb Q$, Proposition~3.6 and equation~(3.15)
of \cite{Mok2009} show that $\mathcal Q(0)$ is holomorphic
with inducing characters $(1,\chi)$.
The same formulas show that the unit-ideal coefficient is nonzero.
Dividing by $u$ therefore gives $E_2(1,\chi)$ in our normalization.
Twisting this form by $\phi$ gives the coefficients
\[
\sum_{\mathfrak r\mid\mathfrak m}
\phi(\mathfrak m\mathfrak r^{-1})
\psi(\mathfrak r)N(\mathfrak r),
\]
so \eqref{eq:eisenstein-fourier-and-constant-terms} identifies it
with $E_2(\phi,\psi)$.
\end{proof}

We now use this family to show that
$E_2(\phi,\psi)\cdot h$ has nonzero Petersson pairing with every
Hecke eigenform in $S_{\ell+2}(\phi\psi\eta)$.

\begin{theorem}
\label{thm:c11-all-component-pairing}
Let $F$ be a totally real field of degree $n\geq2$.
Let $\phi,\psi$ be narrow class characters whose
product is compatible with weight two.
Suppose that
$h\in S_\ell(\eta)$ and
$f\in S_{\ell+2}(\phi\psi\eta)$ are normalized cuspidal
Hecke eigenforms of parallel weights, with $\ell\geq1$.
Then we have
\begin{equation}
\label{eq:c11-all-component-pairing}
\langle E_2(\phi,\psi)\cdot h,f\rangle
=\mathcal C
L(1,\widetilde\pi_f\times(\pi_h\otimes\phi))\ne0,
\end{equation}
where $\mathcal C\ne0$ depends only on
the field, weights, characters, and measure normalizations.
\end{theorem}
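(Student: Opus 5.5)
The identity will come from Rankin--Selberg unfolding against the holomorphic family $E_2(\phi,\psi;w)$ of Lemma~\ref{lem:c11-endpoint}, continued to $w=0$. The nonvanishing is a separate argument about the value at the edge of the critical strip.

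\textbf{Step 1: the identity for $\operatorname{Re}(w)$ large.} Because $\mathcal Q(w)$ is built from Mok's kernel, Mok's Rankin identity \cite[\S7]{Mok2009} applies to it. For $\operatorname{Re}(w)\gg0$ it gives
\[
\langle \mathcal K(w)\cdot h_{\mathrm{tw}},f_{\mathrm{tw}}\rangle=\mathcal C(w)\,D_\phi(\ell+1+w;f,h).
\]
Here the subscripts denote the twists compatible with Mok's compact subgroup, conjugated by $\operatorname{diag}(D,1)$. The factor $\mathcal C(w)$ collects the archimedean gamma integrals, powers of $D_F$ and $N\mathfrak d_F$, the normalizing scalar $u$, and the reciprocal of the normalizing Hecke $L$-factor that appears in the kernel.

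The plan is to carry this through in three moves:
\begin{enumerate}[(a)]
\item Move the twist by $\phi$ from the Eisenstein factor onto $f$. The pairing is invariant under multiplying both arguments by $\phi(\det g)$, up to $|\phi|=1$, and this replaces $f$ by $f_\phi$, whose conjugated coefficients are $\phi(\mathfrak a)\overline{c(\mathfrak a,f)}$. That produces the twisted series \eqref{eq:c11-twisted-series}.
\item Average over $\operatorname{Cl}(F)$. Since $h$ and $f_\phi$ have central characters $\eta$ and $\chi\eta$, each summand $\chi(z)^{-1}\mathcal K(w)(zg)$ contributes the same integral, so averaging only multiplies by a constant.
\item Sum over all narrow components, matching the coefficient normalization $c(\mathfrak m,\cdot)=N(t_\nu)^{-k/2}a_\nu(\mu)$, so that the component Dirichlet series assemble into the ideal sum $D_\phi$.
\end{enumerate}
Both sides are meromorphic in $w$. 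The left side is regular at $w=0$ by Lemma~\ref{lem:c11-endpoint}, since $h$ and $f$ are cusp forms of rapid decay. Continuing and setting $w=0$ then gives
\[
\langle E_2(\phi,\psi)h,f\rangle=\mathcal C(0)\,D_\phi(\ell+1;f,h).
\]

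\textbf{Step 2: identify the value as an $L$-value.} Apply \eqref{eq:prelim-rankin-dirichlet-series} with $k=\ell+2$ and $\xi=\phi$. The shift is $u=s-\ell$, so $s=\ell+1$ corresponds to $u=1$, and
\[
D_\phi(\ell+1)=\frac{L(1,\widetilde\pi_f\times(\pi_h\otimes\phi))}{L_F(2,\omega_f^{-1}\omega_h\phi^2)}.
\]
The denominator is a finite, nonzero absolutely convergent Euler product at $s=2$. I expect it to cancel against the Hecke $L$-factor in Mok's kernel, since that factor is $L(2+2w,\chi^{-1}\cdots)$ with the same character: note $\omega_f^{-1}\omega_h\phi^2=(\phi\psi)^{-1}\phi^2=\chi^{-1}$. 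If the factors do not cancel exactly, the denominator is simply absorbed into $\mathcal C$. I must also check that $\mathcal C(0)\neq0$. This requires the gamma factors at $w=0$ to be finite and nonzero; for $\ell\geq1$ they are of the form $\Gamma(\ell+1)^n$ times powers of $4\pi$.

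\textbf{Step 3: nonvanishing (the main obstacle).} The point $u=1$ is the edge of absolute convergence for the unitary Rankin--Selberg $L$-function. The plan is to invoke the Jacquet--Shalika / Shahidi nonvanishing of $L(s,\pi_1\times\pi_2)$ on $\operatorname{Re}(s)=1$ for cuspidal $\pi_1,\pi_2$ on $\mathrm{GL}_2$, including the possibility of a pole there.

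A pole at $u=1$ would occur exactly when $\pi_h\otimes\phi\cong\pi_f$, up to the unitary normalization. That is impossible here, because the two representations have different archimedean weights, $\ell$ versus $\ell+2$. Hence $L(1,\cdot)$ is finite and nonzero.

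This step needs care in two places:
\begin{enumerate}[(a)]
\item The unitary normalization must be consistent. The local parameters are divided by $N\mathfrak p^{(k-1)/2}$, and this is exactly what puts $u=1$ on the edge of the critical strip.
\item When $\ell=1$, $\pi_h$ is of Galois type, but it is still cuspidal, so the theorem still applies.
\end{enumerate}
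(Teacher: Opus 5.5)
Your route is the paper's route. You unfold Mok's kernel after moving the twist onto $f$ and averaging over $\operatorname{Cl}(F)$; your central-character check for the averaging is correct. You then identify $D_\phi(\ell+1;f,h)=L(1,\widetilde\pi_f\times(\pi_h\otimes\phi))/L_F(2,\chi^{-1})$ via the local generating function. Finally, Shahidi's theorem gives nonvanishing, and the pole is excluded because the archimedean weights $\ell$ and $\ell+2$ differ.

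The one step you assert rather than prove is the specialization at $w=0$. Lemma~\ref{lem:c11-endpoint} only says that $w\mapsto E_2(\phi,\psi;w)(g)$ is regular at $0$ for each fixed $g$. Pointwise regularity plus rapid decay of $h$ and $f$ does not justify interchanging the limit $w\to0$ with the integral. Nor does it show that $I(w)=\langle E_2(\phi,\psi;w)\cdot h,f\rangle$ is meromorphic near $0$. You need that meromorphy when you say ``both sides are meromorphic'' in order to carry the identity from $\operatorname{Re}(w)\gg0$ down to $w=0$.

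This is not cosmetic. At $w=0$ the kernel parameter $s+1-k$ vanishes, and the weight-two series stops converging absolutely. What is needed is a moderate-growth bound $|E_2(\phi,\psi;w)(g)|\leq CH(g)^N$ that holds uniformly for $|w|\leq\epsilon$. The paper gets it as follows:
\begin{itemize}
\item It cites Bernstein--Lapid for the continuation of Eisenstein families in a Fr\'echet space of functions of uniformly moderate growth.
\item It upgrades pointwise regularity to holomorphy in that space, since the negative Laurent coefficients vanish at every $g$ and hence vanish identically.
\item It then applies dominated convergence against the cusp forms.
\end{itemize}
The paper also proves the unfolded identity on all of $\operatorname{Re}(w)>0$, using the bound $|c(\mathfrak a,f)c(\mathfrak a,h)|\leq\tau_F(\mathfrak a)^2N(\mathfrak a)^\ell$. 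This comes from \eqref{eq:prelim-ramanujan-bound}, and from Lemma~\ref{lem:weight-one-coefficients} when $\ell=1$. As a result, only holomorphy of $I(w)$ near $0$ is required, not a global meromorphic continuation. With this uniform bound supplied, your argument is complete.
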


\begin{proof}
Put
$I(w)=\langle E_2(\phi,\psi;w)\cdot h,f\rangle$.
Twisting the Eisenstein family back by $\phi^{-1}$ replaces
$f$ by $f_\phi$ in the pairing. The scalar from
\eqref{eq:prelim-character-twist} is included in the constant below.

Averaging the kernel over the ordinary
class group does not change the integral.
Indeed, changing variables by $g'=zg$ gives
\[
\langle\mathcal K(w)(z\,\cdot)\cdot h,f_\phi\rangle
=\chi(z)\langle\mathcal K(w)\cdot h,f_\phi\rangle.
\]
The factor $\chi(z)$ cancels the weight $\chi(z)^{-1}$ in the
definition of $\mathcal Q(w)$.

For $\ell\geq2$, the coefficient bound
\eqref{eq:prelim-ramanujan-bound} gives
\[
|c(\mathfrak a,f)c(\mathfrak a,h)|
\leq\tau_F(\mathfrak a)^2N(\mathfrak a)^\ell.
\]
For $\ell=1$, Lemma~\ref{lem:weight-one-coefficients} and
multiplicativity give
$|c(\mathfrak a,h)|\leq\tau_F(\mathfrak a)$, while the
weight-three case of \eqref{eq:prelim-ramanujan-bound} gives
$|c(\mathfrak a,f)|\leq\tau_F(\mathfrak a)N(\mathfrak a)$.
Thus the same product bound holds for every $\ell\geq1$.
For $\operatorname{Re}(w)>0$, the series
\eqref{eq:c11-twisted-series}, with $s=\ell+1+w$,
converges absolutely, because the following identity holds:
\begin{equation*}
\sum_{\mathfrak a}
\frac{\tau_F(\mathfrak a)^2}{N(\mathfrak a)^s}
=\frac{\zeta_F(s)^4}{\zeta_F(2s)}
\qquad (\operatorname{Re}(s)>1).
\end{equation*}
The kernel also converges absolutely there. Since $k=\ell+2>\ell$,
we may apply \cite[Theorem~7.9]{Mok2009} with
$F_1=f_\phi^\rho$ and $F_2=h$. Here $f_\phi^\rho$ denotes the
holomorphic form obtained by complex conjugating the Fourier
coefficients of $f_\phi$, as in \cite[equation~(2.30)]{Mok2009}.
For these forms, the character in Mok's identity is
$\omega_{F_1}\omega_{F_2}=\chi^{-1}$. Thus his kernel is
$\mathcal K(w)$ when $s=\ell+1+w$. With our convention for
the Petersson pairing, his integral is the one defining $I(w)$.
We obtain
\begin{equation}
\label{eq:c11-unfolding}
I(w)=\mathcal A(w)
D_\phi(\ell+1+w;f,h),
\end{equation}
where $\mathcal A(w)$ is independent of
the eigenforms, regular at zero, and nonzero there.
The archimedean factor in \cite[equation~(7.19)]{Mok2009}
is a nonzero fixed multiple of
\[
\frac{\Gamma(\ell+1+w)^n}
{(4\pi)^{n(\ell+1+w)}}.
\]
The remaining Hecke $L$-factors have finite conductor one and
are evaluated at two when $w=0$. Absolute convergence there
shows that they are regular and nonzero.

To pass to $w=0$ in \eqref{eq:c11-unfolding}, we need a bound
on the integrand uniform in $w$.
Near $w=0$, an Eisenstein family at a fixed level has a
meromorphic continuation in a fixed Fr\'echet space of functions
with uniform moderate growth; see
\cite[Theorem~2.3 and the corollary at the end of \S8.6]
{BernsteinLapid2024}.
Averaging over the class group, twisting, and dividing by $u$
preserve this property, so it also holds for
\eqref{eq:c11-Hecke-family}. By Lemma~\ref{lem:c11-endpoint},
the value $E_2(\phi,\psi;w)(g)$ is regular at $w=0$ for
every $g$. Its negative Laurent coefficients therefore vanish
at every $g$, and hence vanish as functions. The family is thus
holomorphic in this Fr\'echet space near zero.
In particular, for a height function $H\geq1$ on a finite union
of Siegel sets, there are $\epsilon>0$, $C>0$, and $N\geq0$
such that
\[
|E_2(\phi,\psi;w)(g)|\leq C H(g)^N
\qquad (|w|\leq\epsilon).
\]
The cuspidal forms $h$ and $f$ are rapidly decreasing on these sets.
Thus the integrand defining $I(w)$ has an integrable majorant
independent of $w$ near zero.
It follows that $I(w)$ is holomorphic near zero, and dominated
convergence gives
\[
I(0)=\langle E_2(\phi,\psi)\cdot h,f\rangle.
\]
We may therefore specialize \eqref{eq:c11-unfolding} at zero.
At $s=\ell+1$, the kernel parameter $s+1-k$ is zero and
$\Gamma(s+1-\ell)=\Gamma(2)=1$.

To check the local identity in
\eqref{eq:prelim-rankin-dirichlet-series}, put
\[
H_r(a,b)=\sum_{j=0}^r a^{r-j}b^j.
\]
These polynomials satisfy
\[
\sum_{r\geq0}H_r(a,b)H_r(c,d)X^r
=\frac{1-abcdX^2}
{(1-acX)(1-adX)(1-bcX)(1-bdX)}.
\]
For distinct roots, summing four geometric series proves the
identity. The repeated-root cases then follow by continuity.\footnote{
We verify the local generating-function identity and
$abcd=\chi(\mathfrak p)^{-1}$ for the twisted local parameters;
see \path{verify_rankin_local_factor.py} in \cite{EigenformCertificates}.}
Apply it to the unitary local parameters of
$\widetilde\pi_f$ and $\pi_h\otimes\phi$.
Their product is
\[
abcd=\omega_f(\mathfrak p)^{-1}\eta(\mathfrak p)
\phi(\mathfrak p)^2=\chi(\mathfrak p)^{-1}.
\]
Thus \eqref{eq:prelim-rankin-dirichlet-series} gives
\[
D_\phi(\ell+1;f,h)
=\frac{L(1,\widetilde\pi_f\times(\pi_h\otimes\phi))}
{L_F(2,\chi^{-1})}.
\]
The denominator is nonzero.
This proves \eqref{eq:c11-all-component-pairing} with the nonzero constant
\[
\mathcal C
=\frac{\mathcal A(0)}{L_F(2,\chi^{-1})}.
\]

Finally, Shahidi's theorem \cite{Shahidi1980} gives nonvanishing
of the Rankin--Selberg value at one in the unitary normalization.
A pole at one would require
$\pi_f\simeq\pi_h\otimes\phi$.
Their holomorphic weights at every real place are $\ell+2$
and $\ell$, respectively, so this is impossible.
This Rankin--Selberg value is therefore finite and nonzero.
\end{proof}

This nonvanishing gives the following dimension criterion.

\begin{theorem}
\label{thm:c11-all-component-dimension}
Under the hypotheses of Theorem~\ref{thm:c11-all-component-pairing}, we have
\[
E_2(\phi,\psi)\cdot h\text{ is a Hecke eigenform}
\quad\Longleftrightarrow\quad
\dim S_{\ell+2}(\phi\psi\eta)=1.
\]
\end{theorem}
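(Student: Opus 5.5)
The plan is to decompose the cuspidal product in an orthogonal Hecke eigenbasis of \(S_{\ell+2}(\phi\psi\eta)\) and read off both implications from Theorem~\ref{thm:c11-all-component-pairing}. Put \(g=E_2(\phi,\psi)\cdot h\) and \(\omega=\phi\psi\eta\). Since \(h\) is cuspidal, \(g\) is a cusp form of weight \(\ell+2\), and its central character is \(\omega\), so \(g\in S_{\ell+2}(\omega)\). The Hecke operators admit an orthogonal eigenbasis \(f_1,\dots,f_d\) of this space, where \(d=\dim S_{\ell+2}(\omega)\). Each nonzero full-level cuspidal eigenform has nonzero unit-ideal coefficient, so we may take each \(f_i\) normalized. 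Full-level cuspidal eigenforms with a fixed central character are determined by their Hecke eigenvalues (strong multiplicity one), so the \(f_i\) are the only normalized eigenforms in the space, and they are pairwise orthogonal.

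For \((\Leftarrow)\), suppose \(d=1\). Then \(S_{\ell+2}(\omega)=\mathbb C f_1\), so \(g\) is a multiple of \(f_1\). Theorem~\ref{thm:c11-all-component-pairing} gives \(\langle g,f_1\rangle\ne0\), so \(g\ne0\) and hence \(g\) is a nonzero multiple of \(f_1\), which is a Hecke eigenform.

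For \((\Rightarrow)\), suppose \(g\) is a Hecke eigenform, and write \(g=\sum_i a_i f_i\).
\begin{enumerate}[label=(\roman*)]
\item By orthogonality, \(a_i=\langle g,f_i\rangle/\langle f_i,f_i\rangle\).
\item Theorem~\ref{thm:c11-all-component-pairing}, applied with \(f=f_i\) for each \(i\), gives \(a_i\ne0\) for every \(i\).
\item A Hecke eigenform in \(S_{\ell+2}(\omega)\) lies in a single Hecke eigenspace. By multiplicity one, each such eigenspace is a line spanned by one \(f_i\). Hence \(g\) is a multiple of a single \(f_{i_0}\), so \(a_i=0\) for \(i\ne i_0\).
\end{enumerate}
Combined with (ii), this forces \(d=1\).

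The main obstacle is justifying multiplicity one in this setting: full level, arbitrary narrow class character, and weight \(\ell+2\geq3\). Weight one does not arise, since the target weight is at least three. I would invoke strong multiplicity one for \(\mathrm{GL}_2\) over \(F\), applied to the associated cuspidal automorphic representations. At full level each such representation contributes a one-dimensional space of newforms, so a form is determined by its eigenvalues away from a finite set of primes. This shows that the eigenspaces are lines.

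If one prefers to avoid this, there is an alternative route. Suppose two distinct \(f_i,f_j\) shared an eigenspace, and let \(f\) range over that eigenspace. The pairing formula gives \(\langle g,f\rangle=\mathcal C L(1,\widetilde\pi_f\times(\pi_h\otimes\phi))\). This depends only on the eigenvalues of \(f\), and the dependence is conjugate-linear in \(f\) through the first argument. The map \(f\mapsto\langle g,f\rangle\) is conjugate-linear, yet it would take the same nonzero value at every normalized element of the eigenspace. An affine family of normalized forms has dimension at least one once the eigenspace has dimension at least two, so this is a contradiction. Either argument suffices; the automorphic one is cleaner.
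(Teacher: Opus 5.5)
Your main argument is correct and is essentially the paper's proof. The paper observes that if the product is an eigenform and $\dim S_{\ell+2}(\phi\psi\eta)>1$, then some member of an orthogonal Hecke eigenbasis is orthogonal to it, contradicting Theorem~\ref{thm:c11-all-component-pairing}. That is the same as your observation that every coefficient $a_i$ is nonzero while an eigenform has only one nonzero coefficient. For the converse, the paper gets $g\neq0$ from $c(\mathcal O_F,g)=2^{-n}L_F(-1,\chi)\neq0$ rather than from the pairing, which is an immaterial difference.

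Your ``alternative route'' does not work as written. A conjugate-linear functional can be constant and nonzero on an affine hyperplane that misses the origin; for example, $f\mapsto\overline{c(\mathcal O_F,f)}$ equals $1$ on all normalized forms, so no contradiction follows. You do not need that route, nor strong multiplicity one. At full level $c(\mathfrak m,f)=\lambda_f(\mathfrak m)\,c(\mathcal O_F,f)$, so two normalized eigenforms with the same eigenvalues coincide, and every joint Hecke eigenspace is at most a line.
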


\begin{proof}
Lemma~\ref{lem:low-product-coefficients} gives
\[
c(\mathcal O_F,E_2(\phi,\psi)\cdot h)=2^{-n}L_F(-1,\chi)\ne0.
\]
The special value is nonzero by
\eqref{eq:hecke-L-value-bounds}. Thus
$S_{\ell+2}(\phi\psi\eta)$ has positive dimension.
If its dimension is one, Hecke stability makes the product a Hecke eigenform.
Conversely, suppose that the product is a Hecke eigenform and
$\dim S_{\ell+2}(\phi\psi\eta)>1$. An orthogonal Hecke
eigenbasis of this space contains an eigenform orthogonal to the
product.
This contradicts Theorem~\ref{thm:c11-all-component-pairing}.
Hence $\dim S_{\ell+2}(\phi\psi\eta)=1$.
\end{proof}

\subsection{Cuspidal weights at least two}
\label{sec:eisenstein-cuspidal-weight-two}

For \(\ell\geq2\), we first record coefficient identities shared by
the quadratic and higher-degree cases. We then consider narrow class
number one, fields of higher degree, and real quadratic fields with
larger narrow class number.

\subsubsection{Coefficient identities at small ideals}
\label{subsec:c11-small-ideal-identities}

The following identities apply in degree two as well as in higher
degree. Let \(F\) be a totally real field of degree \(n\geq2\), and let
\(E=E_2(\phi,\psi)\) and \(h\in S_\ell(\eta)\) be normalized
full-level Hecke eigenforms, with \(\ell\geq2\). Suppose that
\(E\cdot h\) is a Hecke eigenform. Put
\(\chi=\phi^{-1}\psi\) and \(\omega=\phi\psi\).
By Lemma~\ref{lem:ec-integrality},
\[
\alpha=\frac{2^n}{L_F(-1,\chi)}
\]
is a nonzero algebraic integer. The product \(G=\alpha E\cdot h\)
is normalized.

The norm bound \eqref{eq:indecomposable-norm-bound} rules out
totally positive decompositions for the ideals below.

\begin{proposition}
\label{prop:c11-small-norm}
If \(\mathfrak a\) is a nonzero integral ideal with
\(N\mathfrak a<2^n\), or with \(N\mathfrak a=2^n\) and
\(\mathfrak a\ne(2)\), then \(G=\alpha E\cdot h\) satisfies
\begin{equation}\label{eq:c11-small-norm}
c(\mathfrak a,G)=\phi(\mathfrak a)c(\mathfrak a,h).
\end{equation}
\end{proposition}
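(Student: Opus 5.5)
The plan is to mirror the proof of Lemma~\ref{lem:quadratic-ec-ramified-prime}, now for an arbitrary ideal of small norm. Fix such an ideal \(\mathfrak a\). Choose a narrow class representative \(t_\nu\) with \([t_\nu]=[\mathfrak a]^{-1}\) in the narrow class group. Then \(\mathfrak a t_\nu\) is principal with a totally positive generator \(\mu\in t_\nu\), so \(\mathfrak a=(\mu)t_\nu^{-1}\).

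Apply the convolution formula \eqref{eq:component-product-convolution} on the \(\nu\)-th component to \(E\cdot h\). By the discussion after \eqref{eq:indecomposable-norm-bound}, the exponent \(\mu\) is indecomposable whenever \(N\mathfrak a<2^n\), or \(N\mathfrak a=2^n\) with \(\mathfrak a\neq(2)\). Hence the convolution sum is empty. Since \(h\) is cuspidal, \(c_\nu(0,h)=0\), and we are left with
\[
c(\mathfrak a,E\cdot h)=c_\nu(0,E)\,c(\mathfrak a,h).
\]

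Next, \eqref{eq:eisenstein-fourier-and-constant-terms} gives
\[
c_\nu(0,E)=\phi^{-1}(t_\nu)\,c_1(0,E).
\]
Because \(\mathfrak a t_\nu=(\mu)\) with \(\mu\gg0\), the identity \eqref{eq:prelim-principal-character} gives \(\phi(\mathfrak a)\phi(t_\nu)=1\), so \(\phi^{-1}(t_\nu)=\phi(\mathfrak a)\). Finally, multiply by \(\alpha=c_1(0,E)^{-1}\), which is nonzero by Lemma~\ref{lem:ec-integrality}. This yields \eqref{eq:c11-small-norm}.

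The only step needing care is checking that the coefficient normalization factors \(N(t_\nu)^{-k/2}\) are consistent across the product. Write \(a_\nu\) for the raw Fourier coefficients. The product satisfies \(a_\nu(\mu,E\cdot h)=a_\nu(0,E)\,a_\nu(\mu,h)\) with weight \(2+\ell\), and the normalization factors satisfy
\[
N(t_\nu)^{-(2+\ell)/2}=N(t_\nu)^{-1}\,N(t_\nu)^{-\ell/2}.
\]
So the normalized identity holds exactly, as it did in the lemma for primes of norm two.
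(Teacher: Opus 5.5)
Your proposal is correct and matches the paper's proof: indecomposability from \eqref{eq:indecomposable-norm-bound} and its equality case empties the convolution sum in \eqref{eq:component-product-convolution}, and cuspidality of $h$ removes the other constant-term contribution. The Eisenstein constant term $c_\nu(0,E)=\phi^{-1}(t_\nu)c_1(0,E)=\phi(\mathfrak a)\alpha^{-1}$ then yields the identity; your separate normalization check is harmless but redundant, since \eqref{eq:component-product-convolution} is already stated for the normalized coefficients.
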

\begin{proof}
By \eqref{eq:indecomposable-norm-bound} and its equality case,
the exponent corresponding to \(\mathfrak a\) is indecomposable
in the stated range. Substituting the Eisenstein constant term
from \eqref{eq:eisenstein-fourier-and-constant-terms} into
\eqref{eq:component-product-convolution} gives
\eqref{eq:c11-small-norm}.
\end{proof}

Applying \eqref{eq:c11-small-norm} at a prime ideal and its square
gives the following obstruction.

\begin{corollary}
\label{cor:c11-prime-exclusion}
There is no prime ideal \(\mathfrak p\) with
\(N(\mathfrak p)^2\leq2^n\) and \(\mathfrak p^2\ne(2)\).
\end{corollary}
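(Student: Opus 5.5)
Suppose, for contradiction, that \(\mathfrak p\) is a prime ideal with \(N(\mathfrak p)^2\leq2^n\) and \(\mathfrak p^2\ne(2)\). Put \(q=N\mathfrak p\). Then \(q<2^n\), and \(\mathfrak p^2\) has norm \(q^2\leq2^n\) and is different from \((2)\). Proposition~\ref{prop:c11-small-norm} therefore applies to both \(\mathfrak a=\mathfrak p\) and \(\mathfrak a=\mathfrak p^2\), giving
\[
c(\mathfrak p,G)=\phi(\mathfrak p)c(\mathfrak p,h),\qquad
c(\mathfrak p^2,G)=\phi(\mathfrak p)^2c(\mathfrak p^2,h).
\]
The plan is to compare these two identities through the Hecke relation \eqref{eq:hecke-prime-square-recurrence}, applied to the normalized eigenforms \(G\) and \(h\).

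The form \(G\) has weight \(\ell+2\) and central character \(\omega\eta\), while \(h\) has weight \(\ell\) and central character \(\eta\). For \(G\), the Hecke relation together with the first identity gives
\[
c(\mathfrak p^2,G)=\phi(\mathfrak p)^2c(\mathfrak p,h)^2-\omega(\mathfrak p)\eta(\mathfrak p)q^{\ell+1}.
\]
For \(h\), the Hecke relation multiplied by \(\phi(\mathfrak p)^2\) gives
\[
\phi(\mathfrak p)^2c(\mathfrak p^2,h)=\phi(\mathfrak p)^2c(\mathfrak p,h)^2-\phi(\mathfrak p)^2\eta(\mathfrak p)q^{\ell-1}.
\]
By the second identity the left-hand sides agree, so the \(c(\mathfrak p,h)^2\) terms cancel. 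Using \(\omega=\phi^2\chi\), we are left with
\[
\phi(\mathfrak p)^2\eta(\mathfrak p)\bigl(\chi(\mathfrak p)q^{\ell+1}-q^{\ell-1}\bigr)=0.
\]

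The character values have absolute value one, so this forces \(\chi(\mathfrak p)q^2=1\). Taking absolute values gives \(q^2=1\), which is impossible for a prime ideal. This contradiction proves the corollary.

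The main point requiring care is the bookkeeping of the central characters in the two Hecke relations, namely that \(\omega\eta\) for \(G\) and \(\eta\) for \(h\) combine via \(\omega=\phi^2\chi\). Once that is tracked correctly, the argument involves no real obstacle. It uses no hypothesis on \(\ell\) beyond the standing ones, because Proposition~\ref{prop:c11-small-norm} already encodes the indecomposability input.
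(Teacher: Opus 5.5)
Your proposal is correct and follows the paper's proof: you apply Proposition~\ref{prop:c11-small-norm} at \(\mathfrak p\) and \(\mathfrak p^2\), then compare the prime-square Hecke relations for \(G\) and \(h\), using \(\omega=\phi^2\chi\), to obtain \(\chi(\mathfrak p)N(\mathfrak p)^2=1\). Beyond the paper's terse version, you make explicit the norm checks that justify applying the proposition and the central-character bookkeeping.
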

\begin{proof}
Apply Proposition~\ref{prop:c11-small-norm} at \(\mathfrak p\)
and \(\mathfrak p^2\). The prime-square relation
\eqref{eq:hecke-prime-square-recurrence} gives
\(\chi(\mathfrak p)N(\mathfrak p)^2=1\), which is impossible
since \(|\chi(\mathfrak p)|=1\) and \(N\mathfrak p>1\).
\end{proof}

\subsubsection{Fields with narrow class number one}

We begin with narrow class number one and use the full-level spaces
and Hecke normalization fixed in the preliminaries.

\begin{theorem}
\label{thm:c11-dimension-class-one}
Suppose \(h^+(F)=1\) and \([F:\mathbb Q]>1\). Let
\(h\in S_\ell(1)\) be a normalized cuspidal Hecke eigenform of
parallel weight \(\ell\geq2\). For even \(k\geq2\), the following equivalence holds:
\[
E_k\cdot h\text{ is a Hecke eigenform}
\quad\Longleftrightarrow\quad
\dim S_{\ell+k}(1)=1.
\]
\end{theorem}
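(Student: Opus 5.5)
The plan is to generalize the proof of Theorem~\ref{thm:c11-all-component-dimension} from Eisenstein weight two to arbitrary even weight $k\geq2$ in the narrow class number one setting. Since $h^+(F)=1$, the only narrow class character is trivial. Hence $E_k=E_k(1,1)$, the form $h$ lies in $S_\ell(1)$, and the product lies in $S_{\ell+k}(1)$. There is a single component, and $E_k$ has constant term $2^{-n}\zeta_F(1-k)$.

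The forward-easy direction goes as follows. By \eqref{eq:product-coefficient-one}, the product satisfies
\[
c(\mathcal O_F,E_k\cdot h)=2^{-n}\zeta_F(1-k).
\]
This is nonzero by \eqref{eq:hecke-L-value-bounds}, so $E_k\cdot h$ is a nonzero element of $S_{\ell+k}(1)$. If $\dim S_{\ell+k}(1)=1$, this space is spanned by a single Hecke eigenform, and Hecke stability makes the product an eigenform.

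For the converse, I will prove the analogue of Theorem~\ref{thm:c11-all-component-pairing}. The claim is that for every normalized eigenform $f\in S_{\ell+k}(1)$,
\[
\langle E_k\cdot h,f\rangle=\mathcal C_k\,L(k-1,\widetilde\pi_f\times\pi_h)\neq0 .
\]
The steps follow the weight-two proof.
\begin{enumerate}[(i)]
\item Realize $E_k$ as the value at $w=0$ of Mok's kernel $K_k(w,1,\mathcal O_F)$. No class group averaging is needed, since $h(F)=1$. For $k\geq3$, this value lies in the absolutely convergent range, which is easier. For $k=2$, it is exactly Lemma~\ref{lem:c11-endpoint}.
\item Apply \cite[Theorem~7.9]{Mok2009} with $F_1=f^\rho$ and $F_2=h$. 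This is allowed because $\ell+k>\ell$. The unfolding gives the pairing as $\mathcal A(w)$ times the Dirichlet series $\sum\overline{c(\mathfrak a,f)}c(\mathfrak a,h)N\mathfrak a^{-s}$, evaluated at $s=\ell+k-1+w$.
\item Use \eqref{eq:prelim-rankin-dirichlet-series} with $u=s-(2\ell+k-2)/2$. At $w=0$ this gives $u=k/2$, so the series equals
\[
L(k/2,\widetilde\pi_f\times\pi_h)/\zeta_F(k).
\]
This corrects the evaluation point in the display of the claim above.
\end{enumerate}

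For $k\geq4$, the point $u=k/2\geq2$ lies in the region of absolute convergence of the Euler product, so nonvanishing is immediate. For $k=2$, we have $u=1$, and nonvanishing follows from Shahidi together with the weight mismatch, exactly as before. The passage to $w=0$ for $k=2$ is justified by the Bernstein--Lapid uniformity argument already given. Once nonvanishing is known, the dimension argument of Theorem~\ref{thm:c11-all-component-dimension} applies verbatim. An orthogonal Hecke eigenbasis of $S_{\ell+k}(1)$ is available, and if its dimension exceeds one, some basis element is orthogonal to the eigenform $E_k\cdot h$, a contradiction.

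The main obstacle is the case $k=3$, where $u=3/2$. This point lies on the boundary region $1<u$. Absolute convergence still holds for $\operatorname{Re}u>1$, so the Euler product is nonzero there. Note also that odd $k$ is excluded by hypothesis, because $h^+=1$ forbids odd weight by Lemma~\ref{lem:odd-weight-unit-parity}. So in practice the only delicate case is $k=2$, which the earlier theorem already covers. Beyond that, the remaining work is bookkeeping: checking that the archimedean factor of Mok's constant, a nonzero multiple of $\Gamma(\ell+k-1)^n(4\pi)^{-n(\ell+k-1)}$, and the Hecke $L$-factors evaluated at $k\geq2$, are finite and nonzero.
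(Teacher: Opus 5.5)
Your proposal is correct, but it reaches the hard direction by a different route from the paper. The paper does not reprove a pairing formula here. It observes that all characters are trivial and cites Li's Petersson--Rankin--Selberg identity for Rankin--Cohen brackets \cite[Theorem~3.4 and Corollary~3.7]{Li2026RankinCohen} at $r=0$ to exclude $\dim S_{\ell+k}(1)>1$, noting that Zhang--Zhou \cite[Proposition~3.3]{ZhangZhou2024} already covers $k\geq4$. It then gives the same unit-ideal-coefficient argument you give for the easy direction.

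You instead stay inside the paper's own machinery. For $k=2$, your argument is literally Theorem~\ref{thm:c11-all-component-dimension} with $\phi=\psi=\eta=1$, so you may simply cite it. For even $k\geq4$, you unfold an absolutely convergent Eisenstein series and obtain $\langle E_k\cdot h,f\rangle$ as a nonzero constant times $L(k/2,\widetilde\pi_f\times\pi_h)/\zeta_F(k)$. Your shift $u=k/2$ is right. Since $k/2\geq2$ lies in the region of absolute convergence, nonvanishing is immediate, with no appeal to Shahidi, Bernstein--Lapid, or the regularization in Lemma~\ref{lem:c11-endpoint}. Your route buys self-containment and an explicit $L$-value. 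The citation buys brevity, at the price of matching Li's full-level and Eisenstein normalizations to the paper's.

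Three fixes for a write-up. First, state $u=k/2$ from the outset instead of the erroneous $L(k-1,\cdot)$. Second, drop the paragraph on $k=3$: the point $u=3/2$ is not on any boundary, and odd $k$ is excluded anyway. Third, say briefly why the $w=0$ value of Mok's weight-$k$ kernel is a nonzero multiple of $E_k$ for $k\geq4$, or unfold the classical series on the single component directly, since Lemma~\ref{lem:c11-endpoint} only treats weight two.
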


\begin{proof}
Since \(h^+(F)=1\), there is a single narrow component and all
narrow class characters are trivial. In particular, \(h\) has trivial
central character and even weight. The full-level group in
\cite{Li2026RankinCohen} uses the convention involving
\(\mathfrak d_F\) described above. Taking \(r=0\) in
\cite[Theorem~3.4 and Corollary~3.7]{Li2026RankinCohen}, with
the Eisenstein normalization in \S3.3 there, shows that \(E_k\cdot h\)
cannot be a Hecke eigenform when \(\dim S_{\ell+k}(1)>1\). For \(k\geq4\),
this obstruction was already proved in
\cite[Proposition~3.3]{ZhangZhou2024}.
By Lemma~\ref{lem:low-product-coefficients} and
\eqref{eq:hecke-L-value-bounds}, the unit-ideal coefficient is
\(2^{-n}\zeta_F(1-k)\ne0\). Thus \(S_{\ell+k}(1)\)
has positive dimension.
If the dimension is one, Hecke stability makes the product a Hecke eigenform.
\end{proof}

\subsubsection{Finite reduction in degree greater than two}

We turn next to fields of degree greater than two. Throughout this
subsubsection, assume \(n>2\) and retain the notation and eigenform
product hypothesis of Subsubsection~\ref{subsec:c11-small-ideal-identities}.
Since \(\chi\) is totally even, it factors through
\(\operatorname{Cl}(F)\). Thus \(h(F)\geq2\) if
\(\chi\ne1\), whereas \(h(F)=1\) forces \(\chi=1\).

For a prime of residue degree one above a rational prime \(p\), the
norm condition is \(p\leq2^{n/2}\). If \((2)\) is not inert,
Corollary~\ref{cor:c11-prime-exclusion} leaves only the possibility
\((2)=\mathfrak p^2\), which the following proposition excludes.

\begin{proposition}
\label{prop:c11-inertness}
If \(E\cdot h\) is a Hecke eigenform, then \((2)\) is inert in \(F\).
\end{proposition}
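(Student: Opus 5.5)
The plan is to assume that \((2)\) is not inert and first reduce to the single configuration \((2)=\mathfrak p^2\). That configuration is then excluded by playing an exact formula for \(\alpha\) against the upper bound for its conjugates coming from \eqref{eq:hecke-L-value-bounds}. Any residual finite list of fields is handled by exact special values or by Corollary~\ref{cor:c11-prime-exclusion} at small primes.

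\emph{Step 1: reduction.} Let \(\mathfrak p\mid(2)\) have least residue degree \(f\). Since \(\sum e_{\mathfrak q}f_{\mathfrak q}=n\) and \((2)\) is not inert, we have \(\sum e_{\mathfrak q}\geq 2\), hence \(f\leq n/2\) and \(N(\mathfrak p)^2\leq 2^n\). Corollary~\ref{cor:c11-prime-exclusion} then forces \(\mathfrak p^2=(2)\). Thus \(n=2f\) is even, \(q:=N\mathfrak p=2^{n/2}\), and \(n\geq4\).

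\emph{Step 2: exact formula for \(\alpha\).} Since \(q<2^n\), Proposition~\ref{prop:c11-small-norm} gives \(c(\mathfrak p,G)=\phi(\mathfrak p)c(\mathfrak p,h)\). Because \(2\) is totally positive, \eqref{eq:prelim-principal-character} gives \(\phi(\mathfrak p)^2=\phi((2))=1\). Now subtract the relations \eqref{eq:hecke-prime-square-recurrence} for \(G\) (weight \(\ell+2\), central character \(\omega\eta\)) and for \(h\) (weight \(\ell\), central character \(\eta\)), and combine with \eqref{eq:quadratic-ec-two-coefficient}, which is valid here by Lemma~\ref{lem:ec-integrality}. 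Exactly as in Lemma~\ref{lem:quadratic-ec-ramified-prime}, this gives
\[
\alpha=\eta(\mathfrak p)q^{\ell-1}\bigl(1-\omega(\mathfrak p)q^2\bigr).
\]
Every Galois conjugate has the same shape, so
\[
|\sigma(\alpha)|\geq q^{\ell-1}(q^2-1)\geq 2^{n/2}(2^n-1),
\]
using \(\ell\geq2\).

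\emph{Step 3: upper bound and degree reduction.} Apply \eqref{eq:hecke-L-galois-equivariance} and the lower bound in \eqref{eq:hecke-L-value-bounds} with \(m=2\) to \(\alpha=2^n/L_F(-1,\chi)\). This gives
\[
|\sigma(\alpha)|<\Bigl(\frac{2\pi^4}{3\,\delta_F^{3/2}}\Bigr)^n.
\]
Comparing with Step 2 yields roughly \(\delta_F^{3/2}<2\pi^4/(3\cdot2^{3/2})\), so \(\delta_F\) is bounded by about \(8.1\).

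By the Odlyzko--Takeuchi bound \eqref{eq:prelim-arithmetic-bounds}, this excludes \(n\geq8\), which leaves \(n\in\{4,6\}\). If \(\ell\geq3\), the extra factor \(q\) sharpens the bound further. The ratio argument used in Theorem~\ref{thm:higher-degree-eisenstein-cuspidal} should reduce matters to \(\ell=2\), or at worst to a small range of \(\ell\).

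\emph{Step 4 (main obstacle): the finite residue.} What remains are quartic and sextic fields with \(\delta_F\lesssim8.1\) in which \(2=\mathfrak p^2\) with \(N\mathfrak p=2^{n/2}\). I would prune this list in three ways:
\begin{itemize}
\item[(i)] Corollary~\ref{cor:c11-prime-exclusion} also forbids every prime of norm at most \(2^{n/2}\) other than \(\mathfrak p\). In particular, in degree four there is no prime of norm \(3\).
\item[(ii)] The Odlyzko bounds in degree six already give \(\delta_F>7.28\), which makes the sextic window very thin.
\item[(iii)] The discriminant tables \cite{Voight,BuchmannFord1989} cut the quartic window to \(D_F\lesssim 4300\).
\end{itemize}
For each surviving field, I expect \(h(F)=1\), so that \(\chi=1\) and \(\alpha=2^n/\zeta_F(-1)\). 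I would then compute \(\zeta_F(-1)\) exactly, via Artin factorization into Dirichlet \(L\)-values where possible, or by machine computation recorded as in \cite{EigenformCertificates}. The check is that \(2^n/\zeta_F(-1)\) is either nonintegral or fails the exact formula of Step 2. The hard part is carrying out this enumeration and these special-value computations; the argument itself is otherwise uniform.
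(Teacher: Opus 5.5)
Your Steps 1--3 match the paper's proof. The paper also reduces to $(2)=\mathfrak p^2$ with $q=2^{n/2}$ via Corollary~\ref{cor:c11-prime-exclusion}, derives $\alpha=\eta(\mathfrak p)q^{\ell-1}(1-\omega(\mathfrak p)q^2)$ to get $|\sigma(\alpha)|\geq q(q^2-1)$, and compares this with \eqref{eq:c11-alpha-bound}, so that Odlyzko--Takeuchi excludes even $n\geq8$ ($|\sigma(\alpha)|<226<4080$). You do not need a separate treatment of $\ell\geq3$, since $q^{\ell-1}\geq q$ already handles every $\ell\geq2$.

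The finite residue is where you differ, and one of your approximations would lose a case. At $n=4$, the exact comparison $60\leq|\sigma(\alpha)|<(2\pi^4/3)^4D_F^{-3/2}$ gives $D_F<4445.506$, i.e.\ $\delta_F<8.166$. Rounding to $\delta_F\approx8.1$ and $D_F\lesssim4300$ would drop the field of discriminant $4400$. That field is one of the five quartic fields in range with $(2)=\mathfrak p^2$ (discriminants $1600,2000,2624,3600,4400$), so you must enumerate up to the exact bound. In degree six, the Odlyzko--Takeuchi value $\delta_F>7.27$ does not close the window. The minimum totally real sextic discriminant $300125$ does: the upper bound drops below $457$, while $q(q^2-1)=504$.

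For the quartic fields, the paper computes no exact special values. Each field has $h(F)=1$, so $\chi=1$. Then $|\alpha|\geq60$ gives $\zeta_F(-1)\leq4/15$, whereas the functional equation with $\zeta_F(2)>1$ gives $\zeta_F(-1)>D_F^{3/2}/(2\pi^2)^4\geq4000/\pi^8>4/15$. Your field-by-field computation of $\zeta_F(-1)$ would also succeed, but it is more work and needs the same class-number input.

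That input does not have to be read off tables. A Hilbert class field of degree $4h(F)\geq8$ would have $\delta_H=\delta_F<8.17$, while Odlyzko--Takeuchi forces $\delta_H>10.2$, exactly as in the proof of Lemma~\ref{lem:weight-three-quartic-unit-signatures}. Combined with the sharpened inequality for $\chi=1$, this gives $D_F<1180$. The only totally real quartic fields below that bound are those of discriminants $725$ and $1125$, and $2$ is unramified in both. So the quartic case closes with essentially no enumeration.
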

\begin{proof}
Suppose that \((2)\) is not inert, and choose a prime \(\mathfrak p\) above it
of least residue degree \(f\). Then \(f\leq n/2\), so
Corollary~\ref{cor:c11-prime-exclusion} forces
\((2)=\mathfrak p^2\), \(N\mathfrak p=2^{n/2}\).
Thus \(n=2f\) is even. Put \(q=2^{n/2}\). Combining
\eqref{eq:c11-small-norm} at \(\mathfrak p\),
\eqref{eq:product-coefficient-two} at \(\mathfrak p^2=(2)\),
and \eqref{eq:hecke-prime-square-recurrence}, we obtain
\[
\alpha=\eta(\mathfrak p)q^{\ell-1}
 \bigl(1-\omega(\mathfrak p)q^2\bigr).
\]
Every conjugate therefore has absolute value at least \(q(q^2-1)\),
whereas \eqref{eq:hecke-L-value-bounds}, with \(k=2\), gives
\begin{equation}\label{eq:c11-alpha-bound}
|\sigma(\alpha)|<\left(\frac{2\pi^4}{3}\right)^nD_F^{-3/2}.
\end{equation}

For \(n=6\), the minimum \(D_F\geq300125\)
\cite[Table~3]{Voight} makes the upper bound less than \(457\),
whereas the lower bound is \(504\). For even \(n\geq8\), substituting the discriminant bound in
\cite[Proposition~2.3]{TAKEUCHI83} into
\eqref{eq:c11-alpha-bound} gives
\[
|\sigma(\alpha)|<e^{3\cdot8.3185/2}
\left(\frac{2\pi^4/3}{29.099^{3/2}}\right)^n<226.
\]
The expression in parentheses is less than one, whereas
\(q(q^2-1)\geq4080\).
It remains to consider \(n=4\), for which the two bounds force
\(D_F<4445.506\). We enumerate the totally real quartic fields
under this bound satisfying \((2)=\mathfrak p^2\) and obtain
\[
D_F\in\{1600,2000,2624,3600,4400\}.
\]
Each field has ordinary class number one, so \(\chi=1\).\footnote{
We enumerate the quartic fields with $D_F<4445.506$ and
$(2)=\mathfrak p^2$, finding the five displayed fields, each with
$h(F)=1$; see \path{e2_square_branch_check.sage}
in \cite{EigenformCertificates}.} The
formula for \(\alpha\) above gives \(|\zeta_F(-1)|\leq4/15\),
whereas \eqref{eq:prelim-hecke-functional-equation} gives
\[
\zeta_F(-1)>\frac{D_F^{3/2}}{(2\pi^2)^4}
 \geq\frac{4000}{\pi^8}>\frac4{15},
\]
a contradiction.
\end{proof}

The bound on \(\alpha\) also bounds the root discriminant
and, through the Hilbert class field, the degree and ordinary
class number. Prime factorizations and finite Euler products then
exclude degrees eight and nine, and degree seven when \(h(F)=1\).

\begin{proposition}
\label{prop:c11-finite-boundary}
If \(E\cdot h\) is a Hecke eigenform, then \((2)\) is inert,
\(n\leq9\), and \(nh(F)\leq14\). If \(\chi=1\), then
\(nh(F)\leq9\). The product cannot be a Hecke eigenform for
\(n\geq8\), or for \(n=7\) and \(h(F)=1\).
\end{proposition}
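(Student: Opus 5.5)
Inertness of \((2)\) is Proposition~\ref{prop:c11-inertness}. For the remaining bounds I would use the inert branch. Since \((2)\) is inert, \eqref{eq:product-coefficient-two} for the normalized product \(G=\alpha E\cdot h\) gives \(c((2),G)=c((2),h)+\alpha\). Here \(c((2),E)=\phi(2)+\psi(2)4^{n}=1+4^n\), because \(2\) is totally positive. The Hecke relation \eqref{eq:hecke-prime-square-recurrence} is not directly usable at an inert prime of norm \(2^n\). Instead I would use the prime \((2)\) itself. By the Ramanujan bound of Lemma~\ref{lem:weight-one-coefficients}, every conjugate of \(c((2),G)-c((2),h)\) is at most \(2\cdot 2^{n(\ell+1)/2}+2\cdot2^{n(\ell-1)/2}\) in absolute value, but this depends on \(\ell\) and is not helpful. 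I would therefore avoid it and rely only on the upper bound \eqref{eq:c11-alpha-bound}: every conjugate of the nonzero algebraic integer \(\alpha\) satisfies \(|\sigma(\alpha)|<(2\pi^4/3)^nD_F^{-3/2}\). Requiring some conjugate to have modulus at least one gives \(\delta_F^{3/2}<2\pi^4/3\), that is, \(\delta_F<(2\pi^4/3)^{2/3}\approx 16.2\). The Odlyzko--Takeuchi bound in \eqref{eq:prelim-arithmetic-bounds} then gives \(n\le 14\). Applied to the Hilbert class field \(H\), which has \(\delta_H=\delta_F\), it gives \(nh(F)\le14\). To sharpen this to \(n\le9\), I would use a finer input: the lower bound in \eqref{eq:hecke-L-value-bounds} includes the factor \(\zeta(2)^{-n}\), so \(|\sigma(\alpha)|<(2\pi^4\zeta(2)/\pi^2\cdot\ldots)\). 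Concretely, I would compute \(|\alpha|\le 2^n(2\pi^2)^n\zeta(2)^n/(D_F^{3/2}\zeta(2n^2))\) and compare it with Voight's table of minimal root discriminants in degrees \(10\)--\(14\).

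When \(\chi=1\), \(\alpha=2^n/\zeta_F(-1)\) is rational, hence a nonzero rational integer. I would then use the exact identity \eqref{eq:prelim-hecke-functional-equation}, \(|\zeta_F(-1)|=D_F^{3/2}\zeta_F(2)/(2\pi^2)^n\), so that \(|\alpha|=(4\pi^2)^n/(D_F^{3/2}\zeta_F(2))\ge1\). Since \(\zeta_F(2)>1\), this gives the sharper bound \(\delta_F<(4\pi^2)^{2/3}\approx11.6\). Feeding this into Odlyzko--Takeuchi for \(H\) gives \(nh(F)\le9\).

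For the exclusions I would split by degree. For \(n\ge8\), the bound \(\delta_F<16.2\) together with inertness of \(2\) and Corollary~\ref{cor:c11-prime-exclusion} is decisive. The corollary forbids any prime of norm \(p^f\) with \(p^{2f}\le2^n\); for \(n=8,9\) this forbids degree-one primes above \(2,3,5,7,11,13\) (when \(2^{n/2}\ge16\)) and degree-two primes above \(3\). I would take the known totally real fields of degree \(8\) or \(9\) with root discriminant below \(16.2\), either from Voight's tables or by bounding the field count, and for each field exhibit a small split prime; alternatively I would use the Euler-product lower bound \(P_B(F)\) in \eqref{eq:prelim-euler-product-bound}. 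A field with all small primes inert or of large residue degree has \(\zeta_F(2)\) close to \(1\), and combining this with the integrality of \(N(\alpha)\) (where \(\alpha\) lies in the cyclotomic field of \(\chi\)) should tighten \(\delta_F\) enough to contradict Odlyzko. For \(n=7\) with \(h(F)=1\), \(\chi=1\) and \(\alpha\in\mathbb Z\). Since \((-1)^7\zeta_F(-1)<0\) and \(|\alpha|\ge1\), the bound \(\delta_F<11.6\) applies, and the few septic fields below it (minimal \(\delta\approx 11.2\) at \(D=20134393\)) would be checked directly: \(2\) is not inert there, or \(\alpha=2^7/\zeta_F(-1)\) fails to be an integer.

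The main obstacle will be the degree \(8\)--\(9\) exclusion. The crude discriminant bound does not beat Odlyzko there, so it must be combined with splitting information at small primes through \(P_B(F)\). I would first try to show that any such field has some prime of norm at most \(2^{n/2}\), using the explicit Odlyzko bounds with prime contributions (Poitou's form), which contradicts Corollary~\ref{cor:c11-prime-exclusion}.
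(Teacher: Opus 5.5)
Your first steps match the paper: inertness from Proposition~\ref{prop:c11-inertness}, the bound $\delta_F<(2\pi^4/3)^{2/3}\approx16.16$ giving $nh(F)\le14$ via the Hilbert class field, and, when $\chi=1$, the sharper $\delta_F<(4\pi^2)^{2/3}\approx11.594$ giving $nh(F)\le9$. Your degree-seven outline is also close to the paper's, which reduces to $\alpha=-1$ and four fields excluded by Corollary~\ref{cor:c11-prime-exclusion} and a truncated Euler product.

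\textbf{The step to $n\le9$ fails.} Your ``finer input'' $2^n(2\pi^2)^n\zeta(2)^n/(D_F^{3/2}\zeta(2n^2))$ equals $(2\pi^4/3)^nD_F^{-3/2}/\zeta(2n^2)$. That is \eqref{eq:c11-alpha-bound} up to a negligible factor, so it still only gives $\delta_F<16.16$. Comparing this with minimal root discriminants in degrees $10$--$14$ cannot succeed. For example, $\mathbb{Q}(\zeta_{25})^+$ is totally real of degree $10$ with root discriminant $5^{17/10}\approx15.4$.

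The missing idea is a case split on $\chi$:
\begin{itemize}
\item Since $\chi$ is totally even, it factors through $\operatorname{Cl}(F)$. So $\chi\ne1$ forces $h(F)\ge2$, and then $nh(F)\le14$ gives $n\le7$.
\item If $\chi=1$, you already have $nh(F)\le9$.
\end{itemize}
Hence $n\le9$. Moreover, in degrees $8$ and $9$ one necessarily has $\chi=1$ and $h(F)=1$.

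\textbf{Your degree-$8$ and $9$ exclusion lacks the same observation.} You run it on the weak bound $\delta_F<16.2$. That leaves discriminant ranges of about $4.7\cdot10^9$ in degree eight and $7.5\cdot10^{10}$ in degree nine, beyond complete tables. The fallbacks you list (a small split prime in every such field, Poitou-type explicit formulas) are not arguments.

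Once $\chi=1$ and $h(F)=1$ are forced, the sharp bound $\delta_F<11.594$ applies:
\begin{itemize}
\item Degree nine is excluded at once, since the minimum root discriminant is $12.869\ldots$.
\item In degree eight, $\alpha$ is a positive integer. If $\alpha\ge2$, then $\delta_F<10.944$, below the minimum $11.385\ldots$. So $\alpha=1$ and $D_F<3.27\cdot10^8$, which leaves three fields.
\item Two of these have $(2)=\mathfrak p^2$, contradicting inertness.
\item The third is excluded by the truncated Euler product bound $\zeta_F(2)<1.052$, against the $\zeta_F(2)>1.083$ required by $\alpha=1$.
\end{itemize}
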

\begin{proof}
Since \(\alpha\) has nonzero integral norm,
\eqref{eq:c11-alpha-bound} gives
\[
\delta_F<\left(\frac{2\pi^4}{3}\right)^{2/3}.
\]
Applying \cite[Proposition~2.3]{TAKEUCHI83} to the totally real Hilbert class
field, of degree \(nh(F)\) and root discriminant \(\delta_F\),
gives \(nh(F)\leq14\). If \(\chi=1\), the functional equation
\eqref{eq:prelim-hecke-functional-equation} and \(\zeta_F(2)>1\)
sharpen the bound to
\(\delta_F<(4\pi^2)^{2/3}\) and hence \(nh(F)\leq9\).
If \(\chi\ne1\), then \(h(F)\geq2\), so \(n\leq7\).
Thus \(n\leq9\) in all cases, and \(h(F)=1\) in degrees eight
and nine.

In degree nine, the bound is \(\delta_F<11.594175\), contradicting
the minimum \(12.869\ldots\) in \cite[Table 3]{Voight}.
In degree eight, the least discriminant is \(282300416\), with root
discriminant \(11.385\ldots\). Here
\(\alpha\in\mathbb Z_{>0}\).
If \(\alpha\geq2\), \eqref{eq:prelim-hecke-functional-equation} gives
\(\delta_F<10.943444\), impossible. Thus \(\alpha=1\) and
\(D_F<326526622.269\). The complete table
\cite[Table 4]{Voight} leaves
\[
282300416,\qquad309593125,\qquad324000000.
\]
The first and third fields have index-one defining polynomials that
reduce modulo two as \((x^4+x+1)^2\) and \((x^4+x^3+1)^2\),
respectively. Thus \((2)=\mathfrak p^2\) in these fields, contrary
to Proposition~\ref{prop:c11-inertness}. In the middle field,
\((2)\) is inert. Evaluating the Euler factors through \(199\) and applying
\eqref{eq:prelim-euler-product-bound}, we obtain
\[
\zeta_F(2)<1.010297(200/199)^8<1.051634,
\]
whereas \(\alpha=1\) requires
\(\zeta_F(2)=(4\pi^2)^8/309593125^{3/2}>1.08315\).\footnote{
We factor $(2)$ in the three degree-eight fields and compute
$P_{199}(F)$ at $D_F=309593125$, obtaining the displayed
zeta bounds; see \path{e2_high_degree_check.sage}
in \cite{EigenformCertificates}.}

For \(n=7\) and \(h(F)=1\), \(\alpha\) is a negative integer.
If \(|\alpha|\geq2\), \eqref{eq:prelim-hecke-functional-equation} gives \(\delta_F<10.853513\), below the
minimum \(20134393^{1/7}=11.05146\ldots\). Consequently
\(\alpha=-1\) and \(D_F<28162990.673\). The complete degree-seven tables
\cite[Theorem 1]{DriverJones2019}, together with defining polynomials
and prime factorizations from \cite{BordeauxTables2008}, leave
\[
20134393,\qquad25164057,\qquad25367689,\qquad28118369.
\]
In the first, second, and fourth fields, the residue degrees are
respectively
\((1,6)\) at 7, \((1,1,5)\) at 3, and \((1,6)\) at 7.
Corollary~\ref{cor:c11-prime-exclusion} applies since
\(7^2,3^2\leq2^7\). For the remaining field, we evaluate the
Euler factors through \(199\) and apply
\eqref{eq:prelim-euler-product-bound} to obtain\footnote{
We factor the indicated primes in the four degree-seven fields
and compute $P_{199}(F)$ at $D_F=25367689$, obtaining the
displayed zeta bounds; see \path{e2_high_degree_check.sage}
in \cite{EigenformCertificates}.}
\[
\zeta_F(2)<1.022705(200/199)^7<1.059227,
\]
whereas \(\alpha=-1\) requires
\(\zeta_F(2)=(4\pi^2)^7/25367689^{3/2}>1.16975\).
\end{proof}

\enlargethispage{3pt}
\subsubsection{Exclusion in degree greater than two}

We now treat the fields remaining after the preceding reduction.
We first record their class-number data and the cuspidal dimension
estimates needed for the final exclusions. We then bound the possible
cuspidal weights using the coefficient at \((4)\).

\begin{lemma}
\label{lem:c11-small-weight-two-spaces}
Let \(F\) be the cubic field of discriminant \(169\) or \(361\),
or the quartic field of discriminant \(725\). Then
\(h(F)=h^+(F)=1\) and \(S_2(1)=0\).
\end{lemma}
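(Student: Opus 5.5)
The statement has two parts: class numbers, and vanishing of the weight-two cuspidal spaces. For the three fields (cubic of discriminants \(169\) and \(361\), quartic of discriminant \(725\)) I would treat both parts by the same explicit-arithmetic methods used in Lemmas~\ref{lem:quadratic-small-fields} and~\ref{lem:higher-degree-ec-exceptional-arithmetic}.

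\textbf{Class numbers.} First fix defining polynomials. For the cubics, take \(x^3-x^2-4x-1\) (discriminant \(169\), the cyclic field of conductor \(13\)) and \(x^3-x^2-6x+7\) (discriminant \(361\), conductor \(19\)). For the quartic, take \(x^4-x^3-3x^2+x+1\) from the table in Lemma~\ref{lem:weight-three-quartic-unit-signatures}. The polynomial discriminants equal the field discriminants, so the equation orders are maximal. The Minkowski bound in \eqref{eq:prelim-arithmetic-bounds} then gives
\begin{itemize}
\item \(2\sqrt{169}/9<3\) and \(2\sqrt{361}/9<5\) in the cubic cases;
\item \(3\sqrt{725}/32<3\) in the quartic case.
\end{itemize}
So it suffices to show that the primes of norm \(2,3,4\) in these fields are principal, or do not exist. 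For the cubics, \(2\) is inert in both fields, since the conductors \(13\) and \(19\) are not \(\equiv\pm1\) modulo the relevant cubic residues; equivalently, the polynomials are irreducible modulo \(2\). Any prime of norm \(3\) would be checked as principal by exhibiting an element of norm \(\pm3\). For the quartic, the bound leaves only norm-two primes; these are excluded or made principal by reducing \(P\) modulo \(2\).

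To pass from \(h(F)=1\) to \(h^+(F)=1\), I would exhibit units of every sign pattern. In the quartic case, \(1+\theta\) has norm \(-1\) by the table. Beyond that, the clean route is to find a full set of sign vectors among small units such as \(\theta\), \(\theta-1\), \(\theta+1\). For cyclic cubics, a unit of norm \(-1\) (namely \(-1\) itself) together with the Galois conjugates of a unit with a mixed sign pattern generates all signatures. I expect to defer this sign check to the certificate files in \cite{EigenformCertificates}.

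\textbf{Vanishing of \(S_2(1)\).} This is the main obstacle. Because \(h^+(F)=1\), \(S_2(1)\) is the space of parallel weight-two cusp forms for \(\mathrm{SL}_2(\mathcal O_F)\). Its dimension is the arithmetic genus-type quantity given by the Shimizu/Freitag dimension formula: the main term \(2^{1-n}\zeta_F(-1)\) plus elliptic contributions, with a correction by \((-1)^n\) in the appropriate Euler-characteristic sign convention.

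My first approach is to compute \(\zeta_F(-1)\) exactly, as in Lemma~\ref{lem:higher-degree-ec-exceptional-arithmetic}:
\begin{itemize}
\item for the cubics, factor it into products of cubic Dirichlet \(L\)-values at \(-1\) via generalized Bernoulli numbers;
\item for the quartic field, obtain it from \(\mathbb Q(\zeta_{15})^+\)-type factorization or from a computer certificate.
\end{itemize}
I would then insert it, together with the elliptic point counts (orders \(2,3,5,7,9,\ldots\), determined by which cyclotomic quadratic extensions embed), into the dimension formula and check that the result is \(0\).

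As a fallback, I would cite Magma/LMFDB Hilbert modular form computations at level one for these fields. These confirm that there are no weight-two cusp forms, consistently with the absence of small-conductor abelian varieties with real multiplication by \(F\). The delicate part is getting the elliptic contributions right in degrees three and four.
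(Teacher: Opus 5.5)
\textbf{Class numbers.} Your class-number argument is essentially the paper's: Minkowski's bound, inertness of $2$ (the polynomials are irreducible mod $2$), and the $2^n$ unit signatures checked by computation. For $D_F=361$ the bound $38/9$ also requires norms $3$ and $4$. These are vacuous, since $3$ is not a cube mod $19$ and hence is inert.

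\textbf{The gap: vanishing of $S_2(1)$.} You name the Hilbert modular dimension formula but never evaluate it, and the inputs you leave open are the entire content.
\begin{enumerate}[label=(\roman*)]
\item On a Hilbert modular threefold or fourfold, the elliptic contribution of a fixed point depends on its rotation type, not only on its order. In degree three, an order-$3$ point of type $(1,1,1)$ contributes $\frac13\bigl((1-\omega)^{-3}+(1-\bar\omega)^{-3}\bigr)=0$ to the arithmetic genus, while a mixed type contributes $1/9$. Counting points is therefore not enough. You would need the class numbers and unit indices of the relevant orders in $F(i)$ and $F(\zeta_3)$, and for $D_F=725$ also $F(\zeta_5)$, refined by CM type.
\item In degree four you omit the cusp term, a Shimizu $L$-value. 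It vanishes here only because a unit of norm $-1$ reverses the sign of the norm on the cusp module, and this must be stated.
\item The field of discriminant $725$ is a non-Galois $D_4$ quartic containing $\mathbb{Q}(\sqrt5)$. So $\zeta_F$ does not factor into Dirichlet $L$-functions, and no ``$\mathbb{Q}(\zeta_{15})^+$-type'' evaluation exists. You would have to bound $\zeta_F(-1)$ through $\zeta_F(2)$ instead; this suffices because the dimension is an integer.
\end{enumerate}
Falling back on Magma/LMFDB would certify the conclusion, but then your dimension-formula plan does no work.

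\textbf{The missing idea: Jacquet--Langlands.} The paper passes to the quaternionic side, after noting that $h=h^+$ makes every totally positive unit a square.
\begin{enumerate}[label=(\roman*)]
\item For the two cubic fields, $S_2(1)$ is identified with holomorphic differentials on the level-one Shimura curve attached to the algebra ramified at two real places. Voight's table gives signatures $(0;2^3,3)$ and $(0;2,3^3)$, hence genus zero. No threefold orbifold computation is needed.
\item For $D_F=725$, $S_2(1)$ is the complement of the constants in the functions on right ideal classes of a maximal order in the totally definite algebra. Eichler's formula gives $h(\mathcal O)=M+59/60$, with CM terms $1/4,1/3,2/5$ coming from $F(\zeta_{2q})$ for $q=2,3,5$. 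The crude bound $0<M<1$ then forces $h(\mathcal O)=1$.
\end{enumerate}

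\textbf{Comparison.} The mass $M=2^{1-n}\lvert\zeta_F(-1)\rvert$ is exactly your main term, so in degree four both routes share it. The quaternionic formulation simply packages the elliptic and cusp data as Eichler's CM terms. In odd degree it also replaces the threefold by a curve whose genus is tabulated.
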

\begin{proof}
The field data are as follows.\footnote{
We compute $h(F)=h^+(F)=1$ and find eight unit signatures
for the cubic fields with $D_F=169,361$;
see \path{cubic_weight_two_genus.sage} in \cite{EigenformCertificates}.}
\begin{center}
\begingroup
\setlength{\tabcolsep}{6pt}
\renewcommand{\arraystretch}{1.10}
\begin{tabular}{@{}rlrrr@{}}
\toprule
$D_F$ & Defining polynomial & $h(F)$ & $h^+(F)$ & Unit signatures\\
\midrule
169 & $x^3-x^2-4x-1$ & 1 & 1 & 8\\
361 & $x^3-x^2-6x+7$ & 1 & 1 & 8\\
725 & $x^4-x^3-3x^2+x+1$ & 1 & 1 & 16\\
\bottomrule
\end{tabular}
\endgroup
\end{center}
Each field has $2^n$ unit signatures. Since
$\mathcal O_F^\times/(\mathcal O_F^\times)^2$ also has order $2^n$,
every totally positive unit is a square.\footnote{
We compute $h(F)=h^+(F)=1$ and find sixteen unit signatures
for the quartic field with $D_F=725$;
see \path{quartic_725_weight_two.sage} in \cite{EigenformCertificates}.}

First consider the two cubic fields. Voight's complete table
\cite[Table~4.3]{VoightShimura2009} gives the signatures
\((0;2^3,3)\) and \((0;2,3^3)\), respectively, for the
Shimura curves of quaternion discriminant and level one. Both curves
have genus zero. Since \(h^+(F)=1\) and totally positive units
are squares, Jacquet--Langlands identifies
\(S_2(1)\) with their holomorphic differentials
\cite[Theorem~3.9 and \S5]{DembeleVoight2013}, proving vanishing.

For discriminant $725$, let $B$ be ramified at all four real
places and at no finite place. The weight-two quaternionic module
is the space of functions on the right ideal classes of a maximal
order $\mathcal O$, and its cuspidal part is the complement of
the constants \cite[Definition~3.7]{DembeleVoight2013}.
Thus it suffices to show \(h(\mathcal O)=1\).

The Eichler class-number formula
\cite[Proposition 5.1 and Lemma 5.3]{KirschmerVoight2010} gives
\[
h(\mathcal O)
 =M+\frac12\left(1-\frac12\right)
    +\frac12\left(1-\frac13\right)
    +\frac12\left(1-\frac15\right)
 =M+\frac{59}{60}.
\]
Here $M$ is the mass of $\mathcal O$.

The CM terms correspond to quadratic extensions $K=F(\zeta_{2q})$.
The condition $\varphi(2q)\mid8$, together with
$\varphi(m)^2\geq m/2$, restricts the search to $q\leq64$.
Write $F=\mathbb Q(a)$ with the defining polynomial above and put
$c=a^3-a^2-2a+1$. The degree-two extensions have the following data.
\begin{center}
\begingroup
\setlength{\tabcolsep}{6pt}
\renewcommand{\arraystretch}{1.10}
\begin{tabular}{@{}rlrrr@{}}
\toprule
$q$ & Relative polynomial & $|\mu(K)|$ & $h(K)$ & $N\mathfrak d_{K/F}$\\
\midrule
2 & $t^2+1$ & 4 & 1 & 256\\
3 & $t^2-t+1$ & 6 & 1 & 81\\
5 & $t^2-ct+1$ & 10 & 1 & 25\\
\bottomrule
\end{tabular}
\endgroup
\end{center}
In each row, the norm of the polynomial discriminant equals the
last entry, so $\mathcal O_F[\zeta_{2q}]=\mathcal O_K$.\footnote{
We enumerate the extensions $F(\zeta_{2q})$ for $q\leq64$ and compute
roots of unity, class numbers and relative discriminants for
the quadratic extensions, which occur at $q=2,3,5$;
see \path{quartic_725_weight_two.sage} in \cite{EigenformCertificates}.}
All finite embedding factors are one.
The three terms in the class-number formula are therefore
$1/4$, $1/3$ and $2/5$, whose sum is $59/60$.

For the unit index, take $u\in\mathcal O_K^\times$.
The totally positive unit $u\bar u$ equals $v^2$ for some
$v\in\mathcal O_F^\times$.
Every conjugate of $u/v$ has absolute value one, so by Kronecker's
theorem \cite[Chapter~1]{Washington1997Cyclotomic}, $u/v$ is
a root of unity.
Thus $\mathcal O_K^\times=\mu(K)\mathcal O_F^\times$,
so the unit index is one.

The mass formula \cite[Main Theorem~26.1.5]{Voight2021}
and \eqref{eq:prelim-hecke-functional-equation} at $m=2$ give
\[
0<M=\frac{2\cdot725^{3/2}\zeta_F(2)}{(4\pi^2)^4}
 <\frac{2\cdot725\sqrt{725}\cdot16}{36^4}<1,
\]
where we used \(\zeta_F(2)\leq\zeta(2)^4<16\) and \(\pi>3\).
Thus \(59/60<h(\mathcal O)<119/60\), forcing
\(h(\mathcal O)=1\). The Jacquet--Langlands correspondence
\cite[Theorem~3.9]{DembeleVoight2013} now gives \(S_2(1)=0\).
\end{proof}

For discriminant \(1125\), both narrow classes occur in the
quaternionic module. We use this module to prove vanishing in
weight two and a dimension lower bound in weight five.

\begin{lemma}
\label{lem:c11-discriminant-1125-spaces}
Let \(F=\mathbb Q(a)\), where
\(a^4-a^3-4a^2+4a+1=0\). Then \(D_F=1125\),
\(h(F)=1\), and \(h^+(F)=2\). The nontrivial narrow class character
\(\varepsilon\) is totally odd, and the cuspidal spaces satisfy
\[
S_2(1)=0,\qquad \dim S_5(\varepsilon)\geq5.
\]
\end{lemma}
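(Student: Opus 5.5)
We split the proof into three parts: the arithmetic data of \(F\), the vanishing of \(S_2(1)\), and the lower bound for \(\dim S_5(\varepsilon)\).

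\emph{Arithmetic data.} The field \(F\) is \(\mathbb Q(\zeta_{15})^+=F_{15}\) from Lemma~\ref{lem:weight-three-quartic-unit-signatures}, which already gives \(D_F=1125\), \(h(F)=1\), and identifies the totally odd character \(\varepsilon=\varepsilon_{15}\) of \(\mathbb Q(\zeta_{15})/F\). We check that \(a=\zeta_{15}+\zeta_{15}^{-1}\) satisfies the stated quartic, which identifies the field. Since \(\varepsilon\) is nontrivial, \(h^+(F)\geq2\). Because \(h(F)=1\), we have \(h^+(F)=2h(F)/[\text{signature index}]\leq2\), so \(h^+(F)=2\). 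Equivalently, the unit signature group has index two, and the certificate files can confirm that there are eight unit signatures.

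\emph{Vanishing in weight two.} As in Lemma~\ref{lem:c11-small-weight-two-spaces}, we take the definite quaternion algebra \(B\) ramified exactly at the four real places; this is allowed since the degree is even. Let \(\mathcal O\) be a maximal order. Via Jacquet--Langlands \cite[Theorem~3.9]{DembeleVoight2013}, the weight-two module for \(B\) with trivial central character is the space of functions on right ideal classes of \(\mathcal O\). Now \(h^+(F)=2\) and not every totally positive unit is a square, so the module must be set up with the norm map to the narrow class group. The constants, and their twist by \(\varepsilon\) composed with the reduced norm, form the Eisenstein part. Hence \(S_2(1)\) vanishes once the type and class data show that the cuspidal complement is zero. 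Concretely, we compute \(h(\mathcal O)\) by the Eichler class-number formula \cite{KirschmerVoight2010}. The mass is \(M=2\cdot1125^{3/2}\zeta_F(2)/(4\pi^2)^4\), which is well below one. The elliptic terms come from \(K=F(\zeta_{2q})\): here \(q=2,3,5\) give CM extensions, and \(\mathbb Q(\zeta_{15})=F(\zeta_{30})\) also appears; for each we need class numbers, unit indices and relative discriminants. We then show that \(h(\mathcal O)\) equals the number of classes forced by the norm fibers over \(\operatorname{Cl}^+(F)\), namely two, one in each fiber. If so, all functions on ideal classes are spanned by the two Eisenstein functions, and \(S_2(1)=0\).

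\emph{Weight five.} For \(\dim S_5(\varepsilon)\geq5\), we use the weight-\((3,3,3,3)\) quaternionic module \(W=\bigotimes_v\operatorname{Sym}^3(\mathbb C^2)\), twisted so that the central character is \(\varepsilon\). Its total dimension is \(4^4=256\) per ideal class. Since \(\mathcal O\) is definite, the invariants of \(W\) under each unit group \(\mathcal O_i^\times/\mathcal O_F^\times\) (a finite group) give the space. Because the weight exceeds two, Jacquet--Langlands identifies this space entirely with \(S_5(\varepsilon)\); there is no Eisenstein part. A lower bound follows from the trace formula for the finite groups \(\Gamma_i=\mathcal O_i^\times/\mathcal O_F^\times\): \(\dim W^{\Gamma_i}=|\Gamma_i|^{-1}\sum_{\gamma}\operatorname{tr}(\gamma\mid W)\). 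The identity term alone contributes \(256/|\Gamma_i|\), and the torsion elements of orders \(2,3,5,15\) contribute characters of \(\operatorname{Sym}^3\) evaluated at roots of unity. We would compute the groups \(\Gamma_i\) explicitly, or at least their element orders, and sum the contributions, aiming to show that the total is at least five. Nonnegativity of each invariant space permits dropping awkward terms only if we bound them, so the explicit computation is safer.

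\emph{Main obstacle.} The hardest step is the exact determination of \(\mathcal O\)'s ideal classes and unit groups in the presence of \(h^+(F)=2\), together with the correct handling of the central character \(\varepsilon\) in the quaternionic module. I would first compute these by machine in Magma or Sage (the certificates) and then justify them by hand through the mass formula, as in Lemma~\ref{lem:c11-small-weight-two-spaces}.
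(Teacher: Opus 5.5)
Your outline has the same architecture as the paper's proof: Jacquet--Langlands for the totally definite algebra $B$ with no finite ramification, a count of right ideal classes in weight two, and a character average on $W=\bigotimes\operatorname{Sym}^3$ in weight five. However, the weight-two step as you set it up would not go through.

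You propose to get $h(\mathcal O)$ from Eichler's class-number formula, with elliptic terms from $F(\zeta_{2q})$ for $q=2,3,5$ together with $F(\zeta_{30})$. These are not separate fields: $F(\zeta_6)=F(\zeta_{10})=F(\zeta_{30})=\mathbb Q(\zeta_{15})$, and $\mathcal O_F[\zeta_3]$, $\mathcal O_F[\zeta_5]$ are non-maximal orders in it. More seriously, the unit-index argument that makes the $D_F=725$ case work needs every totally positive unit to be a square. That fails here, because the signature group has index two. For instance, $1+\zeta_{30}=1-\zeta_{15}^{8}$ is a unit of $\mathbb Q(\zeta_{15})$ with $(1+\zeta_{30})/(1+\zeta_{30}^{-1})=\zeta_{30}$. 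So any maximal order containing $\mathcal O_{\mathbb Q(\zeta_{15})}$ has a reduced unit of projective order $30$. Since $M=\zeta_F(-1)/8=1/30$, the non-icosian class then has a reduced unit group of order $60$ containing elements of orders $6,10,15,30$. Your list of terms misses all of these.

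The idea you are missing is the a priori bound. A projective unit of order $m$ forces $\varphi(m)\le 8$, hence $m\le30$. The classification of finite rotation groups then gives $\#\Gamma_{\mathcal O'}\le60$ for every maximal order $\mathcal O'$. With $M<1/20$, this leaves fewer than three right ideal classes. Surjectivity of $\operatorname{nrd}$ onto $\operatorname{Cl}^+(F)$ gives exactly two, and $1$, $\varepsilon\circ\operatorname{nrd}$ span the weight-two module, so no elliptic terms are needed at all.

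In weight five you stop at ``aiming to show that the total is at least five'', so the actual content of the bound is absent. What is needed is one explicit maximal order with a known unit group. The paper uses the icosian order $\mathcal O_F\otimes(\text{icosians})$ in $(-1,-1)_F$. Its $120$ norm-one units have projective image of order $60$, so by the same bound they exhaust $\Gamma_{\mathcal O}$. On norm-one elements the character of $W$ is $N_{F/\mathbb Q}(t^3-2t)$, and averaging gives $600/120=5$. Each class contributes a nonnegative dimension, so this single class already gives $\dim S_5(\varepsilon)\geq5$. You need not treat the other class, whose units include elements of non-square reduced norm (where the norm-one trace formula must be renormalized) and element orders not on your list.

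No twist is needed for the central character, since $\varepsilon$ is the only narrow character compatible with odd weight.

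Finally, your formula for $h^+(F)$ is wrong as written. The correct relation is $h^+(F)=h(F)\,[\{\pm1\}^4:\operatorname{sgn}(\mathcal O_F^\times)]$, and the norm-one condition only gives index at least two. Equality needs the eight computed signatures. Alternatively, the narrow Hilbert class field of $F$ is unramified and abelian over $\mathbb Q(\zeta_{15})$, which has class number one.
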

\begin{proof}
The field has $D_F=1125$, $h(F)=1$ and $h^+(F)=2$,
with the following fundamental units.
\begin{center}
\begingroup
\setlength{\tabcolsep}{8pt}
\renewcommand{\arraystretch}{1.10}
\begin{tabular}{@{}lr@{}}
\toprule
Fundamental unit & Norm\\
\midrule
$-a^3+3a-1$ & 1\\
$a-1$ & 1\\
$a^3+a^2-3a-1$ & 1\\
\bottomrule
\end{tabular}
\endgroup
\end{center}
The signature image has order $2^4h(F)/h^+(F)=8$.\footnote{
We compute $h(F)=1$, $h^+(F)=2$ and fundamental units,
and find eight unit signatures for $D_F=1125$;
see \path{quartic_1125_sufficient.sage} in \cite{EigenformCertificates}.}
Since these units and $-1$ all have norm one, it is precisely
\[
\{(s_1,s_2,s_3,s_4)\in\{\pm1\}^4:s_1s_2s_3s_4=1\}.
\]
Thus \(\varepsilon\) is totally odd and is the unique
character compatible with odd parallel weight.

Take \(B/F\) ramified at all real places and at no finite place.
For any maximal order \(\mathcal O'\), the reduced unit group
\(\Gamma_{\mathcal O'}=\mathcal O'^\times/\mathcal O_F^\times\)
is a finite rotation group. A projective element of order \(m\),
represented by trace \(t\) and norm \(u\), satisfies
\(t^2/u-2=\zeta_m+\zeta_m^{-1}\), so \(\varphi(m)\leq8\).
Moreover, \(\varphi(m)^2\geq m/2\): each prime-power factor
\(\varphi(p^e)^2/p^e=p^{e-2}(p-1)^2\) is at least one,
except the possible factor \(1/2\) at \(p^e=2\).
Thus \(m\leq128\). Among these integers, the condition
\(\varphi(m)\leq8\) gives \(m\leq30\). In the classification of
finite rotation groups, the cyclic and dihedral groups therefore
have order at most $60$, while the tetrahedral, octahedral and
icosahedral groups have orders $12$, $24$ and $60$, respectively.
Hence
\(\#\Gamma_{\mathcal O'}\leq60\)
\cite[Proposition~11.5.2 and Lemma~26.5.1]{Voight2021}.

The local factors at two and three are \((1-2^{-8})^{-1}\)
and \((1-3^{-4})^{-1}\), respectively. Bounding the remaining
factors by those of \(\zeta(2)^4\) gives
\[
\zeta_F(2)<\zeta(2)^4\frac{27}{85}\frac{256}{405}.
\]
Hence the mass formula \cite[Main Theorem~26.1.5]{Voight2021} yields
\[
M=\frac{2\cdot1125^{3/2}\zeta_F(2)}{(2\pi)^8}
<\frac{1125\sqrt{1125}}{826200}<\frac1{20}.
\]
Each mass summand is at least \(1/60\), so there are fewer than
three right ideal classes. The reduced norm maps these classes onto
\(\operatorname{Cl}^+(F)\) \cite[equation~(7.16)]{DembeleVoight2013},
so there are exactly two. The two narrow norm characters span
this weight-two module, so its cuspidal part is zero.
Since $B$ has no finite ramification, the Jacquet--Langlands
correspondence \cite[Theorem~7.28]{DembeleVoight2013} gives
\(S_2(1)=0\).

To obtain the weight-five lower bound, put \(\tau=a^3-3a+1\), with
\(\tau^2-\tau-1=0\).
In \(B=(-1,-1)_F\), choose \(i^2=j^2=-1\), \(ij=-ji\), and set
\[
\begin{aligned}
z&=\frac{\tau+(\tau-1)i+j}{2},\\
\mathcal O&=\mathcal O_F+\mathcal O_Fi+\mathcal O_Fz+\mathcal O_Fiz.
\end{aligned}
\]
This is the scalar extension of the icosian order
\cite[equation~(11.5.8)]{Voight2021}.
Its closure under multiplication follows from
\[
i^2=-1,\qquad z^2=\tau z-1,\qquad
zi=(1-\tau)+\tau i-iz.
\]
For the basis $(1,i,z,iz)$, the reduced-trace pairing
$\operatorname{trd}(xy)$ has matrix
\[
\begin{pmatrix}
2&0&\tau&1-\tau\\
0&-2&1-\tau&-\tau\\
\tau&1-\tau&\tau-1&-1\\
1-\tau&-\tau&-1&-\tau
\end{pmatrix},
\qquad\det=-1.
\]
Thus the order is maximal.
The group generated by $i$ and $z$ consists of $120$ norm-one
elements. Its projective image has order $60$, so the preceding
bound shows that it is the whole reduced unit group
$\Gamma_{\mathcal O}$.\footnote{
We verify the relations and trace matrix of the quaternion order,
and enumerate the $120$ norm-one elements generated by $i$ and $z$;
see \path{quartic_1125_sufficient.sage} and
\path{verify_quaternion_central_character.py}
in \cite{EigenformCertificates}.}

To obtain the dimension bound in weight five, we compute the
invariant subspace for the unit group just determined.
The relevant representation is
\[
W=\bigotimes_{\sigma:F\hookrightarrow\mathbb R}
\operatorname{Sym}^3(\mathbb C^2).
\]
For an element with reduced trace $t$, the character of $W$ is
$N_{F/\mathbb Q}(t^3-2t)$.
Its character values and their multiplicities are
\begin{center}
\begingroup
\setlength{\tabcolsep}{7pt}
\renewcommand{\arraystretch}{1.10}
\begin{tabular}{@{}rrr@{}}
\toprule
Character value & Multiplicity & Contribution\\
\midrule
256 & 2 & 512\\
1 & 88 & 88\\
0 & 30 & 0\\
\midrule
\multicolumn{1}{@{}l}{Total} & 120 & 600\\
\bottomrule
\end{tabular}
\endgroup
\end{center}
Units of $F$ act by $N_{F/\mathbb Q}(u)^3=1$.
Averaging over these $120$ norm-one elements therefore gives\footnote{
We compute the character of $W$ on the $120$ norm-one elements,
obtaining $\dim W^{\Gamma_{\mathcal O}}=5$;
see \path{quartic_1125_sufficient.sage} and
\path{verify_quaternion_central_character.py}
in \cite{EigenformCertificates}.}
\[
\dim W^{\Gamma_{\mathcal O}}=\frac{512+88}{120}=5.
\]
These invariants form the direct summand associated with the
principal ideal class
\cite[equation~(7.27)]{DembeleVoight2013}.

To identify the central character of this summand, we use the algebraic weight action
in \cite[equations~(7.3) and~(7.10)]{DembeleVoight2013}.
A scalar $b\in F^\times$ acts on $W$ by $N_{F/\mathbb Q}(b)^3$,
so a quaternionic form $\Phi$ satisfies
\[
\Phi(bx)=N_{F/\mathbb Q}(b)^{-3}\Phi(x).
\]
Since $h(F)=1$, the ideal associated with a finite central idele
$c$ is principal, so we may write it as $\mathfrak b=(b)$.
Its normalized action, in the inverse translation convention of
\cite[equation~(7.14)]{DembeleVoight2013}, is
\[
(Z_{\mathfrak b}\Phi)(x)
=(N\mathfrak b)^{-3}\Phi(xc^{-1}).
\]
Since the form has full level and $c/b\in\widehat{\mathcal O}_F^\times$,
we obtain
\[
Z_{\mathfrak b}\Phi
=\frac{N_{F/\mathbb Q}(b)^3}{|N_{F/\mathbb Q}(b)|^3}\Phi
=\varepsilon(\mathfrak b)\Phi.
\]
Thus the five-dimensional summand has central character
$\varepsilon$.\footnote{
We verify the scalar and central idele actions, obtaining
$Z_{\mathfrak b}\Phi=\varepsilon(\mathfrak b)\Phi$;
see \path{verify_quaternion_central_character.py}
in \cite{EigenformCertificates}.} In weight five the entire quaternionic module is
cuspidal \cite[\S7]{DembeleVoight2013}, so the Jacquet--Langlands
correspondence \cite[Theorem~7.28]{DembeleVoight2013} gives
\[
\dim S_5(\varepsilon)\geq5.
\]
\end{proof}

For the following six fields, the coefficient at \((4)\) bounds
the cuspidal weight.

\begin{lemma}
\label{lem:c11-dyadic-cutoffs}
Suppose that \(E\cdot h\) is an eigenform and that \(F\) is one of the
fields with discriminants \(49,169,361,725,1125,5125\).
Then the possible cuspidal weights are
\(2,4,6\) at discriminant \(49\), only \(2\) at discriminants
\(169,361,725\), and \(2,3\) at discriminant \(1125\).
The product cannot be a Hecke eigenform at discriminant \(5125\).
\end{lemma}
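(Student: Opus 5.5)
We use the coefficient at \((4)\) together with the inertness of \((2)\) from Proposition~\ref{prop:c11-inertness}. All six fields have degree \(n\in\{3,4\}\), and by Proposition~\ref{prop:c11-inertness} we may assume \((2)\) is inert, so \(\mathfrak p=(2)\) is prime of norm \(2^n\). We compute \(c((4),E\cdot h)\) from \eqref{eq:component-product-convolution} in the principal component with \(\mu=4\).

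\textbf{Step 1: decompositions of \(4\).} Using the trace-dual box \eqref{eq:prelim-trace-dual-bounds} for each of the six fields, enumerate all ordered decompositions \(4=x+(4-x)\) with \(x\gg0\). Besides \(x=1,2,3\) there may be further totally positive \(x\) with \(0\ll x\ll4\) of small norm. Each such \(x\) gives an ideal \((x)\), which is principal because \(h(F)=1\) here (Lemmas~\ref{lem:c11-small-weight-two-spaces}, \ref{lem:c11-discriminant-1125-spaces}, and the cubic cases). The coefficients at \((x)\) and \((4-x)\) of \(E\) and \(h\) enter the convolution.

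\textbf{Step 2: reduce the coefficients to \(c((2),h)\).} We want every term expressed through \(c((2),h)\), \(c((4),h)\), and known Eisenstein coefficients. Every nonunit ideal of norm below \(2^n\), and each such \((x)\), satisfies \eqref{eq:c11-small-norm}. In principle these small-norm coefficients could be eliminated by Proposition~\ref{prop:c11-small-norm} applied both to \(G\) and to \(E\cdot h\) at the relevant ideals. However, an ideal \((x)\) of norm below \(2^n\) that is a product of primes of residue degree one would have norm at most \(2^{n/2}\), which Corollary~\ref{cor:c11-prime-exclusion} rules out. So I expect the decompositions to involve only \(x\) whose norms avoid small primes, or units; the proof must check this field by field.

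\textbf{Step 3: the constraint from \((4)\).} Compare the result with \(c((4),G)=c((2),G)^2-\chi\eta(2)\,4^{n(\ell+1)}\cdots\) from \eqref{eq:hecke-prime-square-recurrence}, using central character \(\omega\eta\) and weight \(\ell+2\). Substitute \(c((2),G)=c((2),h)+\alpha\) from \eqref{eq:quadratic-ec-two-coefficient}, and use \(c((4),h)=c((2),h)^2-\eta(2)2^{n(\ell-1)}\). This yields a polynomial identity in \(a=c((2),h)\), \(\alpha\), and \(2^{n\ell}\) with small explicit integer contributions. In the simplest case, with only the decompositions \(1+3\), \(2+2\), \(3+1\), it has the shape
\[
2\alpha a+\alpha^2 + 2^{n(\ell-1)}(\ldots)\,(2^{2n}-1)=\text{(bounded terms)} .
\]
Use the Ramanujan bound \(|a|\le 2\cdot 2^{n(\ell-1)/2}\) from Lemma~\ref{lem:weight-one-coefficients}, and the fact that \(\alpha=2^n/\zeta_F(-1)\) (since \(\chi=1\)) is an explicit rational integer computed from \(\zeta_F(-1)\). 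The term \(2^{n(\ell-1)}(4^n-1)\) then dominates for large \(\ell\). This bounds \(\ell\) and yields the listed weights, checking the finitely many small \(\ell\) directly.

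At discriminant \(5125\), we expect \(\alpha\) to be nonintegral, or the identity to fail for all \(\ell\). The main obstacle is Step 1 together with the bookkeeping in Step 2: extra decompositions of \(4\) bring in coefficients at further ideals, and these must be controlled either by Proposition~\ref{prop:c11-small-norm} or by Ramanujan bounds. I would compute the decompositions explicitly for each field first, and only then form the inequality.
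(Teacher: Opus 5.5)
Your route is the paper's: compare \eqref{eq:component-product-convolution} at $(4)$ with \eqref{eq:hecke-prime-square-recurrence} at the inert prime $(2)$, substitute $c((2),G)=c((2),h)+\alpha$, and bound the remaining coefficients with Ramanujan. With $q=2^n$ (the Hecke term is $q^{\ell+1}$, not $4^{n(\ell+1)}$), this gives
\[
q^{\ell-1}(q^2-1)=\alpha\Bigl(2c((2),h)+\alpha-\sum_{0\ll x\ll 4}c((x),E)\,c((4-x),h)\Bigr).
\]
As written, however, your sketch has gaps.

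Step~2 is unnecessary and would not work. The terms $c((4-x),h)$ cannot be expressed through $c((2),h)$: already at $D_F=361$ the decomposition $1+3$ brings in $c((3),h)$, and $N((3))=27>2^3$. Proposition~\ref{prop:c11-small-norm} relates coefficients of $G$ to those of $h$, not coefficients of $h$ to one another. Your norm claim is also false: at $D_F=49$ the prime above $7$ has degree one and norm $7<2^3$. It even occurs as $(4-x)$ for some $x\in\mathcal D_F$.

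One simply bounds $|c((x),E)|=\sigma_1((x))$ and $|c((4-x),h)|\le\tau_F((4-x))N(4-x)^{(\ell-1)/2}$. These contributions are not bounded in $\ell$. The domination you assert holds only because $0\ll 4-x\ll 4$ forces $N(4-x)<4^n=q^2$. After dividing by $q^{\ell-1}$, every term on the right then carries a factor $(\sqrt{N(4-x)}/q)^{\ell-1}$, $q^{-(\ell-1)/2}$ or $q^{-(\ell-1)}$, each below one. So the right side decreases in $\ell$ and increases in $|\alpha|$. This is what lets one numerical check per field dispose of all larger weights. It is also why, at $D_F=725$ and $5125$, an upper bound $A_F\ge|\alpha|$ from $P_{199}(F)<\zeta_F(2)$ suffices in place of the exact value of $\zeta_F(-1)$.

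The second gap is in reaching the listed sets. The paper's check closes the inequality only from $\ell=7,4,3,4,4,2$ onward, at $D_F=49,169,361,725,1125,5125$ respectively. The odd weights $3,5$ at $49$ and $3$ at $169$ and $725$ are not removed by the $(4)$-identity, nor by any ``direct check'' of it. They are removed by parity: these fields have $h^+(F)=1$, so Lemma~\ref{lem:odd-weight-unit-parity} forbids odd $\ell$. At $1125$, where $h^+(F)=2$ and $\varepsilon$ is totally odd, $\ell=3$ genuinely survives.

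Finally, at $5125$ you cannot rely on non-integrality of $\alpha$. This field passed the integrality test in the proof of Theorem~\ref{thm:c11-higher-degree}, so its exclusion must come from the inequality already failing at $\ell=2$, using $A_F=61/10$.
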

\begin{proof}
All six fields have ordinary class number one, so $\chi=1$ and
$\alpha=2^n/\zeta_F(-1)$. The prime $(2)$ is inert and has norm $q=2^n$.
To compare the product coefficient at $(4)$ with the prime-square
relation, we need all decompositions $4=x+(4-x)$ into totally
positive algebraic integers and the Eisenstein divisor sums. Write
\[
\begin{aligned}
\mathcal D_F&=\{x\in\mathcal O_F:0\ll x\ll4\},\\
\sigma_1(\mathfrak a)&=\sum_{\mathfrak b\mid\mathfrak a}N\mathfrak b.
\end{aligned}
\]
By \eqref{eq:product-coefficient-two},
\(c((2),G)=c((2),h)+\alpha\). Comparing
\eqref{eq:component-product-convolution} at \((4)\) with
the relation \eqref{eq:hecke-prime-square-recurrence} gives
\[
q^{\ell-1}(q^2-1)
 =\alpha\left(2c((2),h)+\alpha
 -\sum_{x\in\mathcal D_F}c((x),E)c((4-x),h)\right).
\]
The character factors at \((2)\) and \((4)\) are one because their
generators are totally positive. To bound $\ell$, we estimate the
coefficients in this identity using the Eisenstein formula
\eqref{eq:eisenstein-fourier-and-constant-terms} and the Ramanujan
bound \eqref{eq:prelim-ramanujan-bound}. After dividing by
$q^{\ell-1}$, we obtain
\begin{equation}\label{eq:c11-dyadic-bound}
\begin{aligned}
q^2-1\leq A_F\Biggl(&
 \sum_{x\in\mathcal D_F}\sigma_1((x))\tau_F((4-x))
 \left(\frac{\sqrt{N(4-x)}}q\right)^{\ell-1}\\
 &+4q^{-(\ell-1)/2}+A_Fq^{-(\ell-1)}\Biggr),
\end{aligned}
\end{equation}
where \(A_F\geq|\alpha|\). Since \(0\ll4-x\ll4\) implies \(N(4-x)<4^n=q^2\),
every base on the right is less than one. Thus the right-hand side
decreases with \(\ell\).

The special values and reciprocal constants used below are\footnote{
We verify the tabulated values of $\zeta_F(-1)$ and
$2^n/\zeta_F(-1)$ for $D_F=49,169,361,1125$; see \path{verify_special_values.py}
in \cite{EigenformCertificates}.}
\begin{center}
\begingroup
\setlength{\tabcolsep}{7pt}
\renewcommand{\arraystretch}{1.10}
\begin{tabular}{@{}lrrrr@{}}
\toprule
$D_F$ & 49 & 169 & 361 & 1125\\
\midrule
$\zeta_F(-1)$ & $-1/21$ & $-1/3$ & $-1$ & $4/15$\\
$\alpha=2^n/\zeta_F(-1)$ & $-168$ & $-24$ & $-8$ & $60$\\
\bottomrule
\end{tabular}
\endgroup
\end{center}
For the other two fields, we use the finite Euler product
\(P_{199}(F)\). By \eqref{eq:prelim-euler-product-bound},
$P_{199}(F)<\zeta_F(2)$. Since $\pi<22/7$, the functional equation
\eqref{eq:prelim-hecke-functional-equation} gives
\[
|\alpha|=\frac{(4\pi^2)^4}{D_F^{3/2}\zeta_F(2)}
<\frac{(4(22/7)^2)^4}{D_F^{3/2}P_{199}(F)}.
\]
The resulting rational bounds are
\begin{center}
\begingroup
\setlength{\tabcolsep}{7pt}
\renewcommand{\arraystretch}{1.10}
\begin{tabular}{@{}rrrr@{}}
\toprule
$D_F$ & \makecell[c]{$P_{199}(F)$\\Lower bound}
& \makecell[c]{$|\alpha|$\\Upper bound} & $A_F$\\
\midrule
725 & 1.036232 & 120.468280 & 121\\
5125 & 1.102613 & 6.023835 & $61/10$\\
\bottomrule
\end{tabular}
\endgroup
\end{center}
The lower bounds are rounded down and the upper bounds are rounded
up.\footnote{
We compute $P_{199}(F)$ for $D_F=725,5125$ and the
resulting bounds for $|\alpha|$;
see \path{e2_dyadic_bound.sage} and \path{verify_dyadic_bounds.py}
in \cite{EigenformCertificates}.} For the upper bounds, we use the exact rational products together with the bounds
\[
\sqrt{725}>\frac{2692582403}{10^8},\qquad
\sqrt{5125}>\frac{7158910531}{10^8}.
\]

If $0\ll x\ll4$, then $\operatorname{Tr}(x^2)<16n$.
We enumerate the short vectors in the positive trace lattice and
check total positivity to obtain the complete set $\mathcal D_F$.
Substituting these elements into \eqref{eq:c11-dyadic-bound},
we obtain the following strict upper bounds for its right-hand side.\footnote{
We enumerate $\mathcal D_F$ for the six fields and bound the
right-hand side of \eqref{eq:c11-dyadic-bound} at the displayed weights; see \path{e2_dyadic_bound.sage}
and \path{verify_dyadic_bounds.py} in \cite{EigenformCertificates}.}
\begin{center}
\begingroup
\setlength{\tabcolsep}{6.5pt}
\renewcommand{\arraystretch}{1.10}
\begin{tabular}{@{}lrrrrrr@{}}
\toprule
$D_F$ & 49 & 169 & 361 & 725 & 1125 & 5125\\
\midrule
$A_F$ & 168 & 24 & 8 & 121 & 60 & $61/10$\\
$|\mathcal D_F|$ & 9 & 3 & 3 & 7 & 15 & 7\\
$\ell$ & 7 & 4 & 3 & 4 & 4 & 2\\
Upper bound & 34 & 39 & 34 & 138 & 82 & 149\\
$q^2-1$ & 63 & 63 & 63 & 255 & 255 & 255\\
\bottomrule
\end{tabular}
\endgroup
\end{center}
For example, at $D_F=361$ the only ordered decompositions are
$(1,3)$, $(2,2)$, and $(3,1)$. At $\ell=3$, their contribution gives
\[
8\left(\frac{2\cdot27+18\cdot8+28}{64}
+\frac48+\frac8{64}\right)=\frac{133}{4}<63.
\]
The bounds are rounded upwards using rational arithmetic.
Monotonicity excludes each displayed cuspidal weight and all larger
weights. At discriminants
\(49,169,361,725\), the narrow class number is one, so character
compatibility forces \(\ell\) to be even. The listed possibilities
follow.
\end{proof}

The proof reduces to 772 field isomorphism classes. Arithmetic tests leave
21. Indecomposable exponents exclude 15, and weight bounds and cuspidal
dimensions settle the final six. Fields sharing a discriminant are checked sep-
arately.

\begin{theorem}
\label{thm:c11-higher-degree}
Let \(F\) be a totally real field of degree \(n>2\), with arbitrary ordinary
and narrow class numbers. If \(E_2(\phi,\psi)\) is a full-level
Eisenstein Hecke eigenform and \(h\) is a full-level cuspidal
Hecke eigenform of parallel weight at least two, then \(E_2(\phi,\psi)\cdot h\)
is not a Hecke eigenform.
\end{theorem}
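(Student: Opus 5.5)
Assume $E\cdot h$ is a Hecke eigenform, with $E=E_2(\phi,\psi)$ and $h\in S_\ell(\eta)$, $\ell\geq2$, and $n>2$; we derive a contradiction from a finite list of fields. By Proposition~\ref{prop:c11-inertness}, $(2)$ is inert. By Proposition~\ref{prop:c11-finite-boundary}, $n\leq7$, $nh(F)\leq14$, and $nh(F)\leq9$ when $\chi=1$; degrees eight and nine, and degree seven with $h(F)=1$, are already excluded. The root discriminant satisfies $\delta_F<(2\pi^4/3)^{2/3}$, and $\delta_F<(4\pi^2)^{2/3}$ when $\chi=1$. In each degree $3\leq n\leq7$ this is an explicit discriminant bound, so the complete tables (Voight, Buchmann--Ford, Driver--Jones, Bordeaux) give a finite list of candidate fields. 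I would enumerate them by computer, keeping isomorphism classes that share a discriminant separate. This should give the $772$ classes mentioned before the statement.

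I would then apply the arithmetic tests field by field. Discard fields where $(2)$ is not inert. Discard fields with a prime $\mathfrak p$ satisfying $N(\mathfrak p)^2\leq2^n$, by Corollary~\ref{cor:c11-prime-exclusion}; in practice these are residue-degree-one primes above $3,5,7,11,\dots$ up to $2^{n/2}$. For $\chi=1$ (forced when $h(F)=1$), $\alpha=2^n/\zeta_F(-1)$ is a nonzero rational integer. Its sign is $(-1)^n$, and $|\alpha|=(4\pi^2)^n/(D_F^{3/2}\zeta_F(2))$ by the functional equation. Two-sided bounds on $\zeta_F(2)$ from \eqref{eq:prelim-euler-product-bound} then confine $|\alpha|$ to an interval that usually contains no integer, as in the degree-seven and degree-eight arguments. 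For $\chi\ne1$, bound all conjugates of $\alpha$ via \eqref{eq:c11-alpha-bound} and the norm argument. This should leave the $21$ survivors.

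Next, apply the indecomposable-exponent identity of Proposition~\ref{prop:c11-small-norm} to further ideals of norm below $2^n$. For instance, if $\mathfrak a=\mathfrak p\mathfrak q$ with both norms small, multiplicativity of $c(\cdot,G)$ and $c(\cdot,h)$ gives relations on $\chi$ at these primes. Those relations fail unless character values conspire, which should kill $15$ more fields, possibly after a short lattice computation via \eqref{eq:prelim-trace-dual-bounds} to certify indecomposability. The final six are the discriminants $49,169,361,725,1125,5125$. For them, Lemma~\ref{lem:c11-dyadic-cutoffs} removes $5125$ and restricts $\ell$ to $\{2,4,6\}$, $\{2\}$, or $\{2,3\}$. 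Then I would apply Theorem~\ref{thm:c11-all-component-dimension}: the product is an eigenform if and only if $\dim S_{\ell+2}(\phi\psi\eta)=1$. The factor $h$ must also exist in $S_\ell(\eta)$.
\begin{itemize}
\item At $169,361,725$: $S_2(1)=0$ by Lemma~\ref{lem:c11-small-weight-two-spaces}, so there is no $h$.
\item At $1125$: $\ell=2$ is impossible since $S_2(1)=0$. For $\ell=3$, $\eta=\varepsilon$ and $\dim S_5(\varepsilon)\geq5>1$ by Lemma~\ref{lem:c11-discriminant-1125-spaces}.
\item At $49$: I need $S_\ell(1)=0$ or $\dim S_{\ell+2}(1)\neq1$ for $\ell=2,4,6$. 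I expect $S_2(1)=0$ via a genus-zero Shimura curve, as for $169$ and $361$. For $\ell=4,6$ I would compute $\dim S_6(1)$ and $\dim S_8(1)$ by the quaternionic trace formula (averaging $\operatorname{Sym}$ characters over the finite unit group of a definite maximal order, as for $1125$), expecting dimension at least two or an empty $S_\ell(1)$.
\end{itemize}

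The main obstacle is the middle reduction: organizing the enumeration of hundreds of fields with rigorous, certified Euler-product intervals, and finding, for each of the $15$ fields not killed arithmetically, a concrete indecomposable-ideal relation that gives a contradiction. Handling $h(F)\geq2$ with nontrivial $\chi$ also needs conjugate bounds rather than exact rationality. The discriminant-$49$ dimension computation is the secondary risk.
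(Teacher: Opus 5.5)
\textbf{Gap 1: the step meant to remove fifteen fields.} Your mechanism for this step does not work. Ideals of norm below $2^n$ give nothing beyond Corollary~\ref{cor:c11-prime-exclusion}. For distinct primes, the identity $c(\mathfrak p\mathfrak q,G)=\phi(\mathfrak p\mathfrak q)c(\mathfrak p\mathfrak q,h)$ follows from the identities at $\mathfrak p$ and $\mathfrak q$ by multiplicativity, so it imposes no condition on $\chi$. Only a prime square produces the obstruction $\chi(\mathfrak p)N(\mathfrak p)^2=1$, and prime squares of norm at most $2^n$ have already been removed. In fact, once $(2)$ is inert and every prime of norm at most $2^{n/2}$ is excluded, there is no composite ideal of norm at most $2^n$ other than $(2)$ at all.

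The missing idea is that the norm inequality \eqref{eq:indecomposable-norm-bound} is only a sufficient condition for indecomposability. The paper picks a specific prime with $N(\mathfrak p)^2>2^n$ (norms $3,5,7,11,13$ in its table). For $r=1,2$ it takes the least positive integer $m_r\in\mathfrak p^r$ and the component lattice $t_r=(m_r)\mathfrak p^{-r}$. It then certifies by the finite box search \eqref{eq:prelim-trace-dual-bounds} that no $x\in t_r$ satisfies $0\ll x\ll m_r$. The convolution formula then still gives the coefficient identities at $\mathfrak p$ and $\mathfrak p^2$, and \eqref{eq:hecke-prime-square-recurrence} yields $\chi(\mathfrak p)N(\mathfrak p)^2=1$. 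You mention a lattice search, but you attach it to ideals that need no certification and produce no contradiction.

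\textbf{Gap 2: the reduction to odd ordinary class number.} The paper gets this reduction structurally, not by computation. Suppose $h(F)$ is even, and let $\varepsilon$ be a nontrivial quadratic ordinary class character. It is totally even and preserves $S_{\ell+2}(\phi\psi\eta)$. If that space were one-dimensional, its eigenform would satisfy $\pi_f\simeq\pi_f\otimes\varepsilon$. By Labesse--Langlands, $\pi_f$ would then be induced from the totally real extension cut out by $\varepsilon$, which is incompatible with holomorphic discrete series at the real places. If the dimension exceeds one, Theorem~\ref{thm:c11-all-component-dimension} excludes the product instead.

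This argument gives $n\le6$ (degree seven would need $h(F)=2$), and it gives $h(F)=1$ in degrees five and six. The sharper bound $\delta_F<(4\pi^2)^{2/3}$ therefore governs the enumeration there. Without it, you must treat fields with $h(F)=2$ and $\chi\ne1$ in degrees five through seven up to $\delta_F<(2\pi^4/3)^{2/3}\approx16.16$. That means $D_F$ up to roughly $1.8\cdot10^7$ in degree six and $2.9\cdot10^8$ in degree seven, far beyond the paper's ranges, so your counts $772$ and $21$ would not be reproduced. Moreover, the conjugate bound \eqref{eq:c11-alpha-bound} you propose for $\chi\ne1$ is exactly the bound that defined the enumeration range, so it eliminates nothing new.

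\textbf{A smaller point at discriminant $49$.} A totally definite quaternion algebra over a cubic field must ramify at some finite prime, so the method used at discriminant $1125$ does not compute full-level dimensions there. The paper instead uses Borisov--Gunnells, which gives $S_2=0$, $\dim S_6\ge2$ and $\dim S_8=4$.
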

\begin{proof}
Suppose that the product is an eigenform, and apply
Proposition~\ref{prop:c11-finite-boundary}.
If the ordinary class number is even, Cauchy's theorem gives a
nontrivial quadratic ordinary class character. Its pullback to the
narrow class group is totally even. If the cuspidal space containing the product were
one-dimensional, twisting by this character as in
\eqref{eq:prelim-character-twist} would give a quadratic self-twist
of its unique cuspidal representation.
By \cite[Proposition~6.5]{LabesseLanglands1979}, this representation
is induced from a Hecke character of the quadratic extension
defined by the twisting character. If a real place split in this
extension, the induced parameter there would be a sum of two
one-dimensional representations, whereas a holomorphic
discrete-series parameter is irreducible. Thus the extension is CM.
The twisting character must therefore be totally odd, a contradiction.
If this cuspidal space has dimension
greater than one, Theorem~\ref{thm:c11-all-component-dimension}
also excludes the product. Thus the ordinary class number is odd,
and Proposition~\ref{prop:c11-finite-boundary} reduces the degree
to \(3\leq n\leq6\).

We enumerate all totally real fields up to the discriminant bounds
below, including the endpoints.
\begin{center}
\begingroup
\setlength{\tabcolsep}{7pt}
\renewcommand{\arraystretch}{1.10}
\begin{tabular}{@{}lrrrr@{}}
\toprule
$n$ & 3 & 4 & 5 & 6\\
\midrule
Bound on $D_F$ & 4218 & 68133 & 209508 & 2429064\\
\bottomrule
\end{tabular}
\endgroup
\end{center}
We obtain these bounds by rounding upwards in
\eqref{eq:c11-alpha-bound}. In degrees five and six, the ordinary
class number is odd, so \(nh(F)\leq14\) forces \(h(F)=1\).
The sharper bound for the trivial character therefore applies. We use the complete totally real field tables of
\cite{BordeauxTables2008}, retaining every isomorphism class
within these bounds. The numbers of fields in degrees three
through six are $143$, $552$, $37$, and $40$, respectively,
for a total of $772$.

The complete quartic table also contains the field generated by
$x^4-19x^2-24x+16$, of discriminant $65808$.
Its ordinary class number is one, so the sharper discriminant bound
excludes it. Its dyadic ramification also contradicts
Proposition~\ref{prop:c11-inertness}, so it does not occur among
the remaining fields.

Applying the class-number and discriminant bounds, inertness of
$(2)$, Corollary~\ref{cor:c11-prime-exclusion}, and integrality of
$\alpha$, we obtain $21$ remaining fields.\footnote{
We apply the stated arithmetic tests to all $772$ fields,
leaving $21$ isomorphism classes;
see \path{e2_candidate_enumeration.sage} and
\path{verify_field_enumeration.py} in \cite{EigenformCertificates}.} When \(h(F)=1\), the bound
\eqref{eq:prelim-euler-product-bound} gives
\(P_{199}(F)<\zeta_F(2)<P_{199}(F)(200/199)^n\). Combined with
\eqref{eq:prelim-hecke-functional-equation}, this gives an interval
for \(|\alpha|\). If this interval contains
no integer, integrality of \(\alpha\) excludes the field.
For the field with \(D_F=6125\), the exact reciprocal constant
is \(60/13\), so integrality excludes this field.

For each of the following fifteen fields, choose a prime
of the indicated norm.
For $\mathfrak a=\mathfrak p^r$, with $r=1,2$, let $m_r$
be the least positive integer in $\mathfrak a$, and put
\(t_r=(m_r)\mathfrak p^{-r}\).
A decomposition of the exponent $m_r$ is equivalent to an element
$x\in t_r$ satisfying $0\ll x\ll m_r$.
The trace-dual bounds \eqref{eq:prelim-trace-dual-bounds} enclose
the coordinates of every such $x$ in a finite integer box $B_r$.
We enumerate all points in each box and check the signs at all
real embeddings. The box sizes are listed below. In each field,
both exponents are indecomposable.
\begin{center}
\begingroup
\small
\setlength{\tabcolsep}{4.5pt}
\renewcommand{\arraystretch}{1.08}
\begin{tabular}{@{}rlrrr@{}}
\toprule
$D_F$ & Defining polynomial & $N\mathfrak p$ & $|B_1|$ & $|B_2|$\\
\midrule
81 & $x^3-3x-1$ & 3 & 18 & 64\\
257 & $x^3-x^2-4x+3$ & 3 & 12 & 48\\
321 & $x^3-x^2-4x+1$ & 3 & 18 & 48\\
697 & $x^3-7x-5$ & 5 & 12 & 64\\
1257 & $x^3-x^2-8x+9$ & 3 & 18 & 18\\
1489 & $x^3-x^2-10x-7$ & 7 & 12 & 80\\
3969 & $x^3-21x-35$ & 3 & 18 & 18\\
2525 & $x^4-2x^3-4x^2+5x+5$ & 5 & 54 & 144\\
4205 & $x^4-x^3-5x^2-x+1$ & 5 & 36 & 256\\
4525 & $x^4-x^3-7x^2+3x+9$ & 5 & 54 & 144\\
8069 & $x^4-x^3-5x^2+5x+1$ & 5 & 54 & 144\\
16317 & $x^4-2x^3-4x^2+5x+1$ & 5 & 54 & 72\\
14641 & $x^5-x^4-4x^3+3x^2+3x-1$ & 11 & 1024 & 1024\\
38569 & $x^5-5x^3+4x-1$ & 7 & 324 & 576\\
371293 & \makecell[l]{$x^6-x^5-5x^4+4x^3$\\$+6x^2-3x-1$}
& 13 & 1296 & 3072\\
\bottomrule
\end{tabular}
\endgroup
\end{center}
The thirty boxes contain $8776$ coordinate vectors in total.
In each field, neither exponent has a totally positive
decomposition in its component lattice.\footnote{
We verify indecomposability of the exponents for
$\mathfrak p$ and $\mathfrak p^2$ in the fourteen fields with
$h(F)=1$; see \path{e2_indecomposable_search.sage}
and \path{verify_lattice_witnesses.py} in \cite{EigenformCertificates}.}
Since these exponents are indecomposable, the proof of
Proposition~\ref{prop:c11-small-norm} gives the same coefficient
identities at $\mathfrak p$ and $\mathfrak p^2$. The relation
\eqref{eq:hecke-prime-square-recurrence} then forces
\[
\chi(\mathfrak p)N(\mathfrak p)^2=1,
\]
which is impossible since $|\chi(\mathfrak p)|=1$ and
$N\mathfrak p>1$.

Fourteen of these fields have ordinary class number one. The fifteenth is
\[
\begin{gathered}
F=\mathbb Q(a),\quad a^3-21a-35=0,\\
D_F=3969,\quad\operatorname{Cl}(F)\simeq\operatorname{Cl}^+(F)\simeq C_3.
\end{gathered}
\]
Here \(\mathfrak p=(3,a+1)\) and \((3)=\mathfrak p^3\).
The component lattices for \(\mathfrak p\) and \(\mathfrak p^2\)
are \(\mathfrak p^2\) and \(\mathfrak p\), respectively, with
exponent \(3\) in each case. Each bounding box contains \(18\)
coordinate vectors, none of which gives a totally positive decomposition.\footnote{
We check both boxes of $18$ vectors for $D_F=3969$, finding
no totally positive decomposition of exponent $3$ in either component lattice;
see \path{e2_allclass_balanced.sage} and
\path{verify_lattice_witnesses.py} in \cite{EigenformCertificates}.}

The six remaining discriminants are
\[
49,\qquad169,\qquad361,\qquad725,\qquad1125,\qquad5125.
\]
All have ordinary class number one.
Lemma~\ref{lem:c11-dyadic-cutoffs} excludes \(5125\) and leaves
only cuspidal weight two for \(169,361,725\), which is ruled out by
Lemma~\ref{lem:c11-small-weight-two-spaces}. It remains to consider
\(49\) and \(1125\).

At discriminant \(49\), the remaining cuspidal weights are \(2,4,6\).
Let \(\mathfrak p\) be the prime above \(7\). In the notation of
\cite{BorisovGunnells2013}, the weight-two cuspidal space for
\(\Gamma(\mathfrak p)\) has basis
\[
2F_0F_1-F_4^2,\qquad 2F_0F_2-F_1^2,\qquad
2F_0F_4-F_2^2.
\]
Let \(g_7\) be the element whose action on this cuspidal space
is described in \cite[Propositions~6.6 and~8.1]{BorisovGunnells2013}.
Its eigenvalues on the displayed basis are
\(\zeta_7^3,\zeta_7^6,\zeta_7^5\).
Since none is one, the full-level invariant space \(S_2\) vanishes.
The same paper gives \(\dim S_8=4\)
\cite[proof of Proposition~6.7]{BorisovGunnells2013}.
Choose nonzero invariant polynomials \(P_4,P_6\) of degrees
four and six. The three weight-six forms \(E_2^3,E_2\cdot P_4,P_6\) are
independent, since there is no
relation below weight eight
\cite[Propositions~6.7 and~7.2]{BorisovGunnells2013}.
The Eisenstein space is one-dimensional, so \(\dim S_6\geq2\).
Theorem~\ref{thm:c11-all-component-dimension} therefore excludes
cuspidal weights four and six.

At discriminant \(1125\), Lemma~\ref{lem:c11-dyadic-cutoffs}
leaves cuspidal weights two and three. Since \(h(F)=1\),
\(\phi=\psi\). The narrow class group has order two, so the
Eisenstein central character is trivial. The compatible cuspidal
characters are therefore \(1\) in weight two and \(\varepsilon\)
in weight three. Lemma~\ref{lem:c11-discriminant-1125-spaces}
excludes a cuspidal factor of weight two. For a factor of weight
three, the same lemma gives \(\dim S_5(\varepsilon)\geq5>1\), so
Theorem~\ref{thm:c11-all-component-dimension} excludes the product.
\end{proof}

\subsubsection{Real quadratic fields with larger narrow class number}

It remains to consider real quadratic fields with \(h^+(F)>1\).
We begin with the following reduction.

\begin{lemma}
\label{lem:c11-quadratic-reduction}
Let \(F\) be a real quadratic field with \(h^+(F)>1\). If
\(E_2(\phi,\psi)\cdot h\) is a Hecke eigenform, where \(h\) is a full-level
cuspidal Hecke eigenform of parallel weight \(\ell\geq2\), then the field satisfies
\begin{equation}\label{eq:c11-quadratic-reduction}
D_F\in\{12,21,24,28,69,77\},\qquad h(F)=1,\qquad h^+(F)=2.
\end{equation}
\end{lemma}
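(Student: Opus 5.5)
The reduction will run on the same tools as the higher-degree case, specialized to \(n=2\). Since \(E_2(\phi,\psi)\cdot h\) is an eigenform, Lemma~\ref{lem:ec-integrality} makes \(\alpha=4/L_F(-1,\chi)\) a nonzero algebraic integer, with \(\chi=\phi^{-1}\psi\) totally even and factoring through \(\operatorname{Cl}(F)\).

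\textbf{Step 1: bound the discriminant.} By \eqref{eq:c11-alpha-bound} with \(n=2\), every conjugate satisfies
\[
|\sigma(\alpha)|<\left(\frac{2\pi^4}{3}\right)^2D_F^{-3/2}.
\]
The product of the conjugates is a nonzero integer, so \(D_F<(2\pi^4/3)^{4/3}\), which is a few hundred (roughly \(1.6\times10^3\)). If \(\chi=1\), the sharper form \(|\alpha|=(4\pi^2)^2/(D_F^{3/2}\zeta_F(2))\) applies, as in Proposition~\ref{prop:c11-finite-boundary}, and gives \(D_F<(4\pi^2)^{4/3}\approx 134\).

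\textbf{Step 2: reduce to \(h(F)=1\).} If \(h(F)\) is even, I will rerun the self-twist argument from the proof of Theorem~\ref{thm:c11-higher-degree}. A nontrivial quadratic ordinary class character is totally even. A one-dimensional cuspidal space would then give a quadratic self-twist, which would force a CM extension, a contradiction. A space of dimension larger than one is excluded by Theorem~\ref{thm:c11-all-component-dimension}. For odd \(h(F)\geq3\), I expect Odlyzko bounds applied to the Hilbert class field (\(\delta_H=\delta_F<D_F^{1/2}\)) to exclude it, since \(D_F\) is bounded. Failing that, Roblot's tables in this small range should show \(h(F)\in\{1,2\}\). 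Hence \(h(F)=1\) and \(\chi=1\), so the sharper bound \(D_F<134\) applies. Then \(h^+(F)>1\) forces \(h^+(F)=2\), because \(h^+\in\{h,2h\}\).

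\textbf{Step 3: sieve the remaining discriminants.} I will list the fundamental discriminants below \(134\) that have no unit of norm \(-1\). For each of them I apply the following tests.
\begin{enumerate}[label=(\roman*)]
\item Corollary~\ref{cor:c11-prime-exclusion} with \(n=2\) forbids any prime of norm \(p\leq2\) other than a ramified prime whose square is \((2)\). This leaves the cases where \(2\) is inert or ramified.
\item The ramified case is controlled by the formula \(\alpha=\eta(\mathfrak p)2^{\ell-1}(1-\omega(\mathfrak p)4)\) (Lemma~\ref{lem:quadratic-ec-ramified-prime}). It gives \(|\alpha|\geq3\), to be compared with the upper bound from Step 1.
\item Integrality of \(\alpha=4/\zeta_F(-1)\) is checked using \(\zeta_F(-1)=\zeta(-1)L(-1,\chi_{D_F})\), computed from \eqref{eq:generalized-bernoulli-special-value}.
\end{enumerate}
The survivors should be \(12,21,24,28,69,77\).

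\textbf{Main obstacle.} I expect Step 3 to be the hardest part: the case-by-case check of the candidate discriminants up to about \(134\), namely their unit norms and class numbers and the exact values of \(\zeta_F(-1)\) needed to test whether \(\alpha\) is integral, will require explicit computation.
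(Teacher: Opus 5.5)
\textbf{Gap: the odd class number case in Step 2.} Your Step 2 does not prove \(h(F)=1\) when \(h(F)\) is odd and at least \(3\). First, a numerical slip: \((2\pi^4/3)^{4/3}\approx261\), not \(1.6\times10^3\). Even with \(D_F<261\), neither tool you propose gives \(h(F)\in\{1,2\}\).
\begin{enumerate}[label=(\roman*)]
\item The Odlyzko--Takeuchi bound for the Hilbert class field only yields \(2h(F)\leq14\).
\item The tables contain \(\mathbb Q(\sqrt{229})\) and \(\mathbb Q(\sqrt{257})\), both with \(h(F)=3\), and hence with \(h^+(F)>1\). These are the quadratic resolvents of the totally real cubic fields of fundamental discriminants \(229\) and \(257\).
\end{enumerate}
The split test removes \(257\). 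At \(D_F=229\), however, \((2)\) is inert and \(\chi\) may be a nontrivial cubic class character. Your estimate then only gives \(|\sigma(\alpha)|<4\pi^8/(9\cdot229^{3/2})\approx1.22\). That does not contradict \(\alpha\) being a nonzero algebraic integer of \(\mathbb Q(\zeta_3)\). Step 3 only sieves \(h(F)=1\) discriminants below \(134\) without a unit of norm \(-1\), so this field is never treated. Hence \(h(F)=1\) and \(h^+(F)=2\) are not established.

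\textbf{Repair.} Run the dyadic trichotomy before the class-number reduction, for arbitrary \(\chi\), as the paper does.
\begin{enumerate}[label=(\roman*)]
\item If \((2)\) splits, the product is impossible.
\item If \((2)\) ramifies, then \(|\alpha|\geq2^{\ell-1}\cdot3\geq6\). This is \(6\), not merely \(3\), because \(\ell\geq2\).
\item If \((2)\) is inert, then \(\rho((2))=1\), so the Euler factor of \(L_F(2,\rho)\) at \(2\) is exactly \(16/15\).
\end{enumerate}
Start from \(|L_F(2,\rho)|>\prod_p(1+p^{-2})^{-2}=\pi^4/225\), which gives \(|\sigma(\alpha)|<3600D_F^{-3/2}\). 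In the inert case, replacing the generic factor \(16/25\) at \(2\) by \(16/15\) improves this to \(2160D_F^{-3/2}\). Hence \(D_F^3<600^2\) in the ramified case and \(D_F<167\) in the inert case. In these ranges every field with \(h^+(F)>1\) has \(h(F)\in\{1,2\}\).

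\textbf{Comparison for even class number.} With that fix, your treatment of even \(h(F)\) is a genuinely different route from the paper's, and it is sound. It combines the quadratic self-twist argument (a totally even quadratic class character, Labesse--Langlands, irreducibility of the real discrete-series parameter) with Theorem~\ref{thm:c11-all-component-dimension}. Nothing in it depends on the degree, and it removes \(D_F=40,60,85,165\) at once. This spares you the paper's computation of \(\alpha\) for \(\chi\neq1\) and its separate indecomposability argument for \(7+2\sqrt{10}\) at \(D_F=40\). Your remaining \(\chi=1\) sieve, via integrality of \(4/\zeta_F(-1)\), then leaves exactly \(12,21,24,28,69,77\).
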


\begin{proof}
Put \(D=D_F\), \(\chi=\phi^{-1}\psi\), \(\omega=\phi\psi\), and
let \(\eta\) be the character of \(h\). The character \(\chi\)
is totally even and therefore factors through the ordinary class group.
By Lemma~\ref{lem:ec-integrality},
\(\alpha=4/L_F(-1,\chi)\) is a nonzero algebraic integer.
By \eqref{eq:product-coefficient-two}, the normalized product
\(G=\alpha E_2(\phi,\psi)\cdot h\) satisfies
\[
c((2),G)=c((2),h)+\alpha.
\]
Proposition~\ref{prop:c11-small-norm} gives the coefficient identity
for every ideal of norm less than four. The same convolution argument
applies whenever the exponent is indecomposable in its component lattice.

For every Galois conjugate \(\rho\) of \(\chi\), the functional
equation \eqref{eq:prelim-hecke-functional-equation} and the Euler
product give
\[
\begin{aligned}
|L_F(-1,\rho)|&=\frac{D^{3/2}}{4\pi^4}|L_F(2,\overline\rho)|,\\
|L_F(2,\rho)|&>
\left(\frac{\zeta(4)}{\zeta(2)}\right)^2=\frac{\pi^4}{225}.
\end{aligned}
\]
Indeed, the factors above each rational prime are bounded below by
\((1+p^{-2})^{-2}\), with strict inequality at a ramified prime.
By \eqref{eq:hecke-L-galois-equivariance},
\(|\sigma(\alpha)|<3600D^{-3/2}\) for every embedding \(\sigma\).

If \((2)=\mathfrak p\mathfrak q\) splits, multiplicativity and the
small-norm coefficient identity give \(c((2),G)=c((2),h)\), a
contradiction. If \((2)=\mathfrak p^2\) ramifies,
\eqref{eq:hecke-prime-square-recurrence} gives
\[
\alpha=\eta(\mathfrak p)2^{\ell-1}
\bigl(1-4\omega(\mathfrak p)\bigr).
\]
Every conjugate has absolute value at least six, so \(D^3<600^2\).
If \((2)\) is inert, its local factor in \(L_F(2,\rho)\) is
\(16/15\), since \(\rho((2))=1\). Replacing the generic lower
factor \(16/25\) improves the bound to
\(|\sigma(\alpha)|<2160D^{-3/2}\). The nonzero integral norm of
\(\alpha\) therefore gives \(D^3<2160^2\). Hence the ramified
and inert cases satisfy, respectively,
\[
D<71.138,\qquad D<167.098.
\]

We now enumerate the fundamental discriminants for which \((2)\)
ramifies and \(D^3<600^2\), or is inert and \(D^3<2160^2\).
Minkowski's bound \(\sqrt D/2\) from
\eqref{eq:prelim-arithmetic-bounds}, together with elements of the
required norms, gives ordinary class number one except at
\(D=40,60,85,165\), where it is two. The unit signatures then
identify the fields with \(h^+(F)>1\) in
Table~\ref{tab:c11-quadratic-candidates}. To compute their constant
terms for \(\chi=1\), we use
\(\zeta_F(-1)=\zeta(-1)L(-1,\chi_D)\) and
\eqref{eq:generalized-bernoulli-special-value}, which give
\[
\zeta_F(-1)=\frac{1}{24D}\sum_{a=1}^{D}\chi_D(a)a^2.
\]
For \(D=12,21\), this recovers the values in
Lemma~\ref{lem:quadratic-exact-special-values}.
For each of the four fields of ordinary class number two, the
nontrivial ordinary class character corresponds to the extension
\(\mathbb Q(\sqrt5,\sqrt{D/5})/F\).
Both \(5\) and \(D/5\) are positive fundamental discriminants.
The biquadratic discriminant is \(D^2\), proving that the extension
is unramified at every place. Artin factorization
\cite[p.~4]{CogdellArtin} gives
\(L_F(s,\chi)=L(s,\chi_5)L(s,\chi_{D/5})\). Formula
\eqref{eq:generalized-bernoulli-special-value} then gives the reciprocal
constant terms in the last column.

\begin{table}[htbp]
\centering
\caption{Candidate real quadratic fields and reciprocal constant terms.}
\label{tab:c11-quadratic-candidates}
\begingroup
\setlength{\tabcolsep}{7.5pt}
\renewcommand{\arraystretch}{1.10}
\begin{tabular}{@{}rcccc@{}}
\toprule
\multicolumn{1}{@{}c}{$D$} & \multicolumn{1}{c}{$h(F)$}
& \multicolumn{1}{c}{$h^+(F)$}
& \multicolumn{1}{c}{$\alpha\ (\chi=1)$}
& \multicolumn{1}{c@{}}{$\alpha\ (\chi\ne1)$}\\
\midrule
12&1&2&24&---\\
21&1&2&12&---\\
24&1&2&8&---\\
28&1&2&6&---\\
40&2&2&\(24/7\)&10\\
44&1&2&\(24/7\)&---\\
56&1&2&\(12/5\)&---\\
60&2&4&2&5\\
69&1&2&2&---\\
77&1&2&2&---\\
85&2&2&\(4/3\)&\(5/2\)\\
93&1&2&\(4/3\)&---\\
133&1&2&\(12/17\)&---\\
141&1&2&\(2/3\)&---\\
165&2&4&\(6/11\)&\(5/6\)\\
\bottomrule
\end{tabular}
\endgroup
\end{table}

Every field in Table~\ref{tab:c11-quadratic-candidates} has ordinary
class number at most two, so \(\chi\) takes values in \(\{\pm1\}\). Hence
\(L_F(-1,\chi)\) and \(\alpha\) are rational. Since \(\alpha\)
is an algebraic integer, it follows that \(\alpha\in\mathbb Z\).
Integrality excludes \(D=44,56,85,93,\allowbreak133,141,165\), and the
ramified lower bound \(|\alpha|\geq6\) excludes \(D=60\).
Apart from the six stated fields, only \(D=40\) remains.\footnote{
We compute the field data and reciprocal constants in
Table~\ref{tab:c11-quadratic-candidates}, obtaining seven
fields after the integrality and ramified-prime tests;
see \path{rq_certificate.py} and \path{rq_e2_exact.sage}
in \cite{EigenformCertificates}.}

For \(D=40\), take \(F=\mathbb Q(\sqrt{10})\),
\(\mathfrak p=(3,\sqrt{10}-1)\) and \(\nu=7+2\sqrt{10}\).
The identity
\[
\nu=-9+2\cdot3(\sqrt{10}-1)+2(\sqrt{10}-1)^2
\]
shows that \(\nu\in\mathfrak p^2\). Since
\(N(\nu)=N(\mathfrak p^2)=9\), we have
\(\mathfrak p^2=(\nu)\), with \(\nu\) totally positive.
If \(x=a+b\sqrt{10}\) and \(\nu-x\) were both totally positive,
their embeddings would imply \(1\leq a\leq6\) and
\(0<a-b\sqrt{10}<7-2\sqrt{10}<1\). Since $a^2>10b^2$ and $a\leq6$, we have $|b|\leq1$.
The second inequality then forces $b=1$ and $a\geq4$, whereas
\(4-\sqrt{10}>7-2\sqrt{10}\). Thus \(\nu\) is indecomposable.
The coefficient formula for ideals of small norm and for
indecomposable exponents therefore gives
\[
c(\mathfrak p,G)=\phi(\mathfrak p)c(\mathfrak p,h),\qquad
c(\mathfrak p^2,G)=\phi(\mathfrak p)^2c(\mathfrak p^2,h).
\]
The relation \eqref{eq:hecke-prime-square-recurrence}, together
with \(\phi(\mathfrak p)^2=1\), then implies \(9\omega(\mathfrak p)=1\), a contradiction.\footnote{
We verify $(3,\sqrt{10}-1)^2=(7+2\sqrt{10})$ and the
total positivity and indecomposability of its generator;
see \path{verify_d21_local.py} in \cite{EigenformCertificates}.}
The six remaining fields have the class numbers asserted in
\eqref{eq:c11-quadratic-reduction}.
\end{proof}

Five fields in Lemma~\ref{lem:c11-quadratic-reduction} are excluded by
the dyadic coefficient identities. For discriminant \(21\),
we also use the coefficients at the primes above \(3\) and \(5\).

\begin{theorem}
\label{thm:c11-quadratic-exclusion}
Let \(F\) be a real quadratic field with \(h^+(F)>1\). If
\(E_2(\phi,\psi)\) and \(h\) are full-level Eisenstein and cuspidal
Hecke eigenforms, respectively, and the parallel weight of \(h\) is
at least two, then \(E_2(\phi,\psi)\cdot h\) is not a Hecke eigenform.
\end{theorem}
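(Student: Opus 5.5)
By Lemma~\ref{lem:c11-quadratic-reduction}, it suffices to treat the six fields with $D_F\in\{12,21,24,28,69,77\}$. Each has $h(F)=1$ and $h^+(F)=2$. Since $\chi=\phi^{-1}\psi$ is totally even, it is trivial, so $\phi=\psi$. Then $\omega=\phi^2$, and $\alpha=4/\zeta_F(-1)$ takes the integer values $24,12,8,6,2,2$ from Table~\ref{tab:c11-quadratic-candidates}. I would split the six fields by the behaviour of $(2)$. It ramifies for $D_F=12,24,28$ and is inert for $D_F=21,69,77$; for $D_F=69,77$ we have $D_F\equiv5\pmod 8$.

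\textbf{Ramified cases.} Choose $\mathfrak p$ with $\mathfrak p^2=(2)$, namely $(1+\sqrt3)$, $(2+\sqrt6)$ and $(3+\sqrt7)$. Each generator has norm $-2$, hence mixed signs. Lemma~\ref{lem:quadratic-ec-ramified-prime} with $k=2$ gives
\[
\alpha=\eta(\mathfrak p)2^{\ell-1}\bigl(1-4\omega(\mathfrak p)\bigr).
\]
Here $\phi(\mathfrak p)^2=1$, so $\omega(\mathfrak p)=\pm1$, and $|\alpha|\in\{3\cdot2^{\ell-1},5\cdot2^{\ell-1}\}$. For $D_F=12$ and $\alpha=24$, this forces $\omega(\mathfrak p)=-1$ and $\ell=3$, hence $\eta(\mathfrak p)=1$. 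But in odd weight $\eta$ is totally odd, and the mixed-sign generator gives $\eta(\mathfrak p)=-1$, a contradiction. For $D_F=24$ and $\alpha=8$, neither $3\cdot2^{\ell-1}$ nor $5\cdot2^{\ell-1}$ equals $8$. For $D_F=28$ and $\alpha=6$, we get $\ell=2$ and $\omega(\mathfrak p)=1$, so $\eta(\mathfrak p)=-1$ would be needed. Since $\eta$ is totally even in even weight and the generator is principal, $\eta(\mathfrak p)$ is determined and equals $1$. I expect this to give a sign contradiction.

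\textbf{Inert cases.} For $D_F=69,77$ I would use the coefficient at $(4)$, as in Lemma~\ref{lem:c11-dyadic-cutoffs}:
\[
q^{\ell-1}(q^2-1)=\alpha\Bigl(2c((2),h)+\alpha-\sum_{x}c((x),E)c((4-x),h)\Bigr),\qquad q=4,
\]
where $x$ runs over the totally positive decompositions of $4$, enumerated on a finite box. The Ramanujan bound and $|\alpha|=2$ should make the right side too small for every $\ell$. If this fails for small $\ell$, I would instead find a prime of norm $3$ or $5$ whose generator of $\mathfrak p^2$ is indecomposable, as in the $D_F=40$ argument. That yields $N(\mathfrak p)^2\omega(\mathfrak p)=1$, which is impossible.

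\textbf{The case $D_F=21$.} Following the remark before the statement, I would use the primes above $3$ and $5$. Take $\mathfrak p_3=(3,\sqrt{21})$, with $\mathfrak p_3^2=(3)$, and $\mathfrak p_5\mid5$. I would test indecomposability of totally positive generators (for instance $5$ and elements of norm $25$) in the appropriate component lattices via the trace-dual box \eqref{eq:prelim-trace-dual-bounds}. An indecomposable $\mathfrak p^2$ gives $\phi(\mathfrak p)^2c(\mathfrak p^2,h)=c(\mathfrak p^2,G)$ together with the coefficient identity at $\mathfrak p$, forcing $\omega(\mathfrak p)N(\mathfrak p)^2=1$. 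If some exponent decomposes, I would combine the identity at $(2)$ with bounded correction terms at $\mathfrak p_3$ and $\mathfrak p_5$, using $\alpha=12$, and conclude by the Ramanujan bound.

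The main obstacle is $D_F=21$ (and possibly $69,77$). Finding indecomposable exponents, or bounding the convolution terms sharply enough for all $\ell$, is the delicate finite computation.
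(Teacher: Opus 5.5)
Your reduction to the six fields, your treatment of $D_F=24,28$, and your outline for $D_F=69,77$ match the paper. The case $D_F=21$ is not proved, however.

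\textbf{The gap at $D_F=21$.} Your primary mechanism fails there. Put $w=(1+\sqrt{21})/2$ and $a_m=c((m),h)$. The exponent of $\mathfrak p_3^2=(3)$ decomposes as $3=1+2$. The exponents of the squares of the primes above $5$ also decompose, as $w^2=(2+w)+3$ and $(1-w)^2=(3-w)+3$, where $2+w$ and $3-w$ are totally positive units. So no relation $\omega(\mathfrak p)N(\mathfrak p)^2=1$ is available. Your fallback of ``bounded correction terms plus the Ramanujan bound'' cannot finish either, because cuspidal weight $\ell=3$ is compatible with every such bound.

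The missing idea is to compute the corrections exactly rather than bound them.
\begin{itemize}
\item Since $N\mathfrak p_3=3<4$, you have $c(\mathfrak p_3,G)=\phi(\mathfrak p_3)c(\mathfrak p_3,h)$. The only decompositions of $3$ are $(1,2)$ and $(2,1)$, so $c((3),G)-a_3=12(a_2+5)$ exactly.
\item The Hecke relation at $\mathfrak p_3$ gives $c((3),G)-a_3=-8(-1)^\ell3^{\ell-1}$. Here $\eta(\mathfrak p_3)=(-1)^\ell$ because $N(1+w)=-3$.
\item Comparing the two gives $a_2=-5-2(-1)^\ell3^{\ell-2}$. The bound $|a_2|\le2^\ell$ then excludes every $\ell$ except $\ell=3$, where $a_2=1$.
\item For $\ell=3$, the identity at $(4)$ gives $a_3=-24$ and $a_4=-15$. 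These still satisfy the Ramanujan bounds, so the contradiction must be exact.
\item The six decompositions of $5$, including $(2+w,3-w)$ and $(3-w,2+w)$, give $c((5),G)-a_5=-1188$.
\item On the other hand, the primes $(w)$ and $(1-w)$ lie in the nonprincipal narrow class. Every integral ideal in that class has norm at least $3$, so their exponents are indecomposable, since $\sqrt5<2\sqrt3$. Multiplicativity and $\phi((5))=1$ then give $c((5),G)=a_5$, a contradiction.
\end{itemize}

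\textbf{The case $D_F=12$.} You allow $\omega(\mathfrak p)=-1$, but $\psi=\phi$ gives $\omega(\mathfrak p)=\phi(\mathfrak p)^2=1$, so $\alpha=-3\eta(\mathfrak p)2^{\ell-1}$. Moreover $5\cdot2^{\ell-1}\ne24$ for every $\ell$, so ``$\omega(\mathfrak p)=-1$, $\ell=3$'' is an arithmetic slip; it would give $|\alpha|=20$. The correct conclusion is $\ell=4$ and $\eta(\mathfrak p)=-1$. This contradicts the triviality of the totally even $\eta$, exactly as in your $D_F=28$ argument. Incidentally, at $D_F=28$ the generator $3+\sqrt7$ has norm $+2$ and is totally positive. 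This is harmless, since the even-weight argument uses no signs.

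\textbf{The cases $D_F=69,77$.} The term-by-term absolute bound of Lemma~\ref{lem:c11-dyadic-cutoffs} does not exclude $\ell=2$. You must combine the two $a_2$ terms and keep the sign of the constant. The decompositions of $4$ are only $(1,3),(2,2),(3,1)$, because $\sqrt{D_F}>4$. This yields $3a_2+a_3=2-e_3-30\cdot4^{\ell-2}$, with $e_3=c((3),E)\in\{13,10\}$. That contradicts $|3a_2+a_3|\le21\cdot3^{\ell-2}$ for all $\ell\ge2$.
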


\begin{proof}
Suppose that such a product is a Hecke eigenform. Let \(\ell\geq2\) be the
weight of \(h\), and let \(\eta\) be its central character.
Lemma~\ref{lem:c11-quadratic-reduction} gives the discriminants
and class numbers in \eqref{eq:c11-quadratic-reduction}. Thus
$\phi^{-1}\psi=1$ and $\phi^2=1$. Put
\(G=\alpha E_2(\phi,\phi)\cdot h\) and
\(a_m=c((m),h)\). The reciprocal constants computed in the lemma are
\begin{center}
\begingroup
\setlength{\tabcolsep}{7pt}
\renewcommand{\arraystretch}{1.10}
\begin{tabular}{@{}lrrrrrr@{}}
\toprule
$D_F$ & 12 & 21 & 24 & 28 & 69 & 77\\
\midrule
$\alpha=4/\zeta_F(-1)$ & 24 & 12 & 8 & 6 & 2 & 2\\
\bottomrule
\end{tabular}
\endgroup
\end{center}
For \(D_F=12,24,28\), the ramified identity in the proof of
Lemma~\ref{lem:c11-quadratic-reduction} becomes
\[
\alpha=-3\eta(\mathfrak p)2^{\ell-1}.
\]
The constants \(24,6\) force \(\ell=4,2\), respectively. In
either even weight, \(\eta\) is totally even and hence
trivial, making the right side negative. The constant \(8\)
cannot have absolute value \(3\cdot2^{\ell-1}\).

For \(D_F=69\) or \(77\), we have \(\alpha=2\).
Since \(\sqrt{D_F}>4\), the only ordered totally positive decompositions
of \(4\) are \((1,3),(2,2),(3,1)\). The Eisenstein coefficient
\(e_3=c((3),E_2(\phi,\phi))\) is \(13\) or \(10\), respectively,
since \((3)\) ramifies or is inert. The convolution formula
\eqref{eq:component-product-convolution} at \((4)\) and the relation
\eqref{eq:hecke-prime-square-recurrence} give
\[
3a_2+a_3=2-e_3-30\cdot4^{\ell-2}.
\]
The Ramanujan bound \eqref{eq:prelim-ramanujan-bound} gives
\[
|3a_2+a_3|\leq12\cdot2^{\ell-2}+9\cdot3^{\ell-2}
\leq21\cdot3^{\ell-2}.
\]
The identity instead requires
\(|3a_2+a_3|>30\cdot4^{\ell-2}\), a contradiction for every
\(\ell\geq2\).

It remains to consider \(F=\mathbb Q(\sqrt{21})\). Put
\(w=(1+\sqrt{21})/2\), so that \(\mathcal O_F=\mathbb Z[w]\) and
\(w^2-w-5=0\). Here \(\alpha=12\). The prime
\(\mathfrak p_3=(1+w)\) above \((3)\) has norm \(3\), and its
generator has norm \(-3\). Since \(h(F)=1\) and \(h^+(F)=2\), the prime ideal
represents the nontrivial narrow class and
\(\eta(\mathfrak p_3)=(-1)^\ell\). The prime has norm less than four,
so the small-norm coefficient identity gives
\(c(\mathfrak p_3,G)=\phi(\mathfrak p_3)c(\mathfrak p_3,h)\).

The identities \((1+w)^2=3(2+w)\) and
\((2+w)(3-w)=1\) give \(\mathfrak p_3^2=(3)\).
Since \(\phi^2=1\), the relation
\eqref{eq:hecke-prime-square-recurrence} at \(\mathfrak p_3\) implies
\[
c((3),G)-a_3=-8(-1)^\ell3^{\ell-1}.
\]
The only ordered totally positive decompositions of \(3\) in
\(\mathcal O_F\) are \((1,2)\) and \((2,1)\). Indeed, if
\(x=a+bw\) and \(3-x\) are totally positive, then
\(|b|\sqrt{21}<3\), hence \(b=0\). Since
\(c((2),E_2(\phi,\phi))=5\), the convolution formula
\eqref{eq:component-product-convolution} gives
\(c((3),G)-a_3=12(a_2+5)\). Therefore the coefficient at \((2)\) satisfies
\[
a_2=-5-2(-1)^\ell3^{\ell-2}.
\]
The ideal \((2)\) is prime of norm four, so
\eqref{eq:prelim-ramanujan-bound} gives
\(|a_2|\leq2^\ell\). Write \(A_\ell=|a_2|\).
The base cases are
\[
A_2=7>4,\qquad A_4=23>16,\qquad A_5=49>32.
\]
Moreover, the successive values satisfy
\[
A_{\ell+2}-4A_\ell=
\begin{cases}
10\cdot3^{\ell-2}-15>0,&\ell\geq4\text{ even},\\
10\cdot3^{\ell-2}+15>0,&\ell\geq5\text{ odd}.
\end{cases}
\]
Since \(2^{\ell+2}=4\cdot2^\ell\), induction in steps of two
contradicts this bound for every even \(\ell\geq2\) and every
odd \(\ell\geq5\).
Thus \(\ell=3\) and \(a_2=1\).

The same embedding argument shows that the ordered totally positive
decompositions of \(4\) are \((1,3),(2,2),(3,1)\). Comparing
\eqref{eq:component-product-convolution} at \((4)\) with
\eqref{eq:hecke-prime-square-recurrence} at \((2)\) gives
\[
3a_2+a_3=-1-5\cdot4^{\ell-2}.
\]
Consequently \(a_3=-24\), while
\(a_4=a_2^2-4^{\ell-1}=-15\). There are exactly six ordered
totally positive decompositions of $5$, namely
\[
(1,4),(2,3),(3,2),(4,1),(2+w,3-w),(3-w,2+w).
\]
Indeed, write a summand as \(a+bw\). Its two
embeddings lie strictly between zero and five, so
\(|b|\sqrt{21}<5\) and \(b\in\{-1,0,1\}\). These three cases give
the displayed list. The elements \(2+w\) and \(3-w\) are totally
positive units. The Eisenstein coefficients at \((2),(3),(4)\) are
\(5,13,21\), respectively, and the coefficients of both eigenforms
at the unit ideal are one. Formula
\eqref{eq:component-product-convolution} therefore gives\footnote{
We check the totally positive decompositions of $3,4,5$ and the
product coefficient at $(5)$ over $\mathbb Q(\sqrt{21})$, obtaining
$c((5),G)-a_5=-1188$; see \path{verify_d21_local.py}
in \cite{EigenformCertificates}.}
\[
c((5),G)-a_5
=12\bigl(a_4+5a_3+13a_2+21+2\bigr)=-1188.
\]

On the other hand, \((5)=\mathfrak p\mathfrak q\), where
\(\mathfrak p=(w)\) and \(\mathfrak q=(1-w)\) are distinct primes
of norm five. Both generators have norm \(-5\), so both ideals
represent the nontrivial narrow class. Every integral ideal in this
class has norm at least three: the unit ideal is narrow principal,
and there is no ideal of norm two because \((2)\) is inert.

If a positive exponent corresponding to either prime decomposed in its
component lattice, the two summand ideals would lie in the same
narrow class and have norms at least three. Inequality
\eqref{eq:indecomposable-norm-bound} would then give \(\sqrt5\geq\sqrt3+\sqrt3\), a contradiction.
The two exponents are therefore indecomposable, and the product
coefficients satisfy
\[
\begin{aligned}
c(\mathfrak p,G)&=\phi(\mathfrak p)c(\mathfrak p,h),\\
c(\mathfrak q,G)&=\phi(\mathfrak q)c(\mathfrak q,h).
\end{aligned}
\]
Hecke multiplicativity and \(\phi((5))=1\) now imply
\(c((5),G)=a_5\), contradicting the preceding calculation.
\end{proof}

Combining the preceding exclusions with the known classification
for real quadratic fields of narrow class number one gives the
weight-two classification for cuspidal weights at least two.

\begin{corollary}
\label{cor:c11-classification}
Over totally real fields \(F\ne\mathbb Q\), the only full-level
Eisenstein--cuspidal product identity with Eisenstein weight two and
cuspidal parallel weight at least two is
\[
h_8=120E_2\cdot h_6,
\]
over \(\mathbb Q(\sqrt5)\).
\end{corollary}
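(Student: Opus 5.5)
The corollary is an assembly step: I would split by the degree of $F$ and its narrow class number, cite the relevant exclusion in each case, and check that the one surviving case gives exactly the stated identity. Throughout, the Eisenstein factor is $E_2(\phi,\psi)$ and the cuspidal factor is $h\in S_\ell(\eta)$ with $\ell\geq2$.

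\emph{Degree greater than two.} For $n>2$, Theorem~\ref{thm:c11-higher-degree} already excludes every such product, for arbitrary ordinary and narrow class numbers. Nothing further is needed here.

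\emph{Real quadratic fields with $h^+(F)>1$.} Theorem~\ref{thm:c11-quadratic-exclusion} excludes every product. This leaves only real quadratic fields with $h^+(F)=1$.

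\emph{Real quadratic fields with $h^+(F)=1$.} Here all narrow class characters are trivial, so the Eisenstein factor is $E_2$ and $h\in S_\ell(1)$ with $\ell$ even. I would use two inputs.
\begin{itemize}
\item For $D_F>5$, the classification \cite[Theorem~1(2)]{HaoQinZhou2026} applies. It was proved under GRH in Eisenstein weight two. Li \cite[Theorem~3.9]{Li2026RankinCohen} removed that hypothesis. Alternatively, Theorem~\ref{thm:c11-dimension-class-one} reduces the question to the condition $\dim S_{\ell+2}(1)=1$, which is the criterion used in those works. Either way, no identity arises for $D_F>5$.
\item For $D_F=5$, \cite[Theorem~7.4]{Joshi-Zhang2019Hilbert} lists exactly one Eisenstein--cuspidal identity with Eisenstein weight two, namely $h_8=120E_2\cdot h_6$.
\end{itemize}

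To confirm the scalar in our normalization, I would apply Lemma~\ref{lem:low-product-coefficients}. It gives
\[
c(\mathcal O_F,E_2\cdot h_6)=\tfrac14\zeta_F(-1)=\tfrac1{120},
\]
using $\zeta_{\mathbb Q(\sqrt5)}(-1)=1/30$. Hence the normalized product is $120E_2\cdot h_6$. Its eigenform property is equivalent, by Theorem~\ref{thm:c11-dimension-class-one}, to $\dim S_8(1)=1$ over $\mathbb Q(\sqrt5)$, which is established in \cite{Joshi-Zhang2019Hilbert}.

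The main obstacle is bookkeeping rather than mathematics. The cited results must cover exactly the hypotheses here. In particular, the quadratic class-number-one result must use full-level forms with the same normalization, and the GRH-free version \cite{Li2026RankinCohen} must cover all $\ell\geq2$. I would check this via Theorem~\ref{thm:c11-dimension-class-one}, which rests on Li's Rankin--Cohen result and needs no GRH.
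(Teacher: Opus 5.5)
Your proposal is correct and matches the paper's proof: you use the same three-way split, with Theorem~\ref{thm:c11-higher-degree} for $n>2$, Theorem~\ref{thm:c11-quadratic-exclusion} for real quadratic fields with $h^+(F)>1$, and the prior literature for real quadratic fields with $h^+(F)=1$. The paper cites \cite[Theorem~3.9]{Li2026RankinCohen} for the whole class-number-one quadratic case, including $D_F=5$, so your separate treatment of $D_F=5$ and your check of the scalar $120$ are harmless extras; note that your ``alternatively'' route via Theorem~\ref{thm:c11-dimension-class-one} would not settle $D_F>5$ on its own, since it still needs the field-and-weight reduction carried out in those works.
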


\begin{proof}
For real quadratic \(F\) with \(h^+(F)=1\), the assertion follows from
\cite[Theorem~3.9]{Li2026RankinCohen}.
Theorem~\ref{thm:c11-quadratic-exclusion} excludes \(h^+(F)>1\),
and Theorem~\ref{thm:c11-higher-degree} excludes \(n>2\).
\end{proof}

\begin{remark}
\label{rmk:c11-comparison-Li}
Li~[24, Theorem~3.4 and Corollaries~3.5--3.7] treats
central-character projections in even weights and arbitrary narrow class
number. For the individual Hecke eigenform \(E_2(\phi,\psi)\),
Theorem~8.2 normalizes the family by its unit-ideal coefficient at \(w=0\).
Uniform moderate growth and cuspidal decay justify specialization.
\end{remark}

\subsection{Cuspidal weight one}

Let \(F\) be totally real of degree \(n\geq2\), put
\(E=E_2(\phi,\psi)\), \(\chi=\phi^{-1}\psi\) and \(\omega=\phi\psi\),
and let \(h\in S_1(\eta)\) be a normalized full-level cuspidal
Hecke eigenform. The character \(\chi\) is totally even.

\begin{lemma}
\label{lem:c12-weight-one-pairing}
Let \(f\in S_3(\omega\eta)\) be a normalized full-level cuspidal
Hecke eigenform. With the Petersson convention of Theorem~8.2,
\begin{equation*}
\langle E\cdot h,f\rangle
=
C L\bigl(1,\widetilde{\pi}_f\times(\pi_h\otimes\phi)\bigr)
\neq0.
\end{equation*}
Here \(C\neq0\) depends only on the field, characters and fixed measure
normalizations. Consequently, \(E\cdot h\) is a Hecke eigenform
if and only if \(\dim S_3(\omega\eta)=1\).
\end{lemma}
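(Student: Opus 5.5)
The plan is to specialize Theorem~\ref{thm:c11-all-component-pairing} and Theorem~\ref{thm:c11-all-component-dimension} to \(\ell=1\). Both were stated for \(\ell\geq1\), and their proofs were written to cover cuspidal weight one.

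\textbf{The pairing formula.} I would go through the proof of Theorem~\ref{thm:c11-all-component-pairing} and confirm that each ingredient survives at \(\ell=1\).
\begin{enumerate}[label=(\roman*)]
\item The coefficient bound \(|c(\mathfrak a,f)c(\mathfrak a,h)|\leq\tau_F(\mathfrak a)^2N(\mathfrak a)\) holds. It comes from Lemma~\ref{lem:weight-one-coefficients} for \(h\), using the Artin representation so that the Hecke roots are roots of unity. For \(f\) it is the weight-three case of \eqref{eq:prelim-ramanujan-bound}.
\item Absolute convergence of \(D_\phi(2+w;f,h)\) for \(\operatorname{Re}(w)>0\) follows from (i).
\item Mok's unfolding \cite[Theorem~7.9]{Mok2009} applies with \(k=3>\ell=1\). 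This is the one input where a weight restriction might hide, and it is the step I expect to be the main obstacle. I would check that Mok's hypotheses allow a holomorphic weight-one factor. The point is that \(h\) is a cusp form, hence rapidly decreasing on Siegel sets, which is all that is used.
\item Specialization to \(w=0\) goes through by uniform moderate growth of the Eisenstein family \cite{BernsteinLapid2024} and dominated convergence. At \(s=2\) the kernel parameter \(s+1-k\) is still zero.
\item The local generating-function identity gives \(D_\phi(2;f,h)=L(1,\widetilde\pi_f\times(\pi_h\otimes\phi))/L_F(2,\chi^{-1})\). The shift \(u=s-\ell\) was noted in the preliminaries to hold also when \(\ell=1\).
\end{enumerate}

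\textbf{Nonvanishing.} Shahidi's theorem \cite{Shahidi1980} gives nonvanishing on the line \(\operatorname{Re}=1\). There is no pole, because \(\pi_f\not\simeq\pi_h\otimes\phi\). Indeed, at each real place \(\pi_f\) is a discrete series of weight \(3\), whereas \(\pi_h\) is a limit of discrete series of weight \(1\). The constant \(C=\mathcal A(0)/L_F(2,\chi^{-1})\) is nonzero, as in Theorem~\ref{thm:c11-all-component-pairing}. This gives the displayed identity and \(\langle E\cdot h,f\rangle\neq0\).

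\textbf{The equivalence.} I would repeat the proof of Theorem~\ref{thm:c11-all-component-dimension}.
\begin{itemize}
\item By \eqref{eq:product-coefficient-one}, \(c(\mathcal O_F,E\cdot h)=2^{-n}L_F(-1,\chi)\neq0\). So the product is a nonzero element of \(S_3(\omega\eta)\).
\item If \(\dim S_3(\omega\eta)=1\), Hecke stability makes the product an eigenform.
\item Conversely, suppose the product is an eigenform and \(\dim S_3(\omega\eta)>1\). An orthogonal Hecke eigenbasis then contains a normalized eigenform \(f\) orthogonal to \(E\cdot h\). This contradicts the nonvanishing just proved.
\end{itemize}
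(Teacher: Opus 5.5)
Your proposal is correct and follows the paper's route: the paper's proof simply applies Theorems~\ref{thm:c11-all-component-pairing} and~\ref{thm:c11-all-component-dimension} with \(\ell=1\), since both were stated and proved for all \(\ell\geq1\). Your checks re-verify the weight-one cases that those proofs already address: the coefficient bound via Lemma~\ref{lem:weight-one-coefficients}, Mok's unfolding with \(k=3>\ell=1\), specialization at \(w=0\), Shahidi's nonvanishing with no pole since the weights differ, and the orthogonal-eigenbasis argument.
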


\begin{proof}
Apply Theorems~8.2 and~8.3 with \(\ell=1\).
\end{proof}

\begin{lemma}
\label{lem:c12-target-dimension}
Under the preceding hypotheses, \(\dim S_3(\omega\eta)\geq2\).
\end{lemma}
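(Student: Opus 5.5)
We argue by contradiction: we assume $\dim S_3(\omega\eta)\leq1$, and we produce two linearly independent elements of $S_3(\omega\eta)$ from the data $E$ and $h$. By Lemma~\ref{lem:c12-weight-one-pairing} the product $E\cdot h$ is a nonzero element of $S_3(\omega\eta)$. Indeed, \eqref{eq:product-coefficient-one} gives $c(\mathcal O_F,E\cdot h)=2^{-n}L_F(-1,\chi)\ne0$, and this value is nonzero by \eqref{eq:hecke-L-value-bounds}. So the space has dimension at least one.

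For a second element we vary the Eisenstein factor. Since $h$ has weight one, Lemma~\ref{lem:odd-weight-unit-parity} shows that $n$ is even and that $h^+(F)\geq2$. The central character $\eta$ is totally odd. For every pair $(\phi',\psi')$ with $\phi'\psi'=\omega$, the form $E_2(\phi',\psi')\cdot h$ also lies in $S_3(\omega\eta)$. One such pair is $(\phi',\psi')=(\psi,\phi)$, which changes $\chi$ to $\chi^{-1}$. A second choice is to replace $h$ by a twist $h\otimes\xi$ with $\xi^2=1$ and to adjust $\phi$ accordingly. I would aim to show that $E\cdot h$ and $E_2(\psi,\phi)\cdot h$ are linearly independent.

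To test independence, I would compare coefficients at small ideals using Proposition~\ref{prop:c11-small-norm} and Lemma~\ref{lem:low-product-coefficients}. The unit-ideal coefficients are $2^{-n}L(-1,\chi)$ and $2^{-n}L(-1,\chi^{-1})$. Take $\mathfrak p$ of norm less than $2^n$ in a nontrivial narrow class. The normalized coefficients at $\mathfrak p$ are then $\phi(\mathfrak p)c(\mathfrak p,h)$ and $\psi(\mathfrak p)c(\mathfrak p,h)$. If $\phi\ne\psi$ and $c(\mathfrak p,h)\ne0$, the normalized forms differ, so they are independent.

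The main obstacle is the degenerate case $\phi=\psi$, or more generally $c(\mathfrak p,h)=0$ at every usable prime. When $\phi=\psi$, a different tool is needed, and the Petersson pairing is the natural candidate. Suppose $\dim S_3=1$, and let $f$ be its unique normalized eigenform. Then $E\cdot h=\lambda f$, and $\lambda$ is determined both by the unit-ideal coefficient and by the pairing formula. Comparing these two expressions for every admissible pair $(\phi',\psi')$, and also for the twists $h\otimes\xi$, should give an identity among values $L(1,\widetilde\pi_f\times(\pi_h\otimes\phi'))$ that I would try to contradict. If that fails, I would fall back on the quaternionic or Jacquet--Langlands dimension count in weight three. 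Since $n$ is even and $\eta$ is totally odd, a Shimizu-type trace or mass argument should give $\dim S_3(\omega\eta)\geq2$ from the weight-three representation $W=\bigotimes\operatorname{Sym}^1$. This step is where I expect the real difficulty.
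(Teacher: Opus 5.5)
Your proposal has a genuine gap. The case you call degenerate, $\phi=\psi$, is actually the central case, and you give no argument for it. Because $\chi=\phi^{-1}\psi$ is totally even, it factors through $\operatorname{Cl}(F)$. In particular, whenever $h(F)=1$ you have $\chi=1$, so $E_2(\psi,\phi)\cdot h=E\cdot h$ and swapping the inducing pair produces nothing new.

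Even when $\chi\ne1$, your coefficient comparison needs a prime $\mathfrak p$ with three properties: $N\mathfrak p<2^n$; $\chi(\mathfrak p)\ne1$ (a nontrivial narrow class is not enough); and $c(\mathfrak p,h)\ne0$. None of these is guaranteed, and weight-one eigenforms vanish at a positive proportion of primes.

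Your fallbacks do not close the gap either:
\begin{itemize}
\item The pairing formula of Lemma~\ref{lem:c12-weight-one-pairing} only says that $E\cdot h$ pairs nontrivially with every eigenform. That is perfectly consistent with $\dim S_3(\omega\eta)=1$.
\item No uniform Jacquet--Langlands dimension count is available over all $F$.
\item Quadratic twists give a contradiction only for a totally even twisting character. Totally odd self-twists (CM forms) are possible, and when $h(F)$ is odd no nontrivial totally even quadratic character exists.
\end{itemize}

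The missing idea is to produce a second element of $V=S_3(\omega\eta)$ whose unit-ideal coefficient is \emph{zero}; the paper splits on the parity of $h(F)$.

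\textbf{Case $h(F)$ odd.} Here $\chi$ has odd order $d$, so $\delta=\chi^{(d+1)/2}$ satisfies $\delta^2=\chi$. Put $\xi=\phi\delta\eta^{-1}$. The twisted cube $H=(h^3)\otimes\xi$ then lies in $S_3(\eta^3\xi^2)=S_3(\phi^2\chi\eta)=V$. It is nonzero, because $h^3\ne0$ and twisting is invertible. Applying \eqref{eq:product-coefficient-one} twice gives $c(\mathcal O_F,H)=0$. Since $c(\mathcal O_F,E\cdot h)=2^{-n}L_F(-1,\chi)\ne0$, the forms $H$ and $E\cdot h$ are linearly independent.

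\textbf{Case $h(F)$ even.} Take a nontrivial quadratic ordinary class character $\varepsilon$; twisting by it preserves $V$. If $\dim V=1$, comparing unit-ideal coefficients shows that the unique normalized eigenform satisfies $f\otimes\varepsilon=f$. By Labesse--Langlands, $\pi_f$ would then be induced from the totally real quadratic extension cut out by $\varepsilon$. That is incompatible with the irreducible discrete-series parameter at each real place. Hence $\dim V\ne1$, and together with $V\ne0$ this gives $\dim V\geq2$.
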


\begin{proof}
Put \(V=S_3(\omega\eta)\) and \(P=E\cdot h\).
By \eqref{eq:eisenstein-fourier-and-constant-terms}, \eqref{eq:product-coefficient-one}, and \eqref{eq:hecke-L-value-bounds},\newline
\(c(\mathcal O_F,P)=2^{-n}L_F(-1,\chi)\neq0\);
in particular, \(V\neq0\).

Suppose first that \(h(F)\) is odd.
The ordinary class character \(\chi\) has odd order \(d\), so
\(\delta=\chi^{(d+1)/2}\) satisfies \(\delta^2=\chi\).
Put \(\xi=\phi\delta\eta^{-1}\).
Twisting preserves full level, holomorphic weight and cuspidality,
and \eqref{eq:prelim-character-twist} gives
\[
H=(h^3)\otimes\xi
\in S_3(\eta^3\xi^2)
=S_3(\phi^2\chi\eta)
=V.
\]
The principal component of \(h\) has unit-ideal coefficient one,
so its cube is nonzero. Twisting is invertible, hence \(H\neq0\).
Repeated application of \eqref{eq:product-coefficient-one} gives \(c(\mathcal O_F,h^3)=0\),
and twisting preserves this zero.
Thus \(H\) and \(P\) are linearly independent.

If \(h(F)\) is even, choose a nontrivial quadratic ordinary class
character \(\varepsilon\). Its pullback is totally even, and twisting
by \(\varepsilon\) preserves \(V\) since \(\varepsilon^2=1\).
The quadratic self-twist argument in the proof of Theorem~8.12 applies:
it uses total evenness and the irreducibility of the real discrete-series
parameter, both valid here in weight three.
It gives \(\dim V\neq1\). Since \(V\neq0\), we have \(\dim V\geq2\).
\end{proof}

Combining the two lemmas excludes these products in every degree.

\begin{theorem}
\label{thm:c12-complete-exclusion}
Let \(F\) be a totally real field of degree \(n\geq2\). If \(E_2(\phi,\psi)\) is a
normalized full-level Eisenstein eigenform and
\(h\in S_1(\eta)\) is a normalized full-level cuspidal
Hecke eigenform, then \(E_2(\phi,\psi)\cdot h\) is not a Hecke eigenform.
\end{theorem}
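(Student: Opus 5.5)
The proof should be a direct combination of Lemma~\ref{lem:c12-weight-one-pairing} and Lemma~\ref{lem:c12-target-dimension}. Suppose, for contradiction, that $P=E_2(\phi,\psi)\cdot h$ is a Hecke eigenform, where $h\in S_1(\eta)$. The product lies in $S_3(\omega\eta)$ with $\omega=\phi\psi$, since multiplication maps $M_2(\omega)\cdot M_1(\eta)$ into $M_3(\omega\eta)$ and cuspidality of $h$ makes the product cuspidal. Lemma~\ref{lem:c12-weight-one-pairing} is Theorem~\ref{thm:c11-all-component-dimension} specialized to $\ell=1$. It then forces $\dim S_3(\omega\eta)=1$.

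On the other hand, Lemma~\ref{lem:c12-target-dimension} gives $\dim S_3(\omega\eta)\geq2$ under exactly these hypotheses. This is the desired contradiction. Its proof splits by the parity of $h(F)$. For odd $h(F)$, the twisted cube $(h^3)\otimes\xi$ lies in the same space, is nonzero, and has vanishing unit-ideal coefficient, whereas $P$ does not; so the two are independent. For even $h(F)$, the quadratic self-twist argument rules out a one-dimensional space.

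So the written proof is essentially one line: assume an eigenform product, invoke the two lemmas, and note the dimension clash. The only points to verify are bookkeeping. First, the target space in both lemmas is the same $S_3(\omega\eta)$; this holds because the twist character $\xi=\phi\delta\eta^{-1}$ was chosen so that $\eta^3\xi^2=\phi^2\chi\eta=\omega\eta$. Second, the hypotheses of Theorem~\ref{thm:c11-all-component-pairing} are satisfied with $\ell=1$, which was already checked there via the weight-one coefficient bounds of Lemma~\ref{lem:weight-one-coefficients}.

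The only genuine obstacle lives inside Lemma~\ref{lem:c12-target-dimension} in the even class number case, already settled, where the self-twist must be shown impossible. That step uses the fact that the twisting character is totally even while a CM inducing extension would require a totally odd one. Since both lemmas are available, no new difficulty arises. The theorem holds in every degree $n\geq2$ with no field enumeration.
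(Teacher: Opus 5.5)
Your proposal is correct and is the paper's own proof: assuming the product is a Hecke eigenform, Lemma~\ref{lem:c12-weight-one-pairing} (the pairing theorem and dimension criterion at $\ell=1$) forces $\dim S_3(\omega\eta)=1$, which contradicts Lemma~\ref{lem:c12-target-dimension}. Your bookkeeping check that $\eta^3\xi^2=\phi^2\chi\eta=\omega\eta$, so that both lemmas concern the same space, is also correct.
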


\begin{proof}
If the product were a Hecke eigenform,
Lemma~\ref{lem:c12-weight-one-pairing} would give
$\dim S_3(\omega\eta)=1$. This contradicts
Lemma~\ref{lem:c12-target-dimension}.
\end{proof}

Together with Section~\ref{sec:eisenstein-cuspidal-weight-at-least-three},
this completes the classification for Eisenstein weight at least two.

\begin{proof}[Proof of Theorem~\ref{thm:main-ec}]
For \(k\geq4\), Theorems~\ref{thm:quadratic-eisenstein-cuspidal}
and~\ref{thm:higher-degree-eisenstein-cuspidal} exclude the products
in degrees two and greater than two, respectively.
Theorem~\ref{thm:eisenstein-cuspidal-weight-three} excludes \(k=3\).
For \(k=2\), Corollary~\ref{cor:c11-classification} covers
\(\ell\geq2\), and Theorem~\ref{thm:c12-complete-exclusion}
excludes \(\ell=1\).
The only identity is therefore
\(h_8=120E_2\cdot h_6\) over \(\mathbb Q(\sqrt5)\).
\end{proof}

\section{A weight-one Eisenstein product identity}
\label{sec:weight-one-products}

We prove the identity in Theorem~3 over \(\mathbb Q(\sqrt3)\),
with an Eisenstein factor of weight one and a cuspidal factor of weight four. We first compute the relevant adelic cuspidal dimensions on the two narrow components.

\begin{lemma}
\label{lem:weight-one-sqrt3-dimensions}
Let \(F=\mathbb Q(\sqrt3)\), and let \(\varepsilon\) be its nontrivial
narrow class character. The full adelic cuspidal spaces satisfy
\begin{equation*}
\dim S_4(1)=2,\qquad \dim S_5(\varepsilon)=1.
\end{equation*}
\end{lemma}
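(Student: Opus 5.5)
We will compute both dimensions by passing to the totally definite quaternion algebra over \(F=\mathbb Q(\sqrt3)\), exactly as in the proof of Lemma~\ref{lem:c11-discriminant-1125-spaces}. Here \(n=2\) is even, so a quaternion algebra \(B/F\) ramified at both real places and at no finite place exists. The first step is to record the field data. We have \(D_F=12\), \(h(F)=1\) and \(h^+(F)=2\) by Lemma~\ref{lem:quadratic-small-fields}(b). The fundamental unit \(2+\sqrt3\) is totally positive, and the signature image of the units is \(\{\pm(1,1)\}\). Hence \(\varepsilon\) is totally odd, and \(\varepsilon\) is the only character compatible with odd weight. Since every class character is either \(1\) or \(\varepsilon\), the trivial character is the one relevant in weight four.

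Next we determine the right ideal classes of a maximal order \(\mathcal O\subset B\). A natural candidate is \(B=(-1,-1)_F\), which is unramified at \(2\) because \(2\) ramifies in \(F\) with even local degree. For \(\mathcal O\) we would take a maximal order containing the Hurwitz order, or the order built from \(\mathbb Z[\zeta_{12}]\). We expect its reduced unit group \(\mathcal O^\times/\mathcal O_F^\times\) to be large: the binary octahedral group has projective image \(S_4\) of order \(24\). The mass is
\[
M=\frac{2\cdot12^{3/2}\zeta_F(2)}{(2\pi)^4}=\frac{\zeta_F(-1)}{2}=\frac1{12},
\]
using Lemma~\ref{lem:quadratic-exact-special-values}, where we take the mass normalization with \(2^{1-n}|\zeta_F(-1)|\) as in \cite[Main Theorem~26.1.5]{Voight2021}. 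The reduced norm maps the classes onto \(\operatorname{Cl}^+(F)\), so there are at least two classes. If the two classes each have reduced unit group of order \(24\), then their masses \(1/24+1/24\) sum to \(1/12\), and there are exactly two classes \(I_1,I_2\). We will exhibit the second class explicitly as \(\mathcal O\mathfrak p\) with \(\mathfrak p=(1+\sqrt3)\), which is not narrowly principal. Its right order is conjugate to \(\mathcal O\) because the class lies in the image of an element of norm of mixed sign. Verifying the order and the structure of the unit groups is a finite computation, done as in the \(1125\) case, and we will record it in a footnote.

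Then \(S_k(\chi)\) corresponds by Jacquet--Langlands \cite[Theorem~7.28]{DembeleVoight2013} to the \(\chi\)-isotypic part of
\[
\bigoplus_{i=1}^{2} W_k^{\Gamma_i},\qquad W_k=\bigotimes_{\sigma}\operatorname{Sym}^{k-2}(\mathbb C^2),
\]
with constants removed when \(k=2\). Here \(\Gamma_i\) denotes the unit group of the right order of \(I_i\). The central character is identified as in Lemma~\ref{lem:c11-discriminant-1125-spaces}: the class \(\mathfrak p\) swaps the two summands up to the sign \(\varepsilon(\mathfrak p)\). Therefore \(\dim S_k(1)\) and \(\dim S_k(\varepsilon)\) both equal \(\dim W_k^{\Gamma}\) for the common group \(\Gamma\cong 2O\), with appropriate care when \(k\) is odd versus even. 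The precise bookkeeping could instead give the sum \(2\dim W_k^{\Gamma}\) in one character. This bookkeeping is the step we expect to be the main obstacle, and we will settle it with the explicit central idele computation \(Z_{\mathfrak b}\Phi=\pm\Phi\), as before.

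Finally, we compute \(\dim W_k^{\Gamma}\) by averaging characters. The character of \(W_k\) at an element of reduced trace \(t\) and norm \(u\) is \(N_{F/\mathbb Q}\) of the Chebyshev-type expression: \(t^2-u\) for \(k=4\), and \(t^3-2tu\) for \(k=5\). We evaluate it over the \(48\) norm-one elements, and the outcome we expect is \(\dim W_4^\Gamma=1\) and \(\dim W_5^\Gamma=1\) for the relevant summands. Assembled over the two classes, this gives \(\dim S_4(1)=2\) and \(\dim S_5(\varepsilon)=1\). Weight \(4\) and weight \(5\) are both above two, so the whole quaternionic module is cuspidal and no Eisenstein correction is needed. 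As a consistency check, we will compare the results with the dimension formula for Hilbert modular surfaces, or with the Magma or LMFDB data for \(\mathbb Q(\sqrt3)\).
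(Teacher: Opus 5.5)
Your route differs from the paper's. The paper works classically on the two narrow components ($\mathfrak H\times\mathfrak H^-$ and $\mathfrak H^2$), using the generators of Aoki and Dursthoff, and subtracts one Eisenstein dimension per component. The definite quaternion route can also work, but your version rests on a false structural claim.

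The second right ideal class is not represented by $\mathcal O\mathfrak p$. Since $\mathfrak p=(1+\sqrt3)$ is principal, $\mathcal O\mathfrak p=(1+\sqrt3)\mathcal O$ is a principal right ideal, and its reduced norm $\mathfrak p^2$ lies in the trivial narrow class. Nor can the left order of the second class be conjugate to $\mathcal O$. Suppose $\mathcal O_L(I)=\alpha\mathcal O\alpha^{-1}$. Then $\alpha^{-1}I$ is a two-sided $\mathcal O$-ideal, hence equal to $\mathfrak a\mathcal O$, because $B$ is unramified at every finite place. So $\mathrm{nrd}(I)=\mathrm{nrd}(\alpha)\mathfrak a^2$ is narrowly trivial, since $\mathrm{nrd}(\alpha)\gg0$ and $\operatorname{Cl}^+(F)$ has order two. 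Thus the type number is two, and the two unit groups differ.

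Moreover, $B^1$ contains no binary octahedral group, because $2O$ has elements of reduced trace $\pm\sqrt2\notin F$. Every finite subgroup of $B^1$ has order at most $24$, so there are no $48$ norm-one elements to average over. The two types are as follows.
\begin{itemize}
\item A maximal order containing $\mathcal O_F\otimes(\text{Hurwitz order})$ and $(1+i)/(1+\sqrt3)$, with $\mathcal O_1^1=2T$ and reduced unit group $\Gamma_1\cong S_4$.
\item The order $\mathbb Z[\zeta_{12}]\oplus\mathbb Z[\zeta_{12}]j$, where $\zeta_{12}=(\sqrt3+i)/2$, with $\mathcal O_2^1=\mathrm{Dic}_6$ and $\Gamma_2\cong D_{12}$.
\end{itemize}
This is consistent with the mass $1/24+1/24=1/12$. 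Half of each $\Gamma_i$ consists of classes of elements with reduced norm in $(2+\sqrt3)\mathcal O_F^{\times2}$, such as $(1+i)/(1+\sqrt3)$ and $1+\zeta_{12}$. These must be included in the average. Averaging over norm-one elements alone computes invariants of $A_4$ and $D_6$, giving for instance $\dim W_5^{A_4}=2$.

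The central-character step is also not a choice between your two options. Since $h(F)=1$, every central idele is $b$ times a unit idele. The computation in Lemma~\ref{lem:c11-discriminant-1125-spaces} then gives $Z_{(b)}\Phi=\operatorname{sgn}(N(b))^k\Phi$ on both summands. Nothing is swapped: the whole weight-$k$ module has central character $\varepsilon^k$, so $\dim S_k(\varepsilon^k)=\dim W_k^{\Gamma_1}+\dim W_k^{\Gamma_2}$.

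With your assumptions $\Gamma_1=\Gamma_2$ and $\dim W_4^\Gamma=\dim W_5^\Gamma=1$, this gives $(2,2)$, while your swap picture gives $(1,1)$. Neither yields the claimed $(2,1)$, so your final assembly does not follow.

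With the correct groups, the character sums are:
\begin{itemize}
\item for $S_4$, $(9+3+0+6+6)/24=1$ in weight four and $(16+8)/24=1$ in weight five;
\item for $D_{12}$, $(24+0)/24=1$ in weight four and $(12-12)/24=0$ in weight five.
\end{itemize}
Hence $\dim S_4(1)=1+1=2$ and $\dim S_5(\varepsilon)=1+0=1$. Your method reaches the stated dimensions only after the unit-group and central-character steps are replaced by this analysis.
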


\begin{proof}
Set \(u=2+\sqrt3\). By Lemma~\ref{lem:quadratic-small-fields}, we
have \(h(F)=1\). Every unit has norm one, since the equation
\(a^2-3b^2=-1\) is impossible modulo three.
Thus \(h^+(F)=2\), with narrow classes represented by
\(\mathcal O_F\) and \((\sqrt3)\). In the convention of
Section~\ref{sec:preliminaries}, the two classical groups are
\(\Gamma_0(t_\nu\mathfrak d_F,\mathcal O_F)\).
Since \(\mathfrak d_F=(2\sqrt3)\), the ideals
\(t_\nu\mathfrak d_F\) are represented by \((2\sqrt3)\) and
\((6)\). Diagonal conjugation identifies their modular forms with
forms for \(\mathrm{GL}_2^+(\mathcal O_F)\) on
\(\mathfrak H\times\mathfrak H^-\) and \(\mathfrak H^2\),
respectively, where \(\mathfrak H^-\) is the lower half-plane.
The principal narrow component therefore corresponds to
\(\mathfrak H\times\mathfrak H^-\).

To pass from \(\mathrm{SL}_2(\mathcal O_F)\) to
\(\mathrm{GL}_2^+(\mathcal O_F)\), it suffices to impose
invariance under \(\operatorname{diag}(u,1)\), since this matrix
and \(\mathrm{SL}_2(\mathcal O_F)\) generate the latter group
modulo scalars. Since \(N(u)=1\), its determinant factor
in parallel weight \(k\) is \(N(u)^{k/2}=1\). The required
condition is therefore invariance under \(z\mapsto uz\) on
each domain. Both symmetric and antisymmetric forms are included.
Since every ideal is principal and every unit has positive norm,
the full-level central character compatible with parallel weight
\(k\) is determined by \eqref{eq:prelim-principal-character}:
\[
\eta((a))=\operatorname{sgn}(N(a))^k.
\]
It is \(1\) for even \(k\) and \(\varepsilon\) for odd \(k\).
Adding the dimensions of the two classical cuspidal spaces thus
gives the full adelic dimension with this character.

We first consider the nonprincipal component, identified with
\(\mathfrak H^2\). Let \(M_k\) be its space of weight-\(k\)
forms invariant under the unit action. This space is the direct sum
of its subspaces symmetric and antisymmetric under Galois conjugation. By
\cite[Theorem~2.2]{Aoki2006Dimensions}, the symmetric subring is
the polynomial ring \(\mathbb C[G_2,G_3,G_4]\), where
\(G_j\) is an Eisenstein series of weight \(j\). The
antisymmetric part is a free module over this ring with one
generator of weight eleven. Adding the two Hilbert series gives
\[
\sum_{k\geq0}\dim M_kx^k
=\frac{1+x^{11}}{(1-x^2)(1-x^3)(1-x^4)}.
\]
In weights four and five, all these forms are symmetric.
Their bases are \(G_2^2,G_4\) and \(G_2\cdot G_3\), respectively,
so \(\dim M_4=2\) and \(\dim M_5=1\).

On the principal component, the Hilbert series for all
\(\mathrm{SL}_2(\mathcal O_F)\)-modular forms is
\((1+x^{10})/((1-x)(1-x^3)(1-x^4))\)
by \cite[\S4.1]{Aoki2006Dimensions}.
By \cite[Theorem~8.1]{Dursthoff2016}, the forms symmetric
under Galois conjugation form the polynomial ring
\(\mathbb C[g_1,s_3,g_4]\), with generators of weights one,
three and four. Its bases in weights four and five are
\(g_1^4,g_1\cdot s_3,g_4\) and \(g_1^5,g_1^2\cdot s_3,g_1\cdot g_4\).
Thus the symmetric subspace has dimension three in each weight,
equal to the dimension given by the full Hilbert series.
Every form in these two weights is therefore symmetric.
The subring fixed by the unit action is
\(\mathbb C[g_1,g_4,g_6]\), with generators of weights one,
four and six; see \cite[\S3.2]{Dursthoff2016}.
Its weight-four and weight-five bases are \(g_1^4,g_4\) and
\(g_1^5,g_1\cdot g_4\), respectively. Thus the modular-form dimension
on the principal component is two in both weights.

Each classical group has one cusp, since \(\mathcal O_F\) is a
principal ideal domain. The Eisenstein series \(G_2,G_3\) and
\(g_1\) have nonzero constant terms; see
\cite[\S3.2]{Dursthoff2016}.
Thus the constant-term map has rank one in both weights, using
\(G_2^2,G_2\cdot G_3\) on the nonprincipal component and
\(g_1^4,g_1^5\) on the principal component.
The cuspidal dimension is therefore one less than the modular-form
dimension on each component, giving\footnote{
We verify the Hilbert-series coefficients in weights four
and five, obtaining $\dim S_4(1)=2$ and
$\dim S_5(\varepsilon)=1$; see \path{verify_d12_weight_one.py}
in \cite{EigenformCertificates}.}
\[
\dim S_4(1)=2,\qquad
\dim S_5(\varepsilon)=1.
\]
\end{proof}

We now compute the constant term of $E_1(1,\varepsilon)$ and
use the one-dimensional space $S_5(\varepsilon)$ to obtain an
identity. We also show that the product is the same for both
normalized weight-four eigenforms.

\begin{proposition}
\label{prop:weight-one-sqrt3}
Let \(F=\mathbb Q(\sqrt3)\), and let \(\varepsilon\) be its nontrivial
narrow class character. For every normalized full-level cuspidal Hecke eigenform
\(h_4\in S_4(1)\), the following identity holds:
\[
h_5=12E_1(1,\varepsilon)\cdot h_4,
\]
where \(h_5\) is the unique normalized Hecke eigenform in
\(S_5(\varepsilon)\). In particular, the two normalized weight-four eigenforms
give the same product.
\end{proposition}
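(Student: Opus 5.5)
The plan uses the one-dimensionality of \(S_5(\varepsilon)\) from Lemma~\ref{lem:weight-one-sqrt3-dimensions}. First check that the product lies in this space. Write \(E=E_1(1,\varepsilon)\). The central characters multiply: \(E\) has central character \(1\cdot\varepsilon=\varepsilon\), and \(h_4\in S_4(1)\), so \(E\cdot h_4\in M_5(\varepsilon)\). Since \(h_4\) is cuspidal, the product is cuspidal, so \(E\cdot h_4\in S_5(\varepsilon)\). This space is spanned by \(h_5\), so \(E\cdot h_4=\lambda h_5\) for some scalar \(\lambda\).

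Next, compute \(\lambda\) from the unit-ideal coefficient. By \eqref{eq:product-coefficient-one} and cuspidality of \(h_4\), we have \(c(\mathcal O_F,E\cdot h_4)=c_1(0,E)\). The weight-one constant term \eqref{eq:weight-one-constant-vector} with \(\phi=1\), \(\psi=\varepsilon\), \(t_1=\mathcal O_F\) gives
\[
c_1(0,E)=\tfrac14\bigl(L(0,\varepsilon)+L(0,\varepsilon^{-1})\bigr)=\tfrac12L(0,\varepsilon),
\]
since \(\varepsilon\) is quadratic. Hence \(\lambda=\tfrac12L(0,\varepsilon)\), and this is independent of which eigenform \(h_4\) is chosen. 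The identity \(h_5=12E\cdot h_4\) is then equivalent to \(L(0,\varepsilon)=1/6\).

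To evaluate \(L(0,\varepsilon)\), I would reuse the factorization from the proof of Lemma~\ref{lem:quadratic-exact-special-values}: \(L(s,\varepsilon)=L(s,\chi_{-4})L(s,\chi_{-3})\). By \eqref{eq:generalized-bernoulli-special-value} with \(m=1\), \(L(0,\chi_{-4})=1/2\) and \(L(0,\chi_{-3})=1/3\), so \(L(0,\varepsilon)=1/6\) and \(\lambda=1/12\). It remains to confirm that \(h_5\) is the normalized eigenform, i.e.\ that \(c(\mathcal O_F,h_5)=1\). This holds because \(S_5(\varepsilon)\) is one-dimensional and Hecke-stable, so its generator is an eigenform, and a nonzero full-level eigenform has nonzero unit-ideal coefficient. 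The identity \(h_5=12E\cdot h_4\) follows. Since the right-hand side equals the same \(h_5\) for both choices of \(h_4\), the two products coincide.

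The main obstacle I expect is making the weight-one constant term formula match the normalization conventions exactly. One must confirm that \(t_1=\mathcal O_F\) gives unit character values, that \(L(0,\varepsilon^{-1})=L(0,\varepsilon)\), and that the \(2^{-n}\) factor is correct, so that the scalar comes out as exactly \(12\) and not, say, \(6\) or \(24\). I would cross-check it by computing the coefficient at \((2)\) via \eqref{eq:product-coefficient-two} against the Hecke relation for \(h_5\), if explicit coefficients are available.
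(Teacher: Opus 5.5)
Your proof is correct and follows the paper's argument. Both proofs place $E_1(1,\varepsilon)\cdot h_4$ in the one-dimensional Hecke-stable space $S_5(\varepsilon)$, read off the scalar from the unit-ideal coefficient $c_1(0,E_1(1,\varepsilon))=L(0,\varepsilon)/2=1/12$, and compute $L(0,\varepsilon)=L(0,\chi_{-4})L(0,\chi_{-3})=1/6$.

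The only difference concerns the final claim that the two products coincide. The paper shows directly that $E_1(1,\varepsilon)$ vanishes on the nonprincipal component and that the two normalized $h_4$ agree on the principal one. You instead deduce the coincidence from the identity itself, since both products equal $h_5/12$, and this is logically sufficient.
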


\begin{proof}
The Artin factorization in the proof of
Lemma~\ref{lem:quadratic-exact-special-values} gives
\(L(s,\varepsilon)=L(s,\chi_{-4})L(s,\chi_{-3})\), where
\(\chi_d\) denotes the primitive quadratic Dirichlet character of
discriminant \(d\). Applying
\eqref{eq:generalized-bernoulli-special-value} at \(m=1\), we obtain
\(L(0,\chi_{-4})=1/2\) and \(L(0,\chi_{-3})=1/3\).\footnote{
We compute $L(0,\chi_{-4})=1/2$ and $L(0,\chi_{-3})=1/3$;
see \path{verify_d12_weight_one.py} in \cite{EigenformCertificates}.}
Thus \(L(0,\varepsilon)=1/6\), and
\eqref{eq:weight-one-constant-vector} yields
\[
c_1(0,E_1(1,\varepsilon))
=\frac{L(0,\varepsilon)+L(0,\varepsilon^{-1})}{4}
=\frac1{12}.
\]

Put $E=E_1(1,\varepsilon)$. The same formula gives
the constant term on component $\nu$:
\[
c_\nu(0,E)=\frac{1+\varepsilon(t_\nu)}{24}.
\]
For an ideal $\mathfrak a$ in the nonprincipal narrow class,
$\varepsilon(\mathfrak a)=-1$. The nonconstant coefficient formula
in \eqref{eq:eisenstein-fourier-and-constant-terms}, which also
holds in weight one, gives
\[
c(\mathfrak a,E)=\sum_{\mathfrak r\mid\mathfrak a}
\varepsilon(\mathfrak r)=0.
\]
Indeed, the involution
$\mathfrak r\mapsto\mathfrak a\mathfrak r^{-1}$ pairs
opposite character values and has no fixed point, since
$\varepsilon(\mathfrak r^2)=1$.
The Fourier coefficients on this component correspond to ideals
in the nonprincipal narrow class. Thus $E$ vanishes on the
nonprincipal component, whereas its restriction to the principal
component is nonzero because its constant term is $1/12$.

Now let $h_4\in S_4(1)$ be a normalized Hecke eigenform.
Its restriction to the principal component is nonzero, since
$c(\mathcal O_F,h_4)=1$. The weight-four cuspidal space on
this component is one-dimensional by
the proof of Lemma~\ref{lem:weight-one-sqrt3-dimensions}.
The two normalized eigenforms therefore have identical
restrictions to it. Multiplication by $E$ annihilates their other
component, so their products agree.

The unit-ideal coefficient of $E\cdot h_4$ is $1/12$ by
\eqref{eq:product-coefficient-one}. The product belongs to the
one-dimensional Hecke-stable space $S_5(\varepsilon)$, so
$12E\cdot h_4$ is its normalized Hecke eigenform $h_5$.
\end{proof}

\begin{proof}[Proof of Theorem~\ref{thm:weight-one-identity}]
Lemma~\ref{lem:weight-one-sqrt3-dimensions} and
Proposition~\ref{prop:weight-one-sqrt3} give the assertion.
\end{proof}
\medskip
\noindent\textbf{Declaration of AI use.}
During the preparation of this manuscript, the authors used ChatGPT to
assist with code development. The authors ran and checked all code and
take responsibility for the content of the manuscript.

\end{document}